\documentclass{amsart}
\usepackage{amssymb}
\usepackage[nobysame]{amsrefs}
\usepackage{mathrsfs}
\usepackage[usesnames,svgnames]{xcolor}
\usepackage[sc]{mathpazo}
  \linespread{1.05}         
\usepackage{tikz,pgf}
	\pgfdeclarelayer{background}
	\pgfsetlayers{background,main}
\usepackage{enumerate}
\usepackage{todonotes}
\usepackage{subcaption}
	\captionsetup{labelfont=rm}
\usepackage{mathtools}
\usepackage{dsfont} 
\usepackage[colorlinks=true,
	urlcolor=MidnightBlue,
	citecolor=DarkGreen]{hyperref}
\newtheorem{theorem}{Theorem}[section]
\newtheorem{proposition}[theorem]{Proposition}
\newtheorem{conjecture}[theorem]{Conjecture}
\newtheorem{lemma}[theorem]{Lemma}
\newtheorem{corollary}[theorem]{Corollary}
\newtheorem{definition}[theorem]{Definition}
\newtheorem{question}[theorem]{Question}
\newtheorem{assumption}[theorem]{Assumption}
\theoremstyle{remark}
\newtheorem{remark}[theorem]{Remark}
\newtheorem{example}[theorem]{Example}

\newcommand{\defn}[1]{{\color{DarkGreen}\emph{#1}}}
\newcommand{\defs}{\overset{\mathrm{def}}{=}}
\newcommand{\ie}{\text{i.e.}\;}
\newcommand{\red}[2]{\mathrm{Red}_{#1}(#2)}

\newcommand{\CC}{\mathscr{C}}
\newcommand{\EE}{\mathscr{E}}
\newcommand{\HH}{\mathscr{H}}
\newcommand{\II}{\mathscr{I}}
\newcommand{\MM}{\mathscr{M}}
\newcommand{\PP}{\mathcal{P}}

\newcommand{\xb}{\mathbf{x}}

\newcommand{\st}{^{\text{st}}}

\renewcommand{\th}{^{\text{th}}}
\DeclareMathOperator{\rk}{rk}

\DeclareMathOperator{\rise}{Rise}
\DeclareMathOperator{\orb}{Orb}

\newcommand{\bg}{\mathfrak{B}}  
\newcommand{\id}{\mathds{1}}		
\renewcommand{\top}{\mathfrak{c}} 	
\newcommand{\gen}{A}			
\newcommand{\rtwo}{B}		
\newcommand{\grp}{G}			
\newcommand{\optfrak}[1]{#1}   
\author{Henri M{\"u}hle}
\author{Vivien Ripoll}
\address{Institut f{\"u}r Algebra, Technische Universit{\"a}t Dresden, Zellescher Weg 12--14, 01069 Dresden, Germany.}
\email{henri.muehle@tu-dresden.de}
\address{Fakult{\"a}t f{\"u}r Mathematik, Universit{\"a}t Wien, Oskar-Morgenstern-Platz 1, 1090 Vienna, Austria.}
\email{vivien.ripoll@univie.ac.at}
\thanks{HM is partially supported by a Public Grant overseen by the French National Research Agency (ANR) as part of the ``Investissements d'Avenir'' Program (Reference: ANR-10-LABX-0098) and by Digiteo project PAAGT (Nr. 2015-3161D).  VR is supported by the Austrian Science Foundation FWF, Grants Z130-N13 and F50-N15, the latter in the framework of the Special Research Program ``Algorithmic and Enumerative Combinatorics''.}
\title{Connectivity Properties of Factorization Posets in Generated Groups}
\keywords{generated group, braid group, Hurwitz action, factorization poset, shellability, compatible order, well-covered poset, cycle graph, noncrossing partition lattice}
\subjclass[2010]{06A11 (primary), and 20F99, 05E15 (secondary)}

\begin{document}

\allowdisplaybreaks

\begin{abstract}
	We consider three notions of connectivity and their interactions in partially ordered sets coming from reduced factorizations of an element in a generated group.  While one form of connectivity essentially reflects the connectivity of the poset diagram, the other two are a bit more involved: Hurwitz-connectivity has its origins in algebraic geometry, and shellability in topology.  We propose a framework to study these connectivity properties in a uniform way.  Our main tool is a certain linear order of the generators that is compatible with the chosen element. 
\end{abstract}

\maketitle

\section{Introduction}
	\label{sec:introduction}
For any group $\grp$ the braid group $\bg_{n}$ on $n$ strands naturally acts on $n$-tuples of elements of $\grp$.  This \defn{Hurwitz action} is defined as follows: the $i\th$ standard generator of $\bg_n$ acts on an $n$-tuple of group elements by swapping the $i\th$ entry and the $(i+1)\st$ entry, and moreover conjugating one by the other, so that the product of all the elements remains unchanged (see \eqref{eq:hurwitz} for the precise definition).  This action goes back to \cite{hurwitz91ueber}, where it appeared in the study of branched coverings of Riemann surfaces and was applied to the case where $\grp$ is the symmetric group.  It also plays a role in the computation of the braid monodromy of projective curves~\cites{brieskorn88automorphic,kulikov00braid,libgober89invariants}.

It is a natural question to ask for the number of orbits in $\grp^{n}$ under Hurwitz action.  There are several results on conditions for two elements of $\grp^{n}$ to belong to the same Hurwitz orbit.  To name a few, \cite{benitzhak03graph} deals with the symmetric group, \cite{hou08hurwitz} investigates generalized quaternion groups and dihedral groups and \cite{sia09hurwitz} treats dihedral groups, dicyclic groups and semidihedral groups.  

Since the Hurwitz action preserves the multiset of conjugacy classes of a tuple, it is natural to study the action on $A^{n}$ for any subset $A\subseteq G$ that is closed under $G$-conjugation. Without loss of generality we may then assume that $\grp$ is generated by $\gen$ as a monoid. Some authors refer to this setting as an ``equipped group''~\cites{kharlamov03braid,kulikov13factorizations}.  We will, however, use the notion of a \defn{generated group} following \cites{bessis03dual,huang17absolute}.  A fundamental case is the study of the Hurwitz action on \defn{reduced $\gen$-factorizations} of some element $\top\in\grp$, and the question of the number of Hurwitz orbits.  

Let us write $\red{\gen}{\top}$ for the set of all reduced $\gen$-factorizations of $\top$.  It is well known that when $\grp$ is a finite irreducible real reflection group, $\gen$ its set of reflections, and $\top$ a Coxeter element, the Hurwitz action is transitive on $\red{\gen}{\top}$~\cite{deligne74letter}.  This was later generalized to finite irreducible complex reflection groups~\cite{bessis15finite}*{Proposition~7.6}.  It was shown recently that for $\grp$ a finite irreducible real reflection group the Hurwitz action on $\red{\gen}{\top}$ is transitive if and only if $\top$ is a (parabolic) quasi-Coxeter element~\cite{baumeister17on}*{Theorem~1.1}.  One direction of this equivalence was recently extended to affine Coxeter groups~\cite{wegener17on}*{Theorem~1.1}.  In a similar spirit, \cite{muehle18poset}*{Theorem~1.2} enumerates the Hurwitz orbits for elements of the alternating group generated by all $3$-cycles.

In \cites{bessis03dual,bessis06dual} and also \cite{brady01partial}, generated groups were equipped with an additional structure of a partially ordered set.  This construction has its origin already in \cite{garside69braid}.  More precisely, we consider the set of all prefixes (up to equivalence) of some element $\top\in\grp$, and we say that two such prefixes are comparable if one appears as a subword in some reduced $\gen$-factorization of the other.  From this point of view maximal chains in the resulting partial order correspond to reduced $\gen$-factorizations of $\top$ and the Hurwitz action can be seen as a method to pass from one chain to the other.  The number of Hurwitz orbits of $\red{\gen}{\top}$ can then be interpreted as a ``connectivity coefficient'' of this \defn{factorization poset} $\PP_{\top}(\grp,\gen)$.

This article revolves around the relation of the previously described \defn{Hurwitz-connectivity} to two other forms of connectivity of a poset: \defn{chain-connectivity} (motivated by graph theory) and \defn{shellability} (motivated by topology).  The main result of this article is the following uniform approach to proving Hurwitz-connectivity, chain-connectivity and shellability of $\PP_{\top}(\grp,\gen)$.  The statement of this result uses two notions that will be formally defined later in the article, namely a linear order of the generators that is compatible with $\top$ (Definition~\ref{def:compatible_order}), and a certain ``well-covered'' property (Definition~\ref{def:well_covered}).  The latter property asserts that for every generator that is not minimal with respect to a given linear order we can find a smaller generator such that both have a common upper cover in $\PP_{\top}(\grp,\gen)$.  

Let us fix the following notation for the upcoming three statements.  Let $\grp$ denote a group that is generated by $\gen\subseteq\grp$ as a monoid, where we suppose that~$\gen$ is closed under $\grp$-conjugation.  For $\top\in\grp$ let $\gen_{\top}\subseteq\gen$ denote the set of all generators that appear in at least one reduced $\gen$-factorization of $\top$.

\begin{theorem}\label{thm:main_result}
	If $\red{\gen}{\top}$ is finite, and the factorization poset $\PP_{\top}(\grp,\gen)$ admits a $\top$-compatible order $\prec$ of $\gen_{\top}$ and is totally well-covered with respect to $\prec$, then $\PP_{\top}(\grp,\gen)$ is chain-connected, Hurwitz-connected and shellable.
\end{theorem}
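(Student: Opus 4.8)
\emph{Proof strategy.} The plan is to prove the theorem by induction on the number $n$ of factors of a reduced $\gen$-factorization of $\top$; the cases $n \le 1$ are immediate, so suppose $n \ge 2$. Let $a$ be the $\prec$-minimal element of $\gen_{\top}$. Since any generator occurring in some reduced factorization of $\top$ can be carried to the first position by Hurwitz moves, $a$ is an atom of $\PP_{\top}(\grp,\gen)$, and the upper interval $[a,\top]$ is isomorphic to $\PP_{a^{-1}\top}(\grp,\gen)$ in a way that assigns the label $x^{-1}y$ to each covering step $x\lessdot y$. The first step is to record that the two hypotheses are hereditary: every interval $[x,y]$ of $\PP_{\top}(\grp,\gen)$ is the factorization poset $\PP_{x^{-1}y}(\grp,\gen)$, the restriction of $\prec$ to $\gen_{x^{-1}y}$ is an $(x^{-1}y)$-compatible order (this is why Definition~\ref{def:compatible_order} is phrased recursively), and $\PP_{x^{-1}y}(\grp,\gen)$ is again totally well-covered for that restriction --- guaranteeing this passage to intervals is precisely the role of the word ``totally''. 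In particular the inductive hypothesis applies to $[a,\top]\cong\PP_{a^{-1}\top}(\grp,\gen)$, giving it chain-connectivity, Hurwitz-connectivity, shellability, and an EL-labelling.

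The heart of the argument is the following reduction, call it $(\star)$: \emph{every maximal chain of $\PP_{\top}(\grp,\gen)$ can be connected to one passing through $a$, using only the elementary moves matching the notion at hand} (Hurwitz moves, resp.\ single-element chain exchanges). I would prove $(\star)$ as follows. Along a given maximal chain, let $b$ be the $\prec$-least covering label and rotate the corresponding step to the bottom, so the chain now begins with the atom $b$. If $b=a$ we are done; otherwise $b$ is not $\prec$-minimal in $\gen_{\top}$, so total well-coveredness supplies a generator $b'\prec b$ and a rank-two element $w\le\top$ with $b\lessdot w$ and $b'\lessdot w$. Now $[\id,w]=\PP_{w}(\grp,\gen)$ and $[b,\top]\cong\PP_{b^{-1}\top}(\grp,\gen)$ both satisfy the hypotheses and, when $n\ge 3$, have rank smaller than $n$; applying the inductive hypothesis to these two smaller posets first rearranges the chain, without touching its bottom step, to one of the form $\id\lessdot b\lessdot w\lessdot\cdots\lessdot\top$, and then exchanges the bottom step to obtain $\id\lessdot b'\lessdot w\lessdot\cdots\lessdot\top$. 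Thus the bottom label has been replaced by the strictly $\prec$-smaller $b'$; since $\gen_{\top}$ is finite (because $\red{\gen}{\top}$ is), iterating reaches $a$ after finitely many steps. The remaining case $n=2$ of $(\star)$ has no smaller interval to exploit and must come directly from the existence of a $\top$-compatible order together with finiteness of $\red{\gen}{\top}$; this is the genuine base of the induction.

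Granting $(\star)$, the three conclusions follow by composing it with the inductive hypothesis inside $[a,\top]$. For Hurwitz-connectivity: use $(\star)$ to pass to a reduced factorization beginning with $a$, then use transitivity of the Hurwitz action on $\red{\gen}{a^{-1}\top}$ to normalise the remaining $n-1$ entries by moves fixing the first position; chain-connectivity is identical with chain exchanges. For shellability I would verify that $\lambda(x\lessdot y)=x^{-1}y$, with the labels ordered by $\prec$, is an EL-labelling, i.e.\ that every interval $[x,y]$ has a unique strictly $\prec$-increasing maximal chain and that this chain is lexicographically least; via $[x,y]\cong\PP_{x^{-1}y}(\grp,\gen)$ the two axioms reduce to the global statement, where uniqueness of the increasing chain is built into the notion of a $\top$-compatible order and its lexicographic minimality is checked inductively, using that $a$ is the $\prec$-smallest label of a bottom covering step together with the inductive EL-labelling on $[a,\top]$.

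The step I expect to be the main obstacle is $(\star)$. The rank-two witness $w$ produced by total well-coveredness need not lie on the chain one began with, and within one rank-two interval a single Hurwitz move does not connect an arbitrary pair of its atoms; so one genuinely has to feed smaller instances of the theorem back into the induction and arrange the intermediate chains carefully so that the local reconnection transplants to the full poset, while also arguing termination (this is where finiteness of $\red{\gen}{\top}$ enters). The secondary difficulties are the heredity lemmas for compatibility and total well-coveredness, the verification of the two EL-axioms (where the full recursive content of Definition~\ref{def:compatible_order} is needed), and the base case $n=2$, which the definition of a $\top$-compatible order must be strong enough to handle on its own.
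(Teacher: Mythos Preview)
Your reduction $(\star)$ is correct for chain- and Hurwitz-connectivity, and for chain-connectivity it is essentially the paper's own argument (Proposition~\ref{prop:well_covered_chain_connected}). For Hurwitz-connectivity the paper proceeds differently: rather than re-running the well-covered descent, it observes that a $\top$-compatible order already forces every rank-$2$ interval to have a single Hurwitz orbit (Lemma~\ref{lem:compatible_rank_2_hurwitz}, Corollary~\ref{cor:equiv_compatible_loc_hurwitz}), so that $\PP_\top$ is \emph{locally} Hurwitz-connected, and then shows (Theorem~\ref{thm:hurwitz_transitive}) that chain-connectivity together with local Hurwitz-connectivity implies global Hurwitz-connectivity, by lifting any single chain exchange to a sequence of Hurwitz moves inside the relevant rank-$2$ interval. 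This factoring is what yields the stronger Theorem~\ref{thm:main_hurwitz}, where total well-coveredness is not assumed at all; your direct induction also works, but it spends the well-covered hypothesis where the paper shows it is unnecessary. (Incidentally, the ``rotate the least label to the bottom'' step is superfluous: you may simply take $b$ to be whatever atom the given chain already passes through.)

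There is, however, a genuine gap in your shellability argument. You assert that ``uniqueness of the increasing chain is built into the notion of a $\top$-compatible order'', but Definition~\ref{def:compatible_order} only guarantees a unique $\prec$-rising factorization for elements of length \emph{two}; it is not phrased recursively in the sense you suggest, and it says nothing about $[\id,\top]$ when $\ell_\gen(\top)\ge 3$. Uniqueness of the global rising chain is precisely the thing to be proved, and it genuinely requires well-coveredness: if $(\overline a_1,\ldots,\overline a_n)$ is $\prec$-rising with $\overline a_1\ne\min\gen_\top$, one argues (using the inductive hypothesis on $[\overline a_1,\top]$ together with Proposition~\ref{prop:compatible_cover_set}) that every upper cover $z$ of $\overline a_1$ in $\PP_\top$ satisfies $\lambda_\top(\overline a_1,z)\succeq\overline a_1$, whence $F_\prec(\overline a_1;\top)=\emptyset$, contradicting well-coveredness. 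This is the content of Proposition~\ref{prop:well_covered_shellable}, and Example~\ref{ex:abelian_quotient_disconnected} shows it is not a formality: there one has a $\top$-compatible order but two $\prec$-rising maximal chains, exactly because well-coveredness fails. Once you supply this step, your EL argument coincides with the paper's Theorem~\ref{thm:shellable_well_covered}.
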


We want to emphasize that Theorem~\ref{thm:main_result} uniformly and simultaneously approaches the question whether a factorization poset is chain-connected, Hurwitz-connected or shellable.  We are well aware that it is far from trivial in full generality to establish that a factorization poset is well-covered and admits a compatible order.  However, for some special groups the framework presented here may provide a convenient method to reach uniform insights about the connectivity of the corresponding factorization posets.

By definition, a factorization poset can only be well-covered with respect to a given linear order of the generators.  We conjecture that we can weaken the assumptions of Theorem~\ref{thm:main_result} a bit; and we are able to prove this conjecture for the Hurwitz-connectivity part.

\begin{theorem}\label{thm:main_hurwitz}
	If $\red{\gen}{\top}$ is finite, and the factorization poset $\PP_{\top}(\grp,\gen)$ is chain-connected and admits a $\top$-compatible order of $\gen_{\top}$, then the Hurwitz action is transitive on $\red{\gen}{\top}$. 
\end{theorem}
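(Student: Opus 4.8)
The plan is to work with maximal chains of $\PP_{\top}(\grp,\gen)$ and to lift chain-connectivity to Hurwitz-connectivity, using the $\top$-compatible order to control the rank-$2$ intervals. Recall that a maximal chain $\id = w_{0}\lessdot w_{1}\lessdot\dots\lessdot w_{\ell}=\top$ corresponds bijectively to the reduced $\gen$-factorization $(w_{0}^{-1}w_{1},w_{1}^{-1}w_{2},\dots,w_{\ell-1}^{-1}w_{\ell})\in\red{\gen}{\top}$, and that the Hurwitz generator $\sigma_{i}$ changes only the entries in positions $i$ and $i+1$; in terms of the associated chain this amounts to replacing only the rank-$i$ element $w_{i}$, keeping $w_{i-1}$, $w_{i+1}$ and every other element in place. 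Consequently, if two maximal chains differ in exactly one element, say the one of rank $i$, they give two reduced factorizations of the rank-$2$ element $w_{i-1}^{-1}w_{i+1}\leq\top$, and they are Hurwitz-equivalent as soon as those two factorizations lie in a common $\bg_{2}$-orbit.

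So the first step is to extract from the hypothesis the following local statement (which I expect either to be built into Definition~\ref{def:compatible_order} or to follow from it by a short argument): if $\PP_{\top}(\grp,\gen)$ admits a $\top$-compatible order of $\gen_{\top}$, then for every $v\leq\top$ with $\ell(v)=2$ the Hurwitz action of $\bg_{2}$ is transitive on $\red{\gen}{v}$; equivalently, every rank-$2$ lower interval of $\PP_{\top}(\grp,\gen)$ is Hurwitz-connected. Granting this, I would argue as follows. Take $F,F'\in\red{\gen}{\top}$ with associated maximal chains $C,C'$. By chain-connectivity of $\PP_{\top}(\grp,\gen)$ there is a finite sequence $C=C_{0},C_{1},\dots,C_{m}=C'$ of maximal chains in which $C_{j}$ and $C_{j+1}$ differ in exactly one element, say of rank $i_{j}$. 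Let $x$ and $y$ be the common rank-$(i_{j}-1)$ and rank-$(i_{j}+1)$ elements of $C_{j}$ and $C_{j+1}$; since $x$ and $y$ are two steps apart along $C_{j}$, the element $x^{-1}y$ has $\ell(x^{-1}y)=2$ and satisfies $x^{-1}y\leq\top$ (it occurs as the product of two consecutive steps of the reduced factorization given by $C_{j}$). The two differing rank-$i_{j}$ elements then yield two reduced factorizations of $x^{-1}y$, which by the local statement lie in one $\bg_{2}$-orbit, hence are joined by a word in $\sigma_{1}^{\pm 1}$. Applying the corresponding word in $\sigma_{i_{j}}^{\pm 1}$ to the full length-$\ell$ factorization (this acts on the entries in positions $i_{j}$, $i_{j}+1$ exactly as the original word acted on the pair, and leaves all other entries untouched) shows that the factorization attached to $C_{j}$ is Hurwitz-equivalent to the one attached to $C_{j+1}$. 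Composing along $j=0,\dots,m-1$ gives that $F$ and $F'$ are Hurwitz-equivalent, so the Hurwitz action is transitive on $\red{\gen}{\top}$. Note that this route uses chain-connectivity only for $\PP_{\top}(\grp,\gen)$ itself, not for its lower intervals.

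The main obstacle is precisely the local statement about rank-$2$ intervals: one has to see that a $\top$-compatible order forces each rank-$2$ lower interval to have the ``dihedral-like'' shape in which $\red{\gen}{v}$ forms a single $\bg_{2}$-orbit. This is exactly the local phenomenon that fails for, e.g., products of disjoint $3$-cycles in alternating groups (where Hurwitz-transitivity can fail), so it is where the hypothesis of a compatible order is genuinely spent; chain-connectivity by itself only supplies combinatorial moves between maximal chains and gives no guarantee that an individual such move is realizable by braid generators. Finiteness of $\red{\gen}{\top}$ should enter to keep $\PP_{\top}(\grp,\gen)$ graded and finite enough for the chain-connectivity hypothesis and the orbit bookkeeping to be meaningful. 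Two alternative organizations, should the local statement prove awkward to isolate, are: (i) induct on $\ell(\top)$, using the compatible order to move an arbitrary factorization to one beginning with the $\prec$-minimal generator $a\in\gen_{\top}$, then stripping $a$ and applying the inductive hypothesis to $a^{-1}\top$; or (ii) use the compatible order to re-route the chain-connectivity path so that every elementary move occurs inside a rank-$2$ interval that is already known to be Hurwitz-connected. Both variants ultimately rely on the same rank-$2$ input, which is why I would build the proof around it from the start.
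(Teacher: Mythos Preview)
Your proposal is correct and follows essentially the same route as the paper: the local rank-$2$ statement you isolate is exactly Lemma~\ref{lem:compatible_rank_2_hurwitz} (via Corollary~\ref{cor:equiv_compatible_loc_hurwitz}), and your lifting argument from chain-connectivity to Hurwitz-connectivity is exactly Theorem~\ref{thm:hurwitz_transitive}, whose proof matches your paragraph almost verbatim, including the passage from an arbitrary rank-$2$ interval $[x,y]$ to the length-$2$ element $x^{-1}y\leq_{\gen}\top$.
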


\begin{conjecture}\label{conj:main_shellable}
	If $\red{\gen}{\top}$ is finite, and the factorization poset $\PP_{\top}(\grp,\gen)$ is totally chain-connected and admits a $\top$-compatible order of $\gen_{\top}$, then $\PP_{\top}(\grp,\gen)$ is shellable.
\end{conjecture}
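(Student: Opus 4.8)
The plan is to equip $\PP_{\top}(\grp,\gen)$ with an EL-labeling and then appeal to the standard fact that an EL-shellable bounded poset is shellable, arguing by induction on the rank $\ell$ of $\PP_{\top}(\grp,\gen)$; this rank is finite and well defined because all reduced $\gen$-factorizations of $\top$ have the same length and $\red{\gen}{\top}$ is finite. The labeling is the tautological one: to a covering relation $x\lessdot y$ assign the unique generator $a\in\gen_{\top}$ with $y=xa$, and order the label set $\gen_{\top}$ by the given $\top$-compatible order $\prec$. Under this labeling the maximal chains of an interval are exactly the reduced factorizations of the associated element, and a $\prec$-increasing maximal chain is one whose sequence of letters increases; one must then check, for every interval, that there is a unique increasing maximal chain and that it is lexicographically least.

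For the inductive step one uses the basic structural fact, valid in this generality, that every interval $[x,y]$ of $\PP_{\top}(\grp,\gen)$ is again a factorization poset $\PP_{z}(\grp,\gen)$ for a suitable $z\in\grp$ whose $\gen$-length equals $\rk[x,y]$, and that total chain-connectedness is automatically inherited by $\PP_{z}(\grp,\gen)$, since it is by definition a statement about all intervals. Granting for the moment that $\prec$ restricted to $\gen_{z}$ is a $z$-compatible order --- this is the delicate point, discussed below --- the inductive hypothesis applies to every proper interval, and it remains to verify the EL-conditions for $[\id,\top]$ under the genuinely available order $\prec$. Existence of an increasing maximal chain is exactly the content of a $\top$-compatible order (Definition~\ref{def:compatible_order}). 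Uniqueness follows by induction: two distinct increasing maximal chains would first diverge and later re-converge across some proper interval, producing two increasing maximal chains there, against the inductive hypothesis. Lexicographic minimality is the usual ``push the descents down'' argument: a maximal chain that is not increasing has two consecutive covers with labels $a\succ b$, the rank-two interval they span has an increasing maximal chain whose first label is $\preceq b\prec a$ by the inductive hypothesis, and replacing the descending pair by the increasing pair lowers the chain lexicographically; since $\red{\gen}{\top}$ is finite this terminates at a descent-free, hence increasing, chain, which is therefore lexicographically least. Thus $\lambda$ is an EL-labeling and $\PP_{\top}(\grp,\gen)$ is shellable.

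All the real work has been swept into one place: the claim that the restriction of $\prec$ to $\gen_{z}$ is a $z$-compatible order for every sub-quotient $z$ of $\top$, equivalently that this restriction satisfies the rank-two EL-conditions --- a unique $\prec$-increasing length-two factorization, whose first letter is $\prec$-minimal among the generators below $z$ --- for every length-two $z$ appearing in an interval. This is precisely where total chain-connectedness must be made to do the work that the totally well-covered hypothesis does in Theorem~\ref{thm:main_result}, and it is the main obstacle. It is subtle for two reasons: a $\top$-compatible order of $\gen_{\top}$ a priori only constrains factorizations of $\top$ and of its prefixes, while $z$ need not be a prefix of $\top$; and chain-connectedness gives nothing directly at rank two, where the flip graph of the maximal chains of an interval is complete. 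The conjecture therefore hinges on a propagation statement: total chain-connectedness of all intervals, together with the single $\top$-compatible order on $\gen_{\top}$, forces the compatible-order behaviour downward to every sub-quotient.

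I would attack this propagation one rank at a time. Given a problematic length-two $z$ sitting inside an interval, pass to a length-three interval $[x,y]$ of $\PP_{\top}(\grp,\gen)$ containing the corresponding rank-two interval; chain-connectedness of $[x,y]$ lets one join an arbitrary maximal chain to the chain singled out by the compatible order through a sequence of flips, and by tracking how the label sequence changes under each flip --- in particular under a flip at the bottom rank-two window --- one should be able to read off uniqueness and leastness of the increasing length-two factorization of $z$. The recursive-atom-ordering reformulation of shellability (Bj{\"o}rner--Wachs) makes this bookkeeping especially natural: its crossing condition is exactly the common-upper-cover assertion one hopes to extract from a flip that lowers the first label, and total chain-connectedness is meant to supply such flips in place of the literal common upper covers furnished by well-coveredness. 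Making this extraction uniform --- so that it reproduces, through chain-connectedness alone, everything Theorem~\ref{thm:main_result} obtains from well-coveredness --- is the crux of a proof of Conjecture~\ref{conj:main_shellable}.
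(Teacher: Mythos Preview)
This statement is a \emph{conjecture} in the paper; there is no proof to compare against. The paper instead reformulates it as Conjecture~\ref{conj:compatible_el_shellable} (that $\lambda_{\top}$ is an EL-labeling) and, via Theorem~\ref{thm:shellable_well_covered}, as Conjecture~\ref{conj:compatible_well_covered} (that $\PP_{\top}$ is totally well-covered), and proves only a special case (Theorem~\ref{thm:generator_single_cycle}). You correctly sense that your outline is incomplete, but you have misdiagnosed where the gap lies.

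The point you flag as ``delicate'' --- that the restriction of $\prec$ to $\gen_{z}$ is $z$-compatible for every $z$ arising from an interval --- is in fact routine here. Since $\gen$ is closed under conjugation, Proposition~\ref{prop:subword_order} guarantees that for any interval $[x,y]$ of $\PP_{\top}$ the element $z=x^{-1}y$ satisfies $z\leq_{\gen}\top$; Lemma~\ref{lem:compatible_restriction} then gives that $\prec$ restricted to $\gen_{z}$ is $z$-compatible, and Lemma~\ref{lem:compatible_min_max} supplies the minimality of the first letter in the rising length-two factorization. Your worry that ``$z$ need not be a prefix of $\top$'' is unfounded in this setting.

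The genuine gap is in your uniqueness argument for the full interval $[\id,\top]$. You claim that two distinct rising chains must diverge and re-converge over a \emph{proper} interval; but if they pass through distinct atoms and meet again only at $\top$, the relevant interval is all of $[\id,\top]$ and the induction does not apply. Example~\ref{ex:abelian_quotient_disconnected} exhibits exactly this: a compatible order with two rising chains through disjoint atom sets (there the poset fails to be chain-connected, which is precisely the point). Chasing your own argument through, a second rising chain starting at a non-minimal atom $a$ forces every upper cover $g$ of $a$ to satisfy $\lambda_{\top}(a,g)\succeq a$, i.e.\ $F_{\prec}(a;\top)=\emptyset$ --- so you have rederived Proposition~\ref{prop:well_covered_shellable}, and what remains is precisely to show that total chain-connectedness forbids this. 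That is Conjecture~\ref{conj:compatible_well_covered}, which is open. (A minor slip: existence of a rising chain is not ``exactly the content'' of Definition~\ref{def:compatible_order}, which is a rank-$2$ uniqueness condition; existence is Lemma~\ref{lem:lexicographically_smallest_rising}.)

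Your final paragraph does eventually land on the right target --- extracting the common-upper-cover condition from chain-connectedness in lieu of well-coveredness --- but that is the entire conjecture, not a technical detail to be filled in.
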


The main part of this article deals with a proper study of the implications and non-implications between the three types of connectivity; see Figure~\ref{fig:the_big_picture} for an overview.  We also provide three other versions of Conjecture~\ref{conj:main_shellable}: Conjectures~\ref{conj:compatible_el_shellable} and \ref{conj:compatible_well_covered} are phrased in terms of factorization posets, whereas Conjecture~\ref{conj:compatible_linear_cycle_graph} is formulated in terms of a particular graph that represents the local structure of the factorization poset.  This tool enables us to prove a particular case of Conjecture~\ref{conj:main_shellable}; see Theorem~\ref{thm:generator_single_cycle}.

\begin{figure}
	\centering
	\hspace*{-1.5cm}\begin{tikzpicture}
		\def\x{1};
		\def\y{1};
		\def\s{1};
		\draw(7*\x,1*\y) node[draw,minimum width=4cm,minimum height=1cm,text width=3.75cm,text centered]{reduced cycle graph linear};
		\draw(10.5*\x,3*\y) node[draw,minimum width=4cm,minimum height=1cm,text width=3.75cm,text centered]{totally well-covered + compatible order};
		\draw(5*\x,3*\y) node[draw,minimum width=4cm,minimum height=1cm,text width=3.75cm,text centered]{totally chain-connected + compatible order};
		\draw(3*\x,5*\y) node[draw,minimum width=4cm,minimum height=1cm,text width=3.75cm,text centered]{chain-connected + compatible order};
		\draw(1*\x,7*\y) node[draw,minimum width=4cm,minimum height=1cm,text width=3.75cm,text centered]{locally Hurwitz-connected};
		\draw(6*\x,7*\y) node[draw,minimum width=4cm,minimum height=1cm,text width=3.75cm,text centered]{compatible order};
		\draw(11*\x,7*\y) node[draw,minimum width=4cm,minimum height=1cm,text width=3.75cm,text centered]{$\lambda_{\top}$ EL-labeling};
		\draw(3*\x,9*\y) node[draw,minimum width=4cm,minimum height=1cm,text width=3.75cm,text centered]{Hurwitz-connected};
		\draw(9*\x,9*\y) node[draw,minimum width=4cm,minimum height=1cm,text width=3.75cm,text centered]{shellable};
		\draw(6*\x,11*\y) node[draw,minimum width=4cm,minimum height=1cm,text width=3.75cm,text centered]{chain-connected};
		\draw(6*\x,13*\y) node[draw,minimum width=4cm,minimum height=1cm,text width=3.75cm,text centered]{totally chain-connected};
		\draw[double,->,red!90!black](2.5*\x,8.25*\y) -- (2*\x,7.75*\y) node[black,midway,sloped,below]{\tiny Ex.~\ref{ex:hurwitz_nonlocal}} node[midway,sloped]{\tiny $\backslash$};
		\draw[double,->,red!90!black](1.5*\x,7.75*\y) -- (2*\x,8.25*\y) node[black,midway,sloped,above]{\tiny Ex.~\ref{ex:abelian_quotient_disconnected}} node[midway,sloped]{\tiny $\backslash$};
		\draw[double,->](.75*\x,5.25*\y) .. controls (-2*\x,6.5*\y) and (-2*\x,7.5*\y) .. (.75*\x,8.75*\y) node[black,midway,sloped,above]{\tiny Thm.~\ref{thm:main_hurwitz}};
		\draw[double,->](4*\x,9.75*\y) -- (4.5*\x,10.25*\y) node[black,midway,sloped,above]{\tiny Prop.~\ref{prop:chain_connectivity_necessary}};
		\draw[double,->,red!90!black](5*\x,10.25*\y) -- (4.5*\x,9.75*\y) node[black,midway,sloped,below]{\tiny Ex.~\ref{ex:chain_nonhurwitz}} node[midway,sloped]{\tiny $\backslash$};
		\draw[double,->,red!90!black](7.5*\x,10.25*\y) -- (8*\x,9.75*\y) node[black,midway,sloped,above]{\tiny Ex.~\ref{ex:hurwitz_nonshellable}} node[midway,sloped]{\tiny $\backslash$};
		\draw[double,->](7.5*\x,9.75*\y) -- (7*\x,10.25*\y) node[black,midway,sloped,below]{\tiny Prop.~\ref{prop:chain_connectivity_necessary}};
		\draw[double,->,red!90!black](5.25*\x,9.25*\y) -- (6.75*\x,9.25*\y) node[black,midway,sloped,above]{\tiny Ex.~\ref{ex:hurwitz_nonshellable}} node[midway,sloped]{\tiny $\backslash$};
		\draw[double,->,red!90!black](6.75*\x,8.75*\y) -- (5.25*\x,8.75*\y) node[black,midway,sloped,below]{\tiny Ex.~\ref{ex:chain_nonhurwitz}} node[midway,sloped]{\tiny $\backslash$};
		\draw[double,->,red!90!black](4.5*\x,7.75*\y) -- (4*\x,8.25*\y) node[black,midway,sloped,below]{\tiny Ex.~\ref{ex:abelian_quotient_disconnected}} node[midway,sloped]{\tiny $\backslash$};
		\draw[double,->,red!90!black](4.5*\x,8.25*\y) -- (5*\x,7.75*\y) node[black,midway,sloped,above]{\tiny Ex.~\ref{ex:hurwitz_nonshellable}} node[midway,sloped]{\tiny $\backslash$};
		\draw[double,->](3.75*\x,6.75*\y) -- (3.25*\x,6.75*\y) node[black,midway,sloped,below]{\tiny Lem.~\ref{lem:compatible_rank_2_hurwitz}};
		\draw[double,->,red!90!black](3.25*\x,7.25*\y) -- (3.75*\x,7.25*\y) node[black,midway,sloped,above]{\tiny Ex.~\ref{ex:big_cycle_example}} node[midway,sloped]{\tiny $\backslash$};
		\draw[double,->](8.75*\x,6.75*\y) -- (8.25*\x,6.75*\y) node[black,midway,sloped,below]{\tiny Def.};
		\draw[double,->,red!90!black](8.25*\x,7.25*\y) -- (8.75*\x,7.25*\y) node[black,midway,sloped,above]{\tiny Ex.~\ref{ex:abelian_quotient_disconnected}} node[midway,sloped]{\tiny $\backslash$};
		\draw[double,->](10*\x,7.75*\y) -- (9.5*\x,8.25*\y) node[black,midway,sloped,below]{\tiny Def.};
		\draw[double,->,red!90!black](10*\x,8.25*\y) -- (10.5*\x,7.75*\y) node[black,midway,sloped,above]{\tiny Ex.~\ref{ex:chain_nonhurwitz}} node[midway,sloped]{\tiny $\backslash$};
		\draw[double,->,red!90!black](5.75*\x,11.75*\y) -- (5.75*\x,12.25*\y) node[black,midway,sloped,above]{\tiny Ex.~\ref{ex:hurwitz_nonshellable}} node[midway,sloped]{\tiny $\backslash$};
		\draw[double,->](6.25*\x,12.25*\y) -- (6.25*\x,11.75*\y) node[black,midway,sloped,above]{\tiny Def.};
		\draw[double,->,red!90!black](1.5*\x,9.75*\y) .. controls (1.5*\x,11.5*\y) and (2.5*\x,13*\y) .. (3.75*\x,13.25*\y) node[black,midway,sloped,above]{\tiny Ex.~\ref{ex:hurwitz_nonshellable}} node[midway,sloped]{\tiny $\backslash$};
		\draw[double,->,red!90!black](3.75*\x,12.75*\y) .. controls (3*\x,12.5*\y) and (2.25*\x,11.5*\y) .. (2*\x,9.75*\y) node[black,midway,sloped,below]{\tiny Ex.~\ref{ex:chain_nonhurwitz}} node[midway,sloped]{\tiny $\backslash$};
		\draw[double,->,red!90!black](8.25*\x,13.25*\y) .. controls (9.5*\x,13*\y) and (10.5*\x,11.5*\y) .. (10.5*\x,9.75*\y) node[black,midway,sloped,above]{\tiny Fig.~\ref{fig:dunce_hat} / Qu.~\ref{qu:chain_connected_nonshellable}} node[midway,sloped]{\tiny $\backslash$};
		\draw[double,->](10*\x,9.75*\y) .. controls (9.75*\x,11.5*\y) and (9*\x,12.5*\y) .. (8.25*\x,12.75*\y) node[black,midway,sloped,below]{\tiny Def. + Prop.~\ref{prop:chain_connectivity_necessary}};
		\draw[double,->](4*\x,5.75*\y) -- (4.5*\x,6.25*\y) node[black,midway,sloped,above]{\tiny Def.};
		\draw[double,->,red!90!black](5*\x,6.25*\y) -- (4.5*\x,5.75*\y) node[black,midway,sloped,below]{\tiny Ex.~\ref{ex:abelian_quotient_disconnected}} node[midway,sloped]{\tiny $\backslash$};
		\draw[double,->](4.5*\x,3.75*\y) -- (4*\x,4.25*\y) node[black,midway,sloped,below]{\tiny Def.};
		\draw[double,->,red!90!black](7*\x,7.75*\y) -- (7.5*\x,8.25*\y) node[black,midway,sloped,above]{\tiny Ex.~\ref{ex:abelian_quotient_disconnected}} node[midway,sloped]{\tiny $\backslash$};
		\draw[double,->,red!90!black](8*\x,8.25*\y) -- (7.5*\x,7.75*\y) node[black,midway,sloped,below]{\tiny Ex.~\ref{ex:chain_nonhurwitz}} node[midway,sloped]{\tiny $\backslash$};
		\draw[<->,double](11.25*\x,6.25*\y) -- (10.75*\x,3.75*\y) node[black,midway,sloped,below]{\tiny Thm.~\ref{thm:shellable_well_covered}};
		\draw[->,double](9.5*\x,6.25*\y) -- (6.5*\x,3.75*\y) node[black,midway,sloped,below]{\tiny Def. + Prop.~\ref{prop:chain_connectivity_necessary}};
		\draw[->,double](8.25*\x,2.75*\y) -- (7.25*\x,2.75*\y) node[black,midway,sloped,below]{\tiny Prop.~\ref{prop:well_covered_chain_connected}};
		\draw[->,double](8.25*\x,1.75*\y) -- (8.75*\x,2.25*\y) node[black,midway,sloped,above]{\tiny Prop.~\ref{prop:reduced_graph_linear_well_covered}};
		\draw[->,very thick,green!90!black](6*\x,3.75*\y) -- (9*\x,6.25*\y) node[black,midway,sloped,above]{\tiny Conj.~\ref{conj:compatible_el_shellable}};
		\draw[->,very thick,green!90!black](7.25*\x,3.25*\y) -- (8.25*\x,3.25*\y) node[black,midway,sloped,above]{\tiny Conj.~\ref{conj:compatible_well_covered}};
		\draw[->,very thick,green!90!black](5.25*\x,2.25*\y) -- (5.75*\x,1.75*\y) node[black,midway,sloped,above]{\tiny Conj.~\ref{conj:compatible_linear_cycle_graph}};
	\end{tikzpicture}
	\caption{Implications, non-implications and conjectures between the several properties of $\PP_{\top}$.  We have omitted a few arrows coming from transitivity.}
	\label{fig:the_big_picture}
\end{figure}
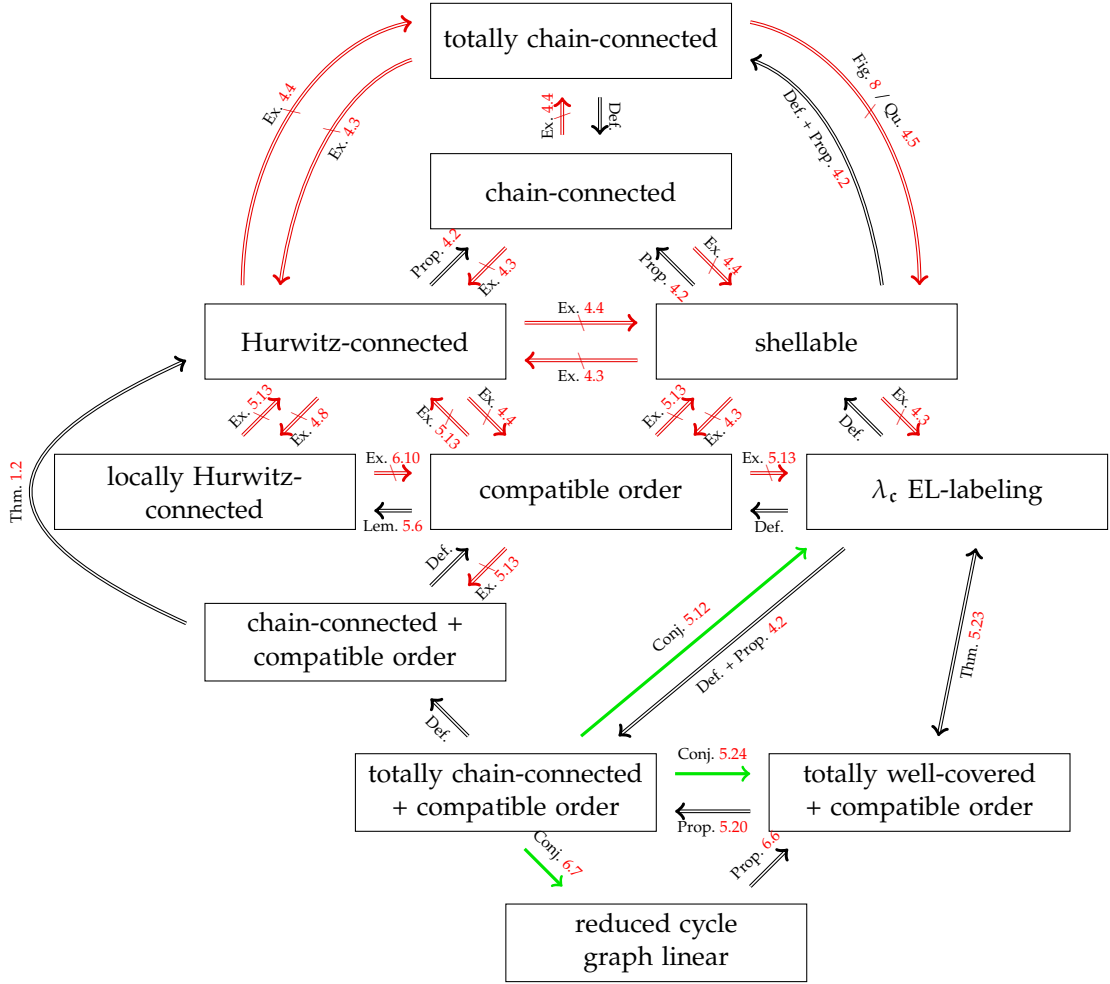

\smallskip

This article is organized as follows.  In Section~\ref{sec:connectivity} we formally define chain-connectivity, Hurwitz-connectivity and shellability.  In the process we recall the necessary background and define the needed concepts.  In Section~\ref{sec:motivating_example} we introduce a huge class of factorization posets arising from reflection groups.  We briefly recall the definitions and state that these factorization posets possess all three connectivity properties.  In Section~\ref{sec:interaction} we investigate relations between our three connectivity properties without any further assumptions.  The heart of this manuscript is Section~\ref{sec:compatible_orders} in which we define the notion of a compatible order of the generators and the ``well-covered'' property.  We prove Theorem~\ref{thm:main_hurwitz} and provide an equivalent formulation of Conjecture~\ref{conj:main_shellable}.  This section culminates in the proof of Theorem~\ref{thm:main_result}.  We conclude this manuscript with Section~\ref{sec:cycle_graph} in which we define a certain graph from which we can essentially recover the factorization poset, and we use this perspective to prove a particular case of our main conjecture.

\section{Three Notions of Connectivity}
	\label{sec:connectivity}
In this section we define the three notions of connectivity that we care about.  Each of the following three subsections serves at the same time as a preliminary section that introduces further necessary concepts and notions.
	
\subsection{Poset Terminology}
	\label{sec:poset_terminology}
In this section we recall the basic concepts from the theory of partially ordered sets, and we introduce a first notion of connectivity.

	\subsubsection{Basics}
	\label{sec:basics}
A partially ordered set (\defn{poset} for short) is a set $P$ equipped with a partial order $\leq$, and we usually write $\PP=(P,\leq)$.  

If $\PP$ has a least element $\hat{0}$ and a greatest element $\hat{1}$, then it is \defn{bounded}, and the \defn{proper part} of $\PP$ is the subposet $\overline{\PP}\defs\bigl(P\setminus\{\hat{0},\hat{1}\},\leq\bigr)$.  

Two elements $x,y\in P$ form a \defn{covering} if $x<y$ and there is no $z\in P$ with $x<z<y$.  We then write $x\lessdot y$, and equivalently say that $x$ \defn{is covered by} $y$ or that $y$ \defn{covers} $x$.  Let us define the set of coverings of $\PP$ by 
\begin{align}\label{eq:order_diagram}
	\EE(\PP) \defs \bigl\{(x,y)\in P\times P\mid x\lessdot y\bigr\}.
\end{align}

From now on we will only consider finite posets.  A \defn{chain} of $\PP$ is a totally ordered subset $C\subseteq P$ meaning that for every $x,y\in C$ we have $x<y$ or $y<x$.  If $C=\{x_{1},x_{2},\ldots,x_{k}\}$ with $x_{i}<x_{j}$ whenever $i<j$, we occasionally use the notation $C:x_{1}<x_{2}<\cdots<x_{k}$ to emphasize the order of the elements.  Moreover, a chain $C$ is \defn{maximal} if it is not contained properly in any other chain.  Let $\MM(\PP)$ denote the set of maximal chains of $\PP$.  A poset is \defn{graded} if all maximal chains have the same cardinality, and this common cardinality minus one is the \defn{rank} of~$\PP$, denoted by $\rk(\PP)$.

\begin{figure}
	\centering
	\begin{tikzpicture}\small
		\def\x{.75};
		\def\y{.75};
		\draw(2*\x,1*\y) node[circle,draw,fill=white,scale=.6](n1){};
		\draw(1*\x,2*\y) node[circle,draw,fill=white,scale=.6](n2){};
		\draw(3*\x,2*\y) node[circle,draw,fill=white,scale=.6](n3){};
		\draw(1*\x,3*\y) node[circle,draw,fill=white,scale=.6](n4){};
		\draw(3*\x,3*\y) node[circle,draw,fill=white,scale=.6](n5){};
		\draw(2*\x,4*\y) node[circle,draw,fill=white,scale=.6](n6){};
		\draw(n1) -- (n2);
		\draw(n1) -- (n3);
		\draw(n2) -- (n4);
		\draw(n3) -- (n5);
		\draw(n4) -- (n6);
		\draw(n5) -- (n6);
	\end{tikzpicture}
	\caption{A bounded graded poset that is not chain-connected.}
	\label{fig:bounded_disconnected}
\end{figure}
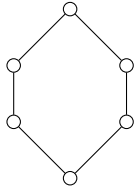

For $x,y\in P$ with $x\leq y$, the set $[x,y]\defs\{z\in P\mid x\leq z\leq y\}$ is an \defn{interval} of $\PP$.

\subsubsection{Chain-Connectivity}
	\label{sec:chain_connectivity}
The first notion of connectivity of a poset that springs to mind is the connectivity of its \defn{poset diagram}, \ie the graph $\bigl(P,\EE(\PP)\bigr)$.  Observe that this graph is trivially connected whenever $\PP$ is bounded.  However, the poset diagram of the proper part of a bounded poset need not be connected, see Figure~\ref{fig:bounded_disconnected}.  We are in fact interested in the following stronger version of connectivity.  

\begin{definition}\label{def:chain_graph}
	Let $\PP$ be a bounded graded poset, and define
	\begin{align}\label{eq:chain_connectivity}
		\II_{\text{chain}} \defs \bigl\{\{C,C'\}\mid C,C'\in\MM(\PP)\;\text{and}\;\lvert C\cap C'\rvert=\rk(\PP)\bigr\}.
	\end{align}
	The \defn{chain graph} of $\PP$ is the graph $\CC(\PP) \defs \bigl(\MM(\PP),\II_{\text{chain}}\bigr)$.
\end{definition}

In other words two maximal chains of $\PP$ are adjacent in the chain graph if they differ in exactly one element.  We call $\PP$ \defn{chain-connected} if $\CC(\PP)$ is connected.  Observe that the poset diagram of the proper part of a chain-connected poset is again connected as soon as the rank of $\PP$ is at least three. 

Moreover, if every interval of $\PP$ is chain-connected, then we call $\PP$ \defn{totally chain-connected}. It is easy to check (by induction) that a bounded graded poset is totally chain-connected if and only if the poset diagram of the proper part of every interval of rank $\geq 3$ is connected (we will not use this characterization in the following).

\subsection{Factorization Posets in Generated Groups}
	\label{sec:factorization_generated}
In this section we introduce the main construction that associates a bounded graded poset with each triple $(\grp,\gen,\top)$, where $\grp$ is a group generated as a monoid by the set $\gen\subseteq\grp$, and where~$\top$ is some element of $\grp$.

\subsubsection{Generated Groups}
	\label{sec:generated_groups}
Fix a group $\grp$ and a subset $\gen\subseteq\grp$ that generates $\grp$ as a monoid.  Let $\id$ denote the identity of $\grp$.  We then call the pair $(\grp,\gen)$ a \defn{generated group}, and we define the \defn{$\gen$-length} of $x\in\grp$ by
\begin{align}\label{eq:gen_length}
	\ell_{\gen} \defs \min\bigl\{k\in\mathbb{N}\mid x=a_{1}a_{2}\cdots a_{k},\;\text{where}\;a_{i}\in\gen\;\text{for}\;i\in[k]\bigr\},
\end{align}
where $[k]\defs\{1,2,\ldots,k\}$.  If $k=\ell_{\gen}(x)$, then any factorization $x=a_{1}a_{2}\cdots a_{k}$ with $a_{i}\in\gen$ for $i\in[k]$ is \defn{reduced}.  Let $\red{\gen}{x}$ denote the set of all reduced $\gen$-factorizations of $x\in\grp$.  In order to avoid confusion, we usually write the elements of $\red{\gen}{x}$ as tuples rather than as words over the alphabet $\gen$.  It follows immediately from the definition that $\ell_{\gen}$ satisfies the \defn{sub-additivity law}
\begin{align}\label{eq:sub_additivity}
	\ell_{\gen}(xy)\leq\ell_{\gen}(x)+\ell_{\gen}(y).
\end{align}
If $x,y\in\grp$ are such that equality holds in \eqref{eq:sub_additivity}, then we say that $x$ \defn{divides} $xy$.  In that case there exists a reduced $\gen$-factorization of $x$ that is a prefix of some reduced $\gen$-factorization of $xy$.  This gives immediately rise to the definition of the following partial order on $\grp$, the \defn{$\gen$-prefix order}:
\begin{align}\label{eq:prefix_order}
	x\leq_{\gen}y\quad\text{if and only if}\quad\ell_{\gen}(x)+\ell_{\gen}(x^{-1}y)=\ell_{\gen}(y).
\end{align}
Observe that $x$ divides $y$ if and only if $x$ lies on a geodesic from $\id$ to $y$ in the right Cayley graph of $(\grp,\gen)$.  The definition of the $\gen$-prefix order as given in \eqref{eq:prefix_order} has perhaps first appeared explicitly in \cite{brady01partial} in the case of the symmetric group, but the notion of divisibility goes back to \cite{garside69braid}.

The next lemma, which is well known to experts, describes the intrinsic recursive structure of the $\gen$-prefix order.  Its proof is essentially verbatim to the proof of \cite{bjorner05combinatorics}*{Proposition~3.1.6}, which treats a particular case.  If $x\leq_{\gen} y$, then we denote the interval of $(G,\leq_{\gen})$ generated by $x$ and $y$ by
\begin{displaymath}
	[x,y]_{\gen}\defs\{z\in G\mid x\leq_{\gen}z\leq_{\gen}y\}.
\end{displaymath}

\begin{lemma}\label{lem:bottom_intervals}
  Let  $x,y,z\in G$.
  \begin{enumerate}[(i)]
  \item If $x\leq_{\gen}y$, then the poset ${\bigl([\id,x^{-1}y]_{\gen},\leq_{\gen}\bigr)}$ is isomorphic to $\bigl([x,y]_{\gen},\leq_{\gen}\bigr)$.
  \item If $x\leq_{\gen}y\leq_{\gen}z$, then $x^{-1}y\leq_{\gen}x^{-1}z$ and the poset $\bigl([x^{-1}y,x^{-1}z],\leq_{\gen}\bigr)$ is isomorphic to $\bigl([y,z]_{\gen},\leq_{\gen}\bigr)$.
    \end{enumerate}
\end{lemma}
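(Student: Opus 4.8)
The plan is to prove both parts via left translation in $\grp$, with essentially all of the work concentrated in one combinatorial lemma squeezed out of the sub-additivity law \eqref{eq:sub_additivity}. The translation map $u\mapsto xu$ will serve as the isomorphism in~(i), and~(ii) will then follow by applying~(i) twice.

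First I would prove the following claim: if $x\leq_{\gen}y$, then for every $w\in\grp$ one has $w\in[x,y]_{\gen}$ if and only if $x^{-1}w\in[\id,x^{-1}y]_{\gen}$. For the forward implication, from $x\leq_{\gen}w\leq_{\gen}y$ one reads off $\ell_{\gen}(x)+\ell_{\gen}(x^{-1}w)+\ell_{\gen}(w^{-1}y)=\ell_{\gen}(y)$; feeding this into the two instances $\ell_{\gen}(y)\leq\ell_{\gen}(x)+\ell_{\gen}(x^{-1}y)$ and $\ell_{\gen}(x^{-1}y)\leq\ell_{\gen}(x^{-1}w)+\ell_{\gen}(w^{-1}y)$ of \eqref{eq:sub_additivity} forces all these inequalities to be equalities, and after rewriting $w^{-1}y=(x^{-1}w)^{-1}(x^{-1}y)$ the resulting identity is exactly $x^{-1}w\leq_{\gen}x^{-1}y$ (and $\id\leq_{\gen}x^{-1}w$ is automatic). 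The converse is the symmetric computation: starting from $x\leq_{\gen}y$ together with $x^{-1}w\leq_{\gen}x^{-1}y$ and writing $w=x(x^{-1}w)$, the same sandwiching between two applications of \eqref{eq:sub_additivity} gives $x\leq_{\gen}w\leq_{\gen}y$. Note for later use that for $w\in[x,y]_{\gen}$ the relation $x\leq_{\gen}w$ holds, which is to say $\ell_{\gen}(w)=\ell_{\gen}(x)+\ell_{\gen}(x^{-1}w)$.

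For part~(i) I would then take $\phi\colon[\id,x^{-1}y]_{\gen}\to[x,y]_{\gen}$, $u\mapsto xu$; by the claim it is a well-defined bijection with inverse $w\mapsto x^{-1}w$. To check that $\phi$ and its inverse are order-preserving, let $u,u'\in[\id,x^{-1}y]_{\gen}$. By the length identity noted above, $\ell_{\gen}(xu)=\ell_{\gen}(x)+\ell_{\gen}(u)$ and $\ell_{\gen}(xu')=\ell_{\gen}(x)+\ell_{\gen}(u')$, while $(xu)^{-1}(xu')=u^{-1}u'$. Therefore $\ell_{\gen}(u)+\ell_{\gen}(u^{-1}u')=\ell_{\gen}(u')$ holds if and only if $\ell_{\gen}(xu)+\ell_{\gen}\bigl((xu)^{-1}(xu')\bigr)=\ell_{\gen}(xu')$, i.e. $u\leq_{\gen}u'$ if and only if $xu\leq_{\gen}xu'$; this is precisely the statement that $\phi$ is a poset isomorphism, which proves~(i). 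For part~(ii), applying the claim to $x\leq_{\gen}y\leq_{\gen}z$ with $w=y$ yields $x^{-1}y\leq_{\gen}x^{-1}z$. Then part~(i) applied to the pair $(y,z)$ gives $\bigl([y,z]_{\gen},\leq_{\gen}\bigr)\cong\bigl([\id,y^{-1}z]_{\gen},\leq_{\gen}\bigr)$, and part~(i) applied to the pair $(x^{-1}y,x^{-1}z)$ gives $\bigl([x^{-1}y,x^{-1}z]_{\gen},\leq_{\gen}\bigr)\cong\bigl([\id,y^{-1}z]_{\gen},\leq_{\gen}\bigr)$, since $(x^{-1}y)^{-1}(x^{-1}z)=y^{-1}z$; composing the two isomorphisms proves~(ii).

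I do not expect a real obstacle here: everything reduces to the familiar trick of forcing equality by sandwiching a quantity between two applications of sub-additivity, which is why the authors can say the proof is essentially verbatim to \cite{bjorner05combinatorics}*{Proposition~3.1.6}. The one place that wants a moment of care is the order-\emph{reflection} in part~(i): one must verify that $xu\leq_{\gen}xu'$ genuinely forces $u\leq_{\gen}u'$, rather than only that comparable elements map to comparable elements. This is exactly where it matters that the length identity $\ell_{\gen}(w)=\ell_{\gen}(x)+\ell_{\gen}(x^{-1}w)$ is available for \emph{every} element $w$ of the interval, not merely for elements lying along a common maximal chain.
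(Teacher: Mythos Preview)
Your proof is correct and follows essentially the same approach as the paper's: the isomorphism in (i) is left translation $u\mapsto xu$, its well-definedness and that of its inverse are obtained by sandwiching lengths between two applications of sub-additivity, and (ii) follows by applying (i) twice via the common interval $[\id,y^{-1}z]_{\gen}$. Your packaging of the two well-definedness checks into a single ``$w\in[x,y]_{\gen}\iff x^{-1}w\in[\id,x^{-1}y]_{\gen}$'' claim and your use of the length identity $\ell_{\gen}(xu)=\ell_{\gen}(x)+\ell_{\gen}(u)$ to handle order-preservation and order-reflection in one line are slightly more economical than the paper's longer chain of equivalences, but the substance is identical.
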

\begin{proof}
	For (i), We will show that the map 
	\begin{equation*}
		f_{x}\colon[\id,x^{-1}y]_{\gen}\to[x,y]_{\gen},\quad u\mapsto xu
	\end{equation*}
	is the desired poset isomorphism.
	
	Let $u\leq_{\gen}x^{-1}y$.  In view of \eqref{eq:sub_additivity} we obtain
	\begin{multline*}
		\ell_{\gen}(y) = \ell_{\gen}(x) + \ell_{\gen}(x^{-1}y) = \ell_{\gen}(x) + \ell_{\gen}(u) + \ell_{\gen}(u^{-1}x^{-1}y)\\
			\geq \ell_{\gen}(xu) + \ell_{\gen}\bigl((xu)^{-1}y\bigr) \geq \ell_{\gen}(y).
	\end{multline*}
	So both inequalities are equalities, and we obtain that $x\leq_{\gen}xu\leq_{\gen}y$, which implies that $f_{x}$ is well defined.
	
	Now, if $v\in[x,y]_{\gen}$, we obtain:
	\begin{multline*}
		\ell_{\gen}(x^{-1}v) = \ell_{\gen}(v)-\ell_{\gen}(x) = \ell_{\gen}(y)-\ell_{\gen}(v^{-1}y)-\ell_{\gen}(x)  = \ell_{\gen}(x^{-1}y)-\ell_{\gen}(v^{-1}y), 
	\end{multline*}
	which implies $x^{-1}v\leq_{\gen}x^{-1}y$.  Thus, the map
        \begin{equation*}
		g_{x}\colon[x,y]_{\gen}\to[\id,x^{-1}y]_{\gen},\quad v\mapsto x^{-1}v
	\end{equation*}
        is well defined. It is obviously the inverse of $f_x$, so $f_{x}$ is a bijection.
	
	Finally, let $g,h\in[\id,x^{-1}y]_{\gen}$.  From above we know that $xg\leq_{\gen}y$ and $xh\leq_{\gen}y$, and we obtain
	\begin{align*}
		g\leq_{\gen}h & \Longleftrightarrow \ell_{\gen}(h) = \ell_{\gen}(g) + \ell_{\gen}(g^{-1}h)\\
		& \Longleftrightarrow \ell_{\gen}(x^{-1}y) - \ell_{\gen}(h^{-1}x^{-1}y) = \ell_{\gen}(x^{-1}y) - \ell_{\gen}(g^{-1}x^{-1}y) + \ell_{\gen}(g^{-1}h)\\
		& \Longleftrightarrow \ell_{\gen}\bigl((xg)^{-1}y\bigr) = \ell_{\gen}\bigl((xh)^{-1}y\bigr) + \ell_{\gen}(g^{-1}h)\\
		& \Longleftrightarrow \ell_{\gen}(y) - \ell_{\gen}(xg) = \ell_{\gen}(y) - \ell_{\gen}(xh) + \ell_{\gen}\bigl((xg)^{-1}xh\bigr)\\
		& \Longleftrightarrow \ell_{\gen}(xh) = \ell_{\gen}(xg) + \ell_{\gen}\bigl((xg)^{-1}xh\bigr)\\
		& \Longleftrightarrow xg \leq_{\gen} xh.
	\end{align*}
	This completes the proof of statement (i).

        For (ii), note that we already proved that if $x\leq_{\gen}y\leq_{\gen}z$, then $x^{-1}y\leq_{\gen}x^{-1}z$: this is the proof above that $g_x$ is well defined (replacing $y$ with $z$ and $v$ with $y$). Now, using property (i) for different values of $x$ and $y$, we obtain that $\bigl([y,z]_{\gen},\leq_{\gen}\bigr)$ and $\bigl([x^{-1}y,x^{-1}z],\leq_{\gen}\bigr)$ are both isomorphic to ${\bigl([\id,y^{-1}z]_{\gen},\leq_{\gen}\bigr)}$. This concludes the proof of statement (ii).
\end{proof}

Let us from now on assume that $\gen$ is closed under $\grp$-conjugation.  In that case,~$\leq_{\gen}$ is in fact a subword order on $\red{\gen}{x}$.

\begin{proposition}[\cite{huang17absolute}*{Proposition~2.8}]\label{prop:subword_order}
	Let $\gen\subseteq\grp$ be a generating set closed under $\grp$-conjugation.  Let $g\in\grp$ with $\ell_{\gen}(g)=n$.  Fix $k\leq n$ and a list of integers $1\leq i_{1}<i_{2}<\cdots<i_{k}\leq n$.  For $x\in\grp$ with $\ell_{\gen}(x)=k$ the following are equivalent:
	\begin{enumerate}[(i)]
		\item $x\leq_{\gen}g$, \ie $\ell_{\gen}(g)=\ell_{\gen}(x)+\ell_{\gen}(x^{-1}g)$;
		\item there exists $(a_{1},a_{2},\ldots,a_{n})\in\red{\gen}{g}$ such that $x=a_{i_{1}}a_{i_{2}}\cdots a_{i_{k}}$.
	\end{enumerate}
\end{proposition}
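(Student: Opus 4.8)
The plan is to prove the equivalence by a double implication, with the interesting direction being (i) $\Rightarrow$ (ii) — producing a reduced factorization of $g$ in which the chosen positions $i_1 < i_2 < \cdots < i_k$ spell out $x$.

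First I would dispatch (ii) $\Rightarrow$ (i): if $g = a_1 a_2 \cdots a_n$ is reduced and $x = a_{i_1} a_{i_2} \cdots a_{i_k}$, then certainly $\ell_{\gen}(x) \le k$, but by hypothesis $\ell_{\gen}(x) = k$, so this sub-factorization of $x$ is itself reduced. Writing $g = x \cdot (x^{-1} g)$ and expressing $x^{-1}g$ as the product of the remaining $n-k$ letters $a_j$ (with $j \notin \{i_1, \ldots, i_k\}$) after suitably conjugating them past each other — this is where closure of $\gen$ under $\grp$-conjugation is used, to rewrite $a_1 \cdots a_n$ as $(a_{i_1} \cdots a_{i_k})(a'_{j_1} \cdots a'_{j_{n-k}})$ with each $a'_{j}$ a conjugate of $a_j$, hence still in $\gen$ — we get $\ell_{\gen}(x^{-1}g) \le n - k$. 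Combined with sub-additivity $n = \ell_{\gen}(g) \le \ell_{\gen}(x) + \ell_{\gen}(x^{-1}g) \le k + (n-k) = n$, all inequalities are equalities, giving (i).

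For (i) $\Rightarrow$ (ii), I would argue that position of the factor does not matter: it suffices to show that whenever $x \leq_{\gen} g$ there is \emph{some} reduced factorization $(b_1, \ldots, b_n) \in \red{\gen}{g}$ with $x = b_1 b_2 \cdots b_k$ (the prefix case), and then that a prefix can be slid to any prescribed set of positions while remaining reduced. The prefix case is immediate from the definition of divisibility already recorded in the excerpt: $x \leq_{\gen} g$ means $\ell_{\gen}(x) + \ell_{\gen}(x^{-1}g) = \ell_{\gen}(g)$, so concatenating a reduced factorization $(b_1, \ldots, b_k)$ of $x$ with a reduced factorization $(b_{k+1}, \ldots, b_n)$ of $x^{-1}g$ yields a length-$n$ factorization of $g = x \cdot (x^{-1}g)$, which is therefore reduced. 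For the sliding step: given a reduced factorization with prefix $b_1 \cdots b_k = x$ and a target position list $i_1 < \cdots < i_k$, one transposes adjacent letters using the conjugation-closure of $\gen$ — replacing $(\ldots, c, d, \ldots)$ by $(\ldots, d, d^{-1}cd, \ldots)$ or $(\ldots, cdc^{-1}, c, \ldots)$, which preserves the product and keeps all entries in $\gen$ — moving the letters of the prefix one at a time into the slots $i_1, \ldots, i_k$ in order. Each such move preserves reducedness because it preserves the length-$n$ word spelling $g$.

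The main obstacle is making the "sliding" argument clean rather than a notational morass: one must verify that the $k$ factors currently spelling $x$ can be simultaneously routed to positions $i_1, \ldots, i_k$ without collisions, which is really just the observation that the permutation taking $(1, \ldots, k, k+1, \ldots, n)$ to the order-preserving arrangement with $\{i_1, \ldots, i_k\}$ distinguished is a product of adjacent transpositions that never need to cross two prefix-letters past each other in the "wrong" order. I expect this to be handled by a straightforward induction on $\sum_t (i_t - t)$, the total displacement, each step being a single adjacent Hurwitz-type move; alternatively one can cite that the Hurwitz action of the braid group on $\red{\gen}{g}$ acts, and that its orbit includes every reordering of positions — but since Hurwitz-transitivity is \emph{not} assumed here, the bare-hands adjacent-transposition argument is the honest route. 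Since this is exactly \cite{huang17absolute}*{Proposition~2.8}, I would in practice simply cite it, but the sketch above is the proof I would reconstruct.
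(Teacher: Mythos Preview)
The paper does not give its own proof of this proposition: it is stated and attributed to \cite{huang17absolute}*{Proposition~2.8} without further argument, exactly as you anticipate in your final sentence. So there is nothing to compare against, and your reconstruction is the substantive content here.

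Your argument is correct. For (ii)~$\Rightarrow$~(i) the conjugation-closure rewrite of $a_1\cdots a_n$ as $(a_{i_1}\cdots a_{i_k})(a'_{j_1}\cdots a'_{j_{n-k}})$ is exactly right and gives $\ell_{\gen}(x^{-1}g)\le n-k$, hence equality throughout. For (i)~$\Rightarrow$~(ii), the two-step strategy (prefix case, then slide) is the standard proof, and your handling of the sliding is fine once you commit to moving the distinguished letters rightward one at a time, starting with $b_k$ to position $i_k$, then $b_{k-1}$ to $i_{k-1}$, and so on: since $i_t\ge t$ each move is rightward, and since $i_{t-1}<i_t$ no distinguished letter ever needs to cross another. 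Using the Hurwitz move $(\ldots,c,d,\ldots)\mapsto(\ldots,cdc^{-1},c,\ldots)$ to push a distinguished letter $c$ one step right keeps that letter unchanged and conjugates the non-distinguished neighbour, so the subword $a_{i_1}\cdots a_{i_k}$ at the end is literally $b_1\cdots b_k=x$. Your induction on $\sum_t(i_t-t)$ formalizes this neatly.

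One small sharpening: you do not need to worry about ``two prefix-letters crossing each other in the wrong order'' at all, because with the rightward-from-$b_k$-first scheme no two distinguished letters are ever adjacent during the slide. This removes the only place where your sketch hesitates.
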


For a fixed element $\top\in\grp$, we define the \defn{factorization poset} of $\top$ in $(\grp,\gen)$ by
\begin{align*}
	\PP_{\top}(\grp,\gen) \defs \bigl([\id,\top]_{\gen},\leq_{\gen}\bigr).
\end{align*}
Whenever it is clear from the context, we omit the group and the generating set.  
The three following lemmas (whose proofs are straightforward) will be used extensively in the rest of the paper. 

The maximal chains of $\PP_{\top}$ correspond bijectively to the reduced $\gen$-factorizations of $\top$.  As a consequence, the reduced $\gen$-factorizations of $\top$ completely determine $\PP_{\top}$.  To make this more precise, let us define a map
\begin{align}\label{eq:natural_labeling}
	\lambda_{\top}\colon\EE(\PP_{\top}) \to\gen,\quad (x,y)\mapsto x^{-1}y.
\end{align}

\begin{lemma}\label{lem:bijection_chains_words}
	The map $\lambda_{\top}$ extends to a bijection 
	\begin{equation*}\label{eq:chain_labeling}
		\begin{split}
			\MM(\PP_{\top})& \to \red{\gen}{\top}\\
			x_{0}\lessdot_{\gen}x_{1}\lessdot_{\gen}\cdots\lessdot_{\gen}x_n & \mapsto\bigl(x_{0}^{-1}x_{1},x_{1}^{-1}x_{2},\ldots,x_{n-1}^{-1}x_n\bigr),
		\end{split}
	\end{equation*}
	whose inverse is given by
	\begin{equation*}
		\begin{split}
			\red{\gen}{\top} &\to \MM(\PP_{\top})\\
			(a_{1},a_{2},\ldots,a_{n}) &\mapsto x_{0}\lessdot_{\gen}x_{1}\lessdot_{\gen}\cdots\lessdot_{\gen}x_n,
		\end{split}
	\end{equation*}
	where $x_{0}=\id$ and $x_{i}=a_{1}a_{2}\cdots a_{i}$ for $i\in[n]$.
\end{lemma}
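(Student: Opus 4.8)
The plan is to make the displayed map and its proposed inverse completely explicit and to check three things: that the chain-to-tuple map really lands in $\red{\gen}{\top}$, that the tuple-to-chain map really lands in $\MM(\PP_{\top})$, and that the two compose to the identity in both orders. Before any of that I would first record the structural facts that make the statement meaningful: $\PP_{\top}=\bigl([\id,\top]_{\gen},\leq_{\gen}\bigr)$ is bounded with $\hat{0}=\id$ and $\hat{1}=\top$, and it is graded of rank $n\defs\ell_{\gen}(\top)$, so that a maximal chain is genuinely a sequence $x_{0}\lessdot_{\gen}x_{1}\lessdot_{\gen}\cdots\lessdot_{\gen}x_{n}$ with $x_{0}=\id$ and $x_{n}=\top$.

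The one real preliminary step is to identify the covers: for $x\leq_{\gen}y$ in $\PP_{\top}$ one has $x\lessdot_{\gen}y$ if and only if $\ell_{\gen}(y)=\ell_{\gen}(x)+1$. The forward implication is immediate since $\ell_{\gen}(x^{-1}y)\geq 1$ for $x<_{\gen}y$, so if the length gap were at least $2$ we could interpolate. Concretely, if $\ell_{\gen}(x^{-1}y)=k\geq 2$, choose a reduced factorization $x^{-1}y=a_{1}a_{2}\cdots a_{k}$ and set $z\defs xa_{1}$; sandwiching $\ell_{\gen}(y)$ between $\ell_{\gen}(x)+k$ and $\ell_{\gen}(z)+\ell_{\gen}(z^{-1}y)\leq\bigl(\ell_{\gen}(x)+1\bigr)+(k-1)$ via \eqref{eq:sub_additivity} forces $\ell_{\gen}(z)=\ell_{\gen}(x)+1$ and $\ell_{\gen}(z^{-1}y)=k-1$, hence $x<_{\gen}z<_{\gen}y$, so $x\lessdot_{\gen}y$ fails. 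Applying this at every step of a maximal chain shows that $\ell_{\gen}$ increases by exactly $1$ along each cover; since a maximal chain of the bounded poset $\PP_{\top}$ runs from $\id$ (length $0$) to $\top$ (length $n$), it has exactly $n+1$ elements with $\ell_{\gen}(x_{i})=i$, and in particular $\PP_{\top}$ is graded of rank $n$.

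Granting this, the two maps are routine. Given a maximal chain $x_{0}\lessdot_{\gen}\cdots\lessdot_{\gen}x_{n}$, each $a_{i}\defs x_{i-1}^{-1}x_{i}=\lambda_{\top}(x_{i-1},x_{i})$ lies in $\gen$ by the cover characterization, and the telescoping product $a_{1}a_{2}\cdots a_{n}=x_{0}^{-1}x_{n}=\top$ has $n=\ell_{\gen}(\top)$ factors, hence is a reduced $\gen$-factorization of $\top$. Conversely, given $(a_{1},\ldots,a_{n})\in\red{\gen}{\top}$, set $x_{0}\defs\id$ and $x_{i}\defs a_{1}\cdots a_{i}$; the only point needing care — and the mildest of obstacles — is to see that $\ell_{\gen}(x_{i})=i$ and $x_{i}\leq_{\gen}\top$, which again follows by sandwiching: the explicit words give $\ell_{\gen}(x_{i})\leq i$ and $\ell_{\gen}(x_{i}^{-1}\top)\leq n-i$, while \eqref{eq:sub_additivity} gives $\ell_{\gen}(x_{i})+\ell_{\gen}(x_{i}^{-1}\top)\geq\ell_{\gen}(\top)=n$, so both inequalities are equalities; this yields $x_{i}\in[\id,\top]_{\gen}$, and $\ell_{\gen}(x_{i-1})+\ell_{\gen}(x_{i-1}^{-1}x_{i})=(i-1)+1=i=\ell_{\gen}(x_{i})$ together with the cover characterization gives $x_{i-1}\lessdot_{\gen}x_{i}$, so the $x_{i}$ form a maximal chain of $\PP_{\top}$. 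The two constructions are visibly mutually inverse: reading the labels off the chain $x_{i}=a_{1}\cdots a_{i}$ returns $(a_{1},\ldots,a_{n})$, and forming partial products from the labels $a_{i}=x_{i-1}^{-1}x_{i}$ of a chain returns $a_{1}\cdots a_{i}=x_{0}^{-1}x_{i}=x_{i}$. This proves the lemma. (Conjugation-closedness of $\gen$ is not actually used here; alternatively, the description of covers and of maximal chains could be deduced from Proposition~\ref{prop:subword_order}.)
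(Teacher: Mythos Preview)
Your proof is correct. The paper does not actually prove this lemma---it groups it with Lemmas~\ref{lem:label_preserving} and~\ref{lem:conjugation_isomorphism} under the remark that their ``proofs are straightforward''---and your argument is exactly the routine verification the authors leave to the reader, including your correct observation that conjugation-closedness of $\gen$ plays no role here.
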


The labeling from \eqref{eq:natural_labeling} is preserved under the bijection from Lemma~\ref{lem:bottom_intervals}.

\begin{lemma}\label{lem:label_preserving}
	Let $x,y,z\in[\id,\top]_{\gen}$ with $x\leq_{\gen}y\leq_{\gen}z$.  For every $g,h\in[x^{-1}y,x^{-1}z]_{\gen}$ with $g\lessdot_{\gen}h$ we have $\lambda_{\top}(g,h)=\lambda_{\top}\bigl(xg,xh\bigr)$.
\end{lemma}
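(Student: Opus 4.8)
The plan is to reduce the assertion to the trivial algebraic identity $(xg)^{-1}(xh)=g^{-1}h$, once one has checked that both $(g,h)$ and $(xg,xh)$ are genuine coverings to which the rule $\lambda_{\top}$ from \eqref{eq:natural_labeling} applies. Recall that $\lambda_{\top}$ sends a covering $(u,v)$ to $u^{-1}v$, and that this rule makes sense for any covering of $(\grp,\leq_{\gen})$, since $u\lessdot_{\gen}v$ forces $\ell_{\gen}(u^{-1}v)=1$, so $u^{-1}v\in\gen$.

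First I would invoke Lemma~\ref{lem:bottom_intervals}. By part (ii), the hypothesis $x\leq_{\gen}y\leq_{\gen}z$ yields $x^{-1}y\leq_{\gen}x^{-1}z$, so the interval $[x^{-1}y,x^{-1}z]_{\gen}$ is well defined. Applying part (i) with $z$ in place of $y$, the map $u\mapsto xu$ is a poset isomorphism $[\id,x^{-1}z]_{\gen}\to[x,z]_{\gen}$; since it sends $x^{-1}y$ to $y$ and $x^{-1}z$ to $z$, it restricts to a poset isomorphism $[x^{-1}y,x^{-1}z]_{\gen}\to[y,z]_{\gen}$. Consequently $g\lessdot_{\gen}h$ holds in $[x^{-1}y,x^{-1}z]_{\gen}$ if and only if $xg\lessdot_{\gen}xh$ holds in $[y,z]_{\gen}$. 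Because coverings inside an interval coincide with coverings in the ambient poset, and $[y,z]_{\gen}$ is an interval of $\PP_{\top}$, this shows in particular that $(xg,xh)\in\EE(\PP_{\top})$, so $\lambda_{\top}(xg,xh)$ is defined.

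Finally I would simply compute
\begin{equation*}
	\lambda_{\top}(xg,xh)=(xg)^{-1}(xh)=g^{-1}x^{-1}xh=g^{-1}h=\lambda_{\top}(g,h),
\end{equation*}
which is the claim. There is no real obstacle here — the only point that requires any care is the bookkeeping ensuring that $(xg,xh)$ is an actual edge of the poset diagram of $\PP_{\top}$, which is exactly what the appeal to Lemma~\ref{lem:bottom_intervals} provides; this is why the statement is listed among the straightforward lemmas.
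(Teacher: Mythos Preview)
Your proof is correct and matches the paper's treatment: the paper does not give an explicit proof but lists this lemma among those ``whose proofs are straightforward,'' and your argument---reducing the claim to the algebraic identity $(xg)^{-1}(xh)=g^{-1}h$ after using Lemma~\ref{lem:bottom_intervals} to verify that $(xg,xh)$ is indeed a covering in $\PP_{\top}$---is exactly the intended verification. Your remark that the rule $u^{-1}v$ makes sense for any covering of $(\grp,\leq_{\gen})$ neatly handles the minor bookkeeping issue of whether $(g,h)$ itself lies in $\EE(\PP_{\top})$.
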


The fact that $\gen$ is closed under $\grp$-conjugation implies that the isomorphism type of $\PP_{\top}$ depends only on the conjugacy class of $\top$.

\begin{lemma}\label{lem:conjugation_isomorphism}
  If $\gen$ is closed under $\grp$-conjugation, then $\ell_{\gen}$ is invariant under $\grp$-conjugation.  In other words $\ell_{\gen}(x)=\ell_{\gen}(gxg^{-1})$ for all $g,x\in\grp$.\\
  Moreover, for all $x,y,g \in \grp$, we have $x\leq_{\gen}y$ if and only if $gxg^{-1}\leq_{\gen}gyg^{-1}$.
\end{lemma}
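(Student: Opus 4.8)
The plan is to prove the two assertions in turn, both by transporting reduced factorizations along the conjugation automorphism.

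For the length invariance, I would first observe that for any fixed $g\in\grp$, conjugation $c_{g}\colon x\mapsto gxg^{-1}$ is a group automorphism of $\grp$ which, by the hypothesis that $\gen$ is closed under $\grp$-conjugation, maps $\gen$ into $\gen$; since $c_{g^{-1}}$ is its two-sided inverse, $c_{g}$ in fact restricts to a \emph{bijection} of $\gen$. Now if $x=a_{1}a_{2}\cdots a_{k}$ is any reduced $\gen$-factorization, applying $c_{g}$ gives $gxg^{-1}=(ga_{1}g^{-1})(ga_{2}g^{-1})\cdots(ga_{k}g^{-1})$, a factorization of $gxg^{-1}$ into $k$ elements of $\gen$ by the previous observation; hence $\ell_{\gen}(gxg^{-1})\leq\ell_{\gen}(x)$. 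Applying the same inequality with $g$ replaced by $g^{-1}$ and $x$ replaced by $gxg^{-1}$ yields $\ell_{\gen}(x)\leq\ell_{\gen}(gxg^{-1})$, so the two quantities are equal.

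For the second statement, I would simply unwind the definition \eqref{eq:prefix_order}. Fix $x,y,g\in\grp$. By the length invariance just established, $\ell_{\gen}(gxg^{-1})=\ell_{\gen}(x)$ and $\ell_{\gen}(gyg^{-1})=\ell_{\gen}(y)$; moreover $(gxg^{-1})^{-1}(gyg^{-1})=gx^{-1}yg^{-1}$, so again by invariance $\ell_{\gen}\bigl((gxg^{-1})^{-1}(gyg^{-1})\bigr)=\ell_{\gen}(x^{-1}y)$. Therefore the defining equality $\ell_{\gen}(x)+\ell_{\gen}(x^{-1}y)=\ell_{\gen}(y)$ holds if and only if $\ell_{\gen}(gxg^{-1})+\ell_{\gen}\bigl((gxg^{-1})^{-1}(gyg^{-1})\bigr)=\ell_{\gen}(gyg^{-1})$, that is, $x\leq_{\gen}y$ if and only if $gxg^{-1}\leq_{\gen}gyg^{-1}$. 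As an immediate consequence, $c_{g}$ restricts to a poset isomorphism between any two conjugate intervals of $(\grp,\leq_{\gen})$, so in particular the isomorphism type of $\PP_{\top}$ depends only on the conjugacy class of $\top$, as asserted in the surrounding text.

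There is essentially no hard part here; the only point worth flagging is that one genuinely needs $\gen$ (not merely $\grp$) to be stable under $\grp$-conjugation in order to obtain the ``$\leq$'' direction of the length inequality, and that the reverse inequality then comes for free by conjugating back by $g^{-1}$, which is why one gets equality rather than a single inequality.
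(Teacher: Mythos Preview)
Your proof is correct and is exactly the straightforward argument the paper has in mind; in fact the paper does not spell out a proof at all but merely remarks that the three lemmas in this block have ``straightforward'' proofs, and what you wrote is the canonical verification. There is nothing to add or compare.
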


Factorization posets are always self dual, too.

\begin{proposition}[\cite{huang17absolute}*{Proposition~2.5}]\label{prop:self_dual}
	Let $\gen$ be closed under $\grp$-conjugation.  For any $x,z\in\grp$ with $x\leq_{\gen}z$ the map 
	\begin{displaymath}
		K_{x,z}\colon\grp\to\grp,\quad y\mapsto xy^{-1}z
	\end{displaymath}
	restricts to an anti-automorphism of the interval $\bigl([x,z]_{\gen},\leq_{\gen}\bigr)$.
\end{proposition}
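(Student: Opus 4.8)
The plan is to reduce the statement to the special case $x=\id$ and then verify, by elementary $\gen$-length computations, that the resulting map is well defined on the interval, bijective, and order-reversing; throughout one uses repeatedly that $\ell_{\gen}$, and hence $\leq_{\gen}$, is conjugation-invariant (Lemma~\ref{lem:conjugation_isomorphism}). For the reduction, set $w\defs x^{-1}z$ and recall the poset isomorphism $f_{x}\colon[\id,w]_{\gen}\to[x,z]_{\gen}$, $u\mapsto xu$, from Lemma~\ref{lem:bottom_intervals}(i) (using the hypothesis $x\leq_{\gen}z$), with inverse $g_{x}\colon v\mapsto x^{-1}v$. Since $K_{x,z}(xu)=x(xu)^{-1}z=xu^{-1}x^{-1}z$, a one-line computation gives $g_{x}\bigl(K_{x,z}(xu)\bigr)=u^{-1}w$, so $g_{x}\circ K_{x,z}\circ f_{x}$ equals the map $K\colon y\mapsto y^{-1}w$. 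As $f_{x}$ and $g_{x}$ are mutually inverse poset isomorphisms, $K_{x,z}$ restricts to an anti-automorphism of $[x,z]_{\gen}$ if and only if $K$ restricts to an anti-automorphism of $[\id,w]_{\gen}$; hence it suffices to treat the latter.

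For well-definedness: if $y\leq_{\gen}w$ then $\ell_{\gen}(y)+\ell_{\gen}(y^{-1}w)=\ell_{\gen}(w)$, and since $\ell_{\gen}\bigl((y^{-1}w)^{-1}w\bigr)=\ell_{\gen}(w^{-1}yw)=\ell_{\gen}(y)$ by conjugation-invariance, this rearranges to $\ell_{\gen}(y^{-1}w)+\ell_{\gen}\bigl((y^{-1}w)^{-1}w\bigr)=\ell_{\gen}(w)$, i.e.\ $K(y)=y^{-1}w\leq_{\gen}w$; so $K$ maps $[\id,w]_{\gen}$ into itself. For bijectivity I would observe that $K\bigl(K(y)\bigr)=(y^{-1}w)^{-1}w=w^{-1}yw$, and that by Lemma~\ref{lem:conjugation_isomorphism} the map $y\mapsto w^{-1}yw$ is an automorphism of $[\id,w]_{\gen}$ (as $y\leq_{\gen}w\iff w^{-1}yw\leq_{\gen}w$); since $K^{2}$ is then a bijection of $[\id,w]_{\gen}$, the map $K$ is both injective and surjective there.

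For order-reversal, let $y_{1},y_{2}\in[\id,w]_{\gen}$ with $y_{1}\leq_{\gen}y_{2}$. From $y_{1}\leq_{\gen}y_{2}\leq_{\gen}w$ one has $\ell_{\gen}(y_{2}^{-1}w)=\ell_{\gen}(w)-\ell_{\gen}(y_{2})$, $\ell_{\gen}(y_{1}^{-1}y_{2})=\ell_{\gen}(y_{2})-\ell_{\gen}(y_{1})$ and $\ell_{\gen}(y_{1}^{-1}w)=\ell_{\gen}(w)-\ell_{\gen}(y_{1})$, while conjugation-invariance gives $\ell_{\gen}\bigl((y_{2}^{-1}w)^{-1}(y_{1}^{-1}w)\bigr)=\ell_{\gen}(w^{-1}y_{2}y_{1}^{-1}w)=\ell_{\gen}(y_{2}y_{1}^{-1})=\ell_{\gen}(y_{1}^{-1}y_{2})$. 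Combining these,
\[
  \ell_{\gen}(y_{2}^{-1}w)+\ell_{\gen}\bigl((y_{2}^{-1}w)^{-1}(y_{1}^{-1}w)\bigr)
  =\bigl(\ell_{\gen}(w)-\ell_{\gen}(y_{2})\bigr)+\bigl(\ell_{\gen}(y_{2})-\ell_{\gen}(y_{1})\bigr)
  =\ell_{\gen}(y_{1}^{-1}w),
\]
which is exactly the statement $y_{2}^{-1}w\leq_{\gen}y_{1}^{-1}w$, i.e.\ $K(y_{2})\leq_{\gen}K(y_{1})$. For the reverse implication I would apply what was just proved to the pair $K(y_{2})\leq_{\gen}K(y_{1})$ (legitimate, since both lie in $[\id,w]_{\gen}$): it yields $\bigl(K(y_{1})\bigr)^{-1}w\leq_{\gen}\bigl(K(y_{2})\bigr)^{-1}w$, that is $w^{-1}y_{1}w\leq_{\gen}w^{-1}y_{2}w$, whence $y_{1}\leq_{\gen}y_{2}$ by Lemma~\ref{lem:conjugation_isomorphism}. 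Together with bijectivity this shows $K$ is an anti-automorphism of $[\id,w]_{\gen}$, and the proposition follows.

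The only point requiring real care is the order-reversal step, precisely because $K$ is not an involution: its square is conjugation by $w$, not the identity. This costs nothing for bijectivity (a map whose square is a bijection is a bijection), and for order-reversal one re-runs the forward implication on the images $K(y_{1}),K(y_{2})$ and then removes the resulting conjugation by $w$ via Lemma~\ref{lem:conjugation_isomorphism}. Everything else is routine bookkeeping with $\ell_{\gen}$ and the sub-additivity and divisibility definitions.
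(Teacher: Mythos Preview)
Your proof is correct. The paper does not actually supply a proof of this proposition; it simply cites \cite{huang17absolute}*{Proposition~2.5}, so there is nothing in the paper to compare against. Your reduction to $x=\id$ via Lemma~\ref{lem:bottom_intervals}(i) is clean, and your handling of the fact that $K$ squares to conjugation by $w$ rather than the identity (so that the converse order-reversal must be obtained by re-applying the forward direction and then undoing the conjugation via Lemma~\ref{lem:conjugation_isomorphism}) is exactly right. One small step you leave implicit is $\ell_{\gen}(y_{2}y_{1}^{-1})=\ell_{\gen}(y_{1}^{-1}y_{2})$; this follows since $y_{2}y_{1}^{-1}=y_{1}(y_{1}^{-1}y_{2})y_{1}^{-1}$ is itself a conjugate, but it might be worth saying so explicitly.
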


\begin{example}\label{ex:sym_4}
	Let $\grp=\mathfrak{S}_{4}$ be the symmetric group of permutations of $[4]$.  It is well known that $\grp$ is generated by its set of transpositions
	\begin{displaymath}
		T = \bigl\{(1\;2),(1\;3),(1\;4),(2\;3),(2\;4),(3\;4)\bigr\}.
	\end{displaymath}
	Since any transposition is an involution, $T$ generates $\mathfrak{S}_{4}$ as a monoid.  It is moreover easy to check that $T$ is closed under $\mathfrak{S}_{4}$-conjugation.  Let $\top=(1\;2\;3\;4)$ be a long cycle in $\mathfrak{S}_{4}$.  The factorization poset $\PP_{\top}(\mathfrak{S}_{4},T)$ is shown in Figure~\ref{fig:sym_4_poset}.  
	
	The reader is cordially invited to verify Lemmas~\ref{lem:bijection_chains_words} and~\ref{lem:label_preserving}.  Lemma~\ref{lem:conjugation_isomorphism} translates to this case as follows: if we replace $\top$ by any other long cycle $\top'$, then the map that adjusts the order of the letters in the cycles of the permutations occurring in Figure~\ref{fig:sym_4_poset} according to the relative order of $[4]$ in $\top'$, is a poset isomorphism.  Proposition~\ref{prop:self_dual} can be verified by rotating the poset diagram by 180 degrees.
	
	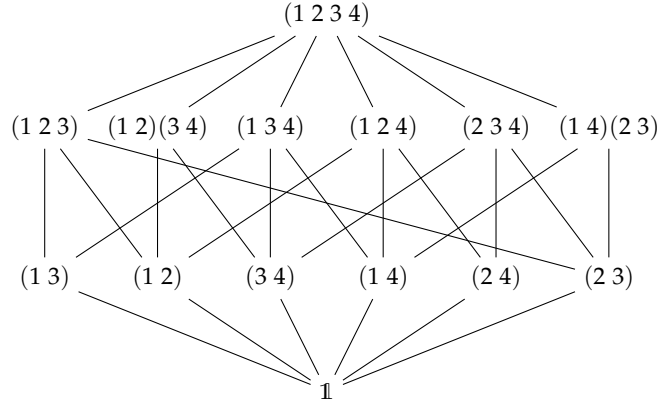
\begin{figure}
		\centering
		\begin{tikzpicture}\small
			\def\x{1.5};
			\def\y{2};
			\draw(3.5*\x,1.25*\y) node(n1){$\id$};
			\draw(1*\x,2*\y) node(n2){$(1\;3)$};
			\draw(2*\x,2*\y) node(n3){$(1\;2)$};
			\draw(3*\x,2*\y) node(n4){$(3\;4)$};
			\draw(4*\x,2*\y) node(n5){$(1\;4)$};
			\draw(5*\x,2*\y) node(n6){$(2\;4)$};
			\draw(6*\x,2*\y) node(n7){$(2\;3)$};
			\draw(1*\x,3*\y) node(n8){$(1\;2\;3)$};
			\draw(2*\x,3*\y) node(n9){$(1\;2)(3\;4)$};
			\draw(3*\x,3*\y) node(n10){$(1\;3\;4)$};
			\draw(4*\x,3*\y) node(n11){$(1\;2\;4)$};
			\draw(5*\x,3*\y) node(n12){$(2\;3\;4)$};
			\draw(6*\x,3*\y) node(n13){$(1\;4)(2\;3)$};
			\draw(3.5*\x,3.75*\y) node(n14){$(1\;2\;3\;4)$};
			\draw(n1) -- (n2);
			\draw(n1) -- (n3);
			\draw(n1) -- (n4);
			\draw(n1) -- (n5);
			\draw(n1) -- (n6);
			\draw(n1) -- (n7);
			\draw(n2) -- (n8);
			\draw(n2) -- (n10);
			\draw(n3) -- (n8);
			\draw(n3) -- (n9);
			\draw(n3) -- (n11);
			\draw(n4) -- (n9);
			\draw(n4) -- (n10);
			\draw(n4) -- (n12);
			\draw(n5) -- (n10);
			\draw(n5) -- (n11);
			\draw(n5) -- (n13);
			\draw(n6) -- (n11);
			\draw(n6) -- (n12);
			\draw(n7) -- (n8);
			\draw(n7) -- (n12);
			\draw(n7) -- (n13);
			\draw(n8) -- (n14);
			\draw(n9) -- (n14);
			\draw(n10) -- (n14);
			\draw(n11) -- (n14);
			\draw(n12) -- (n14);
			\draw(n13) -- (n14);
		\end{tikzpicture}
		\caption{The factorization poset $\PP_{\top}$ of the long cycle $\top=(1\;2\;3\;4)$ in the symmetric group $\mathfrak{S}_{4}$ generated by its transpositions.}
		\label{fig:sym_4_poset}
	\end{figure}
\end{example}

\subsubsection{The Hurwitz Action}
	\label{sec:hurwitz_action}
Perhaps the most important consequence of the assumption that $\gen$ is closed under $\grp$-conjugation is the existence of a braid group action on $\red{\gen}{x}$ (and thus in view of Lemma~\ref{lem:bijection_chains_words} also on $\MM(\PP_{\top})$).  Recall that the \defn{braid group} on $n$ strands can be defined via the group presentation
\begin{multline}\label{eq:braid_group}
	\bg_{n} \defs \bigl\langle\sigma_{1},\sigma_{2},\ldots,\sigma_{n-1}\mid \sigma_{i}\sigma_{i+1}\sigma_{i}=\sigma_{i+1}\sigma_{i}\sigma_{i+1}\;\text{for}\;i\in[n-2],\\
		\text{and}\;\sigma_{i}\sigma_{j}=\sigma_{j}\sigma_{i}\;\text{for}\;i,j\in[n-1]\;\text{with}\;\lvert i-j\rvert>1\bigr\rangle.
\end{multline}
Now fix $x\in\grp$ with $\ell_{\gen}(x)=n$.  For $i\in[n-1]$, we define an action of the braid group generator $\sigma_{i}$ on $\red{\gen}{x}$ by
\begin{equation}\label{eq:hurwitz}
  \begin{array}{crccl}
	& \sigma_{i}\cdot(a_{1},\ldots,a_{i-1}, & a_{i}, & a_{i+1}, & a_{i+2},\ldots,a_{n})\\
	\defs & (a_{1},\ldots,a_{i-1}, & a_{i+1}, & a_{i+1}^{-1}a_{i}a_{i+1}, & a_{i+2},\ldots,a_{n}).
\end{array}\end{equation}
We call such a relation a \defn{Hurwitz move}.

In other words, the generators of $\bg_{n}$ swap two consecutive factors of a reduced $\gen$-factorization of $x$ and conjugate one by the other, so that the product stays the same.  Since $\gen$ is closed under $\grp$-conjugation, $\sigma_{i}$ is indeed a map on $\red{\gen}{x}$, and it is straightforward to verify that this action respects the relations of \eqref{eq:braid_group}, and therefore extends to a group action of $\bg_{n}$ on $\red{\gen}{x}$: the \defn{Hurwitz action}.  Let us state one technical result that will be used in the last section of this paper.

\begin{lemma}\label{prop:reverse_same_orbit_size}
	Let $(\grp,\gen)$ be a generated group, where $\gen$ is closed under $\grp$-conjugation, and let $a,b\in\gen$.  Suppose that $\red{\gen}{ab}$ is finite.  Then the Hurwitz orbit of $\red{\gen}{ab}$ containing $(a,b)$ has the same size as the Hurwitz orbit of $\red{\gen}{ba}$ containing $(b,a)$. 
\end{lemma}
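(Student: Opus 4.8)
The plan is to produce an explicit bijection $\red{\gen}{ab}\to\red{\gen}{ba}$ that intertwines the two Hurwitz actions, exploiting that $ba=b(ab)b^{-1}$ is a conjugate of $ab$. Since $a,b\in\gen$, we may assume $\ell_{\gen}(ab)=2$: otherwise $(a,b)\notin\red{\gen}{ab}$ and there is nothing to prove. In particular $\red{\gen}{ab}$ and $\red{\gen}{ba}$ consist of pairs, and the relevant braid group is $\bg_{2}=\langle\sigma_{1}\rangle$ by \eqref{eq:braid_group}.

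First I would consider the entrywise conjugation map $\psi\colon\grp\times\grp\to\grp\times\grp$, $(x,y)\mapsto(bxb^{-1},byb^{-1})$, and check that it restricts to a bijection from $\red{\gen}{ab}$ onto $\red{\gen}{ba}$. Indeed, $\gen$ is closed under $\grp$-conjugation, so $\psi$ preserves $\gen\times\gen$; the product of the two entries of $\psi(x,y)$ is $b(xy)b^{-1}$, which equals $ba$ exactly when $xy=ab$; and $\ell_{\gen}$ is conjugation-invariant by Lemma~\ref{lem:conjugation_isomorphism}, so $\ell_{\gen}(ba)=\ell_{\gen}(ab)=2$ and therefore every such pair is automatically a reduced factorization. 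The inverse of $\psi$ is conjugation by $b^{-1}$, so $\psi$ is a bijection; in particular the finiteness of $\red{\gen}{ab}$ forces $\red{\gen}{ba}$ to be finite, so that all the orbit sizes below are well-defined natural numbers.

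Next I would verify that $\psi$ is $\bg_{2}$-equivariant. Since $\bg_{2}$ is generated by $\sigma_{1}$, it suffices to check that $\psi(\sigma_{1}\cdot(x,y))=\sigma_{1}\cdot\psi(x,y)$, which is the direct computation
\begin{equation*}
	\psi\bigl(\sigma_{1}\cdot(x,y)\bigr)=\psi\bigl(y,\,y^{-1}xy\bigr)=\bigl(byb^{-1},\,by^{-1}xyb^{-1}\bigr)=\bigl(byb^{-1},\,(byb^{-1})^{-1}(bxb^{-1})(byb^{-1})\bigr)=\sigma_{1}\cdot\psi(x,y),
\end{equation*}
using the definition \eqref{eq:hurwitz} of a Hurwitz move. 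Consequently $\psi$ carries each Hurwitz orbit of $\red{\gen}{ab}$ bijectively onto a Hurwitz orbit of $\red{\gen}{ba}$.

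It then remains to identify the image of the Hurwitz orbit of $(a,b)$. Applying $\sigma_{1}$ to $(a,b)$ yields $(b,b^{-1}ab)$, which lies in the same Hurwitz orbit, and $\psi(b,b^{-1}ab)=(b,a)$. Hence $\psi$ maps the Hurwitz orbit of $(a,b)$ onto the Hurwitz orbit of $(b,a)$, and being a bijection it shows these two orbits have the same cardinality. There is no serious obstacle in this argument; the only point requiring a little care is the bookkeeping that makes $\psi$ land exactly on $(b,a)$ — one must conjugate by $b^{-1}$ after performing a single Hurwitz move, rather than, say, conjugating by $a$ directly, which would instead reach $(a,a^{-1}ba)$, itself still in the orbit of $(b,a)$ via one Hurwitz move.
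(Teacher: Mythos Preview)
Your proof is correct and follows essentially the same route as the paper: both use the entrywise conjugation map by $b$ to pass from $\red{\gen}{ab}$ to $\red{\gen}{ba}$, observe that it is a Hurwitz-equivariant bijection, and then check that the orbit of $(a,b)$ lands in the orbit of $(b,a)$. The only cosmetic difference is in the final step: the paper maps $(a,b)\mapsto(bab^{-1},b)$ and then performs one Hurwitz move to reach $(b,a)$, whereas you first perform the Hurwitz move $(a,b)\mapsto(b,b^{-1}ab)$ and then conjugate to reach $(b,a)$.
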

\begin{proof}
  Write $u:=ab$ and $v:=ba$. Note that $u$ and $v$ are conjugate, for example $v=bub^{-1}$. The map $x\mapsto bxb^{-1}$ clearly induces a bijection from $\red{\gen}{u}$ to $\red{\gen}{v}$ which is compatible with the Hurwitz action. The factorization $(a,b)$ of $u$ is sent, via this bijection, to the factorization $(bab^{-1},b)$ of $v$. Since $(b,a)$ can be obtained from $(bab^{-1},b)$ by a Hurwitz move, they are in the same Hurwitz orbit, hence the orbit of $\red{\gen}{v}$ containing $(b,a)$ is equinumerous to the orbit of $\red{\gen}{u}$ containing $(a,b)$.
\end{proof}

We can now define the second notion of connectivity used in this paper.  

\begin{definition}\label{def:hurwitz_graph}
	Let $\top\in\grp$, and define 
	\begin{align}\label{eq:hurwitz_connectivity}
		\II_{\text{hurwitz}} \defs \bigl\{\{\xb,\xb'\}\mid\xb,\xb'\in\red{\gen}{\top}\;\text{and}\;\xb'=\sigma_{i}\xb\;\text{for some}\;i\in[\ell_{\gen}(\top)-1]\bigr\}.
	\end{align}
	The \defn{Hurwitz graph} of $\top$ is the graph $\HH(\top)\defs\bigl(\red{\gen}{\top},\II_{\text{hurwitz}}\bigr)$.
\end{definition}

In view of Lemma~\ref{lem:bijection_chains_words} we may as well define the Hurwitz graph of $\top$ as a graph on the maximal chains of $\PP_{\top}$, and from this point of view it is clearly (isomorphic to) a subgraph of $\CC(\PP_{\top})$ (see Definition~\ref{def:chain_graph}).  We call $\PP_{\top}$ \defn{Hurwitz-connected} if $\HH(\top)$ is connected.  This is the case if and only if the braid group $\bg_{\ell_{\gen}(\top)}$ acts transitively on $\red{\gen}{\top}$.  In view of Lemma~\ref{lem:bijection_chains_words} we sometimes abuse notation and write $\HH(\PP_{\top})$ instead of $\HH(\top)$.  Figure~\ref{fig:sym_4_hurwitz_graph} shows the Hurwitz graph of the factorization poset from Figure~\ref{fig:sym_4_poset}.

\begin{figure}
	\centering
	\begin{tikzpicture}\small
		\def\x{2.5};
		\def\y{2.5};
		\draw(1*\x,2*\y) node(v1){$(14)(13)(12)$};
		\draw(2*\x,2*\y) node(v2){$(13)(34)(12)$};
		\draw(2*\x,1*\y) node(v3){$(34)(14)(12)$};
		\draw(2.5*\x,1.8*\y) node(v4){$(34)(12)(24)$};
		\draw(3*\x,1*\y) node(v5){$(34)(24)(14)$};
		\draw(3*\x,2*\y) node(v6){$(24)(23)(14)$};
		\draw(4*\x,2*\y) node(v7){$(23)(34)(14)$};
		\draw(3.2*\x,2.5*\y) node(v8){$(23)(14)(13)$};
		\draw(4*\x,3*\y) node(v9){$(23)(13)(34)$};
		\draw(3*\x,3*\y) node(v10){$(13)(12)(34)$};
		\draw(3*\x,4*\y) node(v11){$(12)(23)(34)$};
		\draw(2.5*\x,3.2*\y) node(v12){$(12)(34)(24)$};
		\draw(2*\x,4*\y) node(v13){$(12)(24)(23)$};
		\draw(2*\x,3*\y) node(v14){$(24)(14)(23)$};
		\draw(1*\x,3*\y) node(v15){$(14)(12)(23)$};
		\draw(1.8*\x,2.5*\y) node(v16){$(14)(23)(13)$};
		\draw(v1) -- (v2) -- (v3) -- (v4) -- (v5) -- (v6) -- (v7) -- (v8) -- (v9) -- (v10) -- (v11) -- (v12) -- (v13) -- (v14) -- (v15) -- (v16) -- (v1);
		\draw(v1) -- (v3) -- (v5) -- (v7) -- (v9) -- (v11) -- (v13) -- (v15) -- (v1);
		\draw(v2) -- (v10);
		\draw(v4) -- (v12);
		\draw(v6) -- (v14);
		\draw(v8) -- (v16);
	\end{tikzpicture}
	\caption{The Hurwitz graph of the long cycle $\top=(1\;2\;3\;4)$ in the symmetric group $\mathfrak{S}_{4}$ generated by its transpositions.}
	\label{fig:sym_4_hurwitz_graph}
\end{figure}
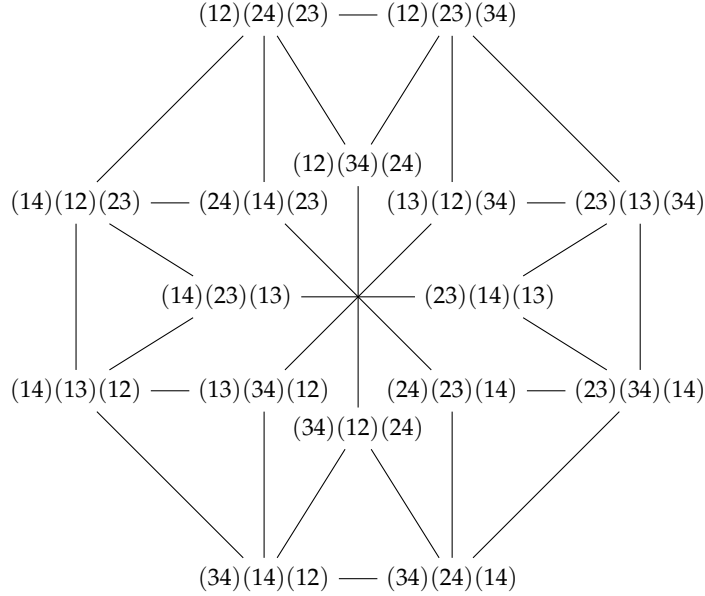

\subsection{Shellability of Posets}
	\label{sec:shellability}
The last notion of connectivity that will be important for this article has its origins in algebraic topology.  Recall that the set of chains of a graded poset $\PP$ forms a simplicial complex: the \defn{order complex} of $\PP$, denoted by $\Delta(\PP)$.  It is a consequence of Hall's Theorem \cite{stanley11enumerative}*{Proposition~3.8.5} that if~$\PP$ is graded and bounded, then the \defn{M{\"o}bius invariant} of $\PP$, \ie the value of the M{\"o}bius function of $\PP$ between least and greatest element, equals the reduced Euler characteristic of $\Delta(\overline{\PP})$.  Consequently, the combinatorics of $\PP$ provides some information on the topology of $\Delta(\overline{\PP})$.  

\subsubsection{Ordinary Shellability}
	\label{sec:ordinary_shellability}
A class of pure simplicial complexes with a particularly nice homotopy type are the \defn{shellable} simplicial complexes: their homotopy type is in fact that of a wedge of spheres~\cite{bjorner96shellable}*{Theorem~4.1}, the corresponding (co-)homology groups are torsion-free, and the Stanley-Reisner ring of such complexes is Cohen-Macaulay~\cite{bjorner80shellable}*{Appendix}.  

We phrase the definition of shellability directly in terms of a bounded graded poset $\PP$.  It can be transferred to pure simplicial complexes via the correspondence between maximal chains of $\PP$ and facets of $\Delta(\overline{\PP})$. 

\begin{definition}\label{def:poset_shellability}
	Let $\PP$ be a bounded graded poset.  A \defn{shelling} of $\PP$ is a linear order $\prec$ on $\MM(\PP)$ such that whenever two maximal chains $M,M'\in\MM(\PP)$ satisfy $M\prec M'$, then there exists $N\in\MM(\PP)$ with $N\prec M'$ and $x\in M'$ with the property that 
	\begin{equation}\label{eq:shelling}
		M\cap M'\subseteq N\cap M'=M'\setminus\{x\}.
	\end{equation}
\end{definition}

A poset that admits a shelling is \defn{shellable}.  We observe that a poset which is not chain-connected cannot be shellable, because then any linear order on $\MM(\PP)$ has a first occurence of two successive chains that lie in different connected components of $\CC(\PP)$, and these two chains forbid such an order to be a shelling of $\PP$ (see details in Proposition~\ref{prop:chain_connectivity_necessary}).  Moreover, every bounded poset of rank $\leq 2$ is shellable, and a bounded graded poset of rank $3$ is shellable if and only if its proper part is connected.  Bearing this in mind, we can view shellability as a sophisticated notion of connectivity.  

\subsubsection{Lexicographic Shellability}
	\label{sec:lexicographic_shellability}
There is a nice combinatorial way to establish shellability, by exhibiting a particular edge-labeling of the poset.  An \defn{edge-labeling} of $\PP$ is a map $\lambda\colon\EE(\PP)\to\Lambda$, where $\Lambda$ is an arbitrary partially ordered set.  An edge-labeling of $\PP$ naturally extends to a labeling of $\MM(\PP)$, where for $C:\hat{0}=x_{0}\lessdot x_{1}\lessdot\cdots\lessdot x_{n}=\hat{1}$ we set 
\begin{displaymath}
	\lambda(C)=\bigl(\lambda(x_{0},x_{1}),\lambda(x_{1},x_{2}),\ldots,\lambda(x_{n-1},x_{n})\bigr).
\end{displaymath}
A maximal chain $C\in\MM(\PP)$ is \defn{rising} if $\lambda(C)$ is weakly increasing with respect to the partial order on $\Lambda$.  A chain $C\in\MM(\PP)$ \defn{precedes} a chain $C'\in\MM(\PP)$ if $\lambda(C)$ is lexicographically smaller than $\lambda(C')$ with respect to the order on $\Lambda$.  An edge-labeling $\lambda$ of $\PP$ is an \defn{EL-labeling} if in every interval of $\PP$ there exists a unique rising maximal chain, and this chain precedes every other maximal chain in that interval.  A poset that admits an EL-labeling is \defn{EL-shellable}.  We have the following result.

\begin{theorem}[\cite{bjorner80shellable}*{Theorem~2.3}]\label{thm:el_shellable}
	Every EL-shellable poset is shellable.
\end{theorem}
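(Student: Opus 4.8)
The plan is to exhibit an explicit shelling order of $\PP$ built from the EL-labeling $\lambda$, and then to check Definition~\ref{def:poset_shellability} for it by hand. Order the maximal chains of $\PP$ by any linear order $\prec$ that extends the strict partial order ``precedes'' described just above (lexicographic comparison of the label sequences in $\Lambda$); since $\MM(\PP)$ is finite, such a $\prec$ exists. I will show that $\prec$ is a shelling: whenever $M\prec M'$, one can produce $N\prec M'$ and $x\in M'$ with $M\cap M'\subseteq N\cap M'=M'\setminus\{x\}$. Observe first that any such $N$ is forced to be $M'$ with a single element replaced.

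Two preliminaries are needed. First, since $\PP$ is graded, every element $z$ has a well-defined rank, equal to its position in every maximal chain through it, and every interval of $\PP$ is itself bounded and graded, so the EL-hypothesis applies inside each interval. Second, a \emph{local exchange} observation: if a maximal chain $z_{0}\lessdot z_{1}\lessdot\cdots\lessdot z_{n}$ has a \emph{descent} at position $i$, meaning $\lambda(z_{i-1},z_{i})\not\leq\lambda(z_{i},z_{i+1})$ in $\Lambda$, then $z_{i-1}\lessdot z_{i}\lessdot z_{i+1}$ is not the (unique) rising chain of the rank-two interval $[z_{i-1},z_{i+1}]$; writing $z_{i}'\neq z_{i}$ for the atom of that interval lying on its rising chain, one moreover gets $\lambda(z_{i-1},z_{i}')<_{\Lambda}\lambda(z_{i-1},z_{i})$ strictly --- otherwise $\lambda(z_{i-1},z_{i})=\lambda(z_{i-1},z_{i}')\leq\lambda(z_{i}',z_{i+1})<\lambda(z_{i},z_{i+1})$ would make $z_{i-1}\lessdot z_{i}\lessdot z_{i+1}$ rising. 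Hence replacing $z_{i}$ by $z_{i}'$ yields a maximal chain whose label sequence coincides with the original one before position $i$ and is strictly $\Lambda$-smaller at position $i$, so the new chain precedes the old one.

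For the main step, let $M\colon\hat{0}=x_{0}\lessdot\cdots\lessdot x_{n}=\hat{1}$ and $M'\colon\hat{0}=y_{0}\lessdot\cdots\lessdot y_{n}=\hat{1}$ with $M\prec M'$. Let $j$ be the least index with $x_{j}\neq y_{j}$, and $k$ the least index larger than $j-1$ with $x_{k}=y_{k}$; then $x_{j-1}=y_{j-1}$, $x_{k}=y_{k}$, and $x_{i}\neq y_{i}$ for $j\leq i\leq k-1$. On the interval $I\defs[y_{j-1},y_{k}]$ the chains $M$ and $M'$ restrict to maximal chains with disjoint interiors (each $x_{i}$ and each $y_{i}$ has rank $i$, while $x_{i}\neq y_{i}$ in that range). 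The \emph{crucial point} is that $M'|_{I}$ is not the unique rising chain $R$ of $I$: were it so, $R$ would precede the other maximal chain $M|_{I}$ of $I$, so at the first coordinate where $\lambda(M'|_{I})$ and $\lambda(M|_{I})$ differ the former would be $\Lambda$-smaller; since $M$ and $M'$ coincide below rank $j$, this would force $\lambda(M')$ to be lexicographically smaller than $\lambda(M)$, i.e.\ $M'$ would precede $M$, contradicting $M\prec M'$. Hence $M'|_{I}$ has a descent at some position $i$ with $j\leq i\leq k-1$; since $y_{i}$ has rank $i$ and $x_{i}\neq y_{i}$, we get $y_{i}\notin M$. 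Applying the local exchange observation at $i$ (all three vertices $y_{i-1},y_{i},y_{i+1}$ lie in $I$, so the notion of descent there is unambiguous) produces the maximal chain $N$, namely $M'$ with $y_{i}$ replaced by the rising atom $y_{i}'$ of $[y_{i-1},y_{i+1}]$. Then $N$ precedes $M'$, hence $N\prec M'$, while $N\cap M'=M'\setminus\{y_{i}\}\supseteq M\cap M'$ because $y_{i}\notin M$. Taking $x\defs y_{i}$ verifies the shelling condition for the pair $(M,M')$; as this pair was arbitrary, $\prec$ is a shelling, so $\PP$ is shellable.

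The step demanding the most care, and the one where the EL-hypothesis is genuinely used, is the \emph{crucial point}: it relies on both the existence and the lexicographic minimality of the rising chain in the interval $I$ (not merely in $[\hat{0},\hat{1}]$), and it requires keeping track of the fact that $\Lambda$ is only partially ordered, so that ``precedes'' is adjudicated at the first coordinate where two label sequences differ, and one must check that incomparable label pairs never derail the comparison. The remaining ingredients --- gradedness, the identification of $j$, $k$ and $I$, the rank argument for disjoint interiors, and the single-element exchange --- are routine.
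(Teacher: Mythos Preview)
Your proof is correct. The paper does not actually prove this theorem---it is quoted from \cite{bjorner80shellable}---but immediately after the statement the paper remarks that ``the lexicographic order on the set $\bigl\{\lambda(C)\mid C\in\MM(\PP)\bigr\}$ induces a shelling of $\PP$,'' and your argument is precisely a careful execution of this standard approach: extend the lexicographic partial order on label sequences to a linear order, locate the first interval $I=[y_{j-1},y_k]$ where $M$ and $M'$ diverge, use the EL-hypothesis on $I$ to force a descent in $M'\vert_I$, and perform a rank-two exchange at that descent to produce the required $N$.
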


In particular, if $\PP$ is an EL-shellable poset, then the lexicographic order on the set $\bigl\{\lambda(C)\mid C\in\MM(\PP)\bigr\}$ induces a shelling of $\PP$.  The converse of Theorem~\ref{thm:el_shellable} is not true, see for instance~\cites{vince85shellable,walker85poset}.  

Factorization posets coming from a generated group $(\grp,\gen)$ are naturally equip\-ped with the edge-labeling $\lambda_{\top}$ defined in \eqref{eq:natural_labeling}.  One way to establish shellability of $\PP_{\top}$ is thus to find a suitable linear order on $\gen$ such that $\lambda_{\top}$ is an EL-labeling.

\begin{example}\label{ex:free_abelian_3} 
	Let $\grp$ be the free abelian group of rank $3$ (isomorphic to $\mathbb{Z}^3$), generated by three pairwise commuting elements $r,s,t$.  Fix the element $\top=rst$.  The corresponding factorization poset is the boolean lattice shown in Figure~\ref{fig:free_abelian_3_poset}, and the corresponding chain graph is shown in Figure~\ref{fig:free_abelian_3_graph}.  Observe that this graph is isomorphic to the Hurwitz graph of $\top$.  Fix the linear order $r\prec s\prec t$.  Then, the reduced factorizations of $\top$ are (in lexicographic order):
	\begin{displaymath}
		(r,s,t) \prec (r,t,s) \prec (s,r,t) \prec (s,t,r) \prec (t,r,s) \prec (t,s,r).
	\end{displaymath}
	Observe that $(r,s,t)$ is the unique rising reduced factorization of $\top$.  In view of Lemma~\ref{lem:bijection_chains_words} this sequence of reduced factorizations corresponds to the following order on $\MM(\PP_{\top})$:
	\begin{displaymath}
		\{\id,r,rs,\top\} \prec \{\id,r,rt,\top\} \prec \{\id,s,rs,\top\} \prec \{\id,s,st,\top\} \prec \{\id,t,rt,\top\} \prec \{\id,t,st,\top\}.
	\end{displaymath}
	It is straightforward to check that this is a shelling of $\PP_{\top}$.  
	
	\begin{figure}
		\centering
		\begin{subfigure}[t]{.4\textwidth}
			\centering
			\begin{tikzpicture}\small
				\def\x{1};
				\def\y{1};
				\draw(2*\x,1*\y) node(n1){$\id$};
				\draw(1*\x,2*\y) node(n2){$r$};
				\draw(2*\x,2*\y) node(n3){$s$};
				\draw(3*\x,2*\y) node(n4){$t$};
				\draw(1*\x,3*\y) node(n5){$rs$};
				\draw(2*\x,3*\y) node(n6){$rt$};
				\draw(3*\x,3*\y) node(n7){$st$};
				\draw(2*\x,4*\y) node(n8){$rst$};
				\draw(n1) -- (n2);
				\draw(n1) -- (n3);
				\draw(n1) -- (n4);
				\draw(n2) -- (n5);
				\draw(n2) -- (n6);
				\draw(n3) -- (n5);
				\draw(n3) -- (n7);
				\draw(n4) -- (n6);
				\draw(n4) -- (n7);
				\draw(n5) -- (n8);
				\draw(n6) -- (n8);
				\draw(n7) -- (n8);
			\end{tikzpicture}
			\caption{The factorization poset $\PP_{rst}$ in the free abelian group generated by $\{r,s,t\}$.}
			\label{fig:free_abelian_3_poset}
		\end{subfigure}
		\hspace*{.1cm}
		\begin{subfigure}[t]{.4\textwidth}
			\centering
			\begin{tikzpicture}\small
				\def\x{1};
				\def\y{1};
				\draw(2*\x,1*\y) node(n1){$\{\id,r,rs,rst\}$};
				\draw(1*\x,2*\y) node(n2){$\{\id,s,rs,rst\}$};
				\draw(1*\x,3*\y) node(n3){$\{\id,s,st,rst\}$};
				\draw(2*\x,4*\y) node(n4){$\{\id,t,st,rst\}$};
				\draw(3*\x,3*\y) node(n5){$\{\id,t,rt,rst\}$};
				\draw(3*\x,2*\y) node(n6){$\{\id,r,rt,rst\}$};
				\draw(n1) -- (n2);
				\draw(n2) -- (n3);
				\draw(n3) -- (n4);
				\draw(n4) -- (n5);
				\draw(n5) -- (n6);
				\draw(n6) -- (n1);
			\end{tikzpicture}
			\caption{The chain graph of the factorization poset in Figure~\ref{fig:free_abelian_3_poset}.}
			\label{fig:free_abelian_3_graph}
		\end{subfigure}
		\caption{A factorization poset and its chain graph in the free abelian group on three generators.}
		\label{fig:free_abelian_3}
	\end{figure}
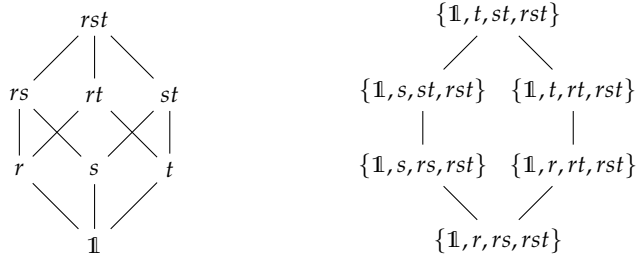
\end{example}

\section{The Motivating Example}
	\label{sec:motivating_example}
Perhaps the earliest occurrence of a factorization poset coming from a generated group is in \cite{brady01partial}.  That article deals with the symmetric group generated by its set of transpositions.  One of the main results of that article is that the corresponding factorization poset of a long ordered cycle is a lattice, which is in fact isomorphic to the lattice of noncrossing set partitions studied by Kreweras~\cite{kreweras72sur}.  Essentially the same isomorphism (phrased in a slightly different language) was described a few years earlier by Biane in \cite{biane97some}.  We have encountered this example in Example~\ref{ex:sym_4}.

At about the same time, other researchers have constructed similar posets coming from other reflection groups, see for instance \cites{reiner97non,bessis03dual}.  Nowadays, all of these constructions can be seen as instances of the following uniform construction.  

Fix a finite-dimensional complex vector space $V$ and consider the group $\optfrak{U}(V)$ of unitary transformations on $V$.  An element $t\in\optfrak{U}(V)$ is a \defn{reflection} if it has finite order and fixes a hyperplane pointwise, called the \defn{reflection hyperplane} of $t$.  Any subgroup of $\optfrak{U}(V)$ that is generated by reflections is a \defn{(complex) reflection group}.  If $\optfrak{W}$ is a reflection group and $T$ is its set of reflections, then $(\optfrak{W},T)$ is---naturally---a generated group.  It is easy to see that $T$ is closed under $\optfrak{W}$-conjugation.  For more background on reflection groups, we refer the interested reader to \cites{humphreys90reflection,lehrer09unitary}.

An element $w\in\optfrak{W}$ is \defn{regular} if it has an eigenvector in the complement of the reflection hyperplanes of $\optfrak{W}$.  A \defn{Coxeter element} is a regular element of some particular order.  We do not go into further detail here, and refer the interested reader to \cite{reiner17on} instead.  It is a consequence of \cite{lehrer99reflection}*{Theorem~C} that Coxeter elements exist in the case when $\optfrak{W}$ is both \defn{irreducible} (\ie $\optfrak{W}$ does not stabilize a proper subspace of $V$ other than $\{0\}$) and \defn{well-generated} (\ie the minimal number of reflections needed to generate $\optfrak{W}$ is $\dim V$).  

Let $\optfrak{W}$ be a finite irreducible well-generated complex reflection group, $T$ the set of reflections of $\optfrak{W}$, and $\top$ a Coxeter element of $\optfrak{W}$.  The factorization poset $\PP_{\top}(\optfrak{W},T)$ is the \defn{lattice of $\top$-noncrossing $\optfrak{W}$-partitions}, as a reference to the prototypical example of the lattice of noncrossing set partitions that arises when $\optfrak{W}$ is the symmetric group.  There exists a vast literature on the study of these posets, and we refer the reader for instance to \cites{armstrong09generalized,bessis03dual,bessis15finite,brady08non,reiner97non} and all the references given therein.

The next two results show that lattices of noncrossing partitions possess all of our three types of connectivity.

\begin{theorem}[\cites{deligne74letter,bessis15finite}]\label{thm:nc_hurwitz}
	Let $\optfrak{W}$ be a finite irreducible well-generated complex reflection group, let $T$ be its set of reflections, and let $\top\in\optfrak{W}$ be a Coxeter element.  The lattice $\PP_{\top}(\optfrak{W},T)$ of $\top$-noncrossing $\optfrak{W}$-partitions is Hurwitz-connected, and thus chain-connected.
\end{theorem}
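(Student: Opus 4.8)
The statement has two halves, and the second follows formally from the first. Indeed, $\PP_\top$ is Hurwitz-connected precisely when the braid group $\bg_{\ell_{\gen}(\top)}$ acts transitively on $\red{T}{\top}$, and---as observed right after Definition~\ref{def:hurwitz_graph}---the Hurwitz graph $\HH(\top)$ is a spanning subgraph of the chain graph $\CC(\PP_\top)$: both have vertex set $\red{T}{\top}\cong\MM(\PP_\top)$ by Lemma~\ref{lem:bijection_chains_words}, and each Hurwitz move alters exactly one element of the corresponding maximal chain. Hence connectivity of $\HH(\top)$ immediately yields connectivity of $\CC(\PP_\top)$, so it suffices to prove Hurwitz-connectivity. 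For a finite irreducible well-generated complex reflection group this is exactly the classical transitivity theorem, due to \cite{deligne74letter} when $\optfrak{W}$ is a real reflection group and to \cite{bessis15finite}*{Proposition~7.6} in general; one route is simply to invoke these.

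For a self-contained sketch, the plan is to induct on the rank $n=\ell_{\gen}(\top)=\dim V$. The cases $n\le 1$ are trivial. The base case $n=2$ is the crux: here $\top$ is a Coxeter element of a rank-$2$ well-generated complex reflection group, and one verifies by direct computation---relying on the classification of rank-$2$ complex reflection groups (the imprimitive family $G(de,e,2)$ together with the exceptional groups $G_4,\dots,G_{22}$; in the real case only dihedral groups occur and the check is elementary)---that all factorizations $(t_1,t_2)$ with $t_1t_2=\top$ form a single orbit under the cyclic $\bg_2$-action $(t_1,t_2)\mapsto(t_1t_2t_1^{-1},t_1)$. For the inductive step, given $\xb=(t_1,\dots,t_n)$ and $\xb'=(t'_1,\dots,t'_n)$ in $\red{T}{\top}$, I would first use Hurwitz moves to arrange that the leading letters agree. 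This rests on two facts: every reflection $t\leq_{\gen}\top$ occurs as the first letter of some reduced factorization of $\top$ (Proposition~\ref{prop:subword_order}), and a local connectivity statement permitting a prescribed reflection to be transported into the first coordinate, which in turn reduces---by applying the Hurwitz action within length-$2$ windows---to the rank-$2$ case just handled. Once $t_1=t'_1$, Lemma~\ref{lem:bottom_intervals} identifies $[t_1,\top]_{\gen}$ with $[\id,t_1^{-1}\top]_{\gen}$, where $t_1^{-1}\top$ is a parabolic Coxeter element of rank $n-1$; the induction hypothesis applies to $\red{T}{t_1^{-1}\top}$, and Lemma~\ref{lem:label_preserving} transfers the resulting Hurwitz equivalence back up to $\top$.

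The main obstacle is precisely the rank-$2$ base case together with the ``move a reflection to the front'' step: these are not formal and genuinely invoke the classification of (rank-$2$) complex reflection groups---this is the essential content of the arguments of Deligne and Bessis. This is why Theorem~\ref{thm:nc_hurwitz} is quoted rather than reproved here, and it is exactly the kind of case-by-case verification that the more uniform ``compatible order'' and ``well-covered'' framework of Section~\ref{sec:compatible_orders} is designed to circumvent for classes of factorization posets where those conditions can be established directly.
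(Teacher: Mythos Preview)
The paper does not give a proof of this theorem at all: it is stated with attribution to \cite{deligne74letter} and \cite{bessis15finite} and then used as a black box.  Your deduction of chain-connectivity from Hurwitz-connectivity is exactly the content of Proposition~\ref{prop:chain_connectivity_necessary}, so that half is fine and matches the paper.

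Regarding your sketch of Hurwitz-connectivity: the overall inductive shape is reasonable, and you are right to flag the ``move a reflection to the front'' step as the non-formal part.  But note that your proposed reduction of this step---``applying the Hurwitz action within length-$2$ windows'' together with rank-$2$ transitivity---is precisely the mechanism of Theorem~\ref{thm:hurwitz_transitive}, and that theorem needs chain-connectivity of $\PP_{\top}$ as an \emph{input}.  Since you are deriving chain-connectivity from Hurwitz-connectivity, invoking it here would be circular.  The actual arguments of Deligne (for real groups) and Bessis (in general) do not proceed this way: Deligne's proof is a counting argument comparing the number of reduced factorizations with the size of a single Hurwitz orbit, and Bessis' argument in \cite{bessis15finite}*{Proposition~7.6} relies on the geometry of the discriminant and the Lyashko--Looijenga morphism rather than on a purely poset-theoretic induction.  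So your sketch, while plausible-sounding, does not actually reproduce either of the cited proofs, and the gap you identify is more serious than a case check---it is the whole theorem.
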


\begin{theorem}[\cites{athanasiadis07shellability,muehle15el}]\label{thm:nc_shellable}
	Let $\optfrak{W}$ be a finite irreducible well-generated complex reflection group, let $T$ be its set of reflections, and let $\top\in\optfrak{W}$ be a Coxeter element.  The lattice $\PP_{\top}(\optfrak{W},T)$ of $\top$-noncrossing $\optfrak{W}$-partitions is shellable.  More precisely, the map $\lambda_{\top}$ defined in \eqref{eq:natural_labeling} is an EL-labeling of $\PP_{\top}(\optfrak{W},T)$ for a certain linear order of $T$.
\end{theorem}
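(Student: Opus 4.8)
The plan is to exhibit a linear order $\preceq$ on the reflection set $T$ of $\optfrak{W}$ such that the natural edge-labeling $\lambda_{\top}$ from \eqref{eq:natural_labeling} is an EL-labeling of $\PP_{\top}(\optfrak{W},T)$; shellability then follows from Theorem~\ref{thm:el_shellable}. Checking the EL-property is organized by an induction along intervals, for which the recursive structure of Lemma~\ref{lem:bottom_intervals} together with the label-compatibility of Lemma~\ref{lem:label_preserving} is the key input: every interval $[x,y]_{T}$ of $\PP_{\top}$ is isomorphic, via $z\mapsto x^{-1}z$, to $[\id,x^{-1}y]_{T}=\PP_{x^{-1}y}(\optfrak{W},T)$, and this isomorphism carries $\lambda_{\top}$ to $\lambda_{x^{-1}y}$. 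Since $x^{-1}y\leq_{T}\top$, the element $x^{-1}y$ is a Coxeter element of a (possibly reducible) parabolic reflection subgroup $\optfrak{W}'\leq\optfrak{W}$ with reflection set $T\cap\optfrak{W}'$, so $\PP_{x^{-1}y}$ is the corresponding product of smaller noncrossing partition lattices, to which the inductive hypothesis applies (EL-shellability is stable under direct products). Inducting on $\dim V=\ell_{T}(\top)$, with trivial base cases in ranks $0$ and $1$, the whole task reduces to: (a) choosing the orders $\preceq$ coherently across parabolic subgroups, so that the order induced on $T\cap\optfrak{W}'$ is again admissible; and (b) showing for the single interval $[\id,\top]_{T}$ that it contains a unique maximal chain whose $\lambda_{\top}$-label is weakly increasing, and that this chain lexicographically precedes every other maximal chain of $[\id,\top]_{T}$.

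For step (a): in the real case one fixes a simple system and a reduced word $\top=s_{i_{1}}s_{i_{2}}\cdots s_{i_{n}}$ for the Coxeter element --- here $T_{\top}=T$, i.e.\ every reflection lies below $\top$ --- and reading this word from left to right yields a canonical total order on $T$, the order of Athanasiadis--Brady--Watt~\cite{athanasiadis07shellability}, which restricts compatibly to standard parabolic subgroups. In the complex well-generated case there is no simple system to start from, so $\preceq$ must instead be built recursively, following~\cite{muehle15el}: having fixed admissible orders on the reflection sets of all proper parabolic noncrossing partition lattices, one extends to $T$ by means of an explicit combinatorial model of $\PP_{\top}$ --- available for the infinite families $G(d,1,n)$ and $G(d,d,n)$, and handled directly for the finitely many exceptional well-generated groups. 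This is exactly the point where a fully uniform argument seems out of reach.

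The heart of the matter---and the step I expect to be hardest---is (b). Existence and uniqueness of the rising maximal chain of $[\id,\top]_{T}$ is a purely order-theoretic statement about the noncrossing partition lattice; for a long cycle in the symmetric group it is the staircase factorization $\bigl((1\,2),(2\,3),\ldots,(n\,n{+}1)\bigr)$ read in the chosen order. Lexicographic minimality is the subtle part: one must show that any non-rising maximal chain can be replaced, by a single local move inside a length-two subinterval, by a maximal chain with lexicographically smaller label, and then iterate down to the rising chain. In the real case this is carried out through the geometry of the reflection arrangement together with the recursive atom structure of $\PP_{\top}$ and properties of the Kreweras complement; in the complex case it is reduced, via the recursion in (a), to the infinite families --- where it follows from the explicit models --- and to the exceptional groups, where it is verified by (computer-assisted) inspection. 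Throughout, one uses that well-generation makes $\PP_{\top}$ a graded lattice of rank $n$, so that the notions of interval and rank entering the definition of an EL-labeling behave as required. Combining (a), (b) and the inductive reduction shows that $\lambda_{\top}$ is an EL-labeling for the chosen order of $T$, and Theorem~\ref{thm:el_shellable} yields shellability.
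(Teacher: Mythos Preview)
The paper does not give its own proof of this theorem; it is stated as a known result imported from \cite{athanasiadis07shellability} (real case) and \cite{muehle15el} (complex well-generated case), and the surrounding text explicitly remarks that no uniform proof is known and that the complex case is settled type by type. So there is nothing in the paper to compare your argument against beyond the citations themselves.

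As a high-level outline of the strategy in those references, your proposal is accurate. The inductive reduction of intervals via Lemmas~\ref{lem:bottom_intervals} and~\ref{lem:label_preserving}, the fact that divisors of a Coxeter element are (parabolic) Coxeter elements so that intervals are again noncrossing partition lattices, the Athanasiadis--Brady--Watt reflection order in the real case, and the recursive construction plus case checks of \cite{muehle15el} in the complex case are exactly the ingredients used. You correctly flag that step~(a) (coherent choice of orders across parabolics) and the lexicographic-minimality part of step~(b) are where the real work lies and where the argument ceases to be uniform; this matches the paper's own commentary. One small caution: the statement that $x^{-1}y\leq_{T}\top$ forces $x^{-1}y$ to be a Coxeter element of a parabolic subgroup is itself a nontrivial theorem (due to Bessis in the complex case, and not uniform), so your inductive scheme already relies on the classification at that point, not only in the construction of $\preceq$. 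With that acknowledged, what you have written is a faithful sketch that defers the substantive content to the cited sources---which is precisely what the paper does as well.
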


To date, however, no uniform proofs of Theorems~\ref{thm:nc_hurwitz} and \ref{thm:nc_shellable} are available.  By a uniform proof, we mean a proof that does not rely on the classification of complex reflection groups.  Uniform proofs are known when $\optfrak{W}$ is a real reflection group and are the main results of \cite{deligne74letter} and \cite{athanasiadis07shellability}, respectively.  For the remaining complex reflection groups both Hurwitz-connectivity and shellability have been verified case by case.  

One of the main goals of our work is the creation of a uniform framework with which we can essentially verify both properties by the same means: a particular linear order of the chosen generating set.  Such a linear order---tailored to the case of reflection groups---plays a crucial role in \cite{athanasiadis07shellability} and \cite{muehle15el}, and can indeed be seen as a precursor to one of the main definitions of this article, Definition~\ref{def:compatible_order} below.

\section{Interaction of Different Types of Connectivity}
\label{sec:interaction}

In this section we want to investigate the implications between the three types of connectivity of a factorization poset $\PP_{\top}(\grp,\gen)$.

\begin{assumption}\label{disc:finiteness}
	From now on we assume that the factorization poset $\PP_{\top}(\grp,\gen)$ is finite. 
\end{assumption}

Note that this is equivalent to the finiteness of $\red{\gen}{\top}$, and also to the finiteness of $\gen_\top$ (the set of generators that are in $\PP_\top$).  We have already mentioned the following easy observation.

\begin{proposition}\label{prop:chain_connectivity_necessary}
	Every factorization poset that is Hurwitz-connected is also chain-connected.  Every shellable bounded graded poset is chain-connected.
\end{proposition}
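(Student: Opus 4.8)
The plan is to handle the two assertions separately; each is essentially an unwinding of the definitions already set up.

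For the implication ``Hurwitz-connected $\Rightarrow$ chain-connected'', I would use Lemma~\ref{lem:bijection_chains_words} together with the remark following Definition~\ref{def:hurwitz_graph} to view $\HH(\top)$ as a graph on $\MM(\PP_\top)$, so that it has the same vertex set as $\CC(\PP_\top)$. It then suffices to check that every edge of $\HH(\top)$ is an edge of $\CC(\PP_\top)$, which I would do by a direct computation: a single Hurwitz move $\sigma_i$ applied to $\xb=(a_1,\dots,a_n)$ leaves the partial product $a_1\cdots a_j$ unchanged for every $j\le i-1$ and, because $a_{i+1}\cdot\bigl(a_{i+1}^{-1}a_ia_{i+1}\bigr)=a_ia_{i+1}$, also for every $j\ge i+1$; only $a_1\cdots a_i$ can change. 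Hence the maximal chains attached to $\xb$ and $\sigma_i\xb$ either coincide (when the move is trivial --- a loop we discard) or share exactly $\rk(\PP_\top)$ elements, that is, they form an edge of $\CC(\PP_\top)$ in the sense of \eqref{eq:chain_connectivity}. So $\HH(\top)$ is a spanning subgraph of $\CC(\PP_\top)$, and connectedness of the former yields connectedness of the latter.

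For ``shellable $\Rightarrow$ chain-connected'', I would argue by contradiction. Let $\PP$ be a bounded graded poset carrying a shelling $M_1\prec M_2\prec\cdots\prec M_m$ of $\MM(\PP)$, and suppose $\CC(\PP)$ is disconnected. Pick $k$ minimal with $M_k$ in a different connected component of $\CC(\PP)$ than $M_1$ (so $k\ge 2$), and apply Definition~\ref{def:poset_shellability} to $M:=M_1\prec M':=M_k$: this gives $N\in\MM(\PP)$ with $N\prec M_k$ and $x\in M_k$ such that $N\cap M_k=M_k\setminus\{x\}$. Since $\PP$ is graded we have $\lvert M_k\rvert=\rk(\PP)+1$, hence $\lvert N\cap M_k\rvert=\rk(\PP)$ and $N\ne M_k$, so $\{N,M_k\}$ is an edge of $\CC(\PP)$ by \eqref{eq:chain_connectivity}. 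But $N\prec M_k$ means $N=M_j$ for some $j<k$, so by minimality $N$ lies in the component of $M_1$; being adjacent to $N$, so does $M_k$, contradicting the choice of $k$. Therefore $\CC(\PP)$ is connected.

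I do not expect a genuine obstacle. The points that need a little care are: that $\PP_\top$ really is bounded and graded, so that $\CC(\PP_\top)$ is defined and all maximal chains have $\rk(\PP_\top)+1$ elements; that a trivial Hurwitz move produces a loop rather than an edge; and that it is gradedness, not merely boundedness, that pins $\lvert N\cap M_k\rvert$ to exactly $\rk(\PP)$, which is exactly the requirement in \eqref{eq:chain_connectivity}. The remainder is bookkeeping.
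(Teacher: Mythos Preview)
Your proposal is correct and follows essentially the same approach as the paper. For the first assertion you spell out the verification that a single Hurwitz move changes at most one partial product, which the paper absorbs into the remark after Definition~\ref{def:hurwitz_graph}; for the second assertion the paper runs a forward induction showing that $\{M_1,\ldots,M_i\}$ induces a connected subgraph of $\CC(\PP)$ for each $i$, while you argue by minimal counterexample---but the key step in both is identical: the shelling condition with $M=M_1$ and $M'=M_k$ produces an earlier chain $N=M_j$ adjacent to $M_k$ in $\CC(\PP)$.
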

\begin{proof}
	Let $\PP_{\top}(\grp,\gen)$ be a factorization poset. Lemma~\ref{lem:bijection_chains_words} states that the sets $\red{\gen}{\top}$ and $\MM\bigl(\PP_{\top}(\grp,\gen)\bigr)$ are in bijection, and this bijection identifies $\HH(\top)$ as a subgraph of $\CC\bigl(\PP_{\top}(\grp,\gen)\bigr)$.  Hence, if $\HH(\top)$ is connected, so is $\CC\bigl(\PP_{\top}(\grp,\gen)\bigr)$. 

	Now let $\PP$ be a bounded graded poset, and let $\bigl\lvert\MM(\PP)\bigr\rvert=s$.  Suppose that we can label the maximal chains of $\PP$ such that $M_{1}\prec M_{2}\prec\cdots\prec M_{s}$ is a shelling of $\PP$.  We prove by induction that for every $i\in[s]$ the set $\{M_{1},M_{2},\ldots,M_{i}\}$ induces a connected subgraph of $\CC(\PP)$.  The base case $i=1$ holds trivially, since the corresponding subgraph consists of a single vertex.  Now assume that the subgraph of $\CC(\PP)$ induced by $\{M_{1},M_{2},\ldots,M_{i}\}$ is connected, and consider $M_{i+1}$.  Since $i\geq 1$ it follows that $M_{1}\prec M_{i+1} $, and since $\prec$ is a shelling we can find some $j\leq i$ and some $x\in M_{i+1}$ such that $M_{1}\cap M_{i+1}\subseteq M_{j}\cap M_{i+1}=M_{i+1}\setminus\{x\}$.  By Definition~\ref{def:chain_graph} there is an edge connecting $M_{j}$ and $M_{i+1}$, which implies that $\{M_{1},M_{2},\ldots,M_{i+1}\}$ induces a connected subgraph of $\CC(\PP)$.  Consequently, $\CC(\PP)$ is connected.
\end{proof}

Neither of the converse statements in Proposition~\ref{prop:chain_connectivity_necessary} is true without further assumptions as the next examples illustrate.

\begin{example}\label{ex:chain_nonhurwitz}
	For the converse of the first statement consider for instance the finite group given by the presentation 
	\begin{multline}\label{eq:chain_nonhurwitz}
		\grp = \bigl\langle r,s,t,u\mid r^{2}=s^{2}=t^{2}=u^{2}=\id,rt=tr,su=us,\\
			rs=st=tu=ur,sr=ru=ut=ts\bigr\rangle.
	\end{multline}
	(This is in fact the dihedral group of order $8$ in the so-called dual Coxeter presentation.  See \cite{bessis03dual} for more background on related groups and presentations.)  The set $\gen=\{r,s,t,u\}$ is clearly closed under $\grp$-conjugation.  Now take the element $\top=rt$.  We have $\red{\gen}{\top}=\bigl\{(r,t),(t,r),(s,u),(u,s)\bigr\}$, which implies that $\PP_{\top}(\grp,\gen)$ is not Hurwitz-connected.  Since $\PP_{\top}(\grp,\gen)$ has rank two, however, it is trivially chain-connected and shellable.  See Figure~\ref{fig:chain_nonhurwitz} for an illustration.
	
	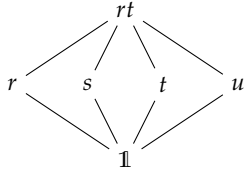
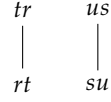
\begin{figure}
		\centering
		\begin{subfigure}[t]{.48\textwidth}
			\centering
			\begin{tikzpicture}\small
				\def\x{1};
				\def\y{1};
				\draw(2.5*\x,1*\y) node(n1){$\id$};
				\draw(1*\x,2*\y) node(n2){$r$};
				\draw(2*\x,2*\y) node(n3){$s$};
				\draw(3*\x,2*\y) node(n4){$t$};
				\draw(4*\x,2*\y) node(n5){$u$};
				\draw(2.5*\x,3*\y) node(n6){$rt$};
				\draw(n1) -- (n2);
				\draw(n1) -- (n3);
				\draw(n1) -- (n4);
				\draw(n1) -- (n5);
				\draw(n2) -- (n6);
				\draw(n3) -- (n6);
				\draw(n4) -- (n6);
				\draw(n5) -- (n6);
			\end{tikzpicture}
			\caption{The factorization poset $\PP_{rt}(\grp,\gen)$, where $\grp$ is given by the presentation in \eqref{eq:chain_nonhurwitz}.}
			\label{fig:chain_nonhurwitz_poset}
		\end{subfigure}
		\hspace*{.1\textwidth}
		\begin{subfigure}[t]{.4\textwidth}
			\centering
			\begin{tikzpicture}\small
				\def\x{1};
				\def\y{1};
				\def\s{1};
				\draw(1*\x,1*\y) node[scale=\s](n1){$rt$};
				\draw(1*\x,2*\y) node[scale=\s](n2){$tr$};
				\draw(2*\x,1*\y) node[scale=\s](n3){$su$};
				\draw(2*\x,2*\y) node[scale=\s](n4){$us$};
				\draw(n1) -- (n2);
				\draw(n3) -- (n4);
			\end{tikzpicture}
			\caption{The Hurwitz graph of the poset in Figure~\ref{fig:chain_nonhurwitz_poset}.}
			\label{fig:chain_nonhurwitz_graph}
		\end{subfigure}
		\caption{An example of a chain-connected factorization poset that is not Hurwitz-connected.}
		\label{fig:chain_nonhurwitz}
	\end{figure}
\end{example}

\begin{example}\label{ex:hurwitz_nonshellable}
	Next consider the infinite group given by the presentation
	\begin{multline}\label{eq:hurwitz_nonshellable}
		\grp = \bigl\langle r,s,t,u,v\mid r^{3}=s^{3}, t^{2}=u^{2}=v^{2}, rs=sr, tu=uv=vt, ut=tv=vu,\\
			rt=ts=sv=vr,rv=vs=su=ur,ru=us=st=tr\bigr\rangle.
	\end{multline}
	The set $\gen=\{r,s,t,u,v\}$ is closed under $\grp$-conjugation.  The factorization poset of $\top=rrrt$ is shown in Figure~\ref{fig:hurwitz_nonshellable_poset}, and the corresponding Hurwitz graph is depicted in Figure~\ref{fig:hurwitz_nonshellable_graph}.  By inspection of these figures we see that $\PP_{\top}(\grp,\gen)$ is Hurwitz-connected, but not shellable, since the subposet $\PP_{rrr}(\grp,\gen)$ is not chain-connected.

	\begin{figure}
		\centering
		\begin{subfigure}[t]{.48\textwidth}
			\centering
			\begin{tikzpicture}\small
				\def\x{1.1};
				\def\y{1.1};
				\draw(3.5*\x,1*\y) node(n1){$\id$};
				\draw(1.5*\x,2*\y) node(n2){$r$};
				\draw(2.5*\x,2*\y) node(n3){$v$};
				\draw(3.5*\x,2*\y) node(n4){$t$};
				\draw(4.5*\x,2*\y) node(n5){$u$};
				\draw(5.5*\x,2*\y) node(n6){$s$};
				\draw(1*\x,3*\y) node(n7){$rr$};
				\draw(2*\x,3*\y) node(n8){$rt$};
				\draw(3*\x,3*\y) node(n9){$rv$};
				\draw(4*\x,3*\y) node(n10){$rs$};
				\draw(5*\x,3*\y) node(n11){$ru$};
				\draw(6*\x,3*\y) node(n12){$ss$};
				\draw(1.5*\x,4*\y) node(n13){$rrt$};
				\draw(2.5*\x,4*\y) node(n14){$rrs$};
				\draw(3.5*\x,4*\y) node(n15){$rrr$};
				\draw(4.5*\x,4*\y) node(n16){$rss$};
				\draw(5.5*\x,4*\y) node(n17){$rrv$};
				\draw(3.5*\x,5*\y) node(n18){$rrrt$};
				\draw(n1) -- (n2);
				\draw(n1) -- (n3);
				\draw(n1) -- (n4);
				\draw(n1) -- (n5);
				\draw(n1) -- (n6);
				\draw(n2) -- (n7);
				\draw(n2) -- (n8);
				\draw(n2) -- (n9);
				\draw(n2) -- (n10);
				\draw(n2) -- (n11);
				\draw(n3) -- (n8);
				\draw(n3) -- (n9);
				\draw(n4) -- (n8);
				\draw(n4) -- (n11);
				\draw(n5) -- (n9);
				\draw(n5) -- (n11);
				\draw(n6) -- (n8);
				\draw(n6) -- (n9);
				\draw(n6) -- (n10);
				\draw(n6) -- (n11);
				\draw(n6) -- (n12);
				\draw(n7) -- (n13);
				\draw(n7) -- (n14);
				\draw(n7) -- (n15);
				\draw(n7) -- (n17);
				\draw(n8) -- (n13);
				\draw(n9) -- (n13);
				\draw(n9) -- (n17);
				\draw(n10) -- (n13);
				\draw(n10) -- (n14);
				\draw(n10) -- (n16);
				\draw(n10) -- (n17);
				\draw(n11) -- (n17);
				\draw(n12) -- (n13);
				\draw(n12) -- (n15);
				\draw(n12) -- (n16);
				\draw(n12) -- (n17);
				\draw(n13) -- (n18);
				\draw(n14) -- (n18);
				\draw(n15) -- (n18);
				\draw(n16) -- (n18);
				\draw(n17) -- (n18);
			\end{tikzpicture}
			\caption{The factorization poset $\PP_{rrrt}(\grp,\gen)$, where $\grp$ is given by the presentation in \eqref{eq:hurwitz_nonshellable}.}
			\label{fig:hurwitz_nonshellable_poset}
		\end{subfigure}
		\hspace*{.75cm}
		\begin{subfigure}[t]{.4\textwidth}
			\centering
			\begin{tikzpicture}\small
				\def\x{.55};
				\def\y{.55};
				\def\s{.67};
				\draw(3*\x,1*\y) node[scale=\s](n1){$rrts$};
				\draw(8*\x,1*\y) node[scale=\s](n2){$rtss$};
				\draw(1.5*\x,1.5*\y) node[scale=\s](n3){$rrrt$};
				\draw(9.5*\x,1.5*\y) node[scale=\s](n4){$tsss$};
				\draw(4*\x,2*\y) node[scale=\s](n5){$rvrs$};
				\draw(7*\x,2*\y) node[scale=\s](n6){$rsvs$};
				\draw(2.5*\x,2.5*\y) node[scale=\s](n7){$rrsv$};
				\draw(8.5*\x,2.5*\y) node[scale=\s](n8){$vrss$};
				\draw(1*\x,3*\y) node[scale=\s](n9){$rrvr$};
				\draw(10*\x,3*\y) node[scale=\s](n10){$svss$};
				\draw(3.5*\x,3.5*\y) node[scale=\s](n11){$rsrv$};
				\draw(7.5*\x,3.5*\y) node[scale=\s](n12){$vsrs$};
				\draw(2*\x,4*\y) node[scale=\s](n13){$rvsr$};
				\draw(9*\x,4*\y) node[scale=\s](n14){$srvs$};
				\draw(5.5*\x,4.5*\y) node[scale=\s](n15){$vssr$};
				\draw(4.5*\x,5.5*\y) node[scale=\s](n16){$srrv$};
				\draw(6.5*\x,5.5*\y) node[scale=\s](n17){$rssu$};
				\draw(5.5*\x,6.5*\y) node[scale=\s](n18){$urrs$};
				\draw(3.5*\x,7.5*\y) node[scale=\s](n19){$ursr$};
				\draw(7.5*\x,7.5*\y) node[scale=\s](n20){$srsu$};
				\draw(2*\x,7*\y) node[scale=\s](n21){$rsur$};
				\draw(9*\x,7*\y) node[scale=\s](n22){$surs$};
				\draw(1*\x,8*\y) node[scale=\s](n23){$rurr$};
				\draw(10*\x,8*\y) node[scale=\s](n24){$ssus$};
				\draw(2.5*\x,8.5*\y) node[scale=\s](n25){$usrr$};
				\draw(8.5*\x,8.5*\y) node[scale=\s](n26){$ssru$};
				\draw(4*\x,9*\y) node[scale=\s](n27){$srur$};
				\draw(7*\x,9*\y) node[scale=\s](n28){$susr$};
				\draw(1.5*\x,9.5*\y) node[scale=\s](n29){$trrr$};
				\draw(9.5*\x,9.5*\y) node[scale=\s](n30){$ssst$};
				\draw(3*\x,10*\y) node[scale=\s](n31){$strr$};
				\draw(8*\x,10*\y) node[scale=\s](n32){$sstr$};
				\draw(n3) -- (n1) -- (n2) -- (n4) -- (n10) -- (n24) -- (n30) -- (n32) -- (n31) -- (n29) -- (n23) -- (n9) -- (n13) -- (n21) -- (n27) -- (n28) -- (n22) -- (n14) -- (n6) -- (n5) -- (n18) -- (n19) -- (n25);
				\draw(n3) -- (n9) -- (n7) -- (n11) -- (n21) -- (n23) -- (n25) -- (n31) -- (n27) -- (n20) -- (n26) -- (n32) -- (n28) -- (n19) -- (n13)  -- (n15) -- (n12) -- (n8) -- (n10) -- (n14) -- (n16);
				\draw(n13) -- (n5) -- (n12) -- (n22) -- (n24) -- (n26);
				\draw(n8) -- (n2) -- (n6) -- (n17) -- (n20) -- (n14);
				\draw(n6) -- (n11) -- (n16) -- (n27);
				\draw(n18) -- (n22);
				\draw(n17) -- (n21);
				\draw(n1) -- (n7);
				\draw(n15) -- (n28);
			\end{tikzpicture}
			\caption{The Hurwitz graph of the poset in Figure~\ref{fig:hurwitz_nonshellable_poset} (the loops are not drawn).}
			\label{fig:hurwitz_nonshellable_graph}
		\end{subfigure}
		\caption{An example of a Hurwitz-connected factorization poset that is not shellable, because it contains an interval of rank $3$ that is not chain-connected.}
		\label{fig:hurwitz_nonshellable}
	\end{figure}
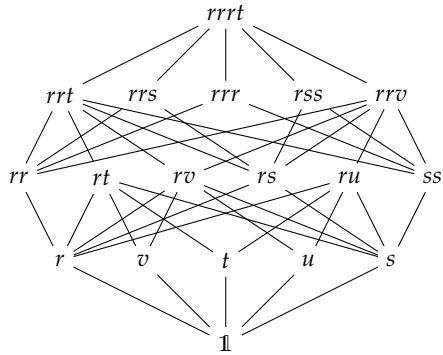
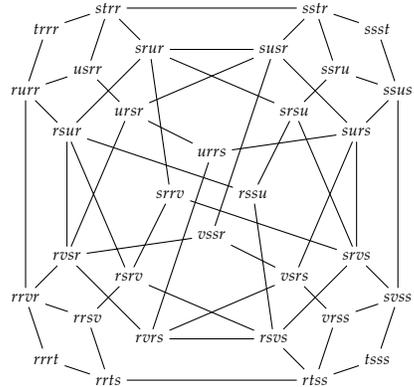
\end{example}

We observe that the example given in Figure~\ref{fig:hurwitz_nonshellable_poset} contains an interval which is not chain-connected, and this is the reason why it is not shellable.  But what happens if we exclude this situation, \ie if we assume that our factorization poset is totally chain-connected?  We are not aware of a factorization poset that is totally chain-connected, but not shellable.  

\begin{question}\label{qu:chain_connected_nonshellable}
	Does there exist a generated group $(\grp,\gen)$ and some $\top\in\grp$ such that $\red{\gen}{c}$ is finite and $\PP_{\top}(\grp,\gen)$ is totally chain-connected but not shellable?
\end{question}

An answer to Question~\ref{qu:chain_connected_nonshellable} would be of great importance within the framework presented here: we could either reduce the difficulty to prove that a factorization poset is shellable, or the group structure of such an example would exhibit a new obstruction to shellability.

Note that, for arbitrary graded posets, there are some well-known examples of totally chain-connected posets which are not shellable.  Consider for instance the poset $\PP$ in Figure~\ref{fig:dunce_hat}, which is reproduced from~\cite{bjorner82introduction}*{Page~16}.  The geometric realization of $\Delta(\overline{\PP})$ is the Dunce Hat, which is known to be non-shellable~\cite{hachimori08decompositions}*{Theorem~3}.  However, it can be verified that every interval of $\PP$ is chain-connected.  On the other hand, $\PP$ is not self dual and in view of Proposition~\ref{prop:self_dual} it cannot arise as a factorization poset in some generated group.

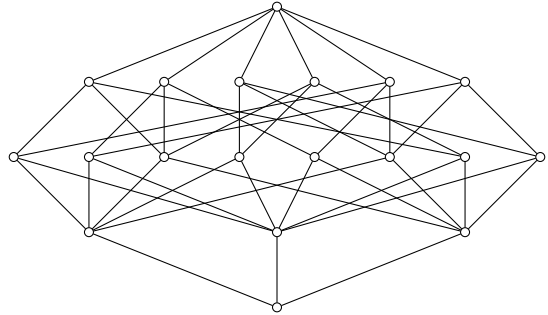
\begin{figure}
	\centering
	\begin{tikzpicture}\small
		\def\x{1};
		\def\y{1};
		\draw(4.5*\x,1*\y) node[circle,draw,fill=white,scale=.4](n1){};
		\draw(2*\x,2*\y) node[circle,draw,fill=white,scale=.4](n2){};
		\draw(4.5*\x,2*\y) node[circle,draw,fill=white,scale=.4](n3){};
		\draw(7*\x,2*\y) node[circle,draw,fill=white,scale=.4](n4){};
		\draw(2*\x,3*\y) node[circle,draw,fill=white,scale=.4](n5){};
		\draw(1*\x,3*\y) node[circle,draw,fill=white,scale=.4](n6){};
		\draw(4*\x,3*\y) node[circle,draw,fill=white,scale=.4](n7){};
		\draw(3*\x,3*\y) node[circle,draw,fill=white,scale=.4](n8){};
		\draw(6*\x,3*\y) node[circle,draw,fill=white,scale=.4](n9){};
		\draw(5*\x,3*\y) node[circle,draw,fill=white,scale=.4](n10){};
		\draw(7*\x,3*\y) node[circle,draw,fill=white,scale=.4](n11){};
		\draw(8*\x,3*\y) node[circle,draw,fill=white,scale=.4](n12){};
		\draw(3*\x,4*\y) node[circle,draw,fill=white,scale=.4](n13){};
		\draw(2*\x,4*\y) node[circle,draw,fill=white,scale=.4](n14){};
		\draw(4*\x,4*\y) node[circle,draw,fill=white,scale=.4](n15){};
		\draw(7*\x,4*\y) node[circle,draw,fill=white,scale=.4](n16){};
		\draw(6*\x,4*\y) node[circle,draw,fill=white,scale=.4](n17){};
		\draw(5*\x,4*\y) node[circle,draw,fill=white,scale=.4](n18){};
		\draw(4.5*\x,5*\y) node[circle,draw,fill=white,scale=.4](n19){};
		\draw(n1) -- (n2);
		\draw(n1) -- (n3);
		\draw(n1) -- (n4);
		\draw(n2) -- (n5);
		\draw(n2) -- (n6);
		\draw(n2) -- (n7);
		\draw(n2) -- (n8);
		\draw(n2) -- (n9);
		\draw(n3) -- (n5);
		\draw(n3) -- (n6);
		\draw(n3) -- (n7);
		\draw(n3) -- (n10);
		\draw(n3) -- (n11);
		\draw(n3) -- (n12);
		\draw(n4) -- (n8);
		\draw(n4) -- (n9);
		\draw(n4) -- (n10);
		\draw(n4) -- (n11);
		\draw(n4) -- (n12);
		\draw(n5) -- (n13);
		\draw(n5) -- (n16);
		\draw(n6) -- (n14);
		\draw(n6) -- (n17);
		\draw(n7) -- (n15);
		\draw(n7) -- (n18);
		\draw(n8) -- (n13);
		\draw(n8) -- (n14);
		\draw(n8) -- (n18);
		\draw(n9) -- (n15);
		\draw(n9) -- (n16);
		\draw(n9) -- (n17);
		\draw(n10) -- (n13);
		\draw(n10) -- (n17);
		\draw(n11) -- (n14);
		\draw(n11) -- (n18);
		\draw(n12) -- (n15);
		\draw(n12) -- (n16);
		\draw(n13) -- (n19);
		\draw(n14) -- (n19);
		\draw(n15) -- (n19);
		\draw(n16) -- (n19);
		\draw(n17) -- (n19);
		\draw(n18) -- (n19);
	\end{tikzpicture}
	\caption{A totally chain-connected poset that is not shellable.}
	\label{fig:dunce_hat}
\end{figure}

\medskip

We have seen in Example~\ref{ex:chain_nonhurwitz} that chain-connectivity of a factorization poset does not imply Hurwitz-connectivity.  However, we may add the following local criterion to make things work.  

\begin{definition}\label{def:locally_hurwitz}
	Let $\PP_{\top}(\grp,\gen)$ be factorization poset.  If $\bg_{2}$ acts transitively on $\red{\gen}{g}$ for every $g\leq_{\gen} c$ with $\ell_{\gen}(g)=2$, then we call $\PP_{\top}(\grp,\gen)$ \defn{locally Hurwitz-connected}.
\end{definition}

\begin{theorem}\label{thm:hurwitz_transitive}
	Let $\gen$ be closed under $\grp$-conjugation, and fix $\top\in\grp$ with $\ell_{\gen}(\top)=n$.  If $\PP_{\top}$ is chain-connected and locally Hurwitz-connected, then $\bg_n$ acts transitively on $\red{\gen}{\top}$, i.e., $\PP_{\top}$ is Hurwitz-connected.
\end{theorem}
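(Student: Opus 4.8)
The plan is to leverage chain-connectivity to move between maximal chains one step at a time, and use local Hurwitz-connectivity to realize each such elementary move inside the Hurwitz graph. Via Lemma~\ref{lem:bijection_chains_words} we identify $\MM(\PP_\top)$ with $\red{\gen}{\top}$ and recall that $\HH(\top)$ is a subgraph of $\CC(\PP_\top)$. So it suffices to show: whenever two maximal chains $C, C'$ are adjacent in $\CC(\PP_\top)$ — i.e.\ $\lvert C \cap C'\rvert = \rk(\PP_\top) = n$, meaning they differ in exactly one element — the corresponding factorizations lie in the same Hurwitz orbit. Granting this, any path in the (connected) chain graph between two arbitrary maximal chains becomes a sequence of Hurwitz-equivalences, and transitivity follows.

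To prove the claim about adjacent chains, suppose $C$ and $C'$ differ only at the element in position $i$; that is, writing $C : \id = x_0 \lessdot x_1 \lessdot \cdots \lessdot x_n = \top$ and $C' : \id = x_0' \lessdot \cdots \lessdot x_n' = \top$, we have $x_j = x_j'$ for all $j \neq i$, and $x_{i-1} \lessdot x_i \lessdot x_{i+1}$, $x_{i-1} \lessdot x_i' \lessdot x_{i+1}$ with $x_i \neq x_i'$. By Lemma~\ref{lem:bottom_intervals}(ii), the interval $[x_{i-1}, x_{i+1}]_\gen$ is isomorphic to $[\id, g]_\gen$ where $g := x_{i-1}^{-1} x_{i+1}$, and $\ell_\gen(g) = 2$, so $g \leq_\gen \top$ with $\ell_\gen(g) = 2$. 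The two length-two factorizations of $g$ coming from the two subchains of $[x_{i-1},x_{i+1}]_\gen$ are, by local Hurwitz-connectivity, in the same $\bg_2$-orbit on $\red{\gen}{g}$. Since, by Lemma~\ref{lem:label_preserving}, translating the interval back up to $[x_{i-1}, x_{i+1}]_\gen$ preserves the labels $\lambda_\top$, a $\bg_2$-move on $\red{\gen}{g}$ corresponds exactly to applying the Hurwitz generator $\sigma_i$ (repeatedly) to the factorization of $\top$ in positions $i$ and $i+1$ — the surrounding factors $a_1, \dots, a_{i-1}, a_{i+2}, \dots, a_n$ are untouched. Hence the factorizations associated with $C$ and $C'$ are Hurwitz-equivalent.

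Putting this together: fix any $\xb, \xb' \in \red{\gen}{\top}$; let $C, C'$ be the corresponding maximal chains. Since $\PP_\top$ is chain-connected there is a path $C = C_0, C_1, \dots, C_k = C'$ in $\CC(\PP_\top)$ with each consecutive pair adjacent. By the previous paragraph, the factorization of each $C_j$ is Hurwitz-equivalent to that of $C_{j+1}$, so by transitivity of the relation "lies in the same Hurwitz orbit," $\xb$ and $\xb'$ lie in the same orbit. As $\xb, \xb'$ were arbitrary, $\bg_n$ acts transitively on $\red{\gen}{\top}$, i.e.\ $\PP_\top$ is Hurwitz-connected.

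The main obstacle I anticipate is the careful bookkeeping in the middle step: verifying that a $\bg_2$-orbit move on the length-two factorizations of the "local" element $g = x_{i-1}^{-1} x_{i+1}$ lifts precisely to an application of the Hurwitz generators $\sigma_i$ (note one may need several $\sigma_i$-moves, since the $\bg_2$-orbit of $(a_i, a_{i+1})$ in $\red{\gen}{g}$ can be larger than $\{(a_i,a_{i+1}), (a_{i+1}, a_{i+1}^{-1}a_i a_{i+1})\}$). This requires knowing that the $\bg_2$-action on $\red{\gen}{g}$ is compatible — under the isomorphism of Lemma~\ref{lem:bottom_intervals}(ii) and the label-preservation of Lemma~\ref{lem:label_preserving} — with the restriction of the $\bg_n$-action to the $i$-th and $(i+1)$-st coordinates; this compatibility is essentially the observation that the Hurwitz action is "local" in the sense that $\sigma_i$ only sees coordinates $i$ and $i+1$, but it should be spelled out. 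Everything else — the reduction to adjacent chains, the appeal to chain-connectivity, the transitivity argument — is routine.
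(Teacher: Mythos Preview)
Your proof is correct and follows the same route as the paper: reduce to adjacent chains in $\CC(\PP_\top)$, identify the rank-$2$ element $g = x_{i-1}^{-1}x_{i+1}$, and lift a $\bg_2$-orbit move on $\red{\gen}{g}$ to repeated applications of $\sigma_i$ in $\bg_n$. One small remark: the deduction ``$g \leq_\gen \top$'' does not follow from Lemma~\ref{lem:bottom_intervals}(ii) alone but from Proposition~\ref{prop:subword_order} (as the paper makes explicit), since $g$ appears as a length-$2$ factor in a reduced $\gen$-factorization of $\top$.
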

\begin{proof}
  Since $\PP_{\top}$ is chain-connected, the chain graph of $\PP_{\top}$ is connected. Thus, by transitivity, it is sufficient to prove that any two chains that are neighbors in the chain graph, are connected in the Hurwitz graph (recall that in view of the canonical bijection of Lemma~\ref{lem:bijection_chains_words}, we can work on factorizations or on maximal chains equivalently).

  Consider $C$ and $C'$ two maximal chains that are neighbors in the chain graph. Say that $C$ corresponds to the reduced $\gen$-factorization $\xb=(a_{1},a_{2},\dots, a_{n})$, and $C'$ to the reduced $\gen$-factorization $\xb'=(a_{1}',a_{2}',\dots, a_{n}')$. The chains $C$ and $C'$ are the same except for one element, so there exists $i \in [n-1]$ such that $a_k=a_k'$ for any $k\neq i,i+1$, and  $a_ia_{i+1}=a_i' a_{i+1}'$. Denote by $g$ the element $a_i a_{i+1}$, which has length $2$ by construction. It is also a factor of a reduced expression of $\top$, so $g\leq_{\gen}\top$ (see Proposition \ref{prop:subword_order}). Because $\PP_{\top}$ is locally Hurwitz-connected, there exists $\omega_2\in\bg_{2}$ with $\omega_2\cdot(a_{i},a_{i+1})=(a_{i}',a_{i+1}')$.  This braid can be lifted to an element $\omega\in\bg_{n}$ in a natural way: the $i$-th and $i+1$-st strands are braided in $\omega$ like the first and second in $\omega_2$, and the other strands are not braided. By construction it satisfies $\omega\cdot\xb=\xb'$.  Since the argument holds for any two neighboring chains in the chain graph, it follows that the Hurwitz graph is also connected.
\end{proof}

The next example illustrates that being locally Hurwitz-connected is actually not a necessary condition for the Hurwitz-connectivity of $\PP_{c}$.  

\begin{example}\label{ex:hurwitz_nonlocal}
	Consider the infinite group given by the presentation
	\begin{multline}\label{eq:hurwitz_nonlocal}
		\grp = \bigl\langle r,s,t,u\mid r^{2}=s^{2},t^{2}=u^{2},rs=sr,tu=ut,\\
			rt=ts=su=ur,st=tr=ru=us\bigr\rangle.
	\end{multline}
	The set $\gen=\{r,s,t,u\}$ is closed under $\grp$-conjugation.  Figure~\ref{fig:hurwitz_nonlocal_poset} shows the factorization poset of $\top=rrt$, and the corresponding Hurwitz graph is depicted in Figure~\ref{fig:hurwitz_nonlocal_graph}.  We see that $\PP_{rrt}(\grp,\gen)$ is Hurwitz-connected, but the interval $\PP_{rr}(\grp,\gen)$ is not.

	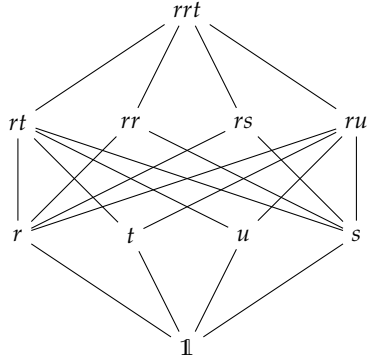
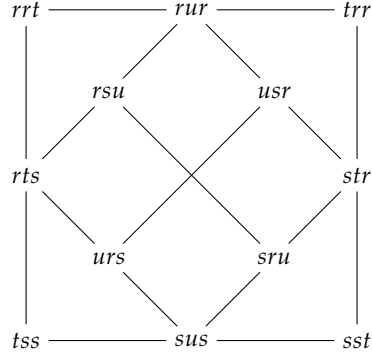
\begin{figure}
		\centering
		\begin{subfigure}[t]{.48\textwidth}
			\centering
			\begin{tikzpicture}\small
				\def\x{1.5};
				\def\y{1.5};
				\draw(2.5*\x,1*\y) node(n1){$\id$};
				\draw(1*\x,2*\y) node(n2){$r$};
				\draw(2*\x,2*\y) node(n3){$t$};
				\draw(3*\x,2*\y) node(n4){$u$};
				\draw(4*\x,2*\y) node(n5){$s$};
				\draw(1*\x,3*\y) node(n6){$rt$};
				\draw(2*\x,3*\y) node(n7){$rr$};
				\draw(3*\x,3*\y) node(n8){$rs$};
				\draw(4*\x,3*\y) node(n9){$ru$};
				\draw(2.5*\x,4*\y) node(n10){$rrt$};
				\draw(n1) -- (n2);
				\draw(n1) -- (n3);
				\draw(n1) -- (n4);
				\draw(n1) -- (n5);
				\draw(n2) -- (n6);
				\draw(n2) -- (n7);
				\draw(n2) -- (n8);
				\draw(n2) -- (n9);
				\draw(n3) -- (n6);
				\draw(n3) -- (n9);
				\draw(n4) -- (n6);
				\draw(n4) -- (n9);
				\draw(n5) -- (n6);
				\draw(n5) -- (n7);
				\draw(n5) -- (n8);
				\draw(n5) -- (n9);
				\draw(n6) -- (n10);
				\draw(n7) -- (n10);
				\draw(n8) -- (n10);
				\draw(n9) -- (n10);
			\end{tikzpicture}
			\caption{The factorization poset $\PP_{rrt}(\grp,\gen)$, where the prefix order on the group given by the presentation in \eqref{eq:hurwitz_nonlocal}.}
			\label{fig:hurwitz_nonlocal_poset}
		\end{subfigure}
		\hspace*{.1\textwidth}
		\begin{subfigure}[t]{.4\textwidth}
			\centering
			\begin{tikzpicture}\small
				\def\x{1.1};
				\def\y{1.1};
				\draw(1*\x,1*\y) node(n1){$tss$};
				\draw(3*\x,1*\y) node(n2){$sus$};
				\draw(5*\x,1*\y) node(n3){$sst$};
				\draw(2*\x,2*\y) node(n4){$urs$};
				\draw(4*\x,2*\y) node(n5){$sru$};
				\draw(1*\x,3*\y) node(n6){$rts$};
				\draw(5*\x,3*\y) node(n7){$str$};
				\draw(2*\x,4*\y) node(n8){$rsu$};
				\draw(4*\x,4*\y) node(n9){$usr$};
				\draw(1*\x,5*\y) node(n10){$rrt$};
				\draw(3*\x,5*\y) node(n11){$rur$};
				\draw(5*\x,5*\y) node(n12){$trr$};
				\draw(n1) -- (n2) -- (n3) -- (n7) -- (n12) -- (n11) -- (n10) -- (n6) -- (n1);
				\draw(n2) -- (n5) -- (n7) -- (n9) -- (n11) -- (n8) -- (n6) -- (n4) -- (n2);
				\draw(n4) -- (n9);
				\draw(n5) -- (n8);
			\end{tikzpicture}
			\caption{The Hurwitz graph of the poset in Figure~\ref{fig:hurwitz_nonlocal_poset} (the loops are not drawn).}
			\label{fig:hurwitz_nonlocal_graph}
		\end{subfigure}
		\caption{An example of a Hurwitz-connected factorization poset which has a rank-$2$ interval that is not Hurwitz-connected.}
		\label{fig:hurwitz_nonlocal}
	\end{figure}
\end{example}

\section{Compatible $\gen$-Orders}
	\label{sec:compatible_orders}
In this section we introduce our main tool: a linear order of $\gen$ that is compatible with $\top\in\grp$.  This concept is an algebraic generalization of the compatible reflection order introduced in \cite{athanasiadis07shellability}, and it also appeared in \cite{muehle15el} in the context of complex reflection groups.  We recall Assumption~\ref{disc:finiteness}: $\red{\gen}{\top}$ is assumed finite.

\subsection{Definition and Properties of Compatible Orders}
	\label{sec:def_compatible_orders}
Recall that $\gen_{\top}=\{a\in\gen\mid a\leq_{\gen}\top\}$.  Observe that we trivially have $\red{\gen}{\top}=\red{\gen_{\top}}{\top}$.

Let $\prec$ be any linear order on $\gen_{\top}$.  We say that a factorization $(a_{1},a_{2},\ldots,a_{\ell_{\gen}(\top)})\in\red{\gen}{\top}$ is \defn{$\prec$-rising} if $a_{i}\preceq a_{i+1}$ for all $i\in[\ell_{\gen}(\top)-1]$. We denote by $\rise(\top;\prec)$ the number of $\prec$-rising reduced $\gen$-factorizations of $\top$ for a given linear order $\prec$ on~$\gen_{\top}$.

\subsubsection{Hurwitz Orbits and Rising Factorizations}

The next statement relates these rising factorizations to the Hurwitz orbits of $\red{\gen}{\top}$, in the specific case when $\ell_{\gen}(\top)=2$.

\begin{proposition}\label{prop:compatible_order_rank_2}
	Let $\top\in\grp$ have $\ell_{\gen}(\top)=2$.  Let $\orb(\top)$ denote the number of Hurwitz orbits of $\red{\gen}{\top}$.  Then:
	\begin{displaymath}
		\orb(\top) = \min \bigl\{ \rise(\top;\prec) \mid \;\prec \text{ is a linear order on } \gen_{\top} \bigr\}.
	\end{displaymath}
	In particular, there exists a linear order $\prec$ of $\gen_{\top}$ such that the number of Hurwitz orbits of $\red{\gen}{\top}$ is equal to the number of $\prec$-rising reduced $\gen$-factorizations of $\top$.
\end{proposition}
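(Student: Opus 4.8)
The plan is to translate the problem, in the special case $\ell_{\gen}(\top)=2$, into a statement about a single permutation, and then verify both inequalities of the claimed minimum by short counting arguments.

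First I would set up a dictionary. Since $\ell_{\gen}(\top)=2$, every element of $\red{\gen}{\top}$ is a pair $(a,b)$ with $a,b\in\gen$ and $ab=\top$, so $b=a^{-1}\top$ is determined by $a$; hence the map $\pi\colon\red{\gen}{\top}\to\gen_{\top}$, $(a,b)\mapsto a$, is injective. It is also surjective: if $a\in\gen$ with $a\leq_{\gen}\top$, then $\ell_{\gen}(a^{-1}\top)=\ell_{\gen}(\top)-\ell_{\gen}(a)=1$, so $a^{-1}\top\in\gen$ and $(a,a^{-1}\top)\in\red{\gen}{\top}$ (and $a^{-1}\top$, being a generator appearing in a reduced factorization, lies in $\gen_{\top}$, using Proposition~\ref{prop:subword_order} or Lemma~\ref{lem:conjugation_isomorphism}). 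Thus $\pi$ is a bijection; in particular $\red{\gen}{\top}$ finite forces $\gen_{\top}$ finite. Using \eqref{eq:hurwitz}, $\sigma_{1}\cdot(a,a^{-1}\top)$ has first component $a^{-1}\top$, so transporting $\sigma_{1}$ through $\pi$ yields the map $\rho\colon\gen_{\top}\to\gen_{\top}$, $\rho(a)=a^{-1}\top$, which is a permutation of the finite set $\gen_{\top}$ (it is injective). Consequently the Hurwitz orbits of $\red{\gen}{\top}$ correspond, via $\pi$, exactly to the cycles of $\rho$; write $\gen_{\top}=S_{1}\sqcup\cdots\sqcup S_{r}$ for these cycles, so $\orb(\top)=r$. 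Finally, a factorization $(a,a^{-1}\top)$ is $\prec$-rising precisely when $a\preceq\rho(a)$, and since $\rho(a)$ lies in the same cycle as $a$, this condition depends only on the restriction of $\prec$ to that one cycle.

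Next I would prove the lower bound $\rise(\top;\prec)\geq r$ for every linear order $\prec$ on $\gen_{\top}$: for each $i$ let $a_{i}$ be the $\prec$-minimum of $S_{i}$; since $\rho$ fixes $S_{i}$ setwise we get $a_{i}\preceq\rho(a_{i})$, so $\bigl(a_{i},a_{i}^{-1}\top\bigr)$ is $\prec$-rising, and these $r$ factorizations lie in pairwise distinct Hurwitz orbits. For the matching upper bound I would construct an order achieving $r$: for each $i$ pick any $b\in S_{i}$, list $S_{i}=\{b,\rho(b),\rho^{2}(b),\dots,\rho^{k_{i}-1}(b)\}$ (all $k_{i}=\lvert S_{i}\rvert$ distinct elements, since $\rho|_{S_{i}}$ is a $k_{i}$-cycle), declare $b\succ\rho(b)\succ\cdots\succ\rho^{k_{i}-1}(b)$ inside $S_{i}$, and order the blocks $S_{1},\dots,S_{r}$ arbitrarily. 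Then in the cyclic sequence $b,\rho(b),\dots,\rho^{k_{i}-1}(b),b$ the only $\prec$-ascent is the wrap-around from $\rho^{k_{i}-1}(b)$ to $b$ (and for $k_{i}=1$ the unique pair $(b,b)$ is trivially rising), so $O_{i}$ contains exactly one $\prec$-rising factorization, namely $\bigl(\rho^{k_{i}-1}(b),b\bigr)$. Summing over $i$ gives $\rise(\top;\prec)=r$, and combining the two parts yields $\orb(\top)=r=\min_{\prec}\rise(\top;\prec)$ with the constructed order realizing the minimum.

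The computational parts — that $\pi$ is a bijection, that $\sigma_{1}$ becomes $\rho$, and the ascent count — are all immediate. The only point I expect to need care is the bookkeeping that makes the construction legitimate: that distinct factorizations in one Hurwitz orbit have distinct first components (so an orbit really is a cyclic order on a subset of $\gen_{\top}$), that these subsets partition all of $\gen_{\top}$, and that the rising condition is local to a single cycle, so that the blocks $S_{i}$ can be interleaved in any way without creating extra rising factorizations. All of this follows from the bijection $\pi$ together with the conjugation-invariance of $\ell_{\gen}$ (Lemma~\ref{lem:conjugation_isomorphism}) and Proposition~\ref{prop:subword_order}; once it is in place, the proposition is essentially a one-line consequence.
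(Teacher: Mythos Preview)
Your proof is correct and follows essentially the same approach as the paper's own proof. Both arguments identify each Hurwitz orbit with a cyclic sequence $a_{1}\to a_{2}\to\cdots\to a_{p}\to a_{1}$ of generators, observe that any linear order produces at least one rising pair per cycle (you pick the $\prec$-minimum of the cycle; the paper argues equivalently by the cyclic contradiction $a_{1}\succ\cdots\succ a_{p}\succ a_{1}$), and then construct an order that runs strictly downward along each cycle so that exactly one wrap-around ascent remains. Your explicit bijection $\pi$ and permutation $\rho$ make the bookkeeping (that the cycles partition $\gen_{\top}$ and that the rising condition is cycle-local) more transparent than the paper's terser treatment, but the content is the same.
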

\begin{proof}
	Any Hurwitz orbit of $\red{\gen}{\top}$ is by assumption finite, and therefore has the form
	\begin{equation}\label{eq:hurwitz_orbit}
		\top = a_{1}a_{2} = a_{2}a_{3} = \cdots = a_{p-1}a_{p} = a_{p}a_{1}.
	\end{equation}
	For any linear order $\prec$ on $\gen_{\top}$, at least one of the factorizations in \eqref{eq:hurwitz_orbit} is $\prec$-rising, because otherwise we would obtain the contradiction $a_{1}\succ a_{2}\succ\cdots\succ a_{p}\succ a_{1}$.  We thus obtain the inequality $\orb(\top)\leq\rise(\top;\prec)$ for any order $\prec$.
	
	It remains to show that equality can be achieved for some order $\prec$.  Since $\ell_{\gen}(\top)=2$, any $a\in\gen_{\top}$ appears in exactly one Hurwitz orbit of $\red{\gen}{\top}$.  We can therefore write $\gen_{\top}$ as the disjoint union of sets of the form $\bigl\{a_{1}^{(i)},a_{2}^{(i)},\ldots,a_{p_{i}}^{(i)}\bigr\}$, where $i$ ranges over the number of Hurwitz orbits of $\red{\gen}{\top}$.  Moreover, for any $i$ and any $k\in[p_{i}]$ we have $\top=a_{k}^{(i)}a_{k+1}^{(i)}$ (where we understand $p_{i}+1=1$).  We can thus consider a linear order $\prec$ on $\gen_{\top}$ satisfying $a_{1}^{(i)}\succ a_{2}^{(i)}\succ\cdots\succ a_{p_{i}}^{(i)}$ for all $i$, and we obtain $\rise(\top;\prec)=\orb(\top)$.
\end{proof}

If $\ell_{\gen}(\top)>2$, it is still true that in any Hurwitz orbit, there is at least one rising factorization, and hence, that the number of orbits is at most the number of rising factorizations (this is a consequence of Lemma~\ref{lem:lexicographically_smallest_rising}). However, Proposition~\ref{prop:compatible_order_rank_2} is not true in general, as it is not guaranteed that we can find a total order $\prec$ on $\gen_{\top}$ such that $\rise(\top;\prec)$ equals the number of Hurwitz orbits of $\red{\gen}{\top}$, as shown in the following example.

\begin{example}\label{ex:no_compatible_order}
	Consider the group $\grp$ from Example~\ref{ex:hurwitz_nonshellable} again, then we can check (by computer) that any linear order $\prec$ on $\gen=\{r,s,t,u,v\}$ produces at least two $\prec$-rising maximal chains in $\PP_{rrrt}$.  (In fact $\rise(rrrt;\prec)$ ranges between two and six.)  We have, however, already seen that the Hurwitz graph $\HH(rrrt)$ is connected; see Figure~\ref{fig:hurwitz_nonshellable_graph}.

	We note in the same example that it is not possible to find a linear order on $\gen$ such that for every $g\leq_{\gen}\top$ with $\ell_{\gen}(g)=2$ there is a unique $\prec$-rising reduced $\gen$-factorization of $g$.  (Observe that $\bg_{2}$ acts transitively on $\red{\gen}{g}$ for any such $g$.)  Take for instance $g=rt$.  We find $\red{\gen}{g}=\{rt,ts,sv,vr\}$.  If we suppose that $\prec$ is chosen in such a way that exactly one of the elements of $\red{\gen}{g}$ is $\prec$-rising, then there are four possibilities:
	\begin{itemize}
		\item $r\prec t$ implies $r\prec v\prec s\prec t$, which means that the two factorizations $(r,v)$ and $(v,s)$ in $\red{\gen}{rv}$ are $\prec$-rising;
		\item $t\prec s$ implies $t\prec r\prec v\prec s$, which means that the two factorizations $(r,v)$ and $(v,s)$ in $\red{\gen}{rv}$ are $\prec$-rising;
		\item $s\prec v$ implies $s\prec t\prec r\prec v$, which means that the two factorizations $(t,r)$ and $(s,t)$ in $\red{\gen}{ru}$ are $\prec$-rising;
		\item $v\prec r$ implies $v\prec s\prec t\prec r$, which means that the two factorizations $(t,r)$ and $(s,t)$ in $\red{\gen}{ru}$ are $\prec$-rising.
	\end{itemize}
\end{example}

This brings us to the main definition of this section.

\begin{definition}\label{def:compatible_order}
	A linear order $\prec$ on $\gen_{\top}$ is \defn{$\top$-compatible} if for any $g\leq_{\gen}\top$ with $\ell_{\gen}(g)=2$ there exists a unique $\prec$-rising reduced $\gen$-factorization of $g$.
\end{definition}

We have the following, immediate property.

\begin{lemma}\label{lem:compatible_restriction}
	If $\prec$ is a $\top$-compatible order of $\gen_{\top}$, then for any $g\leq_{\gen}\top$ the restriction $\prec_{g}$ of $\prec$ to $A_{g}$ is $g$-compatible.
\end{lemma}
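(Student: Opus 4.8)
The plan is to reduce the claim directly to the definition of $\top$-compatibility, the point being that a length-two element $h$ lying below $g$ sees only generators from $\gen_g$ in its reduced factorizations, so the restricted order $\prec_g$ cannot change which factorizations of $h$ are rising. First I would note that the restriction makes sense: since $\leq_{\gen}$ is a partial order and $g \leq_{\gen} \top$, every $a \in \gen_g$ satisfies $a \leq_{\gen} g \leq_{\gen} \top$, so $\gen_g \subseteq \gen_{\top}$ and $\prec_g$ is a well-defined linear order on $\gen_g$.

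Now fix $h \leq_{\gen} g$ with $\ell_{\gen}(h) = 2$. By transitivity $h \leq_{\gen} \top$, so $\top$-compatibility yields a unique $\prec$-rising factorization in $\red{\gen}{h}$. I claim that every $(a_1, a_2) \in \red{\gen}{h}$ already has $a_1, a_2 \in \gen_g$: by Proposition~\ref{prop:subword_order}, applied to the one-element sublists, both $a_1$ and $a_2$ lie below $h$ in $\leq_{\gen}$, and $h \leq_{\gen} g$ then forces $a_1, a_2 \in \gen_g$. (If one prefers to avoid Proposition~\ref{prop:subword_order}, one checks $a_1 \leq_{\gen} h$ straight from the length formula and $a_2 \leq_{\gen} h$ using that $\ell_{\gen}$ is conjugation-invariant by Lemma~\ref{lem:conjugation_isomorphism}, so that $\ell_{\gen}(a_2^{-1}h) = \ell_{\gen}(a_2^{-1}a_1 a_2) = \ell_{\gen}(a_1) = 1$.) Since $\prec_g$ is just the restriction of $\prec$ and both factors of any element of $\red{\gen}{h}$ lie in $\gen_g$, a factorization in $\red{\gen}{h}$ is $\prec_g$-rising if and only if it is $\prec$-rising.

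Combining the last two observations, the set of $\prec_g$-rising reduced $\gen$-factorizations of $h$ coincides with the set of $\prec$-rising ones, which is a singleton. As $h$ ranges over all length-two elements with $h \leq_{\gen} g$, this is precisely the assertion that $\prec_g$ is $g$-compatible. There is essentially no obstacle here; the only point calling for a moment of care is verifying that the \emph{second} factor of $h$, and not merely the first, belongs to $\gen_g$, which is where Proposition~\ref{prop:subword_order} (equivalently, the conjugation-invariance of $\ell_{\gen}$) enters.
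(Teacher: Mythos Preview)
Your proof is correct and follows essentially the same approach as the paper, which dispatches the lemma in a single line: ``This follows from the fact that any element of length two which lies below $g$ also lies below $\top$.'' You have simply unpacked this sentence carefully, making explicit why $\gen_g \subseteq \gen_{\top}$, why both factors of any $(a_1,a_2)\in\red{\gen}{h}$ lie in $\gen_g$, and hence why $\prec_g$-rising and $\prec$-rising coincide on $\red{\gen}{h}$; the paper leaves all of this implicit.
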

\begin{proof}
	This follows from the fact that any element of length two which lies below $g$ also lies below $\top$.
\end{proof}

As a direct consequence of Proposition~\ref{prop:compatible_order_rank_2}, we obtain the following connection to the Hurwitz action in the case of an element of length 2.

\begin{corollary}\label{cor:equiv_compatible_loc_hurwitz}
	If $\ell_{\gen}(\top)=2$, then there exists a $\top$-compatible order of $\gen_{\top}$ if and only if $\PP_{\top}(\grp,\gen)$ is Hurwitz-connected.
\end{corollary}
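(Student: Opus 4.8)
The key observation is that when $\ell_{\gen}(\top)=2$, the only element $g\leq_{\gen}\top$ with $\ell_{\gen}(g)=2$ is $g=\top$ itself. Consequently, by Definition~\ref{def:compatible_order}, a linear order $\prec$ on $\gen_{\top}$ is $\top$-compatible if and only if $\top$ admits a unique $\prec$-rising reduced $\gen$-factorization, i.e., $\rise(\top;\prec)=1$. So the first step is to unwind the definitions to this restatement, and the rest is bookkeeping around Proposition~\ref{prop:compatible_order_rank_2}.

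\textbf{Forward direction.} Suppose a $\top$-compatible order $\prec$ exists, so $\rise(\top;\prec)=1$. Recall from the proof of Proposition~\ref{prop:compatible_order_rank_2} that every Hurwitz orbit of $\red{\gen}{\top}$ contains at least one $\prec$-rising factorization, hence $\orb(\top)\leq\rise(\top;\prec)=1$. Since $\red{\gen}{\top}$ is nonempty (as $\ell_{\gen}(\top)=2$), we get $\orb(\top)=1$, which is exactly the statement that the braid group $\bg_{2}$ acts transitively on $\red{\gen}{\top}$, i.e.\ that $\PP_{\top}(\grp,\gen)$ is Hurwitz-connected.

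\textbf{Converse direction.} Suppose $\PP_{\top}(\grp,\gen)$ is Hurwitz-connected, so $\orb(\top)=1$. Proposition~\ref{prop:compatible_order_rank_2} gives $\min\{\rise(\top;\prec)\mid\prec\text{ a linear order on }\gen_{\top}\}=\orb(\top)=1$, so there is some linear order $\prec$ on $\gen_{\top}$ with $\rise(\top;\prec)=1$. By the restatement above, this $\prec$ is precisely a $\top$-compatible order of $\gen_{\top}$.

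\textbf{Main obstacle.} There is essentially none: the entire content is already packaged in Proposition~\ref{prop:compatible_order_rank_2}, and the only point requiring a moment's care is the initial reduction---that in rank $2$ the defining condition of a compatible order only involves $g=\top$---together with the elementary bound $\rise(\top;\prec)\geq\orb(\top)\geq 1$ that guarantees the minimum being $1$ is equivalent to some order achieving $\rise(\top;\prec)=1$. Chaining these equivalences yields the corollary.
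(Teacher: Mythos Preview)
Your proof is correct and follows essentially the same approach as the paper, which simply states the corollary as a direct consequence of Proposition~\ref{prop:compatible_order_rank_2}. Your explicit unpacking---noting that in rank~$2$ the compatibility condition reduces to $\rise(\top;\prec)=1$, and then reading off both directions from the equality $\orb(\top)=\min_{\prec}\rise(\top;\prec)$---is exactly the intended reasoning.
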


We have seen in Example~\ref{ex:no_compatible_order} that this equivalence breaks down as soon as $\ell_{\gen}(\top)>2$.  We still have the following implication, though.

\begin{lemma}\label{lem:compatible_rank_2_hurwitz}
	If there exists a $\top$-compatible order of $\gen_{\top}$, then $\PP_{\top}(\grp,\gen)$ is locally Hurwitz-connected.
\end{lemma}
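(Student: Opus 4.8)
The plan is to reduce everything to the length-two case, where Proposition~\ref{prop:compatible_order_rank_2} already does the work. Let $\prec$ be a $\top$-compatible order of $\gen_{\top}$, and fix any $g\leq_{\gen}\top$ with $\ell_{\gen}(g)=2$; by Definition~\ref{def:locally_hurwitz} it suffices to show that $\bg_{2}$ acts transitively on $\red{\gen}{g}$, that is, that $\orb(g)=1$.

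First I would note that $\red{\gen}{g}$ is finite: by Proposition~\ref{prop:subword_order} every element of $\red{\gen}{g}$ occurs as a length-two subword of some element of $\red{\gen}{\top}$, and $\red{\gen}{\top}$ is finite by Assumption~\ref{disc:finiteness}. Hence Proposition~\ref{prop:compatible_order_rank_2} is applicable to $g$.

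Next, by Lemma~\ref{lem:compatible_restriction} the restriction $\prec_{g}$ of $\prec$ to $\gen_{g}$ is $g$-compatible. Applying Definition~\ref{def:compatible_order} to the element $g$ itself (which has $\gen$-length two) yields a unique $\prec_{g}$-rising reduced $\gen$-factorization of $g$, i.e.\ $\rise(g;\prec_{g})=1$. Now Proposition~\ref{prop:compatible_order_rank_2} gives
\[
	\orb(g)=\min\bigl\{\rise(g;\prec')\mid\prec'\text{ a linear order on }\gen_{g}\bigr\}\leq\rise(g;\prec_{g})=1 .
\]
Since $g$ has $\gen$-length two it admits at least one reduced $\gen$-factorization, so $\orb(g)\geq 1$, and therefore $\orb(g)=1$. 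As $g$ was an arbitrary element of $\gen$-length two below $\top$, the poset $\PP_{\top}(\grp,\gen)$ is locally Hurwitz-connected.

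There is essentially no obstacle: the statement is a direct corollary of Proposition~\ref{prop:compatible_order_rank_2} together with Lemma~\ref{lem:compatible_restriction}. The only point requiring a moment's care is that Proposition~\ref{prop:compatible_order_rank_2} is stated under a finiteness hypothesis, so one must observe that finiteness of $\red{\gen}{\top}$ descends to finiteness of $\red{\gen}{g}$ for each length-two $g\leq_{\gen}\top$ before invoking it.
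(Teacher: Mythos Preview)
Your proof is correct and follows essentially the same route as the paper's: restrict $\prec$ to $\gen_g$ via Lemma~\ref{lem:compatible_restriction}, then use the rank-$2$ characterization coming from Proposition~\ref{prop:compatible_order_rank_2}. The only cosmetic difference is that the paper packages the last step into Corollary~\ref{cor:equiv_compatible_loc_hurwitz} (which is itself an immediate consequence of Proposition~\ref{prop:compatible_order_rank_2}), whereas you invoke Proposition~\ref{prop:compatible_order_rank_2} directly; your explicit verification of the finiteness of $\red{\gen}{g}$ is a nice added precision that the paper leaves implicit under Assumption~\ref{disc:finiteness}.
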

\begin{proof}
	Let $g\leq_{\gen}\top$ with $\ell_{\gen}(g)=2$.  If $\prec$ is $\top$-compatible, then its restriction to $\gen_g$ is $g$-compatible by Lemma~\ref{lem:compatible_restriction} and Corollary~\ref{cor:equiv_compatible_loc_hurwitz} implies that $\red{\gen}{g}$ has a unique Hurwitz orbit.
\end{proof}

\begin{example}
	If $\grp$ is abelian, then for every $\top\in\grp$, every linear order of $\gen_{\top}$ is $\top$-compatible.
\end{example}

\begin{example}\label{ex:sym_4_ctd_1}
	Let us continue Example~\ref{ex:sym_4}.  Let $\prec$ be the lexicographic order on the set $T$ of transpositions of $\mathfrak{S}_{4}$, \ie
	\begin{displaymath}
		(1\;2)\prec(1\;3)\prec(1\;4)\prec(2\;3)\prec(2\;4)\prec(3\;4).
	\end{displaymath}
	For $\top=(1\;2\;3\;4)$, the following table lists the elements of length two in $\PP_{\top}$ together with their sets of reduced $T$-factorizations.  Only the first factorizations per line are $\prec$-rising.\\
	\begin{center}\begin{tabular}{c||c}
		$g\leq_{T}\top$ & $\red{T}{g}$\\
		\hline\hline
		$(1\;2\;3)$ & $\bigl\{\bigl((1\;2),(2\;3)\bigr),\bigl((2\;3),(1\;3)\bigr),\bigl((1\;3),(1\;2)\bigr)\bigr\}$\\
		$(1\;2\;4)$ & $\bigl\{\bigl((1\;2),(2\;4)\bigr),\bigl((2\;4),(1\;4)\bigr),\bigl((1\;4),(1\;2)\bigr)\bigr\}$\\
		$(1\;3\;4)$ & $\bigl\{\bigl((1\;3),(3\;4)\bigr),\bigl((3\;4),(1\;4)\bigr),\bigl((1\;4),(1\;3)\bigr)\bigr\}$\\
		$(2\;3\;4)$ & $\bigl\{\bigl((2\;3),(3\;4)\bigr),\bigl((3\;4),(2\;4)\bigr),\bigl((2\;4),(2\;3)\bigr)\bigr\}$\\
		$(1\;2)(3\;4)$ & $\{\bigl((1\;2),(3\;4)\bigr),\bigl((3\;4),(1\;2)\bigr)\}$\\
		$(1\;4)(2\;3)$ & $\{\bigl((1\;4),(2\;3)\bigr),\bigl((2\;3),(1\;4)\bigr)\}$\\
	\end{tabular}\end{center}
	We thus conclude that $\prec$ is $\top$-compatible.  On the other hand, if we consider the following linear order
	\begin{displaymath}
		(1\;3)\prec'(1\;2)\prec'(1\;4)\prec'(2\;3)\prec'(2\;4)\prec'(3\;4),
	\end{displaymath}
	then we observe that $(1\;2\;3)$ has two $\prec'$-rising reduced $T$-factorizations, namely $\bigl((1\;2),(2\;3)\bigr)$ and $\bigl((1\;3),(1\;2)\bigr)$.
\end{example}

\subsubsection{Further Properties of Compatible Orders}

Let us collect a few more properties of $\top$-compatible orders.

\begin{lemma}\label{lem:compatible_cyclic}
	Let $A_{\top}=\{a_{1},a_{2},\ldots,a_{N}\}$, and suppose that $a_{1}\prec a_{2}\prec\cdots\prec a_{N}$ is $\top$-compatible.  For any $t\in[N]$ the order $a_{t}\prec a_{t+1}\prec\cdots\prec a_{N}\prec a_{1}\prec a_{2}\prec\cdots\prec a_{t-1}$ is $\top$-compatible.
\end{lemma}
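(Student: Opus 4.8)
The plan is to reduce to a single ``elementary rotation'' of the linear order and then to compare the two orders by counting ascents separately along each Hurwitz orbit. First I would observe that the order displayed in the statement is obtained from $a_{1}\prec a_{2}\prec\cdots\prec a_{N}$ by performing $t-1$ times the operation ``move the current minimum generator to the top of the order'': one such step turns $a_{1}\prec a_{2}\prec\cdots\prec a_{N}$ into the order $\prec'$ with $a_{2}\prec' a_{3}\prec'\cdots\prec' a_{N}\prec' a_{1}$, and iterating this produces exactly the claimed rotated order. Hence, by induction on $t$, it suffices to prove that if $a_{1}\prec a_{2}\prec\cdots\prec a_{N}$ is $\top$-compatible then so is $\prec'$. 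The only feature of $\prec'$ I will use is that $\prec$ and $\prec'$ agree on $\gen_{\top}\setminus\{a_{1}\}$, while $a_{1}$ is the $\prec$-minimum and the $\prec'$-maximum of $\gen_{\top}$.

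Now fix $g\leq_{\gen}\top$ with $\ell_{\gen}(g)=2$; note that $\red{\gen}{g}$ is finite, since appending a fixed reduced $\gen$-factorization of $g^{-1}\top$ embeds $\red{\gen}{g}$ into $\red{\gen}{\top}$. The generator $\sigma_{1}$ acts as a permutation of $\red{\gen}{g}$, and I would decompose $\red{\gen}{g}$ into its $\bg_{2}$-orbits. An orbit of size $q$ has the cyclic shape $(c_{1},c_{2}),(c_{2},c_{3}),\ldots,(c_{q},c_{1})$ with $c_{1}c_{2}=\cdots=c_{q}c_{1}=g$; since $c_{i+1}=c_{i}^{-1}g$ is determined by $c_{i}$, the generators $c_{1},\ldots,c_{q}$ are pairwise distinct whenever $q\geq 2$, whereas an orbit of size one is necessarily $\{(c,c)\}$ with $g=c^{2}$, and such an orbit contributes exactly one $\prec''$-rising factorization for \emph{every} linear order $\prec''$ (the weak inequality $c\preceq'' c$ is automatic). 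For an orbit of size $q\geq2$ the number of rising factorizations it contains equals the number of cyclic indices $i$ with $c_{i}\prec c_{i+1}$, because the $c_i$ are distinct.

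The key step is then the claim: for any cyclic sequence $(c_{1},\ldots,c_{q})$ of pairwise distinct elements of $\gen_{\top}$ with $q\geq 2$, the number of cyclic indices $i$ with $c_{i}\prec c_{i+1}$ equals the number with $c_{i}\prec' c_{i+1}$. Indeed, since $\prec$ and $\prec'$ agree off $a_{1}$, we have $c_{i}\prec c_{i+1}\iff c_{i}\prec' c_{i+1}$ unless $a_{1}\in\{c_{i},c_{i+1}\}$; so if $a_{1}$ does not occur among the $c_{j}$ the two counts coincide immediately. Otherwise $a_{1}=c_{m}$ for a unique $m$, and the only affected steps are the one entering $c_{m}$ and the one leaving $c_{m}$, which are distinct precisely because $q\geq 2$: as $c_{m}=a_{1}$ is $\prec$-minimal but $\prec'$-maximal in $\gen_{\top}$, the step entering $c_{m}$ switches from a $\prec$-descent to a $\prec'$-ascent while the step leaving $c_{m}$ switches from a $\prec$-ascent to a $\prec'$-descent, so the ascent count is unchanged. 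Applying this to the first-coordinate sequence of each $\bg_{2}$-orbit of $\red{\gen}{g}$ of size $\geq2$, together with the fixed contribution of the size-one orbits, yields $\rise(g;\prec')=\rise(g;\prec)=1$. As $g$ was an arbitrary length-two element below $\top$, the order $\prec'$ is $\top$-compatible, which closes the induction.

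I do not expect a genuine obstacle: at bottom the lemma is the remark that an elementary rotation of a linear order preserves the cyclic ascent count of any circular word. The only points requiring a little care are the separate handling of Hurwitz orbits of size one (where ``rising'' holds by the weak inequality, so no order is distinguished) and the verification that inside an orbit of size $\geq 2$ the first-coordinate generators $c_{1},\ldots,c_{q}$ are genuinely distinct, which is what makes the two exceptional steps ``into $c_m$'' and ``out of $c_m$'' different steps.
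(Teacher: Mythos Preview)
Your proof is correct and follows essentially the same route as the paper's: reduce to a single cyclic shift (moving $a_{1}$ from bottom to top) and, for each length-two $g\leq_{\gen}\top$, verify that the number of rising factorizations is preserved by observing that exactly one step into $a_{1}$ flips from descent to ascent while the step out of $a_{1}$ flips the other way. The only minor difference is that the paper tacitly uses $\top$-compatibility (via Corollary~\ref{cor:equiv_compatible_loc_hurwitz}) to know that $\red{\gen}{g}$ is a single Hurwitz orbit, whereas you argue orbit by orbit and handle the size-one case separately, which is a touch more self-contained but not a different idea.
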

\begin{proof}
	It suffices to consider the case $t=2$, the other cases follow then by repeated application.  It is immediate that we only need to consider those $g\leq_{\gen}\top$ of length two for which $a_{1}\in\gen_{g}$.  (For all other $g$ of length two, the restriction of $\prec$ to $\gen_{g}$ is not affected by the cyclic shift of the indices.)
	
	Say that $g$ is such an element, and $g=a_{1}a_{i_{s}}=a_{i_{2}}a_{1}=a_{i_{3}}a_{i_{2}}=\cdots=a_{i_{s}}a_{i_{s-1}}$ for some $s\geq 2$ and some $1<i_{2}<i_{3}<\cdots<i_{s}\leq N$.  It follows that $(a_{1},a_{i_{s}})$ is the unique $\prec$-rising reduced $\gen$-factorization of $g$ before the shift, and is no longer $\prec$-rising after the shift.  Moreover,  $(a_{i_{r}},a_{i_{r-1}})$ is not $\prec$-rising before and after the shift whenever $r\in\{3,4,\ldots,s\}$.  Finally $a_{i_{2}}a_{1}$ is not $\prec$-rising before the shift, but it is $\prec$-rising after the shift.  
	
	We conclude that the number of $\prec$-rising reduced $\gen$-factorizations of $g$ does not change under cyclically shifting the order $\prec$.
\end{proof}

\begin{lemma}\label{lem:compatible_min_max}
	Let $\prec$ be a $\top$-compatible order of $\gen_{\top}$, and let $\prec_{g}$ denote the restriction of $\prec$ to $\gen_{g}$ for some $g\leq_{\gen}\top$ with $\ell_{\gen}(g)=2$.  If $(a,b)$ is the unique $\prec_{g}$-rising reduced $\gen$-factorization of $g$, then $a$ is minimal and $b$ is maximal with respect to $\prec_{g}$. 
\end{lemma}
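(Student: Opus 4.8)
The plan is to read off the statement from the cyclic structure of a rank-two Hurwitz orbit, exactly as in the proof of Proposition~\ref{prop:compatible_order_rank_2}. First I would note that by Lemma~\ref{lem:compatible_restriction} the restriction $\prec_g$ is itself $g$-compatible, and since $\ell_{\gen}(g)=2$, Corollary~\ref{cor:equiv_compatible_loc_hurwitz} guarantees that $\PP_g(\grp,\gen)$ is Hurwitz-connected, i.e.\ $\red{\gen}{g}$ is a single Hurwitz orbit. As $\red{\gen}{g}$ is finite, this orbit has the cyclic shape of~\eqref{eq:hurwitz_orbit},
\[
	g = a_1a_2 = a_2a_3 = \cdots = a_{p-1}a_p = a_pa_1,
\]
for some $p\geq 1$ and $a_1,\dots,a_p\in\gen$ (closure of $\gen$ under conjugation keeps the $a_i$ inside $\gen$). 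By Proposition~\ref{prop:subword_order} every generator lying weakly below $g$ occurs as a letter of some reduced $\gen$-factorization of $g$, so $\gen_g=\{a_1,\dots,a_p\}$ and the reduced $\gen$-factorizations of $g$ are precisely the pairs $(a_i,a_{i+1})$ with indices taken modulo $p$.

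Next I would dispose of the degenerate case $p=1$: then $g=a_1^2$, $\gen_g=\{a_1\}$ is a singleton, and $(a,b)=(a_1,a_1)$ is trivially both minimal and maximal for $\prec_g$. For $p\geq 2$ I would relabel so that $(a,b)=(a_1,a_2)$, the unique $\prec_g$-rising factorization, whence $a_1\prec a_2$. Uniqueness forces each of the remaining factorizations $(a_2,a_3),(a_3,a_4),\dots,(a_p,a_1)$ to fail to be rising, that is,
\[
	a_2\succ a_3\succ\cdots\succ a_p\succ a_1 .
\]
This chain of strict $\prec_g$-inequalities immediately exhibits $a_1=a$ as the $\prec_g$-minimum and $a_2=b$ as the $\prec_g$-maximum of $\gen_g=\{a_1,\dots,a_p\}$, which is the claim.

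I do not anticipate a real obstacle here; the one point deserving care is the identity $\gen_g=\{a_1,\dots,a_p\}$. It relies on two inputs at once: Proposition~\ref{prop:subword_order}, so that every element of $\gen_g$ genuinely appears as a letter in a reduced factorization of $g$, and the single-orbit property just established, so that all those letters sit in the one cyclic list. Without the latter one would only obtain minimality of $a$ and maximality of $b$ within the orbit of $(a,b)$ rather than within all of $\gen_g$, so it is essential that the argument is applied to $g$ (where $\ell_{\gen}(g)=2$ makes Corollary~\ref{cor:equiv_compatible_loc_hurwitz} available) and not merely quoted abstractly.
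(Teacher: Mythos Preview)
Your argument is correct, but it takes a different route than the paper. The paper's proof does not invoke Corollary~\ref{cor:equiv_compatible_loc_hurwitz} at all: instead of unwrapping the entire cyclic Hurwitz orbit, it simply picks $a_{\min}=\min\gen_g$, writes $g=a_{\min}x$ via Proposition~\ref{prop:subword_order}, observes that $(a_{\min},x)$ is rising, and concludes $a=a_{\min}$ by uniqueness; the maximal case is handled symmetrically after one Hurwitz move to put $a_{\max}$ in the second slot. Your approach, by contrast, first establishes that $\red{\gen}{g}$ is a single orbit and then reads off the full chain $a_2\succ a_3\succ\cdots\succ a_p\succ a_1$. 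The paper's version is more economical in its dependencies (it avoids the detour through Proposition~\ref{prop:compatible_order_rank_2} that underlies Corollary~\ref{cor:equiv_compatible_loc_hurwitz}), whereas yours extracts strictly more information, namely the complete linear order that $\prec$ induces on $\gen_g$---a fact the paper only records later, implicitly, in the discussion of the cycle graph. Both are short and valid; yours just trades a slightly heavier toolkit for a stronger intermediate conclusion.
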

\begin{proof}
	By definition and Lemma~\ref{lem:compatible_restriction} there exists a unique $\prec_{g}$-rising $\gen$-factorization of $g$, say $(a,b)$.  Let $a_{\min}=\min\gen_{g}$, and $a_{\max}=\max\gen_{g}$.  Proposition~\ref{prop:subword_order} implies that we can write $g=a_{\min}x$ for some $x\in\gen_{g}$, and by minimality we find $a_{\min}\prec x$.  So $(a_{\min},x)$ is a rising factorization, and by uniqueness we get $a_{\min}=a$.  Analogously we can write $g=a_{\max}y$; and using Hurwitz action this can be rewritten $g=y' a_{\max}$ for some $y'\in\gen_{g}$.  By maximality we find $y'\prec a_{\max}$, which implies $a_{\max}=b$.
\end{proof}

We are now in the position to prove Theorem~\ref{thm:main_hurwitz}.

\begin{proof}[Proof of Theorem~\ref{thm:main_hurwitz}]
	Suppose that $\PP_{\top}$ is chain-connected and admits a $\top$-compatible order $\prec$ of $\gen_{\top}$.  Then Lemma~\ref{lem:compatible_rank_2_hurwitz} implies that $\PP_{\top}$ is locally Hurwitz-connected.  Theorem~\ref{thm:hurwitz_transitive} now implies that $\bg_{\ell_{\gen}}(\top)$ acts transitively on $\red{\gen}{\top}$.
\end{proof}

However, Example~\ref{ex:no_compatible_order} shows that there are cases where $\red{\gen}{\top}$ is Hurwitz-connected, but there does not exist a $\top$-compatible order of $\gen_{\top}$.  

\subsubsection{Compatible Orders and Lexicographic Shellability}

In this section we provide some evidence that $\top$-compatible orders are also closely related to the shellability of the factorization poset $\PP_{\top}$.  For any linear order on $\gen_{\top}$ we can consider the corresponding lexicographic order on $\red{\gen}{\top}$, which is itself a linear order.  

\begin{lemma}\label{lem:lexicographically_smallest_rising}
	Let $\prec$ be a linear order of $\gen_{\top}$ and denote by $\prec_{\text{lex}}$ the corresponding lexicographic order on $\red{\gen}{\top}$. Fix a Hurwitz orbit $\Omega$ inside $\red{\gen}{\top}$. Then the $\prec_{\text{lex}}$-smallest factorization among all the factorizations in $\Omega$ is $\prec$-rising.
\end{lemma}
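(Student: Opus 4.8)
The statement to prove is Lemma~\ref{lem:lexicographically_smallest_rising}: among all factorizations in a fixed Hurwitz orbit $\Omega \subseteq \red{\gen}{\top}$, the $\prec_{\text{lex}}$-smallest one is $\prec$-rising. The natural approach is by contradiction: suppose $\xb = (a_1,a_2,\ldots,a_n)$ is the $\prec_{\text{lex}}$-minimum of $\Omega$ but is not $\prec$-rising. Then there is some index $i$ with $a_i \succ a_{i+1}$. I want to produce another factorization in $\Omega$ that is lexicographically strictly smaller than $\xb$, contradicting minimality.

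\textbf{Key steps.} First I would localize to the length-two factor $g \defs a_i a_{i+1}$. By Proposition~\ref{prop:subword_order}, $g \leq_{\gen} \top$ and $\ell_{\gen}(g)=2$; by Lemma~\ref{lem:compatible_restriction} the restriction $\prec_g$ of $\prec$ to $\gen_g$ need not be $g$-compatible here (we are not assuming compatibility in this lemma), but that is irrelevant --- what matters is only that $(a_i,a_{i+1})$ is not $\prec_g$-rising. Now consider the Hurwitz orbit of $(a_i,a_{i+1})$ inside $\red{\gen}{g}$ under $\bg_2$; it is finite (Assumption~\ref{disc:finiteness}), so by \eqref{eq:hurwitz_orbit} in the proof of Proposition~\ref{prop:compatible_order_rank_2} it has the cyclic form $g = b_1 b_2 = b_2 b_3 = \cdots = b_p b_1$ with $(a_i,a_{i+1}) = (b_k, b_{k+1})$ for some $k$. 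As observed in that same proof, at least one of these pairs is $\prec$-rising. Among the $\prec$-rising ones, I would actually want the $\prec_{\text{lex}}$-smallest pair $(b_j, b_{j+1})$; in particular $b_j \preceq a_i$, and moreover I claim $b_j \prec a_i$ strictly --- because $(a_i, a_{i+1})$ itself is \emph{not} rising, so if $b_j = a_i$ then the rising pair starting at $b_j = a_i$ would have to be $(a_i, a_{i+1})$ by the cyclic structure (the unique pair in the orbit with first coordinate $a_i$), contradicting that it is not rising. Hence replacing the consecutive block $(a_i, a_{i+1})$ by $(b_j, b_{j+1})$ in $\xb$ --- which is achieved by some element of $\bg_2$ lifted to $\bg_n$ acting on strands $i, i+1$, exactly as in the proof of Theorem~\ref{thm:hurwitz_transitive} --- yields a factorization $\xb' \in \Omega$ agreeing with $\xb$ in positions $1,\ldots,i-1$ and having $b_j \prec a_i$ in position $i$. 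Therefore $\xb' \prec_{\text{lex}} \xb$, the desired contradiction.

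\textbf{Main obstacle.} The only genuinely delicate point is the strict inequality $b_j \prec a_i$: one must rule out the case where the $\prec_{\text{lex}}$-smallest rising pair in the local orbit starts with $a_i$ itself. The resolution uses the cyclic structure \eqref{eq:hurwitz_orbit} of a length-two Hurwitz orbit: since the $b_m$ are distinct and $g = b_m b_{m+1}$ for each $m$, a given element of $\gen_g$ occurs as the first coordinate of \emph{exactly one} pair in the orbit; so the pair with first coordinate $a_i$ is uniquely $(a_i, a_{i+1})$, which by hypothesis is not rising. (If the $b_m$ were not distinct the argument would need more care, but distinctness is forced: if $b_m = b_{m'}$ with $m<m'$ then $b_{m+1} = b_m^{-1} g = b_{m'}^{-1} g = b_{m'+1}$, so the orbit would already close up at a shorter length --- take $p$ minimal.) Everything else is routine: the lift of a $\bg_2$-move to a $\bg_n$-move is exactly the construction already used in Theorem~\ref{thm:hurwitz_transitive}, and finiteness of the local orbit is guaranteed by Assumption~\ref{disc:finiteness}.
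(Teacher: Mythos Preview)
Your argument is correct, but it takes a considerably more elaborate route than the paper's one-line proof. The paper simply observes that if $a_{i+1}\prec a_i$, then the single Hurwitz move $\sigma_i$ applied to $\xb$ produces
\[
(a_1,\ldots,a_{i-1},\,a_{i+1},\,a_{i+1}^{-1}a_i a_{i+1},\,a_{i+2},\ldots,a_n),
\]
which agrees with $\xb$ in positions $1,\ldots,i-1$ and has the strictly smaller entry $a_{i+1}$ in position~$i$; contradiction. There is no need to analyze the full local rank-$2$ Hurwitz orbit, locate its lex-smallest rising pair, or invoke the distinctness of the $b_m$.

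Your detour through the cyclic structure of the local orbit does work (the claim $b_j\preceq a_i$ that you state ``in particular'' is justified because the pair whose first coordinate is $\min_{\prec}\gen_g$ is automatically rising and lex-minimal, and $a_i\in\gen_g$), and it has the mild conceptual advantage of connecting the lemma to the analysis around Proposition~\ref{prop:compatible_order_rank_2}. But it obscures the essential point: a single $\sigma_i$ already does the job, because the Hurwitz move at a descent places the smaller of the two adjacent letters first. The references to Lemma~\ref{lem:compatible_restriction} and Proposition~\ref{prop:subword_order} are also unnecessary here, since neither $\top$-compatibility nor the subword characterization is used.
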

\begin{proof}
	Let $(a_{1},a_{2},\ldots,a_{k})\in\red{\gen}{c}$ be a reduced $\gen$-factorization in the orbit $\Omega$.  If it is not $\prec$-rising, then there must be some $i\in[k-1]$ such that $a_{i+1}\prec a_{i}$.  It follows that
	\begin{displaymath}
		\sigma_{i}\cdot(a_{1},a_{2},\ldots,a_{k}) = (a_{1},a_{2},\ldots,a_{i-1},a_{i+1},a_{i+1}^{-1}a_{i}a_{i+1},a_{i+2},\ldots,a_{k})
	\end{displaymath}
	is a lexicographically smaller reduced $\gen$-factorization, and by construction it is also in the orbit $\Omega$.  The claim follows then by contraposition.
\end{proof}

It is a consequence of Lemma~\ref{lem:lexicographically_smallest_rising} that for every linear order $\prec$ of $\gen_{\top}$ every interval in $\PP_{\top}$ has at least one $\prec$-rising maximal chain with respect to the labeling $\lambda_{\top}$ defined in \eqref{eq:natural_labeling}.  We conjecture that there is exactly one $\prec$-rising maximal chain per interval if and only if $\prec$ is $\top$-compatible.

\begin{conjecture}\label{conj:compatible_el_shellable}
	The natural labeling $\lambda_{\top}$ from \eqref{eq:natural_labeling} is an EL-labeling of $\PP_{\top}$ with respect to some linear order $\prec$ of $\gen_{\top}$ if and only if $\PP_{\top}$ is totally chain-connected and $\prec$ is $\top$-compatible.
\end{conjecture}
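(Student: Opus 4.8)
The forward implication admits a short proof. Suppose $\lambda_{\top}$ is an EL-labeling of $\PP_{\top}$ with respect to $\prec$. Since the defining property of an EL-labeling is quantified over all intervals, its restriction to any interval $[x,y]_{\gen}$ of $\PP_{\top}$ is again an EL-labeling; hence every interval of $\PP_{\top}$ is EL-shellable, thus shellable by Theorem~\ref{thm:el_shellable}, thus chain-connected by Proposition~\ref{prop:chain_connectivity_necessary}, so $\PP_{\top}$ is totally chain-connected. To see that $\prec$ is $\top$-compatible, fix $g\leq_{\gen}\top$ with $\ell_{\gen}(g)=2$ and consider the rank-$2$ interval $[\id,g]_{\gen}$. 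By Lemma~\ref{lem:bijection_chains_words} its maximal chains $\id\lessdot_{\gen}a_{1}\lessdot_{\gen}g$ are exactly the reduced $\gen$-factorizations $(a_{1},a_{1}^{-1}g)$ of $g$, and the $\lambda_{\top}$-label sequence of such a chain equals $(a_{1},a_{1}^{-1}g)$ itself; hence a maximal chain of $[\id,g]_{\gen}$ is rising precisely when the corresponding reduced $\gen$-factorization of $g$ is $\prec$-rising. Since $[\id,g]_{\gen}$ has a unique rising maximal chain, $g$ has a unique $\prec$-rising reduced $\gen$-factorization, which is Definition~\ref{def:compatible_order}.

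For the backward implication --- the genuine content of the conjecture --- assume $\PP_{\top}$ is totally chain-connected and $\prec$ is $\top$-compatible. Combining Lemma~\ref{lem:bottom_intervals} with the label-preservation Lemma~\ref{lem:label_preserving}, every interval $[x,y]_{\gen}$ of $\PP_{\top}$ is isomorphic, as a $\lambda_{\top}$-labeled poset, to $\PP_{x^{-1}y}$; the latter is again totally chain-connected and carries the $(x^{-1}y)$-compatible order obtained by restricting $\prec$ (Lemma~\ref{lem:compatible_restriction}). Therefore it suffices to prove that a factorization poset $\PP_{\top}$ which is totally chain-connected and admits a $\top$-compatible order $\prec$ has a \emph{unique} $\prec$-rising maximal chain; the ``precedes'' clause of the EL-property is then automatic, since by Lemma~\ref{lem:lexicographically_smallest_rising} the $\prec_{\text{lex}}$-minimal maximal chain of $\PP_{\top}$ is $\prec$-rising, hence must be that unique one. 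I also record, for later bookkeeping, that Theorem~\ref{thm:main_hurwitz}, applied to $\PP_{\top}$ and to every $\PP_{g}$ with $g\leq_{\gen}\top$, forces $\red{\gen}{\top}$ and each $\red{\gen}{g}$ to consist of a single Hurwitz orbit.

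I would establish uniqueness by induction on $n=\ell_{\gen}(\top)$. For $n\le 2$ it is Definition~\ref{def:compatible_order} verbatim. For $n\ge 3$, set $a:=\min\gen_{\top}$; the $\prec_{\text{lex}}$-minimal maximal chain $C_{0}$ is $\prec$-rising by the previous paragraph and begins with the label $a$ (its first label is minimal among all atoms, and the atoms of $\PP_{\top}$ are exactly $\gen_{\top}$). Note that any $\prec$-rising maximal chain whose first label differs from $a$ cannot use $a$ as a label at all, since a rising label sequence passing through $a=\min\gen_{\top}$ must already start with $a$. Thus the whole backward implication reduces to the single claim: \emph{every $\prec$-rising maximal chain of $\PP_{\top}$ begins with the label $\min\gen_{\top}$}. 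Granting this, two $\prec$-rising maximal chains of $\PP_{\top}$ share the atom $a$, so their tails are $\prec$-rising maximal chains of $[a,\top]_{\gen}\cong\PP_{a^{-1}\top}$, a factorization poset of rank $n-1$ satisfying the inductive hypotheses; by induction the tails agree, and the two chains coincide.

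The claim just isolated is where I expect the real difficulty to lie, and it cannot follow from total chain-connectivity alone: the Dunce-Hat poset of Figure~\ref{fig:dunce_hat} is totally chain-connected but not shellable, so any successful argument must use that $\PP_{\top}$ is a self-dual factorization poset (Proposition~\ref{prop:self_dual}) together with the Hurwitz structure underlying $\lambda_{\top}$. A reasonable line of attack: take a $\prec$-rising maximal chain $C'=(c_{1}',\dots,c_{n}')$ with $c_{1}'\ne a$ and, using total chain-connectivity to connect $C'$ through elementary single-element moves to a maximal chain that passes through the atom $a$, track how the first label changes along the way. Each elementary move occurs inside a rank-$2$ interval, where Lemma~\ref{lem:compatible_min_max} and Proposition~\ref{prop:compatible_order_rank_2} completely describe the unique rising factorization and the single Hurwitz orbit; the hope is that pushing $a$ towards the bottom of the chain forces, at some step, an honest descent in $C'$, contradicting that $C'$ is $\prec$-rising. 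Equivalently one can try to recast the claim purely locally --- this is Conjecture~\ref{conj:compatible_linear_cycle_graph} in the language of the cycle graph of Section~\ref{sec:cycle_graph} --- and attack it there. Making either route rigorous is exactly the gap that keeps the statement a conjecture.
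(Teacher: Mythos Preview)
Your forward implication is correct and is exactly the argument the paper gives in the paragraph immediately following the statement of the conjecture.

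For the backward implication you should be aware that the paper does \emph{not} prove it either: this is a genuine conjecture, and the paper only establishes the easy direction. What the paper does instead is reformulate the open direction. Via Theorem~\ref{thm:shellable_well_covered} and Proposition~\ref{prop:conjecture_implication_2}, the backward implication is shown to be equivalent to Conjecture~\ref{conj:compatible_well_covered}: that total chain-connectivity together with a $\top$-compatible order forces $\PP_{\top}$ to be totally well-covered. Your inductive reduction isolates the claim ``every $\prec$-rising maximal chain begins with $\min\gen_{\top}$''; this is precisely the content of the well-covered property. Indeed, the proof of Proposition~\ref{prop:well_covered_shellable} shows that a rising chain starting at a non-minimal atom $\overline{a}_{1}$ forces $F_{\prec}(\overline{a}_{1};\top)=\emptyset$, and conversely the proof of Theorem~\ref{thm:shellable_well_covered} shows that $F_{\prec}(a_{1};\top)=\emptyset$ for a non-minimal $a_{1}$ produces a second rising chain. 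So your ``single claim'' is not a new reduction but exactly Conjecture~\ref{conj:compatible_well_covered} in different words.

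Your closing paragraph is honest about the gap, and your suggested line of attack (pushing $a$ to the bottom via chain-graph moves and tracking descents through rank-$2$ intervals) is in the spirit of the paper's cycle-graph approach in Section~\ref{sec:cycle_graph}, where only a very special case (Theorem~\ref{thm:generator_single_cycle}) is settled. In short: your write-up matches the paper on what is actually proved, and correctly recognises that the remaining direction is open; but you could strengthen the exposition by noting that your key claim coincides with the well-covered reformulation already provided by Theorem~\ref{thm:shellable_well_covered}.
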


We remark that one direction of Conjecture~\ref{conj:compatible_el_shellable} is trivially true.  If $\lambda_{\top}$ is an EL-labeling of $\PP_{\top}$ with respect to $\prec$, then every interval of $\PP_{\top}$ is shellable, and by Proposition~\ref{prop:chain_connectivity_necessary} chain-connected.  Since every rank-$2$ interval of $\PP_{\top}$ has a unique rising chain, it follows in view of Lemma~\ref{lem:bijection_chains_words} that there exists a unique $\prec$-rising reduced $\gen$-factorization for any element of $\PP_{\top}$ that has length $2$.  Hence $\prec$ is $\top$-compatible.

Conjecture~\ref{conj:compatible_el_shellable} does, however, \emph{not} suggest that $\PP_{\top}$ can only be EL-shellable if there exists a $\top$-compatible order of $\gen_{\top}$.  If there is no $\top$-compatible order of $\gen_{\top}$, we may only conclude that $\lambda_{\top}$ is not an EL-labeling (there may exist others, though).
Consider for instance the factorization poset from Example~\ref{ex:chain_nonhurwitz} again.  Since it is of rank $2$ and not Hurwitz-connected, Corollary~\ref{cor:equiv_compatible_loc_hurwitz} implies that it does not admit a $\top$-compatible order.  However, the labeling which assigns the label sequence $(1,2)$ to one maximal chain, and the label sequence $(2,1)$ to the remaining maximal chains is clearly an EL-labeling.

The next example shows that we cannot drop the assumption of total chain-connectivity in Conjecture~\ref{conj:compatible_el_shellable}.  

\begin{example}\label{ex:abelian_quotient_disconnected}
	Let $\grp$ be the quotient of the free abelian group on six generators $r,s,t,u,v,w$ given by the relation $rst=uvw$.  The factorization poset $\PP_{rst}$ is shown in Figure~\ref{fig:abelian_quotient_disconnected_poset}; its chain graph, depicted in Figure~\ref{fig:abelian_quotient_disconnected_graph}, has two connected components.  Since the generators all commute, any linear order on $\{r,s,t,u,v,w\}$ is $rst$-compatible, but we always find exactly two rising maximal chains.  

	\begin{figure}
		\centering
		\begin{subfigure}[t]{.48\textwidth}
			\centering
			\begin{tikzpicture}\small
				\def\x{1};
				\def\y{1};
				\draw(3.5*\x,1*\y) node(n1){$e$};
				\draw(1*\x,2*\y) node(n2){$r$};
				\draw(2*\x,2*\y) node(n3){$s$};
				\draw(3*\x,2*\y) node(n4){$t$};
				\draw(4*\x,2*\y) node(n5){$u$};
				\draw(5*\x,2*\y) node(n6){$v$};
				\draw(6*\x,2*\y) node(n7){$w$};			
				\draw(1*\x,3*\y) node(n8){$rs$};
				\draw(2*\x,3*\y) node(n9){$rt$};
				\draw(3*\x,3*\y) node(n10){$st$};
				\draw(4*\x,3*\y) node(n11){$uv$};
				\draw(5*\x,3*\y) node(n12){$uw$};
				\draw(6*\x,3*\y) node(n13){$vw$};
				\draw(3.5*\x,4*\y) node(n14){$rst$};
				\draw(n1) -- (n2);
				\draw(n1) -- (n3);
				\draw(n1) -- (n4);
				\draw(n1) -- (n5);
				\draw(n1) -- (n6);
				\draw(n1) -- (n7);
				\draw(n2) -- (n8);
				\draw(n2) -- (n9);
				\draw(n3) -- (n8);
				\draw(n3) -- (n10);
				\draw(n4) -- (n9);
				\draw(n4) -- (n10);
				\draw(n5) -- (n11);
				\draw(n5) -- (n12);
				\draw(n6) -- (n11);
				\draw(n6) -- (n13);
				\draw(n7) -- (n12);
				\draw(n7) -- (n13);
				\draw(n8) -- (n14);
				\draw(n9) -- (n14);
				\draw(n10) -- (n14);
				\draw(n11) -- (n14);
				\draw(n12) -- (n14);
				\draw(n13) -- (n14);
			\end{tikzpicture}
			\caption{An interval in the prefix order on the quotient of the free abelian group on six generators $r,s,t,u,v,w$ given by the relation $rst=uvw$.}
			\label{fig:abelian_quotient_disconnected_poset}
		\end{subfigure}
		\hspace*{.1\textwidth}
		\begin{subfigure}[t]{.4\textwidth}
			\centering
			\begin{tikzpicture}\small
				\def\x{1};
				\def\y{1};
				\def\s{1};
				\draw(2.5*\x,.75*\y) node{};
				\draw(1.5*\x,1*\y) node[scale=\s](n1){$rst$};
				\draw(1*\x,1.75*\y) node[scale=\s](n2){$rts$};
				\draw(1*\x,2.5*\y) node[scale=\s](n3){$trs$};
				\draw(1.5*\x,3.25*\y) node[scale=\s](n4){$tsr$};
				\draw(2*\x,2.5*\y) node[scale=\s](n5){$str$};
				\draw(2*\x,1.75*\y) node[scale=\s](n6){$srt$};
				\draw(3.5*\x,1*\y) node[scale=\s](n7){$uvw$};
				\draw(3*\x,1.75*\y) node[scale=\s](n8){$uwv$};
				\draw(3*\x,2.5*\y) node[scale=\s](n9){$wuv$};
				\draw(3.5*\x,3.25*\y) node[scale=\s](n10){$wvu$};
				\draw(4*\x,2.5*\y) node[scale=\s](n11){$vwu$};
				\draw(4*\x,1.75*\y) node[scale=\s](n12){$vuw$};
				\draw(n1) -- (n2) -- (n3) -- (n4) -- (n5) -- (n6) -- (n1);
				\draw(n7) -- (n8) -- (n9) -- (n10) -- (n11) -- (n12) -- (n7);
			\end{tikzpicture}
			\caption{The Hurwitz graph of the poset in Figure~\ref{fig:abelian_quotient_disconnected_poset}.}
			\label{fig:abelian_quotient_disconnected_graph}
		\end{subfigure}
		\caption{An factorization poset which admits a compatible generator order, but is neither chain-connected, nor EL-shellable.}
		\label{fig:abelian_quotient_disconnected}
	\end{figure}
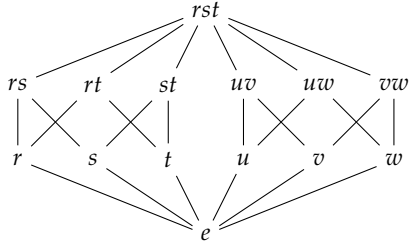
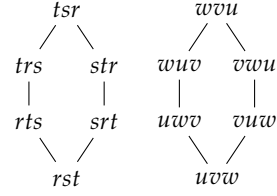
\end{example}

\begin{question}\label{qu:compatible_rising_chain_components}
	Let $\prec$ be a $\top$-compatible order.  Is there a connection between the number $\rise(\top;\prec)$ and the number of connected components of $\CC(\PP_{\top})$?  Is one quantity always smaller or equal to the other?  Are they in fact the same?
\end{question}

An affirmative answer to the next question would enable us to weaken the hypotheses of Conjecture~\ref{conj:compatible_el_shellable}.

\begin{question}\label{qu:compatible_totally_connected}
	Is every finite, chain-connected factorization poset that admits a compatible order totally chain-connected?
\end{question}

\subsection{The Well-Covered Property and EL-Labelings}
	\label{sec:well-covered}
Let us first state some further properties of factorization posets admitting compatible orders.  Fix a linear order $\prec$ of $\gen_{\top}$, and for $a\in\gen_{\top}$ define
\begin{displaymath}
	F_{\prec}(a;\top) \defs \bigl\{g\in\grp\mid a\lessdot_{\gen}g\leq_{\gen}\top\;\text{and there is}\;a'\in\gen_{\top}\;\text{with}\;a'\prec a\;\text{and}\;a'\lessdot_{\gen}g\bigr\}.
\end{displaymath}
In other words, $F_{\prec}(a;\top)$ consists of all upper covers of $a$ in $\PP_{\top}$ that also cover some $a'\prec a$.  

\begin{proposition}\label{prop:compatible_cover_set}
	Let $\prec$ be a $\top$-compatible order of $\gen_{\top}$.  For every $a\in\gen_{\top}$ and every $g\in\grp$ with $a\lessdot_{\gen}g$, the natural labeling $\lambda_{\top}$ satisfies $g\in F_{\prec}(a;\top)$ if and only if $\lambda_{\top}(a,g)\prec\lambda_{\top}(\id,a)$.
\end{proposition}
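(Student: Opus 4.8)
The plan is to unwind both sides of the asserted equivalence into elementary statements about the linear order $\prec$ on the finite set $\gen_{g}:=\{b\in\gen\mid b\leq_{\gen}g\}$, and then to read them off from Lemmas~\ref{lem:compatible_restriction} and~\ref{lem:compatible_min_max}. Note first that writing $\lambda_{\top}(a,g)$ presupposes $(a,g)\in\EE(\PP_{\top})$, so the hypothesis implicitly includes $g\leq_{\gen}\top$; thus throughout $a\lessdot_{\gen}g\leq_{\gen}\top$ with $a\in\gen_{\top}$. By the definition of $\lambda_{\top}$ in \eqref{eq:natural_labeling} we have $\lambda_{\top}(\id,a)=a$ and $\lambda_{\top}(a,g)=a^{-1}g$, so the displayed inequality is simply $a^{-1}g\prec a$. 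Moreover, since $a\lessdot_{\gen}g$ and $\ell_{\gen}(a)=1$, the definition of $\leq_{\gen}$ gives $\ell_{\gen}(g)=1+\ell_{\gen}(a^{-1}g)$; were $\ell_{\gen}(a^{-1}g)\geq2$, writing $a^{-1}g$ as a reduced word $b_{1}b_{2}\cdots$ would produce $a<_{\gen}ab_{1}<_{\gen}g$, contradicting the covering. Hence $\ell_{\gen}(g)=2$, $a^{-1}g\in\gen$, $g=a\cdot(a^{-1}g)$, and $a^{-1}g\leq_{\gen}g\leq_{\gen}\top$, so $a^{-1}g\in\gen_{g}\subseteq\gen_{\top}$ and $(a,a^{-1}g)\in\red{\gen}{g}$; in particular the inequality $a^{-1}g\prec a$ is meaningful.

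Next I would rewrite the left-hand side. Because $\ell_{\gen}(g)=2$, for $a'\in\gen$ the three conditions $a'\lessdot_{\gen}g$, $a'<_{\gen}g$ and $a'\in\gen_{g}$ are equivalent, there being no room for an element strictly between a generator and $g$ on length grounds. Consequently $g\in F_{\prec}(a;\top)$ holds if and only if there is some $a'\in\gen_{g}$ with $a'\prec a$, that is, if and only if $a$ is \emph{not} the $\prec$-minimum of $\gen_{g}$.

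It then remains to show that $a\neq\min_{\prec}\gen_{g}$ is equivalent to $a^{-1}g\prec a$. By Lemma~\ref{lem:compatible_restriction} the restriction $\prec_{g}$ of $\prec$ to $\gen_{g}$ is $g$-compatible, so $g$ has a unique $\prec_{g}$-rising reduced $\gen$-factorization; by Lemma~\ref{lem:compatible_min_max} that factorization is $(\min_{\prec}\gen_{g},\max_{\prec}\gen_{g})$. If $a=\min_{\prec}\gen_{g}$, then $a\preceq a^{-1}g$ because $a^{-1}g\in\gen_{g}$, whence $a^{-1}g\not\prec a$. Conversely, if $a^{-1}g\not\prec a$, then $a\preceq a^{-1}g$, so $(a,a^{-1}g)$ is a $\prec_{g}$-rising reduced $\gen$-factorization of $g$, and by the uniqueness just recalled it equals $(\min_{\prec}\gen_{g},\max_{\prec}\gen_{g})$; in particular $a=\min_{\prec}\gen_{g}$. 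Combining these two implications with the previous paragraph yields $g\in F_{\prec}(a;\top)\Longleftrightarrow a^{-1}g\prec a$, as desired.

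Once Lemma~\ref{lem:compatible_min_max} is available the argument is short, so I do not anticipate a genuine obstacle; the only point requiring a little care is the bookkeeping with the total order $\prec$ in degenerate cases (for instance $g=a^{2}$, where $a^{-1}g=a$ and $\gen_{g}=\{a\}$), which the argument handles uniformly by only ever invoking the weak inequality $a\preceq a^{-1}g$ rather than a strict one.
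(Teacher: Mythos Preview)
Your proof is correct and follows essentially the same route as the paper: both arguments reduce the equivalence to the observation that $g\in F_{\prec}(a;\top)$ holds precisely when $a$ is not the $\prec$-minimum of $\gen_{g}$, and then invoke Lemma~\ref{lem:compatible_min_max} (via Lemma~\ref{lem:compatible_restriction}) to tie this to the position of $a^{-1}g$ relative to $a$. Your write-up is somewhat more explicit about the preliminary bookkeeping (why $\ell_{\gen}(g)=2$, why $a^{-1}g\in\gen_{g}$, and the degenerate case $g=a^{2}$), but the substance of the argument is the same.
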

\begin{proof}
  Let $\gen_{\top}=\{a_{1},a_{2},\ldots,a_{N}\}$ where $a_{i}\prec a_{j}$ if and only if $i<j$, and let $\ell_{\gen}(\top)\geq 2$. (If $\ell_{\gen}(\top)\leq 1$, then the claim is vacuously satisfied.)  Let $g\in\grp$ be of length $2$ in $\PP_{\top}$, and pick $a_{j}\in\gen_{\top}$ such that $g=a_{j}a$ for some $a\in\gen_{\top}$.

  If $g\in F_{\prec}(a_{j};\top)$, then there must be some $a_{i}\in\gen_g$ with $i<j$.  It follows that $a_{j}$ is not minimal in $\gen_{g}$, and Lemma~\ref{lem:compatible_min_max} implies that $a_{j}\succ a$, which yields $\lambda_{\top}(a_{j},g)=a\prec a_{j}=\lambda_{\top}(\id,a_{j})$.

  If $g\notin F_{\prec}(a_{j};\top)$, then $a_{j}$ is the minimal generator in $\gen_{g}$, and Lemma~\ref{lem:compatible_min_max} implies $a_{j}\preceq a$.  Thus $\lambda_{\top}(a_{j},g)=a\succeq a_{j}=\lambda_{\top}(\id,a_{j})$.
\end{proof}

For every linear order $\prec$ of $\gen_{\top}$, the set $F_{\prec}(a;\top)$ is empty for $a=\min\gen_{\top}$.  Factorization posets in which this is the only case when $F_{\prec}(a;\top)$ is empty will be awarded a special name.  The point of this definition is that it provides a different perspective on the question for which linear orders $\lambda_{\top}$ is an EL-labeling.  

\begin{definition}\label{def:well_covered}
	A factorization poset $\PP_{\top}$ is \defn{well-covered} with respect to a linear order $\prec$ of $\gen_{\top}$ if $F_{\prec}(a;\top)$ is empty if and only if $a=\min\gen_{\top}$.  Moreover, $\PP_{\top}$ is \defn{totally well-covered} with respect to $\prec$ if for all $g\in P_{\top}$ the factorization poset $\PP_{g}$ is well-covered with respect to the appropriate restriction of $\prec$.  
\end{definition}

In other words, $\PP_{\top}$ is well-covered if and only if for every atom $a$ (except the smallest one with respect to $\prec$), we can find  a cover $g \gtrdot_{\gen} a$ such that $a$ is not the smallest atom in $\gen_g$.  As a consequence of Lemma~\ref{lem:bottom_intervals}, $\PP_{\top}$ is totally well-covered if and only if every interval of $\PP_{\top}$ is well-covered.

\begin{example}\label{ex:sym_4_ctd_2}
	Let us continue Example~\ref{ex:sym_4} once again, and fix the lexicographic order $\prec$ on $T$ from Example~\ref{ex:sym_4_ctd_1} again.  We now list the sets $F_{\prec}(t;\top)$ for any transposition $t$.\\
	\begin{center}\begin{tabular}{c||c}
		$t$ & $F_{\prec}(t;\top)$\\
		\hline\hline
		$(1\;2)$ & $\emptyset$\\
		$(1\;3)$ & $\bigl\{(1\;2\;3)\bigr\}$\\
		$(1\;4)$ & $\bigl\{(1\;2\;4),(1\;3\;4)\bigr\}$\\
		$(2\;3)$ & $\bigl\{(1\;2\;3),(1\;4)(2\;3)\bigr\}$\\
		$(2\;4)$ & $\bigl\{(1\;2\;4),(2\;3\;4)\bigr\}$\\
		$(3\;4)$ & $\bigl\{(1\;3\;4),(2\;3\;4),(1\;2)(3\;4)\bigr\}$\\
	\end{tabular}\end{center}
	Since $F_{\prec}(t;\top)$ is empty only for $t=(1\;2)=\min T_{\top}$ we conclude that $\PP_{\top}$ is well-covered with respect to $\prec$.
\end{example}

\begin{example}\label{ex:abelian_quotient_disconnected_ctd}
	Let us continue Example~\ref{ex:abelian_quotient_disconnected}.  If we fix the linear order $r\prec s\prec t\prec u\prec v\prec w$, then we observe that $F_{\prec}(u;rst)$ is empty even though $u$ is not minimal with respect to $\prec$.  By definition $\PP_{rst}$ is not well-covered with respect to $\prec$.  In fact, it is not well-covered with respect to any linear order on $\{r,s,t,u,v,w\}$.
\end{example}

\begin{proposition}\label{prop:well_covered_chain_connected}
	Let $\prec$ be a linear order of $\gen_{\top}$.  If $\PP_{\top}$ is totally well-covered with respect to $\prec$, then $\PP_{\top}$ is totally chain-connected.
\end{proposition}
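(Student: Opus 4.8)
The plan is to reduce everything to chain-connectivity of the ``bottom'' factorization posets $\PP_{g}$ and then induct on rank. First I would observe that by Lemma~\ref{lem:bottom_intervals}(i) every interval $[x,y]_{\gen}$ of $\PP_{\top}$ is isomorphic to $\PP_{x^{-1}y}$, and that $x^{-1}y$ itself lies in $P_{\top}$: concatenating reduced $\gen$-factorizations of $x$, of $x^{-1}y$, and of $y^{-1}\top$ gives a reduced $\gen$-factorization of $\top$ in which $x^{-1}y$ appears as a product of consecutive letters, so $x^{-1}y\leq_{\gen}\top$ by Proposition~\ref{prop:subword_order}. Hence it suffices to show that $\PP_{g}$ is chain-connected for every $g\in P_{\top}$, and I would prove this by strong induction on $n=\ell_{\gen}(g)$, the cases $n\leq 2$ being trivial since a poset of rank at most $2$ is chain-connected. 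So fix $g\in P_{\top}$ with $\ell_{\gen}(g)=n\geq 3$; write $\prec_{g}$ for the restriction of $\prec$ to $\gen_{g}$, so that $\PP_{g}$ is well-covered with respect to $\prec_{g}$ by hypothesis.

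The first step I would establish is: \emph{for every atom $a\in\gen_{g}$, the maximal chains of $\PP_{g}$ through $a$ span a connected subgraph of $\CC(\PP_{g})$.} For this, note that deleting $\id$ identifies such chains with the maximal chains of $[a,g]_{\gen}$, which by Lemma~\ref{lem:bottom_intervals}(i) is isomorphic to $\PP_{a^{-1}g}$; since $a\leq_{\gen}g\leq_{\gen}\top$, the same subword argument gives $a^{-1}g\in P_{\top}$, and $\ell_{\gen}(a^{-1}g)=n-1$, so the induction hypothesis makes $\PP_{a^{-1}g}$ chain-connected. A chain-graph edge inside $[a,g]_{\gen}$ swaps an element strictly between $a$ and $g$, hence is still a chain-graph edge of $\PP_{g}$ after re-appending $\id$, so any path in $\CC(\PP_{a^{-1}g})$ lifts to a path in $\CC(\PP_{g})$.

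The decisive step uses the well-covered property. Let $a_{0}=\min\gen_{g}$; I would show that every maximal chain of $\PP_{g}$ is connected in $\CC(\PP_{g})$ to a maximal chain through $a_{0}$, by an inner induction along $\prec_{g}$ on the atom $a$ that the given chain passes through. If $a=a_{0}$ there is nothing to do. If $a\neq a_{0}$, then $F_{\prec_{g}}(a;g)\neq\emptyset$ by Definition~\ref{def:well_covered}, so there are $g'$ with $a\lessdot_{\gen}g'\leq_{\gen}g$ and an atom $a'\prec_{g}a$ with $a'\lessdot_{\gen}g'$, where necessarily $\ell_{\gen}(g')=2$. Extending $\id\lessdot_{\gen}a\lessdot_{\gen}g'$ through the nonempty interval $[g',g]_{\gen}$ produces a maximal chain $C\colon\id\lessdot_{\gen}a\lessdot_{\gen}g'\lessdot_{\gen}x_{3}\lessdot_{\gen}\cdots\lessdot_{\gen}g$ of $\PP_{g}$; replacing the bottom atom $a$ by $a'$ gives another maximal chain $C'\colon\id\lessdot_{\gen}a'\lessdot_{\gen}g'\lessdot_{\gen}x_{3}\lessdot_{\gen}\cdots\lessdot_{\gen}g$, and $C$, $C'$ differ in exactly one element, so they are adjacent in $\CC(\PP_{g})$. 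By the first step the original chain through $a$ is connected to $C$, hence to $C'$, and $C'$ passes through $a'\prec_{g}a$, so the inner induction concludes. Finally, the first step applied to $a_{0}$ shows all maximal chains through $a_{0}$ are mutually connected, so $\CC(\PP_{g})$ is connected, which closes the outer induction.

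The only real content is the ``replace $a$ by $a'$'' move at the bottom of a chain, and it works precisely because well-coveredness supplies a common cover $g'$ of $a$ and of some $\prec$-smaller atom --- this is where the hypothesis is genuinely spent, and I expect the main (minor) obstacle to be the bookkeeping around it: checking that each auxiliary element ($x^{-1}y$, $a^{-1}g$, $g'$) lies in $P_{\top}$ so that ``totally well-covered'' and the nested inductions apply, and that the relevant intervals are nonempty so the maximal chains used actually exist. Note that, unlike in Conjecture~\ref{conj:compatible_el_shellable}, no compatibility of $\prec$ is needed for this argument.
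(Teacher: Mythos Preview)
Your proof is correct and follows essentially the same approach as the paper's: induct on the rank, use the well-covered hypothesis to swap the bottom atom of a maximal chain for a $\prec$-smaller one via a common cover, and use the induction hypothesis on the upper interval $[a,g]_{\gen}\cong\PP_{a^{-1}g}$ to connect chains sharing a fixed atom. The only differences are organizational --- you phrase the atom-descent as an inner induction and you make the verification $x^{-1}y\in P_{\top}$ via Proposition~\ref{prop:subword_order} explicit, whereas the paper leaves this implicit in its appeal to the totally well-covered hypothesis.
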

\begin{proof}
	It suffices to prove that $\PP_{\top}$ is chain-connected.  The total variant then follows by restricting $\prec$ to any interval of $\PP_{\top}$ and repeating the argument.  We proceed by induction on $\ell_{\gen}(\top)$.  If $\ell_{\gen}(\top)=1$, then $\CC(\PP_{\top})$ consists of a single element, and is therefore trivially chain-connected.  So suppose that $\ell_{\gen}(\top)=n$, and the claim holds for all elements of length $<n$.  Let $a=\min\gen_{\top}$.

	First we prove that every maximal chain is in the same connected component of $\CC(\PP_{\top})$ as some maximal chain running through $a$. Let $C\in\MM(\PP_{\top})$, and suppose that $C\cap\gen_{\top}=\{b_{0}\}$.    If $b_{0}=a$, we are done. Otherwise, let $D_{0}=C\setminus\{\id,b_{0}\}$. Since $\PP_{\top}$ is well-covered we can find $g_{1}\in F_{\prec}(b_{0};\top)$ and $b_{1}\prec b_{0}$ with $b_{0}\lessdot_{\gen}g_{1}$ and $b_{1}\lessdot_{\gen}g_{1}$.  Let $D_{1}$ be some maximal chain in $[g_1,\top]_{\gen}$.  If $b_{1}=a$, then we stop; otherwise we repeat this construction until we obtain $b_{k}=a$ (recall that $\gen_\top$ is finite).

	By assumption, for every $i\in\{0,1,\ldots,k-1\}$, the interval $[b_{i},\top]_{\gen_{\top}}$ is totally well-covered with respect to the appropriate restriction of $\prec$.  By our induction hypothesis, we may thus find a sequence of maximal chains in $[b_{i},\top]_{\gen_{\top}}$ that connects $D_{i}\cup\{b_{i}\}$ and $D_{i+1}\cup\{b_{i}\}$.  Since $D_{i}\cup\{\id,b_{i}\}$ and $D_{i+1}\cup\{\id,b_{i}\}$ are adjacent in $\CC(\PP_{\top})$, this implies that $C$ and $D_{k}\cup\{\id,b_{k}\}$ belong to the same connected component of $\CC(\PP_{\top})$. 

	It remains to check that all the maximal chains running through $a$ are in the same connected component. Since $\PP_{\top}$ is totally well-covered, the interval $[a,\top]_{\gen}$ itself is well-covered, and therefore it is chain-connected by induction hypothesis.  This means that between any two maximal chains $D,D'$ of $[a,\top]_{\gen_{\top}}$ there exists a path in the corresponding chain graph connecting them, and this path corresponds to a path in $\CC(\PP_{\top})$ connecting $D\cup\{\id\}$ and $D'\cup\{\id\}$.
\end{proof}

Now fix a linear order $\prec$ of $\gen_{\top}$.  For $a\in\gen_{\top}$ let us denote by $\sqsubset^{a}$ the linear order of the atoms of $[a,\top]_{\gen}$ induced by $\prec$, \ie for $g,g'\in\grp$ with $a\lessdot_{\gen}g$ and $a\lessdot_{\gen}g'$ we have $g\sqsubset^{a}g'$ if and only if $a^{-1}g\prec a^{-1}g'$.  

\begin{lemma}\label{lem:compatible_recursive_prefix}
	Let $\prec$ be a $\top$-compatible order of $\gen_{\top}$.  For every $a\in\gen_{\top}$ there are no two upper covers $g,g'$ of $a$ such that $g\sqsubset^{a} g'$, but $g\notin F_{\prec}(a;\top)$ and $g'\in F_{\prec}(a;\top)$. 
\end{lemma}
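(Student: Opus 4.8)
The plan is to translate the statement into a statement about the edge-labeling $\lambda_{\top}$ via Proposition~\ref{prop:compatible_cover_set}, after which only transitivity of the linear order $\prec$ is needed.

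First I would record the relevant reformulations. Recall that $\lambda_{\top}(x,y)=x^{-1}y$, so for an atom $a\in\gen_{\top}$ and an upper cover $g$ of $a$ in $\PP_{\top}$ Proposition~\ref{prop:compatible_cover_set} reads: $g\in F_{\prec}(a;\top)$ if and only if $a^{-1}g\prec a$. Likewise, by the definition of $\sqsubset^{a}$ we have $g\sqsubset^{a}g'$ if and only if $a^{-1}g\prec a^{-1}g'$. We may assume $\ell_{\gen}(\top)\geq 2$ (otherwise no atom has an upper cover and the claim is vacuous), which is exactly the range in which Proposition~\ref{prop:compatible_cover_set} applies; moreover any upper cover of an atom is automatically an element of length $2$ lying in $\PP_{\top}$, so the proposition does apply to $g$ and $g'$.

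Then I would argue by contradiction. Suppose $g,g'$ are upper covers of $a$ with $g\sqsubset^{a}g'$, $g\notin F_{\prec}(a;\top)$, and $g'\in F_{\prec}(a;\top)$. Applying Proposition~\ref{prop:compatible_cover_set} to $g'$ gives $a^{-1}g'\prec a$; applying it to $g$ and using that $\prec$ is a linear order gives $a\preceq a^{-1}g$; and $g\sqsubset^{a}g'$ gives $a^{-1}g\prec a^{-1}g'$. Chaining these three facts yields $a\preceq a^{-1}g\prec a^{-1}g'\prec a$, i.e.\ $a\prec a$, which is absurd.

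I do not expect any real obstacle here: the whole weight of the lemma sits in Proposition~\ref{prop:compatible_cover_set} (itself a consequence of Lemma~\ref{lem:compatible_min_max}), and what remains is a one-line transitivity argument. The only point deserving a word is the degenerate possibility $a^{-1}g=a$ (that is, $g=a^{2}$), which causes no trouble since it still yields $a\preceq a^{-1}g$. Equivalently, the lemma amounts to the assertion that $F_{\prec}(a;\top)$ is a $\sqsubset^{a}$-initial segment of the set of upper covers of $a$ in $\PP_{\top}$.
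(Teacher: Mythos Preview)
Your argument is correct and essentially identical to the paper's proof. The paper writes $b=a^{-1}g$ and $b'=a^{-1}g'$, obtains $a\preceq b$ directly from $g\notin F_{\prec}(a;\top)$ and $b'\prec a$ from Lemma~\ref{lem:compatible_min_max}, and then chains $a\preceq b\prec b'\prec a$; you do the same thing except that you invoke Proposition~\ref{prop:compatible_cover_set} for both inequalities at once, which is a harmless packaging difference since that proposition is itself proved from Lemma~\ref{lem:compatible_min_max}.
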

\begin{proof}
	Suppose that there exists $a\in\gen_{\top}$ and $g,g'\leq_{\gen}\top$ with $a\lessdot_{\gen}g$ and $a\lessdot_{\gen}g'$ such that $g\sqsubset^{a}g'$, but $g\notin F_{\prec}(a;\top)$ and $g'\in F_{\prec}(a;\top)$.  By definition we find $a'\in\gen_{\top}$ with $a'\prec a$ and $a'\lessdot_{\gen}g'$.  Since $a\lessdot_{\gen}g$ we can write $g=ab$ for some $b\in\gen_{\top}$, and since $g\notin F_{\prec}(a;\top)$, we conclude $a\preceq b$.  Analogously we can find $b'\in\gen_{\top}$ such that $g'=ab'$.  Since $a$ is not minimal in $\gen_{g'}$ Lemma~\ref{lem:compatible_min_max} implies $b'\prec a$.  Since $g\sqsubset^{a}g'$ we obtain by construction that $a^{-1}g\prec a^{-1}g'$.  We thus obtain
	\begin{displaymath}
		a\preceq b = a^{-1}g \prec a^{-1}g' = b' \prec a,
	\end{displaymath}
	which is a contradiction.
\end{proof}

We proceed with the proof of the fact that $\PP_{\top}$ is totally well-covered with respect to a $\top$-compatible order $\prec$ if and only if $\lambda_{\top}$ is an EL-labeling with respect to $\prec$.

\begin{proposition}\label{prop:well_covered_shellable}
	Let $\prec$ be a $\top$-compatible order of $\gen_{\top}$. Suppose that $\PP_{\top}$ is totally well-covered with respect to $\prec$.  Then, for every $x,y\in\grp$ with $x\leq_{\gen}y\leq_{\gen}\top$ there exists a unique $\prec$-rising reduced $\gen$-factorization of $x^{-1}y$.
\end{proposition}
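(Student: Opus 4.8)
The plan is to prove Proposition~\ref{prop:well_covered_shellable} by induction on $\ell_{\gen}(x^{-1}y)$, reducing (via Lemma~\ref{lem:bottom_intervals} and Lemma~\ref{lem:compatible_restriction}) to the case $x=\id$, so that we must show: if $\PP_{\top}$ is totally well-covered with respect to a $\top$-compatible order $\prec$, then every interval $[\id,y]_{\gen}$ has a unique $\prec$-rising maximal chain. Existence is already handed to us by Lemma~\ref{lem:lexicographically_smallest_rising}: the $\prec_{\text{lex}}$-smallest factorization in any orbit — in particular, the $\prec_{\text{lex}}$-smallest reduced factorization of $y$ overall — is $\prec$-rising. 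So the content is \emph{uniqueness}.

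For uniqueness, I would argue as follows. Suppose $(b_1,b_2,\dots,b_k)$ is a $\prec$-rising reduced $\gen$-factorization of $y$, where $k=\ell_{\gen}(y)\ge 2$ (the cases $k\le 1$ being trivial). The idea is to pin down $b_1$ first. By Lemma~\ref{lem:lexicographically_smallest_rising} applied to the orbit of this factorization, the smallest element $b_1$ must in fact be forced: I claim $b_1 = a := \min \gen_y$. Indeed, Proposition~\ref{prop:subword_order} lets us write $y = a\,z$ for some $z$ with $\ell_{\gen}(z)=k-1$, and one checks using the well-covered hypothesis that no $\prec$-rising factorization can start with a generator strictly larger than $a$: if $b_1\succ a$, then consider the length-$2$ element $g = b_1 b_2 \le_{\gen} y \le_{\gen}\top$; since $\prec$ is $\top$-compatible, $(b_1,b_2)$ being $\prec$-rising with $b_1$ not minimal in $\gen_g$ would contradict Lemma~\ref{lem:compatible_min_max}. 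Hence $b_1$ must be minimal among the atoms below $y$, i.e. $b_1 = a$. Actually a cleaner route: the first atom of a $\prec$-rising maximal chain of $[\id,y]_{\gen}$ must be $\prec$-minimal among atoms of $[\id,y]_{\gen}$ — otherwise, if the chain passes through an atom $b_1$ that is not minimal, well-coveredness of $[\id,y]_{\gen}$ gives an upper cover $g\gtrdot_{\gen}b_1$ also covering some $b_1'\prec b_1$, but this does not immediately contradict rising-ness of \emph{our} chain; so the correct argument is the local one via $g=b_1b_2$ and Lemma~\ref{lem:compatible_min_max} as above. Either way, $b_1 = \min\gen_y$ is forced. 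Then $(b_2,\dots,b_k)$ is a $\prec$-rising reduced $\gen$-factorization of $b_1^{-1}y$, and $[\id, b_1^{-1}y]_{\gen}\cong[b_1,y]_{\gen}$ (Lemma~\ref{lem:bottom_intervals}, label-preserving by Lemma~\ref{lem:label_preserving}), which is totally well-covered with respect to the restricted compatible order; by the induction hypothesis this tail is unique, so the whole factorization is unique.

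The step I expect to be the main obstacle is the base-of-the-recursion argument that the first atom of a $\prec$-rising maximal chain in $[\id,y]_{\gen}$ is forced to be $\min\gen_y$, and — more subtly — ensuring that the \emph{tail} of the chain cannot ``escape'' uniqueness because of interaction between different atoms below $y$. The delicate point is this: knowing $b_1=\min\gen_y$, I need that \emph{any} $\prec$-rising maximal chain of $[\id,y]_{\gen}$ starts at $b_1$; but a priori a rising chain could start at some other atom $b_1'$ with $b_1'\preceq$ its successor. This is exactly where total well-coveredness (not just chain-connectedness) and Lemma~\ref{lem:compatible_recursive_prefix} enter: Proposition~\ref{prop:compatible_cover_set} identifies $F_{\prec}(b_1';\top)$ as precisely those covers $g$ of $b_1'$ with $\lambda_{\top}(b_1',g)\prec\lambda_{\top}(\id,b_1')$, i.e. those where the chain \emph{descends} at the second step; well-coveredness says this set is nonempty whenever $b_1'\ne\min\gen_{\top}$ (and by restriction, whenever $b_1'\ne \min\gen_y$), but to get a contradiction I need that \emph{every} cover of $b_1'$ inside $[\id,y]_{\gen}$ lies in $F_{\prec}(b_1';y)$ — which is false in general. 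So the right formulation is the local one I gave: the rising chain's first two steps $b_1' \lessdot b_1'b_2' = g$ with $b_1'\preceq b_2'$ force, via Lemma~\ref{lem:compatible_min_max}, that $b_1'$ is $\prec$-minimal in $\gen_g$, and then one still must bridge from ``$b_1'$ minimal in $\gen_g$'' to ``$b_1'$ minimal in $\gen_y$''. Closing that gap is the crux; I anticipate it requires an inner induction or a careful use of Proposition~\ref{prop:subword_order} together with the fact that $a=\min\gen_y$ itself lies below $y$, comparing the two length-$2$ prefixes. Once this ``first atom is forced'' lemma is in hand, the outer induction closes routinely.
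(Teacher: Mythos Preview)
Your overall architecture is right: reduce to $x=\id$, $y=\top$ via Lemmas~\ref{lem:bottom_intervals} and~\ref{lem:compatible_restriction}, induct on $\ell_{\gen}(\top)$, and use Lemma~\ref{lem:lexicographically_smallest_rising} for existence.  The gap you yourself flag --- bridging ``$b_1$ is $\prec$-minimal in $\gen_g$'' to ``$b_1$ is $\prec$-minimal in $\gen_y$'' --- is real, and your proposed local rank-$2$ argument via Lemma~\ref{lem:compatible_min_max} cannot close it: knowing only that $b_1$ is minimal among the atoms of the particular cover $g=b_1b_2$ tells you nothing about other atoms of $\PP_y$ that do not sit below $g$.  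Neither an ``inner induction'' nor a comparison of two length-$2$ prefixes will rescue this, because the obstruction is global, not local.

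The missing idea is to use the induction hypothesis on the tail \emph{before} pinning down $b_1$, and to apply it not just to $g=b_1b_2$ but to \emph{every} cover of $b_1$ in $\PP_\top$.  Concretely: suppose $(\overline{a}_1,\ldots,\overline{a}_n)$ is a $\prec$-rising factorization with $\overline{a}_1\succ a_1$ (where $(a_1,\ldots,a_n)$ is the lex-smallest one).  The tail $(\overline{a}_2,\ldots,\overline{a}_n)$ is a $\prec$-rising factorization of $\overline{a}_1^{-1}\top$, so by induction it is the \emph{unique} such, hence the lex-smallest; in particular $\overline{a}_2\preceq b$ for every $b\in\gen$ with $\overline{a}_1\lessdot_{\gen}\overline{a}_1 b\leq_{\gen}\top$.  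Since the factorization is rising, $\overline{a}_1\preceq\overline{a}_2\preceq b$ for every such $b$, so by Proposition~\ref{prop:compatible_cover_set} no cover of $\overline{a}_1$ lies in $F_\prec(\overline{a}_1;\top)$.  Now well-coveredness of $\PP_\top$ itself forces $\overline{a}_1=\min\gen_\top$, contradicting $\overline{a}_1\succ a_1$.  This is exactly where the well-covered hypothesis does its work --- not on the rank-$2$ subinterval $[\id,g]_{\gen}$, but on the whole of $\PP_\top$.
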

\begin{proof}
	Fix $x,y\in\grp$ with $x\leq_{\gen}y\leq_{\gen}\top$.  In view of Lemmas~\ref{lem:bottom_intervals} and \ref{lem:compatible_restriction} it suffices to consider the case $x=\id$ and $y=\top$.  We proceed by induction on $n=\ell_{\gen}(\top)$.  The cases $n\leq 1$ are trivial, so we can assume that $n>1$.
	
	Lemma~\ref{lem:lexicographically_smallest_rising} implies that the lexicographically smallest reduced $\gen$-factorization of $\top$ is $\prec$-rising. Denote this factorization by $F=(a_{1},a_{2},\ldots,a_{n})$.  Now let $\overline{F}=(\overline{a}_{1},\overline{a}_{2},\ldots,\overline{a}_{n})$ be any $\prec$-rising factorization.  If $\overline{a}_{1}=a_{1}$, then $(a_{2},a_{3},\ldots,a_{n})$ and $(\overline{a}_{2},\overline{a}_{3},\ldots,\overline{a}_{n})$ are both $\prec$-rising reduced $\gen$-factorization of $a_{1}^{-1}\top$. So by our induction hypothesis they are equal, and $\overline{F}=F$.

	Now assume $\overline{a}_{1}\neq a_{1}$, i.e., $\overline{a}_{1}\succ a_{1}$. Let $z\leq_{\gen}\top$ such that $\overline{a}_{1}\lessdot_{\gen}z$. We can thus write $z=\overline{a}_{1}b$ for some $b\in\gen_{\top}$.  By induction we obtain that $(\overline{a}_{2},\overline{a}_{3},\ldots,\overline{a}_{n})$ is the lexicographically smallest reduced $\gen$-factorization of $\overline{a}_{1}^{-1}\top$.  It follows that $\overline{a}_{2}\preceq b$, and therefore $\overline{a}_{1}\preceq b$, which in view of Proposition~\ref{prop:compatible_cover_set} implies that $z\notin F_{\prec}(\overline{a}_{1},\top)$.  Since $z$ was chosen arbitrarily we conclude $F_{\prec}(\overline{a}_{1},\top)=\emptyset$.  Since $\PP_{\top}$ is (totally) well-covered we conclude that $\overline{a}_{1}=\min\gen_{\top}$, which yields the contradiction $\overline{a}_{1}\preceq a_{1}$.
\end{proof}

\begin{theorem}\label{thm:shellable_well_covered}
	Let $\prec$ be a linear order of $\gen_{\top}$. Then $\lambda_{\top}$ is an EL-labeling of $\PP_{\top}$ with respect to $\prec$ if and only if $\prec$ is $\top$-compatible and $\PP_{\top}$ is totally well-covered with respect to $\prec$.
\end{theorem}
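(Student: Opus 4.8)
The plan is to prove the two implications separately, on top of a common reduction. The reduction: by Lemma~\ref{lem:bottom_intervals} together with Lemma~\ref{lem:label_preserving}, every interval $[x,y]_{\gen}$ of $\PP_{\top}$ is isomorphic, \emph{as an edge-labeled poset}, to $\PP_{x^{-1}y}$ with its natural labeling, and by Lemma~\ref{lem:bijection_chains_words} the $\prec$-rising maximal chains of $[x,y]_{\gen}$ correspond exactly to the $\prec$-rising reduced $\gen$-factorizations of $x^{-1}y$. Since an EL-labeling restricts to an EL-labeling of every interval, and since ``totally well-covered'' and ``$\top$-compatible'' are by definition assertions about all the sub-posets $\PP_{g}$ with $g\leq_{\gen}\top$, both directions reduce to a ``global'' statement which is then applied to every such $\PP_{g}$. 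In particular I may assume $\ell_{\gen}(\top)\geq 2$, the remaining cases being vacuous on both sides.

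For ``totally well-covered and $\top$-compatible $\Rightarrow$ $\lambda_{\top}$ an EL-labeling'': Proposition~\ref{prop:well_covered_shellable} already gives that each interval of $\PP_{\top}$ has a \emph{unique} $\prec$-rising maximal chain, so only the lexicographic-minimality part of the EL-condition is left. But the lexicographically smallest reduced $\gen$-factorization of $x^{-1}y$ is the smallest one in its own Hurwitz orbit, hence $\prec$-rising by Lemma~\ref{lem:lexicographically_smallest_rising}; by uniqueness it is \emph{the} $\prec$-rising chain of the interval, so that chain precedes all others. Hence $\lambda_{\top}$ is an EL-labeling.

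For the converse, assume $\lambda_{\top}$ is an EL-labeling of $\PP_{\top}$ with respect to $\prec$. Applying the EL-property to the rank-$2$ intervals $[\id,g]_{\gen}$ ($\ell_{\gen}(g)=2$, $g\leq_{\gen}\top$) yields a unique $\prec$-rising reduced $\gen$-factorization of $g$, so $\prec$ is $\top$-compatible. For total well-coveredness, note that the restriction of $\lambda_{\top}$ to $\PP_{g}=[\id,g]_{\gen}$ equals $\lambda_{g}$ and is an EL-labeling of $\PP_{g}$, while $\prec$ restricted to $\gen_{g}$ is $g$-compatible by Lemma~\ref{lem:compatible_restriction}; so it is enough to prove: \emph{if $\prec$ is $\top$-compatible and $\lambda_{\top}$ is an EL-labeling of $\PP_{\top}$, then $\PP_{\top}$ is well-covered with respect to $\prec$}. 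Fix $a\in\gen_{\top}$ with $a\neq\min\gen_{\top}$ (for $a=\min\gen_{\top}$ one has $F_{\prec}(a;\top)=\emptyset$ by definition). Set $b=\min\gen_{a^{-1}\top}$; note $b\in\gen_{a^{-1}\top}\subseteq\gen_{\top}$, as $a^{-1}\top\leq_{\gen}\top$. Under the labeled isomorphism $[a,\top]_{\gen}\cong\PP_{a^{-1}\top}$, atoms of $[a,\top]_{\gen}$ have the form $ac$ with $c\in\gen_{a^{-1}\top}$ and $\lambda_{\top}(a,ac)=c$; the restriction of $\lambda_{\top}$ to $[a,\top]_{\gen}$ is an EL-labeling, so its unique $\prec$-rising maximal chain $R$ is lexicographically smallest there and therefore begins $a\lessdot_{\gen}ab$ (each atom occurs as a first label, so the smallest first label is $b$). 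Put $C=\{\id\}\cup R\in\MM(\PP_{\top})$; its label sequence is $a$ followed by the weakly increasing sequence $\lambda_{\top}(R)$, which starts with $b$. If $a\preceq b$, then $C$ is $\prec$-rising, hence the unique $\prec$-rising maximal chain of $\PP_{\top}$, hence lexicographically smallest; since each atom of $\PP_{\top}$ is a first label, the smallest first label is $\min\gen_{\top}$, forcing $a=\min\gen_{\top}$ --- a contradiction. Thus $b\prec a$, and then $a\lessdot_{\gen}ab\leq_{\gen}\top$ with $\lambda_{\top}(a,ab)=b\prec a=\lambda_{\top}(\id,a)$, so $ab\in F_{\prec}(a;\top)$ by Proposition~\ref{prop:compatible_cover_set}; in particular $F_{\prec}(a;\top)\neq\emptyset$.

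The main obstacle is precisely this last lemma: to witness that a non-minimal atom $a$ is not well-covered one must \emph{exhibit} an upper cover of $a$ over which $a$ fails to be $\prec$-minimal, and there is no canonical candidate. The trick is to test the $\prec$-rising maximal chain of the upper interval $[a,\top]_{\gen}$ --- which by the EL-property must begin at $a\lessdot_{\gen}a\cdot\min\gen_{a^{-1}\top}$ --- against the $\prec$-rising maximal chain of all of $\PP_{\top}$, which is forced to begin at $\id\lessdot_{\gen}\min\gen_{\top}$. The two reductions and the forward direction (via Proposition~\ref{prop:well_covered_shellable} and Lemma~\ref{lem:lexicographically_smallest_rising}) are then routine.
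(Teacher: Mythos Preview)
Your proof is correct and follows essentially the same approach as the paper's. Both directions match: the forward implication via Proposition~\ref{prop:well_covered_shellable} and Lemma~\ref{lem:lexicographically_smallest_rising}, and the backward implication by fixing a non-minimal atom $a$, taking the rising (equivalently, lexicographically smallest) maximal chain of $[a,\top]_{\gen}$, and using the EL-property on all of $\PP_{\top}$ to force its first label $b=\min\gen_{a^{-1}\top}$ to satisfy $b\prec a$, whence $ab\in F_{\prec}(a;\top)$ via Proposition~\ref{prop:compatible_cover_set}. The only cosmetic difference is that the paper phrases the key step directly (``not lex-smallest, hence not rising, hence $a_{1}\succ a_{2}$'') while you phrase it by contradiction.
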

\begin{proof}
	If $\PP_{\top}$ is totally well-covered with respect to a $\top$-compatible order $\prec$, then Proposition~\ref{prop:well_covered_shellable} implies that for all $x\leq_{\gen}y\leq_{\gen}\top$ there exists a unique $\prec$-rising reduced $\gen$-factorization of $x^{-1}y$.  By Lemma~\ref{lem:lexicographically_smallest_rising} this must necessarily be the lexicographically smallest reduced $\gen$-factorization.  It follows that $\lambda_{\top}$ is an EL-labeling of $\PP_{\top}$.
  
	Now assume that $\lambda_{\top}$ is an EL-labeling of $\PP_{\top}$.  By definition, every interval of rank $2$ has a unique $\prec$-rising maximal chain, which implies that $\prec$ is $\top$-compatible.  Moreover, since $\lambda_{\top}$ restricts to an EL-labeling of any interval of $\PP_{\top}$ by restricting~$\prec$ accordingly, it suffices to prove that $\PP_{\top}$ is well-covered.

	Now pick $a_1\in\gen_{\top}$ with $a_1\neq\min\gen_{\top}$.  Let $(a_{2},a_{3},\ldots,a_{n})$ be the lexicographically smallest reduced $\gen$-factorization of $a_1^{-1}\top$.  Lemma~\ref{lem:lexicographically_smallest_rising} implies $a_{2}\prec a_{3}\prec\cdots\prec a_{n}$.  Since $a_1\neq\min\gen_{\top}$ we also know that $(a_1,a_{2},\ldots,a_{n})$ is not the lexicographically smallest reduced $\gen$-factorization of $\top$.  Since $\lambda_{\top}$ is an EL-labeling of $\PP_{\top}$ we obtain that $(a_1,a_{2},\ldots,a_{n})$ is not $\prec$-rising, and thus $a_1\succ a_{2}$.  Consider $g=a_1a_{2}$, so that $a_1\lessdot g$ and $a_2\lessdot g$.  Proposition~\ref{prop:compatible_cover_set} implies $g\in F_{\prec}(a_1;\top)$.  Since~$a_1$ was chosen arbitrarily we conclude that $F_{\prec}(a_1;\top)\neq\emptyset$ whenever $a_1\neq\min\gen_{\top}$, which precisely says that $\PP_{\top}$ is well-covered.
\end{proof}

We thus obtain the following equivalent statement of Conjecture~\ref{conj:compatible_el_shellable}.

\begin{conjecture}\label{conj:compatible_well_covered}
	If $\PP_{\top}$ is totally chain-connected, and $\prec$ is a $\top$-compatible order of $\gen_{\top}$, then $\PP_{\top}$ is totally well-covered with respect to $\prec$.
\end{conjecture}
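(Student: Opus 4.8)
The plan is to induct on $n=\ell_{\gen}(\top)$ and reduce everything to a single statement about the top interval. First I would observe that the hypotheses pass to subintervals: an interval of an interval of $\PP_{\top}$ is again an interval of $\PP_{\top}$, so ``totally chain-connected'' is inherited by every $\PP_{g}$ with $g\leq_{\gen}\top$ and, via Lemma~\ref{lem:bottom_intervals} and Lemma~\ref{lem:label_preserving}, by every $[a,\top]_{\gen}\cong\PP_{a^{-1}\top}$; moreover every rank-$2$ interval of $\PP_{\top}$ is label-isomorphic to $\PP_{g}$ for some length-$2$ element $g\leq_{\gen}\top$, so by Lemma~\ref{lem:compatible_restriction} the $\top$-compatible order $\prec$ restricts to a compatible order on each such subposet. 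Hence in the inductive step I may assume that every \emph{proper} interval of $\PP_{\top}$ is totally well-covered, and therefore (Proposition~\ref{prop:well_covered_shellable}) has a \emph{unique} $\prec$-rising maximal chain. Since $\PP_{\top}$ itself has at least one $\prec$-rising maximal chain by Lemma~\ref{lem:lexicographically_smallest_rising}, all that remains is to prove that $\PP_{\top}$ is well-covered, equivalently that it has a unique $\prec$-rising maximal chain.

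The next step is to pin down where well-coveredness can fail. By Proposition~\ref{prop:compatible_cover_set}, an atom $a\in\gen_{\top}$ has $F_{\prec}(a;\top)=\emptyset$ exactly when $a=\min\gen_{g}$ for every $g\gtrdot_{\gen}a$; call such an $a$ \emph{submissive}. Using the inductive hypothesis one checks that the submissive atoms are precisely the first letters of $\prec$-rising reduced $\gen$-factorizations of $\top$: if $a$ is submissive then prepending $a$ to the unique rising maximal chain of $[a,\top]_{\gen}$ (transported into $\PP_{\top}$ by Lemmas~\ref{lem:bottom_intervals} and~\ref{lem:label_preserving}) gives a rising maximal chain, and conversely Lemma~\ref{lem:compatible_min_max} together with uniqueness of the rising chain of $[a,\top]_{\gen}$ forces $\lambda_{\top}(a,g)\succeq\lambda_{\top}(\id,a)$ for all $g\gtrdot a$. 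Since the global lexicographically smallest reduced factorization of $\top$ is rising (Lemma~\ref{lem:lexicographically_smallest_rising}) and starts with $\min\gen_{\top}$, the whole conjecture therefore comes down to: a chain-connected factorization poset carrying a compatible order, all of whose proper intervals have a unique rising maximal chain, has no submissive atom other than $\min\gen_{\top}$.

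To attack this I would argue by contradiction. Suppose $b\neq a:=\min\gen_{\top}$ is submissive and let $R,R'$ be the rising maximal chains starting with $a,b$. Chain-connectivity gives a path $R=M_{0},M_{1},\dots,M_{k}=R'$ in $\CC(\PP_{\top})$; writing $\alpha(M_{i})$ for the unique rank-one element of $M_{i}$, we have $\alpha(M_{0})=a\neq b=\alpha(M_{k})$, so there is a step at which $M_{j}$ and $M_{j+1}$ differ only in their bottom atom, hence share a rank-$2$ element $z$ with $\alpha(M_{j})\lessdot_{\gen}z\gtrdot_{\gen}\alpha(M_{j+1})$. The idea is to combine this with the submissiveness of $b$ and the already-established EL-behaviour of every proper interval to produce, inside $[z,\top]_{\gen}$ or along a carefully chosen path, a descent incompatible with $R'$ being rising --- in effect ``bubbling'' $R'$ past the atom $b$ while never leaving $\MM(\PP_{\top})$. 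By Theorem~\ref{thm:shellable_well_covered} this is the same as showing $\lambda_{\top}$ is an EL-labeling, so any such argument would also reprove (and is equivalent to) Conjecture~\ref{conj:compatible_el_shellable}.

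The hard part --- and this is exactly why the statement is only conjectured --- is carrying out that last step. The bubble-sort move behind Lemma~\ref{lem:lexicographically_smallest_rising} only operates within a single Hurwitz orbit, while Example~\ref{ex:abelian_quotient_disconnected} exhibits two rising maximal chains lying in different Hurwitz orbits; so one genuinely needs chain-connectivity to bridge distinct orbits, and the intermediate chains $M_{1},\dots,M_{k-1}$ on the connecting path are a priori completely uncontrolled (not rising, not lexicographically comparable). Converting the global, order-diagram hypothesis of chain-connectivity into the purely local ``$a=\min\gen_{g}$ for all $g\gtrdot a$'' statement is the crux. The most concrete route I see is to pass to the reduced cycle graph of $\PP_{\top}$ from the final section --- the combinatorial object underlying Conjecture~\ref{conj:compatible_linear_cycle_graph} and Theorem~\ref{thm:generator_single_cycle}, which records exactly the rank-$2$ data together with how the rank-$2$ pieces are glued --- and to try to show there that a compatible order together with connectivity forces the relevant cycles to be ``linear'', which is the cycle-graph incarnation of having no spurious submissive atom.
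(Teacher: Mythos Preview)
This statement is a \emph{conjecture} in the paper and is not proved there; there is no ``paper's own proof'' to compare against.  What the paper does is show that this conjecture is equivalent to Conjecture~\ref{conj:compatible_el_shellable} (via Theorem~\ref{thm:shellable_well_covered} and Proposition~\ref{prop:conjecture_implication_2}), reformulate it in terms of the reduced cycle graph (Conjecture~\ref{conj:compatible_linear_cycle_graph}), and prove only the very special case of Theorem~\ref{thm:generator_single_cycle}.

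Your write-up is therefore appropriate in spirit: you set up exactly the right reduction, and you are honest that the final step is missing.  A few specific comments.

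\smallskip
\textbf{What is correct.}  The passage of both hypotheses to all intervals is right (Lemmas~\ref{lem:bottom_intervals}, \ref{lem:label_preserving}, \ref{lem:compatible_restriction}).  The inductive reduction to ``$\PP_{\top}$ is well-covered'' is right.  Your bijection, under the inductive hypothesis, between atoms $a$ with $F_{\prec}(a;\top)=\emptyset$ and first letters of $\prec$-rising maximal chains is correct and is a genuinely useful sharpening: it is the poset-level translation of the paper's Proposition~\ref{prop:reduced_graph_linear_well_covered} (sinks of the reduced cycle graph are exactly your ``submissive'' atoms).  Your observation that the remaining task is equivalent to showing $\lambda_{\top}$ is an EL-labeling is exactly Theorem~\ref{thm:shellable_well_covered}.

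\smallskip
\textbf{Where the gap is, and why it is real.}  The contradiction argument you sketch does not close.  Picking a chain-graph path $R=M_{0},\dots,M_{k}=R'$ and the first index $j$ at which the bottom atom changes gives you a common rank-$2$ element $z$ above $\alpha(M_{j})$ and $\alpha(M_{j+1})$, but nothing forces either of these atoms to be $b$, nor does the EL-behaviour of proper intervals give any control over the labels along $M_{j}$ or $M_{j+1}$.  Concretely, the inductive hypothesis tells you only that \emph{each} interval $[\alpha(M_{i}),\top]_{\gen}$ has a unique rising maximal chain; it says nothing about the non-rising chains $M_{1},\dots,M_{k-1}$ you are actually walking through.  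The ``bubble $R'$ past $b$'' idea would need a lexicographic-descent argument along the path, but the intermediate chains can increase lexicographically, so there is no monovariant.  This is precisely the obstruction the paper cannot overcome either, and is why the statement is stated as a conjecture; the cycle-graph approach you point to in your last paragraph is the paper's attempt to get traction, and even there only Theorem~\ref{thm:generator_single_cycle} is obtained.
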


It follows from Proposition~\ref{prop:well_covered_chain_connected} that every well-covered poset is chain-connected.  However, the example in Figure~\ref{fig:chain_nonhurwitz_poset} shows that factorization posets may be well-covered with respect to non-compatible atom orders.

\begin{proposition}\label{prop:conjecture_implication_2}
	Conjectures~\ref{conj:compatible_el_shellable} and \ref{conj:compatible_well_covered} are equivalent.
\end{proposition}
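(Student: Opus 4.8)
The plan is to read off the equivalence directly from Theorem~\ref{thm:shellable_well_covered}, which already translates the condition ``$\lambda_{\top}$ is an EL-labeling of $\PP_{\top}$ with respect to $\prec$'' into the conjunction ``$\prec$ is $\top$-compatible and $\PP_{\top}$ is totally well-covered with respect to $\prec$''. Combined with Proposition~\ref{prop:well_covered_chain_connected}, this shows that one direction of Conjecture~\ref{conj:compatible_el_shellable} is already a theorem, so the content of Conjecture~\ref{conj:compatible_el_shellable} collapses to its converse, and that converse is exactly Conjecture~\ref{conj:compatible_well_covered} with the EL-labeling condition replaced by the well-covered condition via Theorem~\ref{thm:shellable_well_covered}.

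First I would dispose of the easy half. If $\lambda_{\top}$ is an EL-labeling of $\PP_{\top}$ with respect to $\prec$, then Theorem~\ref{thm:shellable_well_covered} gives that $\prec$ is $\top$-compatible and that $\PP_{\top}$ is totally well-covered with respect to $\prec$, whence $\PP_{\top}$ is totally chain-connected by Proposition~\ref{prop:well_covered_chain_connected}. Thus the forward implication of Conjecture~\ref{conj:compatible_el_shellable} holds unconditionally, and Conjecture~\ref{conj:compatible_el_shellable} is equivalent to the single statement $(\ast)$: for every linear order $\prec$ of $\gen_{\top}$, if $\PP_{\top}$ is totally chain-connected and $\prec$ is $\top$-compatible, then $\lambda_{\top}$ is an EL-labeling of $\PP_{\top}$ with respect to $\prec$.

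It then remains to prove that $(\ast)$ and Conjecture~\ref{conj:compatible_well_covered} are equivalent, which I would do by passing back and forth through Theorem~\ref{thm:shellable_well_covered} under the standing hypothesis that $\prec$ is $\top$-compatible. Assuming $(\ast)$: given a totally chain-connected $\PP_{\top}$ with a $\top$-compatible order $\prec$, the map $\lambda_{\top}$ is an EL-labeling with respect to $\prec$ by $(\ast)$, hence $\PP_{\top}$ is totally well-covered with respect to $\prec$ by Theorem~\ref{thm:shellable_well_covered}, which is Conjecture~\ref{conj:compatible_well_covered}. Conversely, assuming Conjecture~\ref{conj:compatible_well_covered}: in the same situation $\PP_{\top}$ is totally well-covered with respect to $\prec$, and since $\prec$ is $\top$-compatible, Theorem~\ref{thm:shellable_well_covered} makes $\lambda_{\top}$ an EL-labeling with respect to $\prec$, giving $(\ast)$. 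Chaining the two equivalences yields Conjecture~\ref{conj:compatible_el_shellable}~$\Longleftrightarrow$~Conjecture~\ref{conj:compatible_well_covered}.

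There is no genuine technical obstacle here; the argument is a formal substitution using Theorem~\ref{thm:shellable_well_covered}. The only point requiring care is the reading of the biconditional in Conjecture~\ref{conj:compatible_el_shellable}: it must be interpreted as quantified over all linear orders $\prec$ of $\gen_{\top}$, with the word ``some'' merely introducing the order $\prec$ that occurs on both sides; under any other reading the statement would not be a biconditional about a single order. With this understood, the proof goes through with nothing beyond the cited results.
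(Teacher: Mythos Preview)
Your proposal is correct and follows essentially the same approach as the paper: both arguments reduce the equivalence to Theorem~\ref{thm:shellable_well_covered} (translating ``$\lambda_{\top}$ is an EL-labeling'' into ``$\top$-compatible and totally well-covered'') together with Proposition~\ref{prop:well_covered_chain_connected} (recovering total chain-connectedness from the well-covered property). The only organizational difference is that you first isolate the unconditional direction of Conjecture~\ref{conj:compatible_el_shellable} and then prove the remaining implication $(\ast)$ is equivalent to Conjecture~\ref{conj:compatible_well_covered}, whereas the paper treats each of the two implications between the conjectures directly; the underlying logic and the cited results are identical.
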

\begin{proof}
	Let us first assume that Conjecture~\ref{conj:compatible_el_shellable} is true.  If $\PP_{\top}$ is totally chain-connected and admits a $\top$-compatible order $\prec$, then Conjecture~\ref{conj:compatible_el_shellable} implies that~$\lambda_{\top}$ is an EL-labeling of $\PP_{\top}$ with respect to $\prec$.  Now, Theorem~\ref{thm:shellable_well_covered} implies that~$\PP_{\top}$ is well-covered with respect to $\prec$, which establishes Conjecture~\ref{conj:compatible_well_covered}.
	
	\medskip
	
	Now, conversely, let us assume that Conjecture~\ref{conj:compatible_well_covered} is true.  If $\PP_{\top}$ is totally chain-connected and admits a $\top$-compatible order $\prec$, then Conjecture~\ref{conj:compatible_well_covered} implies that $\PP_{\top}$ is totally well-covered with respect to $\prec$.  Now, Theorem~\ref{thm:shellable_well_covered} implies that~$\lambda_{\top}$ is an EL-labeling of $\PP_{\top}$ with respect to $\prec$.
	
	If $\lambda_{\top}$ is an EL-labeling of $\PP_{\top}$ with respect to some linear order $\prec$ of $\gen_{\top}$, then Theorem~\ref{thm:shellable_well_covered} implies that $\prec$ is $\top$-compatible, and $\PP_{\top}$ is totally well-covered with respect to $\prec$.  Now, Proposition~\ref{prop:well_covered_chain_connected} implies that $\PP_{\top}$ is totally chain-connected.
	
	We have thus established Conjecture~\ref{conj:compatible_el_shellable}.
\end{proof}

\begin{remark}\label{rem:well_covered_origin}
	The well-covered property is modeled after Condition~(ii) in \cite{bjorner83lexicographically}*{Definition~3.1}, which introduces the concept of a recursive atom order of a bounded graded poset.  Lemma~\ref{lem:compatible_recursive_prefix} implies that a $\top$-compatible order of $\gen_{\top}$ satisfies Condition~(i) of \cite{bjorner83lexicographically}*{Definition~3.1}.  Consequently, if Conjecture~\ref{conj:compatible_well_covered} were true, any $\top$-compatible order of $\gen_{\top}$ in a totally chain-connected factorization poset would be a recursive atom order.
		
	Moreover, the proofs of Propositions~\ref{prop:compatible_cover_set}, \ref{prop:well_covered_shellable} and Theorem~\ref{thm:shellable_well_covered} are essentially verbatim to parts of the proof of \cite{bjorner83lexicographically}*{Theorem~3.2}.  
\end{remark}

Let us conclude this section with the proof of Theorem~\ref{thm:main_result}.  

\begin{proof}[Proof of Theorem~\ref{thm:main_result}]
	Suppose that $\PP_{\top}$ admits a $\top$-compatible order of $\gen_{\top}$ and that it is totally well-covered.  Proposition~\ref{prop:well_covered_chain_connected} implies that $\PP_{\top}$ is (totally) chain-connected.  Moreover, Theorem~\ref{thm:main_hurwitz} implies that it is Hurwitz-connected, and Theorem~\ref{thm:shellable_well_covered} implies that it is shellable.
\end{proof}

\section{The Cycle Graph of $\PP_{\top}$}
	\label{sec:cycle_graph}
In this section we prove a particular case of Conjecture~\ref{conj:compatible_well_covered}.  We achieve this with the help of a graph-theoretic representation of a part of $\PP_{\top}$.  To that end, we associate a directed labeled graph constructed from cycles with $\PP_{\top}$; the \emph{cycle graph} of $\PP_{\top}$.  This structure retains only the information about the rank-$2$ subintervals of $\PP_{\top}$ but allows to tackle some cases of our conjectures systematically and in a nice graphical way.

\subsection{Definition and Properties of the Cycle Graph}
	\label{sec:def_cyclegraph}
Recall Assumption~\ref{disc:finiteness}:  $\gen_{\top}$ is assumed to be finite.  Let
\begin{equation*}
	\rtwo_{\top} \defs \bigl\{g\in\grp\mid g\leq_{\gen}\top\;\text{and}\;\ell_{\gen}(g)=2\bigr\}
\end{equation*}
be the set of length $2$ elements below $\top$.  For any $g\in\rtwo_{\top}$, the set of reduced $\gen$-factorizations of $g$ is partitioned into \defn{Hurwitz orbits}, \ie connected components of $\HH(g)$.  Since $\gen_{\top}$ is finite, each of these orbits consists of factorizations of the form
\begin{displaymath}
	g = a_{1}a_{2} = a_{2}a_{3} = \cdots = a_{k-1}a_{k} = a_{k}a_{1},
\end{displaymath}
for some $k\geq 1$ and some pairwise distinct generators $a_{1},a_{2},\ldots,a_{k}$.  If $k=1$, then $g=a_{1}^{2}$.  We can represent this Hurwitz orbit by drawing a cycle connecting $a_{1},a_{2},\ldots,a_{k},a_{1}$ in that order.  This gives rise to the following definition.

\begin{definition}\label{def:cycle_graph}
	Let $\PP_{\top}$ be a factorization poset.  The \defn{cycle graph} of $\PP_{\top}$, denoted by $\Gamma(\PP_{\top})$, is a directed labeled graph such that
	\begin{itemize}
		\item the set of vertices of $\Gamma(\PP_{\top})$ is $\gen_{\top}$;
		\item for $a,b\in\gen_{\top}$ we have that $a\rightarrow b$ is an oriented edge of $\Gamma(\PP_{\top})$ if and only if $ab\in\rtwo_{\top}$;
		\item for $a,b\in\gen_{\top}$ the label of $a\rightarrow b$ is given by the product $ab$.
	\end{itemize}
\end{definition}

\begin{lemma}\label{lem:cycle_graph_properties}
	$\Gamma(\PP_{\top})$ has the following properties.
	\begin{itemize}
		\item $\Gamma(\PP_{\top})$ has no multiple directed edges.
		\item $\Gamma(\PP_{\top})$ has a loop with vertex $a$ and label $g$ if and only if $g=a^{2}\in\rtwo_{\top}$.
		\item The set of edge labels of $\Gamma(\PP_{\top})$ is $\rtwo_{\top}$, and every element of $\rtwo_{\top}$ appears as the label of at least one edge of $\Gamma(\PP_{\top})$.
		\item For any $g\in\rtwo_{\top}$, the set of edges of $\Gamma(\PP_{\top})$ having label $g$ is a disjoint union of directed cycles, each corresponding to a connected component of $\HH(g)$.
	\end{itemize}
\end{lemma}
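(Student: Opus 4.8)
The plan is to verify the four assertions in turn; the first three are immediate from Definition~\ref{def:cycle_graph}, while the last requires a short analysis of the $\bg_{2}$-action. For the first assertion, observe that for a fixed ordered pair $(a,b)$ of elements of $\gen_{\top}$ the condition ``$ab\in\rtwo_{\top}$'' either holds or it does not, so there is at most one edge $a\rightarrow b$; hence $\Gamma(\PP_{\top})$ has no multiple directed edges. The second assertion is the special case $a=b$: a loop at $a$ is by definition an edge $a\rightarrow a$, which is present precisely when $a^{2}=a\cdot a\in\rtwo_{\top}$, and in that case its label is $a^{2}$.

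For the third assertion, the label of any edge $a\rightarrow b$ is $ab$, which lies in $\rtwo_{\top}$ by definition, so every label lies in $\rtwo_{\top}$. Conversely, let $g\in\rtwo_{\top}$; since $\ell_{\gen}(g)=2$ we may choose a reduced factorization $g=ab$ with $a,b\in\gen$. As $a$ is a prefix of $(a,b)$ we have $a\leq_{\gen}g\leq_{\gen}\top$, hence $a\in\gen_{\top}$; and Proposition~\ref{prop:subword_order}, applied to $g$ with the singleton index list $(2)$, gives $b\leq_{\gen}g\leq_{\gen}\top$, hence $b\in\gen_{\top}$. Thus $a\rightarrow b$ is an edge of $\Gamma(\PP_{\top})$ with label $g$, so every element of $\rtwo_{\top}$ occurs as a label.

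For the fourth assertion, fix $g\in\rtwo_{\top}$. The argument just given shows that the edges of $\Gamma(\PP_{\top})$ carrying the label $g$ are exactly the pairs $a\rightarrow b$ with $(a,b)\in\red{\gen}{g}$, and $a\rightarrow b\mapsto(a,b)$ is a bijection between this set of edges and $\red{\gen}{g}$. The connected components of $\HH(g)$ are the Hurwitz orbits of the $\bg_{2}$-action on $\red{\gen}{g}$, and each such orbit $\Omega$ is finite because $\gen_{\top}$, hence $\red{\gen}{g}$, is finite; consequently $\sigma_{1}$ acts on $\Omega$ as a single $k$-cycle, where $k=\lvert\Omega\rvert$. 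Iterating $\sigma_{1}$ from $(a_{1},a_{2})\in\Omega$ produces the cyclic sequence $(a_{1},a_{2}),(a_{2},a_{3}),\ldots,(a_{k},a_{1})$ with $a_{i}a_{i+1}=g$ for every $i$ (indices taken cyclically, so $a_{k+1}=a_{1}$). The generators $a_{1},\dots,a_{k}$ are pairwise distinct: the only reduced factorization of $g$ with first entry $a_{i}$ is $(a_{i},a_{i}^{-1}g)$, so $a_{i}=a_{j}$ would force the pairs $(a_{i},a_{i+1})$ and $(a_{j},a_{j+1})$ to coincide, contradicting that $\Omega$ has $k$ elements. Hence, viewed as a set of edges, $\Omega$ is precisely the directed cycle $a_{1}\rightarrow a_{2}\rightarrow\cdots\rightarrow a_{k}\rightarrow a_{1}$ (a loop when $k=1$). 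Finally, distinct orbits use disjoint vertex sets: if a generator $a$ appeared in two orbits, then the factorization $(a,a^{-1}g)$ occurs in both --- directly when $a$ is a first entry, and, when $a$ is a second entry of some $(ga^{-1},a)$, because $\sigma_{1}\cdot(ga^{-1},a)=(a,a^{-1}g)$ --- so the two orbits coincide. Therefore the edges labeled $g$ form a disjoint union of directed cycles, one for each connected component of $\HH(g)$.

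The only step needing genuine care is this last assertion, namely checking that a single Hurwitz orbit really is a cycle on distinct vertices and that distinct orbits are vertex-disjoint; both points reduce to the observation that for fixed $g$ a generator $a$ is the first entry of at most one reduced factorization of $g$, together with the one-line Hurwitz computation $\sigma_{1}\cdot(ga^{-1},a)=(a,a^{-1}g)$.
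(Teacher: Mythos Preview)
Your proof is correct and follows the same route as the paper, which simply records that the lemma is ``immediate from the definition''; you have merely spelled out the details, handling the fourth item with the appropriate care (the cyclic $\bg_{2}$-action, distinctness of the $a_i$, and vertex-disjointness of orbits).
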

\begin{proof}
	This is immediate from the definition.
\end{proof}

Figure~\ref{fig:cycle_graphs} shows three cycle graphs of factorization posets we have encountered before. For brevity we have omitted the edge labels, and have instead used colors to indicate edges with the same label.  

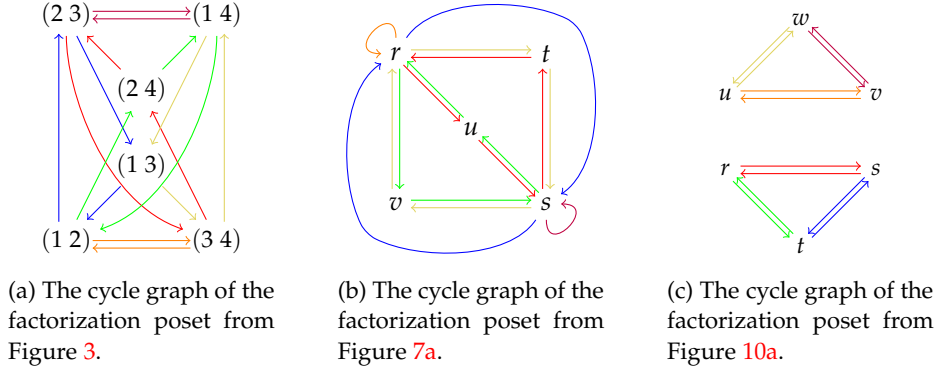
\begin{figure}
	\centering
	\begin{subfigure}[t]{.28\textwidth}
		\centering
		\begin{tikzpicture}\small
			\def\x{1};
			\def\y{1};
			\draw(1*\x,1*\y) node(n1){$(1\;2)$};
			\draw(2*\x,2*\y) node(n2){$(1\;3)$};
			\draw(3*\x,4*\y) node(n3){$(1\;4)$};
			\draw(1*\x,4*\y) node(n4){$(2\;3)$};
			\draw(2*\x,3*\y) node(n5){$(2\;4)$};
			\draw(3*\x,1*\y) node(n6){$(3\;4)$};
			\draw[->,purple](1.35*\x,4.05*\y) -- (2.65*\x,4.05*\y);
			\draw[->,purple](2.65*\x,3.95*\y) -- (1.35*\x,3.95*\y);
			\draw[->,orange](1.35*\x,1*\y) -- (2.65*\x,1*\y);
			\draw[->,orange](2.65*\x,.9*\y) -- (1.35*\x,.9*\y);
			\draw[->,blue](.9*\x,1.25*\y) -- (.9*\x,3.75*\y);
			\draw[->,blue](n4) -- (n2);
			\draw[->,blue](n2) -- (n1);
			\draw[->,red](n4) .. controls (1*\x,3*\y) and (1.5*\x,1.5*\y) .. (n6);
			\draw[->,red](n6) -- (n5);
			\draw[->,red](n5) -- (n4);
			\draw[->,green](n1) -- (n5);
			\draw[->,green](n5) -- (n3);
			\draw[->,green](n3) .. controls (3*\x,3*\y) and (2.5*\x,1.5*\y) .. (n1);
			\draw[->,yellow!70!gray](n3) -- (n2);
			\draw[->,yellow!70!gray](n2) -- (n6);
			\draw[->,yellow!70!gray](3.1*\x,1.25*\y) -- (3.1*\x,3.75*\y);
		\end{tikzpicture}
		\caption{The cycle graph of the factorization poset from Figure~\ref{fig:sym_4_poset}.}
		\label{fig:sym_4_cycle_graph}
	\end{subfigure}
	\hspace*{.05\textwidth}
	\begin{subfigure}[t]{.28\textwidth}
		\centering
		\begin{tikzpicture}
			\def\x{1};
			\def\y{1};
			\draw(1*\x,1*\x) node(n1){$v$};
			\draw(3*\x,1*\x) node(n2){$s$};
			\draw(2*\x,2*\x) node(n3){$u$};
			\draw(1*\x,3*\x) node(n4){$r$};
			\draw(3*\x,3*\x) node(n5){$t$};
			\draw[->,yellow!70!gray](1.2*\x,3.05*\y) -- (2.8*\x,3.05*\y);
			\draw[->,yellow!70!gray](2.8*\x,.95*\y) -- (1.2*\x,.95*\y);
			\draw[->,yellow!70!gray](3.05*\x,2.8*\y) -- (3.05*\x,1.2*\y);
			\draw[->,yellow!70!gray](.95*\x,1.2*\y) -- (.95*\x,2.8*\y);
			\draw[->,red](2.8*\x,2.95*\y) -- (1.2*\x,2.95*\y);
			\draw[->,red](1.1*\x,2.85*\y) -- (1.85*\x,2.1*\y);
			\draw[->,red](2.1*\x,1.85*\y) -- (2.85*\x,1.1*\y);
			\draw[->,red](2.95*\x,1.2*\y) -- (2.95*\x,2.8*\y);
			\draw[->,green](1.2*\x,1.05*\y) -- (2.8*\x,1.05*\y);
			\draw[->,green](1.9*\x,2.15*\y) -- (1.15*\x,2.9*\y);
			\draw[->,green](2.9*\x,1.15*\y) -- (2.15*\x,1.9*\y);
			\draw[->,green](1.05*\x,2.8*\y) -- (1.05*\x,1.2*\y);
			\draw[->,rounded corners,blue](n2) .. controls (2.5*\x,.25*\y) and (1.3*\x,.25*\y) .. (.6*\x,.6*\y) .. controls (.25*\x,1.3*\y) and (.25*\x,2.5*\y) .. (n4);
			\draw[->,rounded corners,blue](n4) .. controls (1.5*\x,3.75*\y) and (2.7*\x,3.75*\y) .. (3.4*\x,3.4*\y) .. controls (3.75*\x,2.7*\y) and (3.75*\x,1.5*\y) .. (n2);
			\path (n4) edge[out=90,in=180,looseness=.75,distance=1.5em,->,orange] (n4);
			\path (n2) edge[out=270,in=0,looseness=.75,distance=1.5em,->,purple] (n2);
		\end{tikzpicture}
		\caption{The cycle graph of the factorization poset from Figure~\ref{fig:hurwitz_nonshellable_poset}.}
		\label{fig:hurwitz_nonshellable_cycle_graph}
	\end{subfigure}
	\hspace*{.05\textwidth}
	\begin{subfigure}[t]{.28\textwidth}
		\centering
		\begin{tikzpicture}\small
			\def\x{1};
			\def\y{1};
			\draw(2*\x,1*\y) node(n1){$t$};
			\draw(1*\x,2*\y) node(n2){$r$};
			\draw(3*\x,2*\y) node(n3){$s$};
			\draw(1*\x,3*\y) node(n4){$u$};
			\draw(3*\x,3*\y) node(n5){$v$};
			\draw(2*\x,4*\y) node(n6){$w$};
			\draw[->,orange](1.2*\x,3.05*\y) -- (2.8*\x,3.05*\y);
			\draw[->,orange](2.8*\x,2.95*\y) -- (1.2*\x,2.95*\y);
			\draw[->,yellow!70!gray](1.85*\x,3.9*\y) -- (1.1*\x,3.15*\y);
			\draw[->,yellow!70!gray](1.15*\x,3.1*\y) -- (1.9*\x,3.85*\y);
			\draw[->,purple](2.9*\x,3.15*\y) -- (2.15*\x,3.9*\y);
			\draw[->,purple](2.1*\x,3.85*\y) -- (2.85*\x,3.1*\y);
			\draw[->,red](1.2*\x,2.05*\y) -- (2.8*\x,2.05*\y);
			\draw[->,red](2.8*\x,1.95*\y) -- (1.2*\x,1.95*\y);
			\draw[->,green](1.9*\x,1.15*\y) -- (1.15*\x,1.9*\y);
			\draw[->,green](1.1*\x,1.85*\y) -- (1.85*\x,1.1*\y);
			\draw[->,blue](2.85*\x,1.9*\y) -- (2.1*\x,1.15*\y);
			\draw[->,blue](2.15*\x,1.1*\y) -- (2.9*\x,1.85*\y);
		\end{tikzpicture}
		\caption{The cycle graph of the factorization poset from Figure~\ref{fig:abelian_quotient_disconnected_poset}.}
		\label{fig:abelian_quotient_disconnected_cycle_graph}
	\end{subfigure}
	\caption{Some cycle graphs.}
	\label{fig:cycle_graphs}
\end{figure}

Now assume that $\PP_{\top}$ admits a $\top$-compatible order of $\gen_{\top}$.  It follows in this case from Lemma~\ref{lem:compatible_rank_2_hurwitz} that for every $g\in\rtwo_{\top}$ the directed edges in $\Gamma(\PP_{\top})$ labeled by $g$ form a single cycle.  In fact the existence of a $\top$-compatible order puts a bound on the number of edges we need to remove from $\Gamma(\PP_{\top})$ to make it acyclic.  Generally, for any directed graph $\Gamma$ a \defn{feedback arc set} is a set of directed edges of $\Gamma$ whose removal makes $\Gamma$ acyclic.  Let $d(\Gamma)$ denote the minimal size of a feedback arc set of $\Gamma$.  

\begin{proposition}\label{prop:minimal_feedback_compatible}
	Any factorization poset $\PP_{\top}$ satisfies $d\bigl(\Gamma(\PP_{\top})\bigr)\geq \bigl\lvert\rtwo_{\top}\bigr\rvert$.  Moreover, $d\bigl(\Gamma(\PP_{\top})\bigr)=\bigl\lvert\rtwo_{\top}\bigr\rvert$ if and only if $\PP_{\top}$ admits a $\top$-compatible order of $\gen_{\top}$.
\end{proposition}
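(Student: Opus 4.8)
The plan is to extract from Lemma~\ref{lem:cycle_graph_properties} a decomposition of the edge set of $\Gamma(\PP_{\top})$ into pairwise edge-disjoint directed cycles, and then to match feedback arc sets against it. First I would observe that the edges of $\Gamma(\PP_{\top})$ split, according to their labels and then according to Hurwitz orbits, into one directed cycle for each Hurwitz orbit of each $g\in\rtwo_{\top}$: distinct labels give disjoint edge sets because the label of $a\to b$ is $ab$, and within a fixed label the cycles coming from different orbits are edge-disjoint since each reduced factorization $(a,b)$ of $g$ lies in a unique orbit (finiteness from Assumption~\ref{disc:finiteness} makes all these cycles finite). Since a feedback arc set must delete at least one edge from each directed cycle, it deletes at least one from each of these $\sum_{g\in\rtwo_{\top}}\orb(g)$ cycles, whence
\[
	d\bigl(\Gamma(\PP_{\top})\bigr)\ \geq\ \sum_{g\in\rtwo_{\top}}\orb(g)\ \geq\ \bigl\lvert\rtwo_{\top}\bigr\rvert,
\]
using $\orb(g)\geq 1$. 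This gives the inequality, and also shows that $d\bigl(\Gamma(\PP_{\top})\bigr)=\lvert\rtwo_{\top}\rvert$ forces $\orb(g)=1$ for every $g$, \ie each label of $\Gamma(\PP_{\top})$ spans a single directed cycle.

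To get equality from a $\top$-compatible order $\prec$, I would use Lemma~\ref{lem:compatible_rank_2_hurwitz} to know that every $g\in\rtwo_{\top}$ contributes exactly one cycle, say $g=a_{1}a_{2}=a_{2}a_{3}=\cdots=a_{k}a_{1}$ with edges $a_{i}\to a_{i+1}$ (indices mod $k$). By Definition~\ref{def:compatible_order} exactly one factorization $(a_{i},a_{i+1})$ is $\prec$-rising, equivalently exactly one of these edges is weakly $\prec$-increasing (when $k=1$ this is the loop). Taking $S$ to be the set of all weakly $\prec$-increasing edges of $\Gamma(\PP_{\top})$, I get $\lvert S\rvert=\lvert\rtwo_{\top}\rvert$, and every edge of $\Gamma(\PP_{\top})\setminus S$ is strictly $\prec$-decreasing, hence $\Gamma(\PP_{\top})\setminus S$ is acyclic. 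So $d\bigl(\Gamma(\PP_{\top})\bigr)\leq\lvert\rtwo_{\top}\rvert$, which together with the lower bound gives equality.

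For the converse I would start from $d\bigl(\Gamma(\PP_{\top})\bigr)=\lvert\rtwo_{\top}\rvert$. By the first paragraph every $g\in\rtwo_{\top}$ spans a single cycle, and a minimum feedback arc set $S$ then has size $\lvert\rtwo_{\top}\rvert$ and, by edge-disjointness of the $\lvert\rtwo_{\top}\rvert$ label-cycles, must contain exactly one edge from each. Since $\Gamma(\PP_{\top})\setminus S$ is acyclic it admits a topological order: a linear order $\prec$ of $\gen_{\top}$ with $a\succ b$ for every edge $a\to b$ of $\Gamma(\PP_{\top})\setminus S$. I claim this $\prec$ is $\top$-compatible: for $g\in\rtwo_{\top}$ with cycle $a_{1}\to a_{2}\to\cdots\to a_{k}\to a_{1}$, all but one of these edges lie in $\Gamma(\PP_{\top})\setminus S$, so all but one of the factorizations $(a_{i},a_{i+1})$ satisfy $a_{i}\succ a_{i+1}$ and hence are not $\prec$-rising (for $k\geq 2$ the $a_{i}$ are pairwise distinct; for $k=1$ the single loop is forced into $S$), while at least one is $\prec$-rising by the cyclic argument of Proposition~\ref{prop:compatible_order_rank_2}. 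Hence $g$ has a unique $\prec$-rising reduced $\gen$-factorization, so $\prec$ is $\top$-compatible.

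I expect no conceptual obstacle here; the two points needing a little care are that a minimum feedback arc set of the forced size is necessarily distributed one edge per label-cycle (immediate from edge-disjointness) and the use of a topological ordering of $\Gamma(\PP_{\top})\setminus S$, together with the small amount of bookkeeping around loops and the length-one Hurwitz orbits $g=a^{2}$.
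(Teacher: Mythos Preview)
Your proposal is correct and follows essentially the same approach as the paper: both arguments use the decomposition of $\Gamma(\PP_{\top})$ into edge-disjoint label-cycles for the lower bound, remove the unique rising edge from each cycle to show a compatible order gives equality, and conversely take a topological order of $\Gamma(\PP_{\top})\setminus S$ for a minimum feedback arc set $S$ (one edge per cycle) to produce a compatible order. Your justification for the converse is slightly more explicit than the paper's in that you spell out why the remaining edge in each cycle must be the rising one, but the content is the same.
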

\begin{proof}
	Let us abbreviate $\Gamma=\Gamma(\PP_{\top})$.  It is immediate from the definition that $d(\Gamma)$ needs to be at least the size of any collection of edge-disjoint cycles of~$\Gamma$.  Moreover, every $g\in\rtwo_{\top}$ produces a collection of $k_{g}\geq 1$ cycles of $\Gamma$, say $C_{g}^{(i)}$ for $i=1,2,\dots, k_g$.  Since the edges in any $C_{g}^{(i)}$ are labeled by $g$, we find that $\bigl\{C_{g}^{(i)}\mid g\in\rtwo_{\top}, i\in[k_{g}]\bigr\}$ is a collection of edge-disjoint cycles of $\Gamma$, and we obtain
	\begin{displaymath}
		d(\Gamma) \geq \bigl\lvert\bigl\{C_{g}^{(i)}\mid g\in\rtwo_{\top}, i\in[k_{g}]\bigr\}\bigr\rvert \geq \bigl\lvert\rtwo_{\top}\bigr\rvert.
	\end{displaymath}
	
	Now suppose that $d(\Gamma)=\bigl\lvert\rtwo_{\top}\bigr\rvert$. This implies in particular that there is only one cycle labeled by $g$ for each $g\in\rtwo_{\top}$; which we denote simply by $C_g$.  Let $\Gamma'$ be an acyclic graph that is constructed from $\Gamma$ by removing $d(\Gamma)$ edges.  It follows that $\Gamma'$ is obtained by removing exactly one edge for each $C_{g}$.  Since $\Gamma'$ is acyclic, we can define the following partial order on $\gen_{\top}$: $a \preceq b$ if and only if $b \rightarrow a$ in $\Gamma'$. Then by construction, any linear extension of $\preceq$ is $\top$-compatible. 
	
	Conversely suppose that there exists a $\top$-compatible order $\prec$ on $\gen_{\top}$.  Then from Lemma~\ref{lem:compatible_rank_2_hurwitz}, the Hurwitz action of $\bg_2$ on $\red{\gen}{g}$ is transitive for any $g\in\rtwo_{\top}$. Let us write the unique Hurwitz orbit for $g$ as $g=a_{1}a_{2} = a_{2}a_{3} = \cdots = a_{k-1}a_{k} = a_{k}a_{1}$; this corresponds in $\Gamma$ to the unique cycle $C_g$ labeled by $g$. Since $\prec$ is a $\top$-compatible order, we can find some $i\in[k]$ such that $a_{i}\succ a_{i+1}\succ\cdots\succ a_{i+k-1}$ (indices taken mod $k$).  Now let us remove from $\Gamma$ the edge from $a_{i-1}$ to $a_{i}$ in $C_{g}$.  (In particular, we remove all the loops.)  After doing that for each $g\in \rtwo_{\top}$, every edge $a\rightarrow b$ in the resulting graph $\Gamma'$ satisfies $a\succ b$.  This implies that there can be no cycles left in $\Gamma'$.  This yields $d(\Gamma)=\bigl\lvert\rtwo_{\top}\bigr\rvert$.
\end{proof}

If $\prec$ is $\top$-compatible, let $\Gamma_{\prec}(\PP_{\top})$ denote the \defn{reduced cycle graph}, \ie the graph that is constructed from $\PP(\top)$ by removing the edges corresponding to the $\prec$-rising reduced $\gen$-factorizations of $g\in\rtwo_{\top}$.  Figure~\ref{fig:reduced_cycle_graphs} shows reduced versions of the cycle graphs in Figures~\ref{fig:sym_4_cycle_graph} and \ref{fig:abelian_quotient_disconnected_cycle_graph}.  The cycle graph in Figure~\ref{fig:hurwitz_nonshellable_cycle_graph} has six different edge labels, but it is quickly verified that we need to remove at least nine edges to make this graph acyclic.  (These are the two loops plus one edge for each of the seven back-and-forth pairs.)  Proposition~\ref{prop:minimal_feedback_compatible} then implies that the corresponding factorization poset does not admit a compatible order, which we have already verified in Example~\ref{ex:no_compatible_order}.  

\begin{figure}
	\centering
	\begin{subfigure}[t]{.45\textwidth}
		\centering
		\begin{tikzpicture}\small
			\def\x{1};
			\def\y{1};
			\draw(1*\x,1*\y) node(n1){$(1\;2)$};
			\draw(2*\x,2*\y) node(n2){$(1\;3)$};
			\draw(3*\x,4*\y) node(n3){$(1\;4)$};
			\draw(1*\x,4*\y) node(n4){$(2\;3)$};
			\draw(2*\x,3*\y) node(n5){$(2\;4)$};
			\draw(3*\x,1*\y) node(n6){$(3\;4)$};
			\draw[->,purple](1.35*\x,4.05*\y) -- (2.65*\x,4.05*\y);
			\draw[->,orange](2.65*\x,.9*\y) -- (1.35*\x,.9*\y);
			\draw[->,blue](n4) -- (n2);
			\draw[->,blue](n2) -- (n1);
			\draw[->,red](n6) -- (n5);
			\draw[->,red](n5) -- (n4);
			\draw[->,green](n5) -- (n3);
			\draw[->,green](n3) .. controls (3*\x,3*\y) and (2.5*\x,1.5*\y) .. (n1);
			\draw[->,yellow!70!gray](n3) -- (n2);
			\draw[->,yellow!70!gray](3.1*\x,1.25*\y) -- (3.1*\x,3.75*\y);
		\end{tikzpicture}
		\caption{A reduced cycle graph of the cycle graph from Figure~\ref{fig:sym_4_cycle_graph} corresponding to the order $(1\;2)\prec(1\;3)\prec(1\;4)\prec(2\;3)\prec(2\;4)\prec(3\;4)$.}
		\label{fig:sym_4_red_cycle_graph}
	\end{subfigure}
	\hspace*{.05\textwidth}
	\begin{subfigure}[t]{.45\textwidth}
		\centering
		\begin{tikzpicture}\small
			\def\x{1};
			\def\y{1};
			\draw(2*\x,1*\y) node(n1){$t$};
			\draw(1*\x,2*\y) node(n2){$r$};
			\draw(3*\x,2*\y) node(n3){$s$};
			\draw(1*\x,3*\y) node(n4){$u$};
			\draw(3*\x,3*\y) node(n5){$v$};
			\draw(2*\x,4*\y) node(n6){$w$};
			\draw[->,orange](2.8*\x,2.95*\y) -- (1.2*\x,2.95*\y);
			\draw[->,yellow!70!gray](1.85*\x,3.9*\y) -- (1.1*\x,3.15*\y);
			\draw[->,purple](2.1*\x,3.85*\y) -- (2.85*\x,3.1*\y);
			\draw[->,red](2.8*\x,1.95*\y) -- (1.2*\x,1.95*\y);
			\draw[->,green](1.9*\x,1.15*\y) -- (1.15*\x,1.9*\y);
			\draw[->,blue](2.15*\x,1.1*\y) -- (2.9*\x,1.85*\y);
		\end{tikzpicture}
		\caption{The reduced cycle graph of the cycle graph from Figure~\ref{fig:abelian_quotient_disconnected_cycle_graph} corresponding to the order $r\prec s\prec t\prec u\prec v\prec w$.}
		\label{fig:abelian_quotient_disconnected_red_cycle_graph}
	\end{subfigure}
	\caption{Some reduced cycle graphs.}
	\label{fig:reduced_cycle_graphs}
\end{figure}
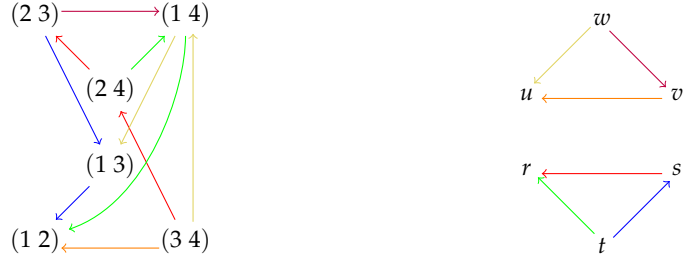

From the proof of Proposition~\ref{prop:minimal_feedback_compatible} we obtain the following corollary.

\begin{corollary}\label{cor:compatible_linear_extension}
	Let $\prec$ be a $\top$-compatible order of $\gen_{\top}$.  Let $(\gen_{\top},\sqsubseteq)$ be the dual of the poset induced by $\Gamma_{\prec}(\PP_{\top})$.  Then, $\prec$ is a linear extension of $\sqsubseteq$, and moreover every linear extension of $\sqsubseteq$ is $\top$-compatible.
\end{corollary}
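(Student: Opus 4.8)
The plan is to reuse, almost verbatim, the second half of the proof of Proposition~\ref{prop:minimal_feedback_compatible}, where the acyclic graph obtained by deleting the $\prec$-rising edges was analysed. First I would record the structural facts established there. Since $\prec$ is $\top$-compatible, Lemma~\ref{lem:compatible_rank_2_hurwitz} shows that for each $g\in\rtwo_{\top}$ the edges of $\Gamma(\PP_{\top})$ labelled $g$ form a single directed cycle $C_{g}$, and because $g$ has a unique $\prec$-rising reduced $\gen$-factorization, exactly one edge $a\to b$ of $C_{g}$ satisfies $a\prec b$ (namely the one recording that factorization; if $g=a^{2}$ this edge is the loop at $a$). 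Forming $\Gamma_{\prec}(\PP_{\top})$ deletes exactly these edges, so every edge $a\to b$ of $\Gamma_{\prec}(\PP_{\top})$ satisfies $b\prec a$; in particular $\Gamma_{\prec}(\PP_{\top})$ is acyclic, and, unravelling the definitions (compare the partial order introduced in the proof of Proposition~\ref{prop:minimal_feedback_compatible}), $a\sqsubseteq b$ holds precisely when there is a directed path from $b$ to $a$ in $\Gamma_{\prec}(\PP_{\top})$.

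For the first assertion I would argue that $\prec$ is a linear extension of $\sqsubseteq$: if $a\sqsubseteq b$ with $a\neq b$, pick a directed path $b=c_{0}\to c_{1}\to\cdots\to c_{m}=a$ in $\Gamma_{\prec}(\PP_{\top})$; each step descends, so $b=c_{0}\succ c_{1}\succ\cdots\succ c_{m}=a$, hence $a\prec b$. Since $\prec$ is a total order on $\gen_{\top}$ containing $\sqsubseteq$, it is a linear extension of $\sqsubseteq$.

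For the second assertion I would fix an arbitrary linear extension $\prec'$ of $\sqsubseteq$ and an arbitrary $g\in\rtwo_{\top}$, and show that $g$ has a unique $\prec'$-rising reduced $\gen$-factorization. Deleting one edge of $C_{g}$ leaves a directed path through all its vertices, say $b_{1}\to b_{2}\to\cdots\to b_{k}$; when $g=a^{2}$ one has $k=1$ and $\red{\gen}{g}=\{(a,a)\}$, which is trivially the unique $\prec'$-rising factorization, so I may assume $k\geq 2$. Each path edge $b_{j}\to b_{j+1}$ gives $b_{j+1}\sqsubseteq b_{j}$, hence $b_{k}\sqsubseteq\cdots\sqsubseteq b_{1}$ is a $\sqsubseteq$-chain, and since $\prec'$ extends $\sqsubseteq$ and the $b_{j}$ are pairwise distinct we get $b_{1}\succ' b_{2}\succ'\cdots\succ' b_{k}$. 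The reduced $\gen$-factorizations of $g$ correspond bijectively to the edges of $C_{g}$: the $k-1$ path edges give $(b_{1},b_{2}),\ldots,(b_{k-1},b_{k})$, none of which is $\prec'$-rising, and the deleted edge gives $(b_{k},b_{1})$, which is $\prec'$-rising because $b_{k}\prec' b_{1}$. Hence $g$ has exactly one $\prec'$-rising reduced $\gen$-factorization, and as $g$ was arbitrary, $\prec'$ is $\top$-compatible.

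Beyond this bookkeeping there is essentially nothing else to do, so the only step needing real care --- the main obstacle --- is establishing that deleting the unique ascending edge of each $C_{g}$ genuinely leaves a graph all of whose edges descend with respect to $\prec$. This is precisely where $\top$-compatibility is used twice: through Lemma~\ref{lem:compatible_rank_2_hurwitz} to rule out $C_{g}$ being a union of several directed cycles, and through the uniqueness of the rising factorization to pin down the number of ascending edges of $C_{g}$ to exactly one.
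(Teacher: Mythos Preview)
Your proposal is correct and follows essentially the same approach as the paper, which derives the corollary directly from the proof of Proposition~\ref{prop:minimal_feedback_compatible} without supplying an independent argument. You spell out in more detail the two key points that the paper leaves implicit there: that every remaining edge of $\Gamma_{\prec}(\PP_{\top})$ descends with respect to $\prec$ (giving the first assertion), and that the surviving edges of each $C_{g}$ form a $\sqsubseteq$-chain, so any linear extension of $\sqsubseteq$ singles out the same unique rising factorization (giving the second assertion).
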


A directed graph on the vertex set $V$ is \defn{connected} if for every $v_{1},v_{2}\in V$ there exists a directed path from $v_{1}$ to $v_{2}$.  

\begin{lemma}\label{lem:reduced_graph_linear_connected}
	The order induced by $\Gamma_{\prec}(\PP_{\top})$ is linear (and therefore equal to $\prec$) if and only if $\Gamma_{\prec}(\PP_{\top})$ is connected.
\end{lemma}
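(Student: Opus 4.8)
The plan is to unwind the definitions, the only non-formal ingredient being that $\Gamma := \Gamma_{\prec}(\PP_{\top})$ is acyclic. First I would recall the two facts already established in the proof of Proposition~\ref{prop:minimal_feedback_compatible}: since $\prec$ is $\top$-compatible, each $g \in \rtwo_{\top}$ contributes a single directed cycle $C_g$ to $\Gamma(\PP_{\top})$ (Lemma~\ref{lem:compatible_rank_2_hurwitz}), and deleting from every $C_g$ the edge attached to its unique $\prec$-rising reduced $\gen$-factorization produces a directed graph in which every remaining edge $a \to b$ satisfies $a \succ b$. In particular $\Gamma$ is acyclic, so its reachability relation is a genuine partial order on $\gen_{\top}$; this is (dual to) the order induced by $\Gamma$ used in Corollary~\ref{cor:compatible_linear_extension}, and by that corollary $\prec$ is a linear extension of it. Write $\trianglelefteq$ for this induced order.

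For the implication ``linear $\Rightarrow$ connected'', assume $\trianglelefteq$ is linear. Then any two distinct vertices $v_1,v_2$ are $\trianglelefteq$-comparable, which by the definition of $\trianglelefteq$ means exactly that there is a directed path in $\Gamma$ joining $v_1$ and $v_2$; hence $\Gamma$ is connected. Since a total order is its own unique linear extension, $\trianglelefteq$ must coincide with $\prec$, which gives the parenthetical assertion.

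For the converse, assume $\Gamma$ is connected, so that for every pair of vertices one is reachable from the other; equivalently, every pair is $\trianglelefteq$-comparable. As $\trianglelefteq$ is already a partial order --- this is the only place where the acyclicity of $\Gamma$ enters --- comparability of all pairs upgrades it to a total order, that is, it is linear, and as in the previous paragraph it then equals $\prec$.

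I do not anticipate a real obstacle: the argument is essentially a translation between ``every pair reachable'' and ``total order''. The only points requiring care are the bookkeeping of edge orientations in $\Gamma$ (every edge strictly decreases $\prec$) and the invocation of Corollary~\ref{cor:compatible_linear_extension} to pin the induced linear order down as $\prec$ itself; it is also worth stating explicitly that acyclicity of $\Gamma_{\prec}(\PP_{\top})$ is precisely what makes pairwise reachability equivalent to being a total order.
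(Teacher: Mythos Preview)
Your proof is correct and follows the same approach as the paper: the paper's argument is the single observation that two vertices are comparable in the induced order if and only if they are joined by a directed path in $\Gamma_{\prec}(\PP_{\top})$, and you spell this out more carefully, making explicit the role of acyclicity (needed for antisymmetry of $\trianglelefteq$) and invoking Corollary~\ref{cor:compatible_linear_extension} to identify the resulting linear order with $\prec$.
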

\begin{proof}
	This follows again from the construction, since every two vertices $a$ and $b$ of $\Gamma_{\prec}(\PP_{\top})$ are comparable in $(\gen_{\top},\sqsubseteq)$ if and only if they are connected by a directed path in $\Gamma_{\prec}(\PP_{\top})$.
\end{proof}

We can also characterize the well-covered property inside the reduced cycle graph.

\begin{proposition}\label{prop:reduced_graph_linear_well_covered}
  For any $\top$-compatible order $\prec$, $\PP_{\top}$ is well-covered if and only if the reduced cycle graph $\Gamma_{\prec}(\PP_{\top})$ has a unique sink.
  In particular, if $\Gamma_{\prec}(\PP_{\top})$ induces a linear order, then $\PP_{\top}$ is well-covered.
\end{proposition}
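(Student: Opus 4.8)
The plan is to translate the well-covered property into a statement about sinks of the reduced cycle graph, using Proposition~\ref{prop:compatible_cover_set} as the dictionary between label inequalities and membership in the sets $F_{\prec}(a;\top)$.

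First I would record the shape of $\Gamma_{\prec}(\PP_{\top})$. Since $\prec$ is $\top$-compatible, Lemma~\ref{lem:compatible_rank_2_hurwitz} gives that $\bg_{2}$ acts transitively on $\red{\gen}{g}$ for every $g\in\rtwo_{\top}$, so by Lemma~\ref{lem:cycle_graph_properties} the edges of $\Gamma(\PP_{\top})$ labeled $g$ form a single directed cycle $C_{g}$; by Lemma~\ref{lem:compatible_min_max} the edge deleted from $C_{g}$ to form $\Gamma_{\prec}(\PP_{\top})$ is the one from $\min\gen_{g}$ to $\max\gen_{g}$ (the loop at $a$ when $g=a^{2}$). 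As already observed inside the proof of Proposition~\ref{prop:minimal_feedback_compatible}, this forces every edge $a\to b$ of $\Gamma_{\prec}(\PP_{\top})$ to be strictly $\prec$-descending, i.e.\ $b\prec a$; in particular $\Gamma_{\prec}(\PP_{\top})$ is acyclic and hence has at least one sink.

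The key step is the equivalence: for $a\in\gen_{\top}$, one has $F_{\prec}(a;\top)=\emptyset$ if and only if $a$ is a sink of $\Gamma_{\prec}(\PP_{\top})$. For the forward implication (in contrapositive form), if $a$ is not a sink, pick an edge $a\to b$ of $\Gamma_{\prec}(\PP_{\top})$; then $g:=ab\in\rtwo_{\top}$, one checks $a\lessdot_{\gen}g\leq_{\gen}\top$, and $\lambda_{\top}(a,g)=b\prec a=\lambda_{\top}(\id,a)$ because the edge is descending, so Proposition~\ref{prop:compatible_cover_set} puts $g\in F_{\prec}(a;\top)$. Conversely, if $g\in F_{\prec}(a;\top)$ then $g\in\rtwo_{\top}$; writing $g=ab$ with $b=a^{-1}g\in\gen_{\top}$, Proposition~\ref{prop:compatible_cover_set} forces $b=\lambda_{\top}(a,g)\prec\lambda_{\top}(\id,a)=a$, so $a$ is not minimal in $\gen_{g}$, whence the edge $a\to b$ inside $C_{g}$ is not the $\prec$-rising one and therefore survives in $\Gamma_{\prec}(\PP_{\top})$, so $a$ is not a sink. (Note the degenerate case $g=a^{2}$ never arises here, since $b\prec a$ rules out $b=a$; loops are always deleted and so never contribute to any $F_{\prec}(a;\top)$.)

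Granting this equivalence, the proposition follows quickly. Since $F_{\prec}(\min\gen_{\top};\top)=\emptyset$ trivially, $\min\gen_{\top}$ is always a sink; hence the well-covered condition ``$F_{\prec}(a;\top)=\emptyset\iff a=\min\gen_{\top}$'' is precisely the condition that $\min\gen_{\top}$ is the \emph{only} sink, i.e.\ that $\Gamma_{\prec}(\PP_{\top})$ has a unique sink. For the final sentence, suppose $\Gamma_{\prec}(\PP_{\top})$ induces a linear order; by (the proof of) Lemma~\ref{lem:reduced_graph_linear_connected} this means that any two vertices are joined by a directed path. Take any sink $s$; for any other vertex $v$ the path between $v$ and $s$ cannot start at the sink $s$, so it runs from $v$ to $s$. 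If $s'$ were a second sink, the path between $s$ and $s'$ could start at neither, a contradiction; so the sink is unique and $\PP_{\top}$ is well-covered. I expect the only delicate point to be the bookkeeping around orientation conventions and the degenerate loop case; once the dictionary of Proposition~\ref{prop:compatible_cover_set} is in place, everything reduces to the direct argument above.
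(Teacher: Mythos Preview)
Your proof is correct and follows essentially the same approach as the paper: both establish the core equivalence that a vertex $a$ is a sink of $\Gamma_{\prec}(\PP_{\top})$ if and only if $F_{\prec}(a;\top)=\emptyset$, and then read off the result. Your version is simply more explicit in invoking Proposition~\ref{prop:compatible_cover_set} and in handling the loop case, whereas the paper argues directly from the definition of the reduced cycle graph; for the second sentence, the paper just notes that an acyclic graph inducing a linear order has a unique sink, which is the same content as your path argument.
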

\begin{proof}
  Let $s_0$ be the minimal element of $\gen_\top$ for the order $\prec$. Clearly $s_0$ is a sink. 
  A vertex $b$ of $\Gamma_{\prec}(\PP_{\top})$ is not a sink if and only if there exists a vertex $a$ with $b\rightarrow a$, which is equivalent to $a\prec b$, with both $a$ and $b$ covered by some $g\in\rtwo_{\top}$ (corresponding to the label of that edge).  So $b$ is not a sink in $\Gamma_{\prec}(\PP_{\top})$ if and only if there exists $g\in\rtwo_{\top}$ such that $g\in F_{\prec}(b,\top)$, i.e. exactly when $F_{\prec}(b,\top)\neq \emptyset$. By definition, this means that $\PP_{\top}$ is well-covered if and only if $\Gamma_{\prec}$ has only $s_0$ as a sink.

The second statement follows naturally since an acyclic graph inducing a linear order has necessarily a unique sink.
\end{proof}

Then the following conjecture, described in terms of the cycle graph, implies Conjecture~\ref{conj:compatible_well_covered}.

\begin{conjecture}\label{conj:compatible_linear_cycle_graph}
	If $\PP_{\top}$ is totally chain-connected and admits a $\top$-compatible order $\prec$ of $\gen_{\top}$, then for any $g\in \PP_{\top}$, the reduced cycle graph $\Gamma_{\prec}(\PP_g)$ induces a linear order.
\end{conjecture}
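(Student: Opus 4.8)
Since every interval $[\id,g]_{\gen}$ of $\PP_{\top}$ is again a factorization poset that is totally chain-connected and, by Lemma~\ref{lem:compatible_restriction} together with Lemma~\ref{lem:bottom_intervals}, carries a $g$-compatible order, it suffices to treat $g=\top$; by Lemma~\ref{lem:reduced_graph_linear_connected} the task then reduces to showing that the reduced cycle graph $\Gamma_{\prec}(\PP_{\top})$ is connected as a directed graph. Recall from the proof of Proposition~\ref{prop:minimal_feedback_compatible} (see also Corollary~\ref{cor:compatible_linear_extension}) that $\Gamma_{\prec}(\PP_{\top})$ is acyclic, and that by Lemma~\ref{lem:compatible_min_max} every surviving edge $c\to b$ satisfies $c\succ b$; hence connectivity of $\Gamma_{\prec}(\PP_{\top})$ is equivalent to the induced partial order $\sqsubseteq$ on $\gen_{\top}$ being total, i.e.\ to any two generators $b,b'\in\gen_{\top}$ being joined by a directed path. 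I would prove this by induction on $n=\ell_{\gen}(\top)$. The cases $n\le 1$ are trivial, and for $n=2$ the existence of a $\top$-compatible order forces $\PP_{\top}$ to be Hurwitz-connected by Corollary~\ref{cor:equiv_compatible_loc_hurwitz}, so the single cycle of $\Gamma(\PP_{\top})$ runs through all of $\gen_{\top}$ and deleting its unique rising edge leaves a directed Hamiltonian path.

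For the inductive step, the starting observation is that every generator $b$ is the bottom atom of some maximal chain of $\PP_{\top}$. Given $b,b'\in\gen_{\top}$, pick maximal chains $C,C'$ with bottom atoms $b,b'$; by chain-connectivity there is a path $C=C_{0},C_{1},\ldots,C_{m}=C'$ in $\CC(\PP_{\top})$, and tracking the bottom atom along it yields a walk $b=\beta_{0},\beta_{1},\ldots,\beta_{m}=b'$ in which consecutive $\beta_{i},\beta_{i+1}$ are either equal or lie on a common cycle of $\Gamma(\PP_{\top})$, namely the cycle labelled by the length-$2$ element that $C_{i}$ and $C_{i+1}$ share immediately above $\id$. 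This exhibits $\Gamma(\PP_{\top})$ as an \emph{undirected} connected graph; the whole difficulty is to upgrade this to a directed path in the \emph{reduced} graph. Here I would feed in the induction hypothesis via the interval $[a_{0},\top]_{\gen}$, where $a_{0}=\min\gen_{\top}$: by Lemma~\ref{lem:bottom_intervals} this interval is isomorphic to $\PP_{a_{0}^{-1}\top}$, which has length $n-1$, is totally chain-connected, and carries a compatible order (using the label-preservation of Lemma~\ref{lem:label_preserving}), so its reduced cycle graph is linearly ordered by induction. One notes that along any cycle of $\Gamma(\PP_{\top})$ labelled $g\in\rtwo_{\top}$, the surviving edges form a directed path from $\max\gen_{g}$ down to $\min\gen_{g}$ (only the rising edge $\min\gen_{g}\to\max\gen_{g}$ is removed, by Lemma~\ref{lem:compatible_min_max}); so from any vertex on that cycle one reaches $\min\gen_{g}$ inside $\Gamma_{\prec}(\PP_{\top})$. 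The plan is to use the undirected walk above, together with this "descend to the local minimum of each cycle" step, to route a directed path from every $\beta_{i}$ down to $a_{0}$, and then to reconstruct the remaining connectivity of $\Gamma_{\prec}(\PP_{\top})$ from that of $\Gamma_{\prec}$ on $[a_{0},\top]_{\gen}$.

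The main obstacle — and the reason this is still a conjecture — lies precisely in that upgrade: the walk coming from chain-connectivity lives in the undirected graph, and there is no a priori guarantee that it can be rerouted inside the acyclic graph $\Gamma_{\prec}(\PP_{\top})$ without becoming stuck at a generator other than $a_{0}$. In fact, ruling out such a "local minimum of $\prec$" is exactly the well-covered property (Proposition~\ref{prop:reduced_graph_linear_well_covered}), so the present statement subsumes Conjecture~\ref{conj:compatible_well_covered}; the example in Figure~\ref{fig:abelian_quotient_disconnected_red_cycle_graph} shows that $\Gamma_{\prec}$ really can split into $\sqsubseteq$-incomparable pieces once total chain-connectivity fails, so the hypothesis must be used in a genuinely global way rather than through the purely rank-$2$ data recorded by $\Gamma(\PP_{\top})$. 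A realistic intermediate target, in the spirit of the recursive atom orderings of Bj\"orner--Wachs \cite{bjorner83lexicographically}, is to establish the statement under the extra hypothesis that each generator lies on a single cycle of $\Gamma(\PP_{\top})$, where the descent step above already suffices; this is the content of Theorem~\ref{thm:generator_single_cycle}.
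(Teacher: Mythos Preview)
This statement is a \emph{conjecture} in the paper, not a theorem: the paper offers no proof, only the special case recorded as Theorem~\ref{thm:generator_single_cycle}. Your proposal is likewise not a proof, and you say so yourself (``the reason this is still a conjecture''). Read as a strategy sketch, your discussion is accurate and well aligned with the paper's own framing: the reduction to $g=\top$ via Lemmas~\ref{lem:bottom_intervals} and~\ref{lem:compatible_restriction} is correct; the reformulation via Lemma~\ref{lem:reduced_graph_linear_connected} as connectivity of $\Gamma_{\prec}(\PP_{\top})$ is the right one; the base case $n=2$ is handled exactly as you say; and your identification of the obstacle---getting stuck at a $\prec$-sink other than $\min\gen_{\top}$, i.e.\ the failure of the well-covered property---matches Proposition~\ref{prop:reduced_graph_linear_well_covered} and the paper's remark that Conjecture~\ref{conj:compatible_linear_cycle_graph} implies Conjecture~\ref{conj:compatible_well_covered}.

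Two small corrections. First, your description of Theorem~\ref{thm:generator_single_cycle} is not quite right: the hypothesis there is that \emph{some} generator $a\in\gen_{\top}$ lies on a unique cycle (equivalently, has a unique upper cover in $\rtwo_{\top}$, aside from a possible $a^{2}$), not that \emph{every} generator does. The proof then exploits this single distinguished $a$ to pin down $\top$ (essentially $\top=ab^{n-1}$) and runs a case analysis on the size of $\gen_{g_{a}}$; it is not the ``descend to the local minimum of each cycle'' argument you outline. Second, your inductive use of $[a_{0},\top]_{\gen}$ does not obviously help: the atoms of that interval are length-$2$ elements of $\PP_{\top}$, not elements of $\gen_{\top}$, so linearity of $\Gamma_{\prec}(\PP_{a_{0}^{-1}\top})$ orders the covers of $a_{0}$ rather than the generators themselves, and there is no evident transfer back to $\Gamma_{\prec}(\PP_{\top})$. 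This is precisely the global-versus-rank-$2$ gap you flag, and nothing in the paper closes it either.
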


\subsection{Proof of a Specific Case}
	\label{sec:proof_of_specific_case}
In the remainder of this section we prove a particular case of Conjecture~\ref{conj:compatible_linear_cycle_graph}.  Recall from Assumption~\ref{disc:finiteness} that we assume $\gen_{\top}$ to be finite.

\begin{theorem}\label{thm:generator_single_cycle}
	Let  $\PP_{\top}(\grp,\gen)$ be a factorization poset and denote $n:=\ell_{\gen}(\top)$. Assume that $n\geq 3$, and assume there exists an element $a$ in $\gen_{\top}$ that is covered in $\PP_{\top}$ by a unique element $g$ (other than possibly $a^{2}$). Then Conjecture~\ref{conj:compatible_linear_cycle_graph} holds.
\end{theorem}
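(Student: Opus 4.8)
The plan is to prove, by induction on $n=\ell_\gen(\top)$, the unpacked form of Conjecture~\ref{conj:compatible_linear_cycle_graph}: namely that $\Gamma_\prec(\PP_h)$ is connected — equivalently, by Lemma~\ref{lem:reduced_graph_linear_connected}, that it induces a linear order — for every $h\leq_\gen\top$, whenever $\PP_\top$ is totally chain-connected, $\prec$ is $\top$-compatible, and $\PP_\top$ has an atom $a$ covered by a unique element $\hat g$ other than possibly $a^2$. Intervals of rank at most $2$ are handled at once, since by Lemma~\ref{lem:compatible_rank_2_hurwitz} the reduced cycle graph of such an interval is a single directed cycle with its $\prec$-rising edge deleted, i.e.\ a directed path (or a single vertex). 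For $h<_\gen\top$ of rank $\ge 3$, if $a\leq_\gen h$ then the upper covers of $a$ computed inside $\PP_h$ form a subset of those computed inside $\PP_\top$, so $a$ is again an atom of $\PP_h$ covered by a unique element other than possibly $a^2$; since $\PP_h$ inherits total chain-connectivity and the restriction of $\prec$ is $h$-compatible (Lemmas~\ref{lem:bottom_intervals} and \ref{lem:compatible_restriction}) and $\ell_\gen(h)<n$, the inductive hypothesis applies. Handling the proper intervals of rank $\ge3$ that do \emph{not} lie above $a$ requires exhibiting, for each of them, a vertex playing the role of $a$; pinning this down is one of the bookkeeping points to be checked. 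Modulo it, the whole statement reduces to showing that $\Gamma_\prec(\PP_\top)$ itself is connected.

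For that, the first thing I would exploit is that a compatible order rigidifies the reduced cycle graph. By Lemma~\ref{lem:compatible_rank_2_hurwitz} each $h'\in\rtwo_\top$ labels a single directed cycle of $\Gamma(\PP_\top)$; by Lemma~\ref{lem:compatible_min_max} its deleted edge is $\min\gen_{h'}\to\max\gen_{h'}$; and by Corollary~\ref{cor:compatible_linear_extension} every edge of $\Gamma_\prec(\PP_\top)$ strictly decreases $\prec$. Hence the surviving edges of each such cycle form the strictly $\prec$-descending path on the vertex set $\gen_{h'}$, and $\Gamma_\prec(\PP_\top)$ is exactly the union over $h'\in\rtwo_\top$ of these ``block paths''. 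Connectivity thus means precisely that any two generators are joined by a $\prec$-descending walk each of whose steps passes between $\prec$-consecutive elements of some block. I would also record that if $a^2\in\rtwo_\top$, then $(a,a)$ must be the \emph{unique} $\prec$-rising factorization of $a^2$, so Lemma~\ref{lem:compatible_min_max} forces $\gen_{a^2}=\{a\}$; consequently $\gen_{\hat g}$ is the only block of size $>1$ containing $a$.

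The second ingredient is the local shape of $a$ in $\Gamma_\prec(\PP_\top)$: as the only length-$2$ elements above $a$ are $\hat g$ and possibly $a^2$, the only edges of $\Gamma(\PP_\top)$ incident to $a$ are $a\to a^{-1}\hat g$, $(\hat g a^{-1})\to a$, and possibly the loop at $a$. After deleting the rising edge of $\gen_{\hat g}$ and the loop, $a$ is therefore either the $\prec$-minimum of $\gen_{\hat g}$ (a sink), the $\prec$-maximum of $\gen_{\hat g}$ (a source), or an interior vertex of the block path of $\gen_{\hat g}$ with in-degree and out-degree exactly $1$. In the interior case, where $a$ has a unique predecessor $c$ and unique successor $b$, connectivity of $\Gamma_\prec(\PP_\top)$ is equivalent to connectivity of the graph obtained by contracting the path $c\to a\to b$; I would aim to identify this contracted graph with the reduced cycle graph of a rank-$(n-1)$ factorization poset obtained by ``deleting $a$'' and close the induction. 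In the sink case — the source case being dual via the anti-automorphism of Proposition~\ref{prop:self_dual} — one must first show that $a$ is in fact the $\prec$-minimum of all of $\gen_\top$: if there were $a'\prec a$ in $\gen_\top$, then $a'\notin\gen_{\hat g}$ and $a'\neq a$, so $a$ and $a'$ lie below no common length-$2$ element, and since every edge of $\Gamma_\prec(\PP_\top)$ decreases $\prec$ and $a$ is a sink, no directed walk could ever join them; this must be ruled out using total chain-connectivity, by transporting a maximal chain through $a$ to one through $a'$ along the chain graph and examining the first step at which $a$ appears. I expect this last point, together with the precise identification of the rank-$(n-1)$ poset in the interior case, to be the main technical obstacle.
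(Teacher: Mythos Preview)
Your plan diverges sharply from the paper's argument and, as written, has genuine gaps that I do not see how to close.

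The paper's proof does \emph{not} proceed by induction on $n$.  Instead it exploits the Hurwitz action to show that the hypothesis on $a$ is extraordinarily rigid: since every reduced $\gen$-factorization of $\top$ can be Hurwitz-moved to one beginning with $a$, and every length-$2$ element above $a$ is either $a^2$ or $g_a=ab$, one deduces that any factorization starting with $a$ uses only the letters $a$ and $b$.  A short case split then forces either $\gen_\top=\{a,b\}$ (trivial) or $\top=ab^{n-1}$.  In the latter case the paper analyzes $\lvert\gen_{g_a}\rvert$: for $\lvert\gen_{g_a}\rvert\geq 4$ (Theorem~\ref{thm:big_cycle}), and for $\lvert\gen_{g_a}\rvert=3$ with $n\geq 5$ (Theorem~\ref{thm:particular_top}), one exhibits enough edge-disjoint cycles in $\Gamma(\PP_\top)$ to force $d(\Gamma)>\lvert\rtwo_\top\rvert$, so by Proposition~\ref{prop:minimal_feedback_compatible} \emph{no} $\top$-compatible order exists and the conjecture holds vacuously.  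Only the two residual cases $n=3,4$ with $\lvert\gen_{g_a}\rvert=3$ survive, and these are checked by hand.

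Your inductive scheme does not uncover this rigidity, and the steps you label as ``bookkeeping'' or ``the main technical obstacle'' are in fact the whole content.  First, for proper intervals of rank $\geq 3$ not lying above $a$, you propose to ``exhibit a vertex playing the role of $a$''; there is no reason such a vertex should exist, and indeed in the non-vacuous cases of the paper (Figures~\ref{fig:particular_top_3} and \ref{fig:particular_top_4}) it need not.  Second, your interior-vertex case relies on identifying the contraction of $c\to a\to b$ with the reduced cycle graph of some rank-$(n-1)$ factorization poset ``obtained by deleting $a$''.  But factorization posets come from generated groups, not from abstract poset surgery; there is no construction that removes an atom and yields another $\PP_{\top'}(\grp',\gen')$, let alone one inheriting the unique-cover hypothesis so that the induction applies.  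Third, in the sink case, even granting that $a=\min_\prec\gen_\top$, you have not explained why $\Gamma_\prec(\PP_\top)$ is connected---this is exactly the statement you are trying to prove, and knowing the minimum alone does not yield it.

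In short, the paper's proof works because the hypothesis is so restrictive that almost everything is ruled out by feedback-arc-set counting; your proposal treats the hypothesis as a mild local condition amenable to a generic induction, and the missing pieces are not fillable without essentially rediscovering the paper's case analysis.
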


We assume in the following the hypotheses of Conjecture~\ref{conj:compatible_linear_cycle_graph}, \ie $\PP_{\top}$ is totally chain-connected and there exists a $\top$-compatible order of $\gen_{\top}$.  In particular, Theorem~\ref{thm:main_hurwitz} implies that $\PP_{g}$ is Hurwitz-connected for every $g\leq_{\gen}\top$.  

Let us denote the unique element in $\rtwo_{\top}$ from Theorem~\ref{thm:generator_single_cycle} by $g_{a}$, and let $b\in\gen_{\top}$ be the unique generator satisfying $g_{a}=ab$.  The size of the cycle containing $a$ is therefore equal to $\lvert\gen_{g_{a}}\rvert$.  

In terms of the cycle graph the condition in Theorem~\ref{thm:generator_single_cycle} translates as follows: there is a unique cycle containing the vertex $a$ (not counting a possible loop), and this cycle is labeled by $g_{a}$.  In particular, thanks to Proposition~\ref{prop:subword_order}, we get the following crucial property.
\begin{equation}\label{eq:facto_starting_with_a}
	\text{Any factor in a reduced $\gen$-factorization of $\top$ starting with $a$ is either $a$ or $b$.}	
\end{equation}
Note that this does not imply $\gen_{\top}=\{a,b\}$; there may be factorizations of $\top$ not containing $a$ at all. Consider now a factorization $(a,x_2,x_3, \dots, x_n)\in \red{\gen}{\top}$. Let us consider three disjoint cases.

\begin{enumerate}[(i)]
	\item \emph{All the $x_i$'s, for $i\in \{2,3,\ldots,n\}$, are equal to $a$.} This means that $\top=a^{n}$ and by Hurwitz-connectivity we obtain $\gen_{\top}=\{a\}$, which contradicts the hypotheses of Theorem~\ref{thm:generator_single_cycle}.
	\item \emph{There is at least one $a$ and one $b$ in the $x_i$'s for $i\in \{2,3,\ldots,n\}$.} Then, we may apply a Hurwitz move on a sequence of factors $(a,b)$ (or $(b,a)$) and obtain the element $aba^{-1}$ (or $a^{-1}ba$) as a factor appearing after $a$ in a factorization of $\top$. Thus, by Property~\eqref{eq:facto_starting_with_a}, this element is either $a$ or $b$. It cannot be equal to~$a$ (since $b\neq a$), so we deduce that $ab=ba$.  So $\gen_{g_{a}}=\{a,b\}$, and $\top$ can be written $a^{k}b^{l}$ for some positive integers $k,l$. By Hurwitz-connectivity, we conclude that $\gen_{\top}=\{a,b\}$.  The cycle graph of $\PP_{\top}$ in this particular case consists of a single cycle of length two, and possibly one or two loops; the reduced cycle graph is just a single directed edge, which clearly induces a linear order of~$\gen_{\top}$. This concludes the proof of Theorem~\ref{thm:generator_single_cycle} in this case.
	\item \emph{All the $x_i$'s, for $i\in \{2,3,\ldots,n\}$, are equal to $b$.} This means that $\top=a b^{n-1}$, and this is the only nontrivial case of the theorem, to which we devote the rest of the section.
\end{enumerate}

The proof of Theorem~\ref{thm:generator_single_cycle} is now reduced to the two theorems below: Theorem~\ref{thm:big_cycle} which deals with the case $\lvert\gen_{g_{a}}\rvert\geq 4$, and Theorem~\ref{thm:particular_top} which tackles the remaining case $\lvert\gen_{g_{a}}\rvert=3$.  In both cases we apply the following reasoning.  We assume knowledge of the Hurwitz orbit of $\red{\gen}{g_{a}}$ and we let the appropriate braid group act on the factorization $ab^{n-1}$ in order to exhibit other elements in $\gen_{\top}$ and $\rtwo_{\top}$ step by step.  With the help of Lemma~\ref{prop:reverse_same_orbit_size} we deduce the size of the corresponding Hurwitz orbits, and we continue until we find no new elements.  There exists a \textsc{Sage}-script that may assist with this process, which can be obtained from \url{https://www.math.tu-dresden.de/\textasciitilde hmuehle/files/sage/hurwitz.sage}.  In many cases (or when $\lvert\gen_{g_{a}}\rvert$ is large enough) we find out that there does not exist any $\top$-compatible order.  In the remaining cases, we check explicitly that the reduced cycle graph induces a linear order.

\begin{theorem}\label{thm:big_cycle}
  Let  $\PP_{\top}(\grp,\gen)$ be a factorization poset satisfying the assumptions of Theorem~\ref{thm:generator_single_cycle}. Assume moreover that $\PP_\top$ is totally chain-connected.

  If $abb\leq_{\gen}\top$ for some $b\in\gen_{\top}\setminus\{a\}$, and if $\lvert\gen_{g_{a}}\rvert\geq 4$, then there does not exist a $\top$-compatible order of $\gen_{\top}$.
\end{theorem}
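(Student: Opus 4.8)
The plan is to first reduce to the case $n=3$. By the case analysis preceding the theorem we are in case~(iii), so $\top=ab^{n-1}$, and in particular $abb=ab^{2}\leq_{\gen}\top$. A $\top$-compatible order of $\gen_{\top}$ restricts, by Lemma~\ref{lem:compatible_restriction}, to an $ab^{2}$-compatible order of $\gen_{ab^{2}}$; the interval $\PP_{ab^{2}}$ is still totally chain-connected, since its intervals are intervals of $\PP_{\top}$; the generator $a$ is still covered in $\PP_{ab^{2}}$ only by $g_{a}$ (and possibly $a^{2}$); and $\lvert\gen_{g_{a}}\rvert$ is unchanged. Hence it suffices to rule out an $ab^{2}$-compatible order, and from now on I take $\top=ab^{2}$ and assume, for a contradiction, that a $\top$-compatible order $\prec$ of $\gen_{\top}$ exists.

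By Lemma~\ref{lem:compatible_rank_2_hurwitz} together with Theorem~\ref{thm:main_hurwitz} (applied to every interval via Lemma~\ref{lem:compatible_restriction}), every set $\red{\gen}{g}$ with $g\leq_{\gen}\top$ forms a single Hurwitz orbit. In particular $\red{\gen}{g_{a}}$ is a single cycle $g_{a}=c_{1}c_{2}=c_{2}c_{3}=\cdots=c_{k}c_{1}$ with $c_{1}=a$, $c_{2}=b$ and $c_{3},\ldots,c_{k}$ pairwise distinct, where $k=\lvert\gen_{g_{a}}\rvert\geq 4$ and $c_{i}^{-1}c_{i-1}c_{i}=c_{i+1}$ for all $i$; moreover $b^{2}\leq_{\gen}\top$ and, since $(b,b)$ is a Hurwitz fixed point, $\red{\gen}{b^{2}}=\{(b,b)\}$. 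Write $\phi$ for conjugation by $b$. Starting the orbit chase from $(a,b,b)$ and applying $\sigma_{1},\sigma_{2}$ one immediately reaches the reduced factorizations $(c_{i},c_{i+1},b)$, $(c_{i},b,\phi(c_{i+1}))$ and $(b,\phi(c_{i}),\phi(c_{i+1}))$ of $\top$ for every $i$; in particular $c_{j}b\in\rtwo_{\top}$ for every $j$ (these are $k$ distinct elements, with $c_{1}b=g_{a}$ and $c_{2}b=b^{2}$), the cycle of $\Gamma(\PP_{\top})$ labelled $c_{j}b$ passes through $c_{j}\rightarrow b\rightarrow\phi(c_{j})$, and — using Lemma~\ref{lem:conjugation_isomorphism}, since $\phi(g_{a})=c_{3}b$ — the cycle labelled $c_{3}b$ is exactly the $\phi$-image of the $g_{a}$-cycle, hence again of length $k$. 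Pursuing the chase further (there is a \textsc{Sage} script for this) one enumerates all of $\red{\gen}{\top}$ and all of $\rtwo_{\top}$, each new label of $\rtwo_{\top}$ contributing a cycle to $\Gamma(\PP_{\top})$; by Proposition~\ref{prop:minimal_feedback_compatible} a $\top$-compatible order would force $d\bigl(\Gamma(\PP_{\top})\bigr)=\lvert\rtwo_{\top}\rvert$.

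The contradiction is then read off from Lemma~\ref{lem:compatible_min_max}: in each rank-$2$ cycle there is a unique ascent, and its two endpoints are the $\prec$-minimum and the $\prec$-maximum of the corresponding generator set. Recording these constraints for the cycles that all meet the common vertex $b$ — the $g_{a}$-cycle through $a\rightarrow b\rightarrow c_{3}$, the $c_{3}b$-cycle through $c_{3}\rightarrow b\rightarrow\phi(c_{3})$, and so on — yields implications of the form ``$x\prec b\Rightarrow\phi(x)\prec b$'', which, once $k\geq 4$ has forced enough distinct vertices and cycles into the picture, close up into a cyclic chain $x_{1}\prec x_{2}\prec\cdots\prec x_{1}$; equivalently one exhibits a family of edge-disjoint cycles of $\Gamma(\PP_{\top})$ strictly larger than $\lvert\rtwo_{\top}\rvert$, violating Proposition~\ref{prop:minimal_feedback_compatible}. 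For the finitely many shapes of the $g_{a}$-cycle not killed outright by this count one checks by hand (again with the help of the script) that no linear order on $\gen_{\top}$ is compatible. The hypothesis $k\geq 4$ enters precisely here: when $k=3$ the orbit chase closes up consistently and a compatible order can still exist, which is why that case must be handled separately in Theorem~\ref{thm:particular_top}.

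The main obstacle is the bookkeeping of this orbit chase: one must show that for $k\geq 4$ the generation of new elements of $\gen_{\top}$ and $\rtwo_{\top}$ always terminates in a configuration with strictly more forced (edge-disjoint) cycles than labels — whereas it stabilises harmlessly when $k=3$ — and isolating a single invariant that makes this uniform in $k$, rather than a finite case check on the first few values of $k$, is the delicate point.
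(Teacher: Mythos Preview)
Your reduction to $\top=abb$ and your strategy via Proposition~\ref{prop:minimal_feedback_compatible} are both correct and match the paper. The gap is that you never actually exhibit the family of edge-disjoint cycles: you assert that ``one exhibits a family of edge-disjoint cycles of $\Gamma(\PP_{\top})$ strictly larger than $\lvert\rtwo_{\top}\rvert$'', defer the remaining shapes to a script, and close by conceding that ``isolating a single invariant\ldots is the delicate point''. That delicate point is the entire content of the theorem, and your proposal does not supply it.

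The paper's argument is short, uniform in $\lvert\gen_{g_{a}}\rvert$, and needs no case-checking or computer. Set $a_{0}=a$, $b_{0}=b$ and apply Hurwitz moves to obtain
\[
a_{0}b_{0}b_{0}=b_{0}c_{0}b_{0}=b_{0}b_{0}a_{1}=b_{0}a_{1}b_{1}=a_{1}b_{1}b_{1},
\]
defining $c_{i},a_{i+1},b_{i+1}$ with $a_{i}b_{i}=b_{i}c_{i}$ and $c_{i}b_{i}=a_{i+1}b_{i+1}$; in your notation $a_{1}=\phi^{2}(a)$, so you were one step away from this iteration. Finiteness forces a minimal period $p\geq 2$ with $(a_{p},b_{p})=(a_{0},b_{0})$, and Lemma~\ref{prop:reverse_same_orbit_size} makes every cycle labelled $a_{i}b_{i}$ have length $\lvert\gen_{g_{a}}\rvert$. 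One then writes down $p+1$ pairwise edge-disjoint cycles in $\Gamma(\PP_{abb})$ whose edges carry only the $p$ labels $a_{0}b_{0},\ldots,a_{p-1}b_{p-1}$: the $p$ two-cycles $b_{i}\to c_{i}\to b_{i}$ together with the single long cycle $a_{0}\to b_{0}\to a_{1}\to b_{1}\to\cdots\to b_{p-1}\to a_{0}$. Since a compatible order removes exactly one edge per label, it removes at most $p$ edges carrying these labels, and so at least one of the $p+1$ edge-disjoint cycles survives --- contradiction. The hypothesis $\lvert\gen_{g_{a}}\rvert\geq 4$ enters precisely in verifying edge-disjointness: within each cycle labelled $a_{i}b_{i}$ the construction occupies four consecutive directed edges, and these are distinct exactly when that cycle has length at least four.
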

\begin{proof}
	Assume that there exists a $\top$-compatible order. Since $\PP_{\top}$ is totally chain-connected, Theorem~\ref{thm:main_hurwitz} implies in particular that $\PP_{abb}$ is Hurwitz-connected.

	Suppose that $ab=bc=\cdots=da$, and rename $a=a_{0}$ and $b=b_{0}$.  We can thus write
	\begin{displaymath}
		a_{0}b_{0}b_{0} = b_{0}c_{0}b_{0} = b_{0}b_{0}a_{1} = b_{0}a_{1}b_{1} = a_{1}b_{1}b_{1},
	\end{displaymath}
	for appropriate elements $c_{0}, a_{1}, b_{1}$.
	
	If we continue this process, the finiteness of $\gen_{\top}$ implies that we can find a minimal $k\geq 1$ with $a_{k}=a_{0}$ and $b_{k}=b_{0}$.  If $k=1$, then we may conclude from $b_0 a_1 = a_1 b_1$ that $a$ and $b$ commute.  We thus obtain the contradiction $\bigl\lvert\gen_{g_{a}}\bigr\rvert=2$.  We therefore have $k\geq 2$.
	
	Moreover, we observe that for $i\in\{0,1,\ldots,k-1\}$ we have $a_{i}b_{i}=b_{i}c_{i}$ and $c_{i}b_{i}=a_{i+1}b_{i+1}$.  Let us denote by $C_{i}$ the cycle in $\Gamma(\PP_{abb})$ labeled by $a_{i}b_{i}$.  Lemma~\ref{prop:reverse_same_orbit_size} implies $\lvert C_{i}\rvert=\lvert C_{i+1}\rvert$.
	
	In particular for each $i\in\{0,1,\ldots,k-1\}$ the graph $\Gamma(\PP_{abb})$ contains the $2$-cycle $b_{i}\overset{a_{i}b_{i}}{\longrightarrow}c_{i}\overset{a_{i+1}b_{i+1}}{\longrightarrow}b_{i}$.  We also observe that there exists a cycle 
	\begin{displaymath}
		a_{0}\overset{a_{0}b_{0}}{\longrightarrow}b_{0}\overset{a_{1}b_{1}}{\longrightarrow}a_{1}\overset{a_{1}b_{1}}{\longrightarrow}b_{1}\overset{a_{2}b_{2}}{\longrightarrow}\cdots\overset{a_{k-1}b_{k-1}}{\longrightarrow}a_{k-1}\overset{a_{k-1}b_{k-1}}{\longrightarrow}b_{k-1}\overset{a_{0}b_{0}}{\longrightarrow}a_{0}.
	\end{displaymath}
	We have therefore exhibited $k+1$ cycles that are mutually edge-disjoint by construction, whereas there are only $k$ elements of length $2$ in $\PP_{abb}$.
	We conclude that any feedback arc set of $\Gamma(\PP_{abb})$ needs to contain two edges with the same label, which in light of Proposition~\ref{prop:minimal_feedback_compatible} implies that we cannot find an $abb$-compatible order of $\gen_{abb}$.  Consequently we cannot find a $\top$-compatible order of $\gen_{\top}$.
\end{proof}

\begin{example}\label{ex:big_cycle_example}
	Let us illustrate Theorem~\ref{thm:big_cycle} in the case $\lvert\gen_{g_{a}}\rvert=5$ and $k=3$.  Let $\top=abb$, and write $a_{0}=a$ and $b_{0}=b$.  We obtain
	\begin{align*}
		a_{0}b_{0}b_{0} & = b_{0}c_{0}b_{0} = b_{0}b_{0}a_{1} = b_{0}a_{1}b_{1} = a_{1}b_{1}b_{1}\\
		& = b_{1}c_{1}b_{1} = b_{1}b_{1}a_{2} = b_{1}a_{2}b_{2} = a_{2}b_{2}b_{2}\\
		& = b_{2}c_{2}b_{2} = b_{2}b_{2}a_{0} = b_{2}a_{0}b_{0} = a_{0}b_{0}b_{0}.
	\end{align*}
	Figure~\ref{fig:big_cycle_example_graph} shows the cycle graph of the factorization poset $\PP_{\top}$ displayed in Figure~\ref{fig:big_cycle_example_poset}.
	The reader is invited to verify that there does not exist a $\top$-compatible order of $\gen_{\top}=\{a_{0},b_{0},c_{0},a_{1},b_{1},c_{1},a_{2},b_{2},c_{2}\}$.  By construction, however, $\bg_{3}$ acts transitively on $\red{\gen_{\top}}{\top}$, and moreover $\bg_{2}$ acts transitively on $\red{\gen_{\top}}{g}$ for any $g\leq_{\gen}\top$ with $\ell_{\gen_{\top}}(g)=2$.
	
	\begin{figure}
		\centering
		\begin{tikzpicture}\small
			\def\x{1};
			\def\y{1};
			\draw(3*\x,6*\y) node(na){$a_{0}$};
			\draw(5*\x,5*\y) node(nb){$b_{0}$};
			\draw(4*\x,4*\y) node(nc){$c_{0}$};
			\draw(2*\x,4*\y) node(nd){$c_{2}$};
			\draw(1*\x,5*\y) node(ne){$b_{2}$};
			\draw(5*\x,2*\y) node(nf){$a_{1}$};
			\draw(3*\x,1*\y) node(ng){$b_{1}$};
			\draw(3*\x,3*\y) node(nh){$c_{1}$};
			\draw(1*\x,2*\y) node(ni){$a_{2}$};
			\draw[->,blue](na) -- (nb);
			\draw[->,blue](4.85*\x,4.9*\y) -- (4.1*\x,4.15*\y);
			\draw[->,blue](nc) -- (nd);
			\draw[->,blue](1.9*\x,4.15*\y) -- (1.15*\x,4.9*\y);
			\draw[->,blue](ne) -- (na);
			\draw[->,green](4.15*\x,4.1*\y) -- (4.9*\x,4.85*\y);
			\draw[->,green](5.1*\x,4.8*\y) -- (nf);
			\draw[->,green](nf) -- (ng);
			\draw[->,green](3.05*\x,1.2*\y) -- (3.05*\x,2.8*\y);
			\draw[->,green](nh) -- (nc);
			\draw[->,red](2.95*\x,2.8*\y) -- (2.95*\x,1.2*\y);
			\draw[->,red](ng) -- (ni);
			\draw[->,red](ni) -- (.9*\x,4.8*\y);
			\draw[->,red](1.1*\x,4.85*\y) -- (1.85*\x,4.1*\y);
			\draw[->,red](nd) -- (nh);
			\draw[->,orange](nh) .. controls (2.5*\x,3*\y) and (1*\x,4.5*\y) .. (ne);
			\draw[->,orange](ne) -- (nb);
			\draw[->,orange](nb) .. controls (5*\x,4.5*\y) and (3.5*\x,3*\y) .. (nh);
			\draw[->,purple](5.1*\x,4.8*\y) -- (ng);
			\draw[->,purple](ng) -- (nd);
			\draw[->,purple](nd) -- (nb);
			\draw[->,yellow!70!gray](ne) -- (nc);
			\draw[->,yellow!70!gray](nc) -- (ng);
			\draw[->,yellow!70!gray](ng) -- (.9*\x,4.8*\y);
			\path (nb) edge[out=90,in=0,looseness=.75,distance=1.5em,->] (nb);
			\path (ne) edge[out=180,in=90,looseness=.75,distance=1.5em,->] (ne);
			\path (ng) edge[out=315,in=225,looseness=.75,distance=1.5em,->] (ng);
		\end{tikzpicture}
		\caption{The cycle graph $\Gamma(\PP_{abb})$ from Example~\ref{ex:big_cycle_example}, corresponding to the case $\lvert\gen_{g_{a}}\rvert=5$ and $k=3$ of Theorem~\ref{thm:big_cycle}.  (For simplicity the loops are all drawn black even though their labels are distinct.)}
		\label{fig:big_cycle_example_graph}
	\end{figure}

	\begin{figure}
		\centering
		\begin{tikzpicture}\small
			\def\x{1};
			\def\y{2};
			\draw(5*\x,1.25*\y) node(n1){$\id$};
			\draw(1*\x,2*\y) node(n2){$b_{0}$};
			\draw(2*\x,2*\y) node(n3){$a_{0}$};
			\draw(3*\x,2*\y) node(n4){$c_{0}$};
			\draw(4*\x,2*\y) node(n5){$a_{1}$};
			\draw(5*\x,2*\y) node(n6){$b_{2}$};
			\draw(6*\x,2*\y) node(n7){$c_{2}$};
			\draw(7*\x,2*\y) node(n8){$c_{1}$};
			\draw(8*\x,2*\y) node(n9){$b_{1}$};
			\draw(9*\x,2*\y) node(n10){$a_{2}$};
			\draw(1*\x,3*\y) node(n11){$b_{0}b_{0}$};
			\draw(2*\x,3*\y) node(n12){$a_{0}b_{0}$};
			\draw(3*\x,3*\y) node(n13){$c_{1}b_{2}$};
			\draw(4*\x,3*\y) node(n14){$b_{1}c_{2}$};
			\draw(5*\x,3*\y) node(n15){$c_{0}b_{0}$};
			\draw(6*\x,3*\y) node(n16){$b_{2}b_{2}$};
			\draw(7*\x,3*\y) node(n17){$b_{1}b_{2}$};
			\draw(8*\x,3*\y) node(n18){$b_{1}b_{1}$};
			\draw(9*\x,3*\y) node(n19){$c_{1}b_{1}$};
			\draw(5*\x,3.75*\y) node(n20){$a_{0}b_{0}b_{0}$};
			\draw(n1) -- (n2);
			\draw(n1) -- (n3);
			\draw(n1) -- (n4);
			\draw(n1) -- (n5);
			\draw(n1) -- (n6);
			\draw(n1) -- (n7);
			\draw(n1) -- (n8);
			\draw(n1) -- (n9);
			\draw(n1) -- (n10);
			\draw(n2) -- (n11);
			\draw(n2) -- (n12);
			\draw(n2) -- (n13);
			\draw(n2) -- (n14);
			\draw(n2) -- (n15);
			\draw(n3) -- (n12);
			\draw(n4) -- (n12);
			\draw(n4) -- (n15);
			\draw(n4) -- (n17);
			\draw(n5) -- (n15);
			\draw(n6) -- (n12);
			\draw(n6) -- (n13);
			\draw(n6) -- (n16);
			\draw(n6) -- (n17);
			\draw(n6) -- (n19);
			\draw(n7) -- (n12);
			\draw(n7) -- (n14);
			\draw(n7) -- (n19);
			\draw(n8) -- (n13);
			\draw(n8) -- (n15);
			\draw(n8) -- (n19);
			\draw(n9) -- (n14);
			\draw(n9) -- (n15);
			\draw(n9) -- (n17);
			\draw(n9) -- (n18);
			\draw(n9) -- (n19);
			\draw(n10) -- (n19);
			\draw(n11) -- (n20);
			\draw(n12) -- (n20);
			\draw(n13) -- (n20);
			\draw(n14) -- (n20);
			\draw(n15) -- (n20);
			\draw(n16) -- (n20);
			\draw(n17) -- (n20);
			\draw(n18) -- (n20);
			\draw(n19) -- (n20);
		\end{tikzpicture}
		\caption{The factorization poset of Example~\ref{ex:big_cycle_example}, corresponding to the cycle graph from Figure~\ref{fig:big_cycle_example_graph}.}
		\label{fig:big_cycle_example_poset}
	\end{figure}
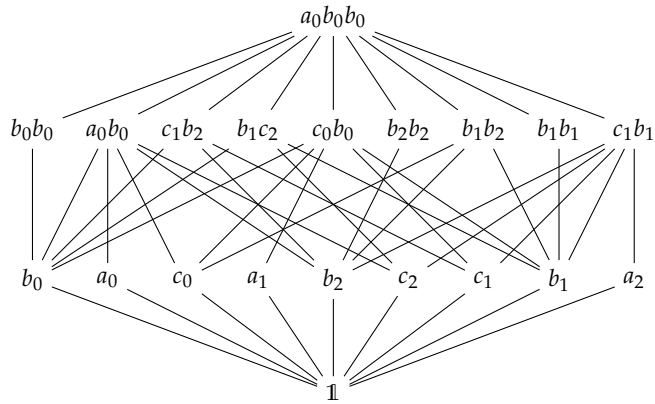
\end{example}

\begin{remark}
	If $\lvert\gen_{g_{a}}\rvert=4$, then we have $c_{0}=c_{1}=\cdots=c_{k-1}$.  Indeed, observe that we can write $c_{0}b_{0}=b_{0}a_{1}=a_{1}b_{1}=b_{1}c_{1}$.  Lemma~\ref{prop:reverse_same_orbit_size} implies that this is the whole Hurwitz orbit, which forces $c_{0}=c_{1}$. 
\end{remark}

\begin{theorem}\label{thm:particular_top}
	Let $\PP_{\top}(\grp,\gen)$ be a factorization poset satisfying the assumptions of Theorem~\ref{thm:generator_single_cycle}.  If $\top=ab^{n-1}$ and $\bigl\lvert\gen_{g_{a}}\bigr\rvert=3$, then Conjecture~\ref{conj:compatible_linear_cycle_graph} holds.
\end{theorem}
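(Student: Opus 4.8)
The plan is to follow the strategy described in the paragraph preceding Theorem~\ref{thm:big_cycle}: exploit the known Hurwitz orbit of $g_{a}$, let the braid group act on the factorization $ab^{n-1}$, track orbit sizes with Lemma~\ref{prop:reverse_same_orbit_size}, and then either conclude that no $\top$-compatible order exists (so that Conjecture~\ref{conj:compatible_linear_cycle_graph} is vacuous) or determine $\Gamma(\PP_{\top})$ and its subgraphs explicitly and check via Lemma~\ref{lem:reduced_graph_linear_connected} that each reduced cycle graph is connected. I may assume from now on that a $\top$-compatible order $\prec$ of $\gen_{\top}$ exists (otherwise there is nothing to prove) and that $\PP_{\top}$ is totally chain-connected. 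Since $\lvert\gen_{g_{a}}\rvert=3$ and local Hurwitz-connectivity holds by Lemma~\ref{lem:compatible_rank_2_hurwitz}, the Hurwitz orbit of $g_{a}$ is $g_{a}=ab=bc=ca$ for a generator $c$ with $a,b,c$ pairwise distinct; from $c=aba^{-1}=b^{-1}ab$ one gets the braid relation $aba=bab$, and $ba\notin\rtwo_{\top}$, because if $ba\leq_{\gen}\top$ then $(a,a^{-1}ba)$ is a rank-two factor starting with $a$, forcing $a^{-1}ba\in\{a,b\}$ by Property~\eqref{eq:facto_starting_with_a} and hence $ba\in\{a^{2},ab\}$, both impossible. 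These facts are what keep the generators produced below from collapsing onto one another.

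Next I would treat the base case $\top=ab^{2}$ by hand. Letting $\bg_{3}$ act on $(a,b,b)$ and pushing the unique non-$b$ factor around, one finds that $\red{\gen}{ab^{2}}$ consists of the eight factorizations $(a,b,b)$, $(b,c,b)$, $(b,b,d)$, $(b,d,c)$, $(d,c,c)$, $(c,b,c)$, $(c,c,a)$, $(c,a,b)$, where $d:=b^{-1}cb$ and the orbit of $cb$ is forced by Lemma~\ref{prop:reverse_same_orbit_size} to be $cb=bd=dc$; in particular $\gen_{\top}=\{a,b,c,d\}$ and $\rtwo_{\top}=\{ab,\,cb,\,b^{2},\,c^{2}\}$, so $\Gamma(\PP_{\top})$ is the union of the triangles $a\to b\to c\to a$ and $c\to b\to d\to c$ with loops at $b$ and at $c$. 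A short check shows that a linear order of $\gen_{\top}$ is $\top$-compatible exactly when it is a cyclic rotation of $a\prec c\prec d\prec b$, so by Lemma~\ref{lem:compatible_cyclic} it suffices to take $\prec$ to be $a\prec c\prec d\prec b$; then the $\prec$-rising factorizations of the rank-two elements are $(a,b),(c,b),(b,b),(c,c)$, the reduced cycle graph $\Gamma_{\prec}(\PP_{\top})$ keeps the edges $b\to c$, $c\to a$, $b\to d$, $d\to c$, and this graph is connected, inducing the linear order $a\sqsubset c\sqsubset d\sqsubset b$. Every proper $g<_{\gen}\top$ has $\ell_{\gen}(g)\le 2$, for which $\Gamma_{\prec}(\PP_{g})$ is a single vertex or a single directed edge, hence trivially linear; this disposes of $n=3$.

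For $n\ge 4$ I would first peel off the proper subintervals. If $g<_{\gen}\top$ with $\ell_{\gen}(g)\le 2$, then $\Gamma_{\prec}(\PP_{g})$ is again a vertex or a single edge. If $\ell_{\gen}(g)=3$, then $\prec$ restricts to a $g$-compatible order by Lemma~\ref{lem:compatible_restriction}, $\PP_{g}$ is totally chain-connected and Hurwitz-connected (Theorem~\ref{thm:main_hurwitz}); one checks that $\PP_{g}$ again possesses an atom covered by a unique element, and since $\PP_{g}$ admits a compatible order, Theorem~\ref{thm:big_cycle} forces the associated cycle to have length $2$ or $3$, so $\PP_{g}$ falls under the trichotomy in the proof of Theorem~\ref{thm:generator_single_cycle} and reduces either to its cases~(i)/(ii), which give a one-edge reduced cycle graph, or to a copy of the base case $\top=a'(b')^{2}$ just treated. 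The remaining task is $\Gamma_{\prec}(\PP_{\top})$ itself when $\ell_{\gen}(\top)\ge 4$. For this I would continue the Hurwitz-orbit exploration from $(a,b^{n-1})$: pushing the non-$b$ factor rightward produces the generators $u_{i}:=b^{-(i+1)}ab^{i+1}$ and the rank-two elements $u_{i-1}b=bu_{i}$, whose Hurwitz orbits Lemma~\ref{prop:reverse_same_orbit_size} again forces to have size~$3$; using the braid relation, the fact $ba\notin\rtwo_{\top}$, and Lemma~\ref{lem:compatible_rank_2_hurwitz} one shows that each step genuinely produces a new generator and a new label until the process must terminate, since $\gen_{\top}$ is finite. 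Combining the finitely many possibilities with the already-known restriction of $\prec$ to $\PP_{ab^{2}}$ and with the feedback-arc-set bound of Proposition~\ref{prop:minimal_feedback_compatible} reduces $\Gamma(\PP_{\top})$ to a finite list of explicit shapes, for each of which one writes down $\Gamma_{\prec}(\PP_{\top})$ and verifies connectivity as above; the \textsc{Sage} script referenced earlier can carry out this enumeration.

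The main obstacle I anticipate is precisely the completeness of the Hurwitz-orbit exploration in the last step: one has to be certain that \emph{every} generator and \emph{every} rank-two element below $\top$ has been exhibited, so that Lemma~\ref{prop:reverse_same_orbit_size} and Proposition~\ref{prop:minimal_feedback_compatible} genuinely cut the possibilities down to a finite, by-hand-checkable list of cycle graphs; once that is in place, the verification of connectivity of each reduced cycle graph is the same (lengthy but routine) bookkeeping already carried out explicitly for $n=3$.
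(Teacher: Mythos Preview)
Your treatment of the base case $n=3$ is correct and essentially matches the paper's. The gap is in your handling of $n\ge 4$, which is too vague to constitute a proof and, more importantly, misidentifies the expected outcome. The paper does not proceed by an open-ended Hurwitz exploration terminating in a Sage enumeration of ``finitely many shapes'' for each $n$; it makes three sharp moves that your outline is missing. First, $n=4$ is worked out separately by hand: the cycle graph is again unique (six generators $a,b,c,d,e,f$, seven rank-two elements), the compatible orders are exactly the six cyclic shifts of $a\prec c\prec f\prec d\prec e\prec b$, and one verifies linearity of the reduced cycle graph directly. Second, for $n=5$ the exploration is organized around a \emph{periodicity parameter}: writing $a_{0}=a$, $b_{0}=b$ and iterating $a_{i}b_{i}^{4}=\cdots=a_{i+1}b_{i+1}^{4}$ yields a minimal $k\ge 2$ with $(a_{k},b_{k})=(a_{0},b_{0})$; the cases $k=2,3$ collapse to explicit coincidences contradicting the single-cycle hypothesis on $a$, while for $k\ge 4$ one exhibits $2k+1$ pairwise edge-disjoint cycles in $\Gamma(\PP_{ab^{4}})$ against only $2k$ labels, so Proposition~\ref{prop:minimal_feedback_compatible} rules out any $ab^{4}$-compatible order. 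Third, $n>5$ reduces to $n=5$: since $ab^{4}\le_{\gen}\top$, Lemma~\ref{lem:compatible_restriction} would force the restriction of $\prec$ to be $ab^{4}$-compatible, which was just shown impossible. The upshot---which your proposal never reaches---is that for $n\ge 5$ no $\top$-compatible order exists and Conjecture~\ref{conj:compatible_linear_cycle_graph} holds vacuously; only $n\in\{3,4\}$ ever require checking reduced cycle graphs. Your sequence $u_{i}=b^{-(i+1)}ab^{i+1}$ is the right germ, but without the periodicity parameter $k$, the $k=2,3$ eliminations, and the reduction of $n>5$ to $n=5$, you are left with infinitely many values of $n$ each demanding its own exploration.

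There is also a secondary gap in your inductive treatment of proper subintervals: you assert without proof that every rank-$3$ interval $\PP_{g}$ below $\top$ ``again possesses an atom covered by a unique element'', and you say nothing at all about subintervals of rank between $4$ and $n-1$. In the paper's route this issue evaporates: once $n\ge 5$ is excluded, every proper interval has rank at most $3$, and those are handled either trivially or by the explicit $n=3$ computation already done.
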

\begin{proof}
	Assume the hypotheses of Conjecture~\ref{conj:compatible_linear_cycle_graph}: $\PP_{\top}$ is totally chain-connected, and there exists a $\top$-compatible order $\prec$ of $\gen_{\top}$.  In particular, Theorem~\ref{thm:main_hurwitz} implies that $\PP_{\top}$ is Hurwitz-connected.
	
	If $a^{2}\leq_{\gen}\top$, then Proposition~\ref{prop:subword_order} implies that there exists a reduced $\gen$-factoriza\-tion of $\top$ that starts with $a^{2}$, which is connected by a sequence of Hurwitz moves to the factorization $ab^{n-1}$.  In that case we can verify that there needs to exist an element in $\rtwo_{\top}$ different from both $g_{a}$ and $a^{2}$ that covers $a$, which is a contradiction.  Hence we conclude $a^{2}\not\leq_{\gen}\top$.
	
	Without loss of generality let $g_{a} = ab = bc = ca$.  If $n=3$, then Lemma~\ref{prop:reverse_same_orbit_size} implies that there is a unique possible cycle graph, which is shown in Figure~\ref{fig:particular_top_3_graph}.  The corresponding factorization poset is shown in Figure~\ref{fig:particular_top_3_poset}.  There are four $\top$-compatible orders of $\gen_{\top}$ (namely $a\prec c\prec d\prec b$ and its cyclic shifts, see Lemma~\ref{lem:compatible_cyclic}), and we can check that in each case the reduced cycle graph $\Gamma_{\prec}(\PP_{\top})$ induces a linear order.
	
	\begin{figure}
		\centering
		\begin{subfigure}[t]{.4\textwidth}
			\centering
			\begin{tikzpicture}\small
				\def\x{1};
				\def\y{1};
				\draw(2.5*\x,1*\y) node(n1){$\id$};
				\draw(1*\x,2*\y) node(n2){$a$};
				\draw(2*\x,2*\y) node(n3){$b$};
				\draw(3*\x,2*\y) node(n4){$c$};
				\draw(4*\x,2*\y) node(n5){$d$};
				\draw(1*\x,3*\y) node(n6){$bb$};
				\draw(2*\x,3*\y) node(n7){$ab$};
				\draw(3*\x,3*\y) node(n8){$bc$};
				\draw(4*\x,3*\y) node(n9){$cc$};
				\draw(2.5*\x,4*\y) node(n10){$abb$};
				\draw(n1) -- (n2);
				\draw(n1) -- (n3);
				\draw(n1) -- (n4);
				\draw(n1) -- (n5);
				\draw(n2) -- (n7);
				\draw(n3) -- (n6);
				\draw(n3) -- (n7);
				\draw(n3) -- (n8);
				\draw(n4) -- (n7);
				\draw(n4) -- (n8);
				\draw(n4) -- (n9);
				\draw(n5) -- (n8);
				\draw(n6) -- (n10);
				\draw(n7) -- (n10);
				\draw(n8) -- (n10);
				\draw(n9) -- (n10);
			\end{tikzpicture}
			\caption{The factorization poset of $\PP_{\top}$ in the case $n=3$ of Theorem~\ref{thm:particular_top}.}
			\label{fig:particular_top_3_poset}
		\end{subfigure}
		\hspace*{.1\textwidth}
		\begin{subfigure}[t]{.4\textwidth}
			\centering
			\begin{tikzpicture}\small
				\def\x{1.5};
				\def\y{1.5};
				\draw(1*\x,2*\y) node(na){$a$};
				\draw(2*\x,2*\y) node(nb){$b$};
				\draw(1*\x,1*\y) node(nc){$c$};
				\draw(2*\x,1*\y) node(nd){$d$};
				\draw[->,red](na) -- (nb);
				\draw[->,red](1.9*\x,1.85*\y) -- (1.15*\x,1.1*\y);
				\draw[->,red](nc) -- (na);
				\draw[->,blue](1.1*\x,1.15*\y) -- (1.85*\x,1.9*\y);
				\draw[->,blue](nb) -- (nd);
				\draw[->,blue](nd) -- (nc);
				\path (nb) edge[out=90,in=0,looseness=.75,distance=1.5em,->] (nb);
				\path (nc) edge[out=270,in=180,looseness=.75,distance=1.5em,->] (nc);
			\end{tikzpicture}
			\caption{The cycle graph of the factorization poset of Figure~\ref{fig:particular_top_3_poset}.  (For simplicity the loops are all drawn black even though their labels are distinct.)}
			\label{fig:particular_top_3_graph}
		\end{subfigure}
		\caption{Illustration of the case $n=3$ in Theorem~\ref{thm:particular_top}.}
		\label{fig:particular_top_3}
	\end{figure}
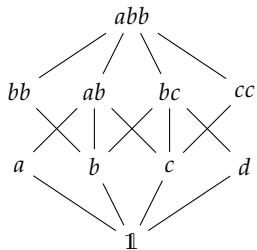
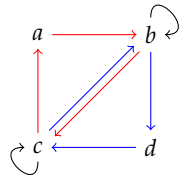
	
	If $n=4$, then Lemma~\ref{prop:reverse_same_orbit_size} implies once again that there is a unique possible cycle graph, which is shown in Figure~\ref{fig:particular_top_4_graph}.  The corresponding factorization poset is shown in Figure~\ref{fig:particular_top_4_poset}.  There are six $\top$-compatible orders of $\gen_{\top}$ (namely $a\prec c\prec f\prec d\prec e\prec b$ and its cyclic shifts), and we can check that in each case the reduced cycle graph $\Gamma_{\prec}(\PP_{\top})$ induces a linear order.

	\begin{figure}
		\centering
		\begin{subfigure}[t]{.4\textwidth}
			\centering
			\begin{tikzpicture}\small
				\def\x{1};
				\def\y{1};
				\draw(4*\x,1*\y) node(n1){$\id$};
				\draw(1*\x,2*\y) node(n2){$a$};
				\draw(2*\x,2*\y) node(n3){$c$};
				\draw(3*\x,2*\y) node(n4){$b$};
				\draw(4*\x,2*\y) node(n5){$f$};
				\draw(5*\x,2*\y) node(n6){$d$};
				\draw(6*\x,2*\y) node(n7){$e$};
				\draw(1*\x,3*\y) node(n8){$ab$};
				\draw(2*\x,3*\y) node(n9){$cc$};
				\draw(3*\x,3*\y) node(n10){$bb$};
				\draw(4*\x,3*\y) node(n11){$cb$};
				\draw(5*\x,3*\y) node(n12){$cd$};
				\draw(6*\x,3*\y) node(n13){$db$};
				\draw(7*\x,3*\y) node(n14){$dd$};
				\draw(2*\x,4*\y) node(n15){$ccc$};
				\draw(3*\x,4*\y) node(n16){$abb$};
				\draw(4*\x,4*\y) node(n17){$bbb$};
				\draw(5*\x,4*\y) node(n18){$bdd$};
				\draw(6*\x,4*\y) node(n19){$bbe$};
				\draw(7*\x,4*\y) node(n20){$ddd$};
				\draw(4*\x,5*\y) node(n21){$abbb$};
				\draw(n1) -- (n2);
				\draw(n1) -- (n3);
				\draw(n1) -- (n4);
				\draw(n1) -- (n5);
				\draw(n1) -- (n6);
				\draw(n1) -- (n7);
				\draw(n2) -- (n8);
				\draw(n3) -- (n8);
				\draw(n3) -- (n9);
				\draw(n3) -- (n11);
				\draw(n3) -- (n12);
				\draw(n4) -- (n8);
				\draw(n4) -- (n10);
				\draw(n4) -- (n11);
				\draw(n4) -- (n13);
				\draw(n5) -- (n12);
				\draw(n6) -- (n11);
				\draw(n6) -- (n12);
				\draw(n6) -- (n14);
				\draw(n6) -- (n13);
				\draw(n7) -- (n13);
				\draw(n8) -- (n16);
				\draw(n9) -- (n15);
				\draw(n9) -- (n16);
				\draw(n9) -- (n18);
				\draw(n10) -- (n16);
				\draw(n10) -- (n17);
				\draw(n10) -- (n19);
				\draw(n11) -- (n16);
				\draw(n11) -- (n18);
				\draw(n11) -- (n19);
				\draw(n12) -- (n18);
				\draw(n13) -- (n19);
				\draw(n14) -- (n18);
				\draw(n14) -- (n19);
				\draw(n14) -- (n20);
				\draw(n15) -- (n21);
				\draw(n16) -- (n21);
				\draw(n17) -- (n21);
				\draw(n18) -- (n21);
				\draw(n19) -- (n21);
				\draw(n20) -- (n21);
			\end{tikzpicture}
			\caption{The factorization poset of $\PP_{\top}$ in the case $n=4$ of Theorem~\ref{thm:particular_top}.}
			\label{fig:particular_top_4_poset}
		\end{subfigure}
		\hspace*{.1\textwidth}
		\begin{subfigure}[t]{.4\textwidth}
			\centering
			\begin{tikzpicture}\small
				\def\x{1};
				\def\y{1};
				\draw(3*\x,3*\y) node(na){$a$};
				\draw(4*\x,2*\y) node(nb){$b$};
				\draw(2*\x,2*\y) node(nc){$c$};
				\draw(3*\x,1*\y) node(nd){$d$};
				\draw(5*\x,1*\y) node(ne){$e$};
				\draw(1*\x,1*\y) node(nf){$f$};
				\draw[->,blue](na) -- (nb);
				\draw[->,blue](3.9*\x,1.95*\y) -- (2.1*\x,1.95*\y);
				\draw[->,blue](nc) -- (na);
				\draw[->,red](nb) -- (ne);
				\draw[->,red](ne) -- (nd);
				\draw[->,red](3.1*\x,1.15*\y) -- (3.85*\x,1.9*\y);
				\draw[->,green](nd) -- (nf);
				\draw[->,green](nf) -- (nc);
				\draw[->,green](2.15*\x,1.9*\y) -- (2.9*\x,1.15*\y);
				\draw[->,yellow!70!gray](3.9*\x,1.85*\y) -- (3.15*\x,1.1*\y);
				\draw[->,yellow!70!gray](2.85*\x,1.1*\y) -- (2.1*\x,1.85*\y);
				\draw[->,yellow!70!gray](2.1*\x,2.05*\y) -- (3.9*\x,2.05*\y);
				\path (nb) edge[out=90,in=0,looseness=.75,distance=1.5em,->] (nb);
				\path (nc) edge[out=180,in=90,looseness=.75,distance=1.5em,->] (nc);
				\path (nd) edge[out=315,in=225,looseness=.75,distance=1.5em,->] (nd);
			\end{tikzpicture}
			\caption{The cycle graph of the factorization poset of Figure~\ref{fig:particular_top_4_poset}.  (For simplicity the loops are all drawn black even though they represent distinct elements.)}
			\label{fig:particular_top_4_graph}
		\end{subfigure}
		\caption{Illustration of the case $n=4$ in Theorem~\ref{thm:particular_top}.}
		\label{fig:particular_top_4}
	\end{figure}
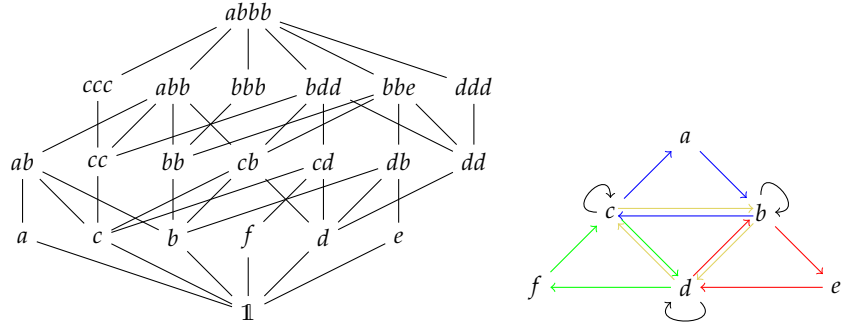
	
	Next let $n=5$, and let $a_{0}=a$, $b_{0}=b$ and $c_{0}=c$.  We have $\top=a_{0}b_{0}b_{0}b_{0}b_{0}=b_{0}c_{0}b_{0}b_{0}b_{0}$, which in view of Proposition~\ref{prop:subword_order} implies that $c_{0}b_{0}\in\rtwo_{\top}$.  Lemma~\ref{prop:reverse_same_orbit_size} implies that there is some $d\in\gen_{\top}$ with $c_{0}b_{0}=b_{0}d=dc_{0}$.  We continue to find $b_{0}c_{0}b_{0}b_{0}b_{0}=b_{0}b_{0}db_{0}b_{0}$, and as before we find $db_{0}\in\rtwo_{\top}$, which forces the existence of $b_{1}\in\gen_{\top}$ with $db_{0}=b_{0}b_{1}=b_{1}d$.  Now we have $b_{0}b_{0}db_{0}b_{0}=b_{0}b_{0}b_{0}b_{1}b_{0}$ so that $b_{1}b_{0}\in\rtwo_{\top}$ and we find $a_{1}\in\gen_{\top}$ with $b_{1}b_{0}=b_{0}a_{1}=a_{1}b_{1}$.  We finish this iteration to obtain
	\begin{displaymath}
		\top = b_{0}b_{0}b_{0}b_{1}b_{0} = b_{0}b_{0}b_{0}b_{0}a_{1} = b_{0}b_{0}b_{0}a_{1}b_{1} = b_{0}b_{0}a_{1}b_{1}b_{1} = b_{0}a_{1}b_{1}b_{1}b_{1} = a_{1}b_{1}b_{1}b_{1}b_{1}.
	\end{displaymath}
	Since $\gen_{\top}$ is finite we can find some $k>1$ with $a_{k}=a_{0}$ and $b_{k}=b_{0}$.  This also implies that $c_{0}$ is in fact $b_{k-1}$, and we have $b_{i-1}b_{i}=db_{i-1}=b_{i}d$ for $i\in[k]$.
	
	For $k=2$ the cycle graph consists of two cycles $a\rightarrow b\rightarrow c\rightarrow a$, and $a\rightarrow c\rightarrow b\rightarrow a$, which contradicts the assumption that $g_{a}$ is the unique element covering $a$.  But even without that assumption we would still obtain a contradiction to the fact that $\prec$ is $\top$-compatible.
	
	For $k=3$ we can find the following sequence of equalities coming from repeated Hurwitz moves:
	\begin{align*}
		a_{0}b_{0}b_{0}b_{0}b_{0} & = b_{0}b_{2}b_{0}b_{0}b_{0} = b_{0}db_{2}b_{0}b_{0} = db_{2}b_{2}b_{0}b_{0} = db_{2}db_{2}b_{0} = db_{1}b_{2}b_{2}b_{0}\\
		& = db_{1}b_{2}b_{0}d = \mathbf{db_{1}db_{2}d} = db_{0}b_{1}b_{2}d = db_{0}b_{2}dd = da_{0}b_{0}dd = \mathbf{da_{0}db_{2}d}.
	\end{align*}
	The highlighted words imply $a_{0}=b_{1}$, which contradicts the assumption that $g_{a}$ is the unique element covering $a$.
	
	For $k\geq 4$ we observe that $\Gamma(\PP_{\top})$ has the following $2k+1$ cycles: $b_{i}\overset{b_{i+1}d}{\longrightarrow}b_{i+1}\overset{a_{i+1}b_{i+1}}{\longrightarrow}b_{i}$ and $b_{i}\overset{b_{i}d}{\longrightarrow}d\overset{b_{i+1}d}{\longrightarrow}b_{i}$ for $i\in\{0,1,\ldots,k-1\}$ as well as
	\begin{displaymath}
		a_{0}\overset{a_{0}b_{0}}{\longrightarrow}b_{0}\overset{a_{1}b_{1}}{\longrightarrow}a_{1}\overset{a_{1}b_{1}}{\longrightarrow}b_{1}\overset{a_{2}b_{2}}{\longrightarrow}a_{2}\overset{a_{2}b_{2}}{\longrightarrow}\cdots b_{k-1}\overset{a_{0}b_{0}}{\longrightarrow}a_{0}.
	\end{displaymath}
	It follows that any feedback arc set of $\PP_{\top}$ needs to contain at least two edges with the same label, which in view of Proposition~\ref{prop:minimal_feedback_compatible} contradicts the assumption that $\prec$ is $\top$-compatible.  In fact, if $k$ is even, then we can explicitly find an element in $\rtwo_{\top}$ on whose set of reduced $\gen$-factorizations $\bg_{2}$ does not act transitively.  See Figure~\ref{fig:particular_top_5_4} for an illustration of the case $k=4$.
	
	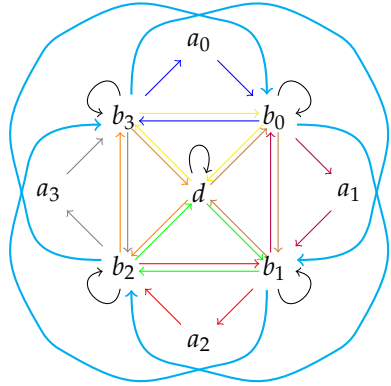
\begin{figure}
		\centering
		\begin{tikzpicture}
			\def\x{1};
			\def\y{1};
			\draw(1*\x,1*\y) node(n1){$b_{2}$};
			\draw(3*\x,1*\y) node(n2){$b_{1}$};
			\draw(2*\x,2*\y) node(n3){$d$};
			\draw(1*\x,3*\y) node(n4){$b_{3}$};
			\draw(3*\x,3*\y) node(n5){$b_{0}$};
			\draw(2*\x,0*\y) node(n6){$a_{2}$};
			\draw(0*\x,2*\y) node(n7){$a_{3}$};
			\draw(4*\x,2*\y) node(n8){$a_{1}$};
			\draw(2*\x,4*\y) node(n9){$a_{0}$};
			\draw[->,blue](2.8*\x,2.95*\y) -- (1.2*\x,2.95*\y);
			\draw[->,blue](n4) -- (n9);
			\draw[->,blue](n9) -- (n5);
			\draw[->,yellow](1.2*\x,3.05*\y) -- (2.8*\x,3.05*\y);
			\draw[->,yellow](2.85*\x,2.9*\y) -- (2.1*\x,2.15*\y);
			\draw[->,yellow](1.9*\x,2.15*\y) -- (1.15*\x,2.9*\y);
			\draw[->,red](1.2*\x,1.05*\y) -- (2.8*\x,1.05*\y);
			\draw[->,red](n2) -- (n6);
			\draw[->,red](n6) -- (n1);
			\draw[->,green](2.8*\x,.95*\y) -- (1.2*\x,.95*\y);
			\draw[->,green](1.15*\x,1.1*\y) -- (1.9*\x,1.85*\y);
			\draw[->,green](2.1*\x,1.85*\y) -- (2.85*\x,1.1*\y);
			\draw[->,purple](2.95*\x,1.2*\y) -- (2.95*\x,2.8*\y);
			\draw[->,purple](n5) -- (n8);
			\draw[->,purple](n8) -- (n2);
			\draw[->,brown](3.05*\x,2.8*\y) -- (3.05*\x,1.2*\y);
			\draw[->,brown](2.9*\x,1.15*\y) -- (2.15*\x,1.9*\y);
			\draw[->,brown](2.15*\x,2.1*\y) -- (2.9*\x,2.85*\y);
			\draw[->,gray](1.05*\x,2.8*\y) -- (1.05*\x,1.2*\y);
			\draw[->,gray](n1) -- (n7);
			\draw[->,gray](n7) -- (n4);
			\draw[->,orange](.95*\x,1.2*\y) -- (.95*\x,2.8*\y);
			\draw[->,orange](1.1*\x,2.85*\y) -- (1.85*\x,2.1*\y);
			\draw[->,orange](1.85*\x,1.9*\y) -- (1.1*\x,1.15*\y);
			\draw[->,rounded corners,thick,cyan] (3.3*\x,2.9*\y) .. controls (3.5*\x,2.9*\y) and (4*\x,2.9*\y) .. (4.25*\x,2.5*\y) .. controls (4.5*\x,1.25*\y) and (4.75*\x,1.25*\y) .. (4*\x,0*\y) .. controls (2.75*\x,-.75*\y) and (2.75*\x,-.5*\y) .. (1.5*\x,-.25*\y) .. controls (1.1*\x,0*\y) and (1.1*\x,.5*\y) .. (1.1*\x,.7*\y);
			\draw[->,rounded corners,thick,cyan] (.7*\x,1.1*\y) .. controls (.5*\x,1.1*\y) and (0*\x,1.1*\y) .. (-.25*\x,1.5*\y) .. controls (-.5*\x,2.75*\y) and (-.75*\x,2.75*\y) .. (0*\x,4*\y) .. controls (1.25*\x,4.75*\y) and (1.25*\x,4.5*\y) .. (2.5*\x,4.25*\y) .. controls (2.9*\x,4*\y) and (2.9*\x,3.5*\y) .. (2.9*\x,3.3*\y);
			\draw[->,rounded corners,thick,cyan] (1.1*\x,3.3*\y) .. controls (1.1*\x,3.5*\y) and (1.1*\x,4*\y) .. (1.5*\x,4.25*\y) .. controls (2.75*\x,4.5*\y) and (2.75*\x,4.75*\y) .. (4*\x,4*\y) .. controls (4.75*\x,2.75*\y) and (4.5*\x,2.75*\y) .. (4.25*\x,1.5*\y) .. controls (4*\x,1.1*\y) and (3.5*\x,1.1*\y) .. (3.3*\x,1.1*\y);
			\draw[->,rounded corners,thick,cyan] (2.9*\x,.7*\y) .. controls (2.9*\x,.5*\y) and (2.9*\x,0*\y) .. (2.5*\x,-.25*\y) .. controls (1.25*\x,-.5*\y) and (1.25*\x,-.75*\y) .. (0*\x,0*\y) .. controls (-.75*\x,1.25*\y) and (-.5*\x,1.25*\y) .. (-.25*\x,2.5*\y) .. controls (0*\x,2.9*\y) and (.5*\x,2.9*\y) .. (.7*\x,2.9*\y);
			\path (n1) edge[out=260,in=190,looseness=.75,distance=1.5em,->] (n1);
			\path (n2) edge[out=280,in=350,looseness=.75,distance=1.5em,->] (n2);
			\path (n4) edge[out=100,in=170,looseness=.75,distance=1.5em,->] (n4);
			\path (n5) edge[out=80,in=10,looseness=.75,distance=1.5em,->] (n5);
			\path (n3) edge[out=110,in=70,looseness=.75,distance=1.5em,->] (n3);
		\end{tikzpicture}
		\caption{The cycle graph $\Gamma(\PP_{abbbb})$ corresponding to the case $k=4$ of Theorem~\ref{thm:particular_top}.  (For simplicity the loops are all drawn black even though they represent distinct elements.)  The thick blue edges represent a rank-$2$ element which affords a non-transitive Hurwitz action.}
		\label{fig:particular_top_5_4}
	\end{figure}
	
	If $n>5$, then we observe that $\PP_{\top}$ contains an interval isomorphic to $\PP_{ab^{4}}$.  We have seen in the previous paragraph, that the restriction of $\prec$ to this interval cannot be $ab^{4}$-compatible, which implies that $\prec$ cannot be $\top$-compatible.  This, however, contradicts our assumption on $\prec$.
\end{proof}

We can now conclude the proof of Theorem~\ref{thm:generator_single_cycle}.

\begin{proof}[Proof of Theorem~\ref{thm:generator_single_cycle}]
	Suppose that $\PP_{\top}$ is totally chain-connected and there exists a $\top$-compatible order of $\gen_{\top}$.  In particular, Theorem~\ref{thm:main_hurwitz} implies that $\PP_{\top}$ is Hurwitz-connected.  We distinguish three cases according to the size of $\gen_{g_{a}}$.  
	
	(i) Suppose $\bigl\lvert\gen_{g_{a}}\bigr\rvert=2$.  This means that $\gen_{g_{a}}=\{a,b\}$, and $a$ and $b$ commute.  By Hurwitz-connectivity we conclude $\gen_{\top}=\{a,b\}$, and the claim follows trivially.
	
	(ii) Suppose $\bigl\lvert\gen_{g_{a}}\bigr\rvert=3$.  Let $\gen_{g_{a}}=\{a,b,c\}$, and say that $ab=bc=ca$.  If $aa\leq_{\gen}\top$, then we have either $\top=a^n$ or $aab\leq_{\gen}\top$.  In the first case, we obtain by Hurwitz-connectivity the contradiction $\gen_{\top}=\{a\}$.  In the second case we have $aab=aca$, which forces the existence of $ca\in\rtwo_{\top}$ with $a\lessdot_{\gen}ca$, which contradicts our assumption that $g_{a}$ is the unique upper cover of $a$ (except possibly $a^{2}$).  We conclude $aa\not\leq_{\gen}\top$.  Thus $\top=ab^{n-1}$, and the claim follows from Theorem~\ref{thm:particular_top}.
	
	(iii) Suppose $\bigl\lvert\gen_{g_{a}}\bigr\rvert\geq 4$.  Analogously to (ii) we conclude $\top=ab^{n-1}$, and the claim follows from Theorem~\ref{thm:big_cycle}.
\end{proof}

\section*{Acknowledgements}
We thank the anonymous referee for the many comments and suggestions that helped improve both content and exposition of this paper.

\begin{bibdiv}
\begin{biblist}

\bib{armstrong09generalized}{article}{
      author={Armstrong, Drew},
       title={{Generalized Noncrossing Partitions and Combinatorics of Coxeter
  Groups}},
        date={2009},
     journal={Memoirs of the American Mathematical Society},
      volume={202},
}

\bib{athanasiadis07shellability}{article}{
      author={Athanasiadis, Christos~A.},
      author={Brady, Thomas},
      author={Watt, Colum},
       title={{Shellability of Noncrossing Partition Lattices}},
        date={2007},
     journal={Proceedings of the American Mathematical Society},
      volume={135},
       pages={939\ndash 949},
}

\bib{baumeister17on}{article}{
      author={Baumeister, Barbara},
      author={Gobet, Thomas},
      author={Roberts, Kieran},
      author={Wegener, Patrick},
       title={{On the Hurwitz Action in Finite Coxeter Groups}},
        date={2017},
     journal={Journal of Group Theory},
      volume={20},
       pages={103\ndash 132},
}

\bib{benitzhak03graph}{article}{
      author={Ben-Itzhak, Tzachi},
      author={Teicher, Mina},
       title={{Graph Theoretic Method For Determining Hurwitz Equivalence in
  the Symmetric Group}},
        date={2003},
     journal={Israel Journal of Mathematics},
      volume={135},
       pages={83\ndash 91},
}

\bib{bessis03dual}{article}{
      author={Bessis, David},
       title={{The Dual Braid Monoid}},
        date={2003},
     journal={Annales Scientifiques de l'{\'E}cole Normale Sup{\'e}rieure},
      volume={36},
       pages={647\ndash 683},
}

\bib{bessis06dual}{article}{
      author={Bessis, David},
       title={{A Dual Braid Monoid for the Free Group}},
        date={2006},
     journal={Journal of Algebra},
      volume={302},
       pages={55\ndash 69},
}

\bib{bessis15finite}{article}{
      author={Bessis, David},
       title={{Finite Complex Reflection Arrangements are $K(\pi,1)$}},
        date={2015},
     journal={Annals of Mathematics},
      volume={181},
       pages={809\ndash 904},
}

\bib{biane97some}{article}{
      author={Biane, Philippe},
       title={{Some Properties of Crossings and Partitions}},
        date={1997},
     journal={Discrete Mathematics},
      volume={175},
       pages={41\ndash 53},
}

\bib{bjorner80shellable}{article}{
      author={Bj{\"o}rner, Anders},
       title={{Shellable and Cohen-Macaulay Partially Ordered Sets}},
        date={1980},
     journal={Transactions of the American Mathematical Society},
      volume={260},
       pages={159\ndash 183},
}

\bib{bjorner05combinatorics}{book}{
      author={Bj{\"o}rner, Anders},
      author={Brenti, Francesco},
       title={{Combinatorics of Coxeter Groups}},
   publisher={Springer},
     address={New York},
        date={2005},
}

\bib{bjorner82introduction}{collection}{
      author={Bj{\"o}rner, Anders},
      author={Garsia, Adriano},
      author={Stanley, Richard~P.},
       title={{An Introduction to Cohen-Macaulay Posets}},
      series={Ordered Sets},
   publisher={Springer},
     address={Dordrecht},
        date={1982},
}

\bib{bjorner83lexicographically}{article}{
      author={Bj{\"o}rner, Anders},
      author={Wachs, Michelle~L.},
       title={{On Lexicographically Shellable Posets}},
        date={1983},
     journal={Transactions of the American Mathematical Society},
      volume={277},
       pages={323\ndash 341},
}

\bib{bjorner96shellable}{article}{
      author={Bj{\"o}rner, Anders},
      author={Wachs, Michelle~L.},
       title={{Shellable and Nonpure Complexes and Posets I}},
        date={1996},
     journal={Transactions of the American Mathematical Society},
      volume={348},
       pages={1299\ndash 1327},
}

\bib{brady01partial}{article}{
      author={Brady, Thomas},
       title={{A Partial Order on the Symmetric Group and new $K(\pi,1)$'s for
  the Braid Groups}},
        date={2001},
     journal={Advances in Mathematics},
      volume={161},
       pages={20\ndash 40},
}

\bib{brady08non}{article}{
      author={Brady, Thomas},
      author={Watt, Colum},
       title={{Non-Crossing Partition Lattices in Finite Real Reflection
  Groups}},
        date={2008},
     journal={Transactions of the American Mathematical Society},
      volume={360},
       pages={1983\ndash 2005},
}

\bib{brieskorn88automorphic}{article}{
      author={Brieskorn, Egbert},
       title={{Automorphic Sets and Braids and Singularities}},
        date={1988},
     journal={Contemporary Mathematics},
      volume={78},
       pages={45\ndash 115},
}

\bib{deligne74letter}{misc}{
      author={Deligne, Pierre},
       title={{Letter to Eduard Looijenga}},
        date={1974},
        note={Available at
  \url{http://homepage.rub.de/christian.stump/Deligne_Looijenga_Letter_09-03-1974.pdf}},
}

\bib{hou08hurwitz}{article}{
      author={dong Hou, Xiang},
       title={{Hurwitz Equivalence in Tuples of Generalized Quaternion Groups
  and Dihedral Groups}},
        date={2008},
     journal={The Electronic Journal of Combinatorics},
      volume={15},
}

\bib{garside69braid}{article}{
      author={Garside, Frank~A.},
       title={{The Braid Group and Other Groups}},
        date={1969},
     journal={The Quarterly Journal of Mathematics},
      volume={20},
       pages={235\ndash 254},
}

\bib{hachimori08decompositions}{article}{
      author={Hachimori, Masahiro},
       title={{Decompositions of Two-Dimensional Simplicial Complexes}},
        date={2008},
     journal={Discrete Mathematics},
      volume={308},
       pages={2307\ndash 2312},
}

\bib{huang17absolute}{article}{
      author={Huang, Jia},
      author={Lewis, Joel~B.},
      author={Reiner, Victor},
       title={{Absolute Order in General Linear Groups}},
        date={2017},
     journal={Journal of the London Mathematical Society},
      volume={95},
       pages={223\ndash 247},
}

\bib{humphreys90reflection}{book}{
      author={Humphreys, James~E.},
       title={{Reflection Groups and Coxeter Groups}},
   publisher={Cambridge University Press},
     address={Cambridge},
        date={1990},
}

\bib{hurwitz91ueber}{article}{
      author={Hurwitz, Adolf},
       title={{Ueber Riemann'sche Fl{\"a}chen mit gegebenen
  Verzweigungspunkten}},
        date={1891},
     journal={Math. Ann.},
      volume={39},
       pages={1\ndash 60},
}

\bib{kharlamov03braid}{article}{
      author={Kharlamov, Viatcheslav~M.},
      author={Kulikov, Viktor~S.},
       title={{On Braid Monodromy Factorizations}},
        date={2003},
     journal={Izvestiya Rossiiskoi Akademii Nauk Seriya Matematicheskaya},
      volume={67},
       pages={79\ndash 118},
}

\bib{kreweras72sur}{article}{
      author={Kreweras, Germain},
       title={Sur les partitions non crois\'ees d'un cycle},
        date={1972},
     journal={Discrete Mathematics},
      volume={1},
       pages={333\ndash 350},
}

\bib{kulikov13factorizations}{article}{
      author={Kulikov, Viktor~S.},
       title={{Factorizations in Finite Groups}},
        date={2013},
     journal={Sbornik: Mathematics},
      volume={204},
       pages={237\ndash 263},
}

\bib{kulikov00braid}{article}{
      author={Kulikov, Viktor~S.},
      author={Teicher, Mina},
       title={{Braid Monodromy Factorizations and Diffeomorphism Types}},
        date={2000},
     journal={Izvestiya Rossiiskoi Akademii Nauk Seriya Matematicheskaya},
      volume={64},
       pages={89\ndash 120},
}

\bib{lehrer99reflection}{article}{
      author={Lehrer, Gustav~I.},
      author={Springer, Tonny~A.},
       title={{Reflection Subquotients of Unitary Reflection Groups}},
        date={1999},
     journal={Canadian Journal of Mathematics},
      volume={51},
       pages={1175\ndash 1193},
}

\bib{lehrer09unitary}{book}{
      author={Lehrer, Gustav~I.},
      author={Taylor, Donald~E.},
       title={{Unitary Reflection Groups}},
   publisher={Cambridge University Press},
     address={Cambridge},
        date={2009},
}

\bib{libgober89invariants}{article}{
      author={Libgober, Anatoly},
       title={{Invariants of Plane Algebraic Curves via Representations of the
  Braid Groups}},
        date={1989},
     journal={Inventiones Mathematicae},
      volume={95},
       pages={25\ndash 30},
}

\bib{muehle15el}{article}{
      author={M{\"u}hle, Henri},
       title={{EL-Shellability and Noncrossing Partitions Associated with
  Well-Generated Complex Reflection Groups}},
        date={2015},
     journal={European Journal of Combinatorics},
      volume={43},
       pages={249\ndash 278},
}

\bib{muehle18poset}{article}{
      author={M{\"u}hle, Henri},
      author={Nadeau, Philippe},
       title={{A Poset Structure on the Alternating Group Generated by
  $3$-Cycles}},
        date={2019},
      journal={Algebraic Combinatorics},
      note={To appear.},
}

\bib{reiner97non}{article}{
      author={Reiner, Victor},
       title={{Non-Crossing Partitions for Classical Reflection Groups}},
        date={1997},
     journal={Discrete Mathematics},
      volume={177},
       pages={195\ndash 222},
}

\bib{reiner17on}{article}{
      author={Reiner, Victor},
      author={Ripoll, Vivien},
      author={Stump, Christian},
       title={{On Non-Conjugate Coxeter Elements in Well-Generated Reflection
  Groups}},
        date={2017},
     journal={Mathematische Zeitschrift},
      volume={285},
       pages={1041\ndash 1062},
}

\bib{sia09hurwitz}{article}{
      author={Sia, Charmaine},
       title={{Hurwitz Equivalence in Tuples of Dihedral Groups, Dicyclic
  Groups, and Semidihedral Groups}},
        date={2009},
     journal={The Electronic Journal of Combinatorics},
      volume={16},
}

\bib{stanley11enumerative}{book}{
      author={Stanley, Richard~P.},
       title={{Enumerative Combinatorics, Vol. 1}},
     edition={2},
   publisher={Cambridge University Press},
     address={Cambridge},
        date={2011},
}

\bib{vince85shellable}{article}{
      author={Vince, Andrew},
      author={Wachs, Michelle~L.},
       title={{A Shellable Poset that is not Lexicographically Shellable}},
        date={1985},
     journal={Combinatorica},
      volume={5},
       pages={257\ndash 260},
}

\bib{walker85poset}{article}{
      author={Walker, James~W.},
       title={{A Poset which is Shellable but not Lexicographically
  Shellable}},
        date={1985},
     journal={European Journal of Combinatorics},
      volume={6},
       pages={287\ndash 288},
}

\bib{wegener17on}{article}{
      author={Wegener, Patrick},
       title={{On the Hurwitz Action in Affine Coxeter Groups}},
        date={2017},
      eprint={arXiv:1710.06694},
}

\end{biblist}
\end{bibdiv}
 
\end{document}